\documentclass{amsart}

\usepackage{amssymb,latexsym,amscd}
\usepackage{cyrillic}
\usepackage{euscript}           
\usepackage{enumerate,calc,graphicx,array}
\usepackage[matrix,arrow,curve,frame]{xy}
\numberwithin{equation}{section}

\usepackage{caption,subcaption}
\usepackage{tikz}
\usetikzlibrary{matrix,calc,arrows,decorations.markings,patterns,calc}
\tikzset{->-/.style={decoration={
  markings,
  mark=at position .5 with {\arrow{>}}},postaction={decorate}}}

\newtheorem{theorem}{Theorem}[section]

\newtheorem{proposition}[theorem]{Proposition}
\newtheorem{conjecture}[theorem]{Conjecture}
\newtheorem{corollary}[theorem]{Corollary}

\newtheorem{lemma}[theorem]{Lemma}

\theoremstyle{definition}

\newtheorem{remark}[theorem]{Remark}
\newtheorem{example}[theorem]{Example}
\newtheorem{definition}[theorem]{Definition}

\def\endproof{\hfill$\square$\medskip}

\def\AA{\mathcal{A}}

\def\UU{\mathbb{U}}

\def\FF{\mathcal{F}}
\def\ZZ{\mathbb{Z}}
\def\QQ{\mathbb{Q}}
\def\CC{\mathbb{C}}

\def\cU{\mathcal{U}}

\def\ii{\mathbf{i}}
\def\jj{\mathbf{j}}

\def\TT{\mathbb{T}}
\def\YY{\mathbb{Y}}

\newcommand{\cyrrm}{\fontencoding{OT2}\selectfont\textcyrup}

\addtolength{\evensidemargin}{-1in}
\addtolength{\oddsidemargin}{-1in}
\addtolength{\textwidth}{4cm}
\addtolength{\textheight}{2cm}
\def\sgn{\operatorname{sgn}}

\def\FF{\mathcal{F}}

\begin{document}


\title
{Noncommutative marked surfaces}

\author{Arkady Berenstein}
\address{\noindent Department of Mathematics, University of Oregon,
Eugene, OR 97403, USA} \email{arkadiy@math.uoregon.edu}

\author{Vladimir Retakh}
\address{Department of Mathematics, Rutgers University, Piscataway, NJ 08854, USA}
\email{vretakh@math.rutgers.edu}

\begin{abstract} The aim of the paper is to attach a noncommutative cluster-like structure to each marked surface $\Sigma$. This is a noncommutative algebra $\AA_\Sigma$
generated by ``noncommutative geodesics" between marked points subject to certain triangle relations and noncommutative analogues of Ptolemy-Pl\"ucker relations.
It turns out that the algebra $\AA_\Sigma$ exhibits a noncommutative Laurent Phenomenon with respect to any triangulation of $\Sigma$, which confirms its ``cluster nature".
As a surprising byproduct, we obtain a new topological invariant of $\Sigma$, which is a free or a 1-relator group easily computable in terms of any triangulation of $\Sigma$.
Another application is the proof of Laurentness and positivity of certain discrete noncommutative integrable systems.
\end{abstract}

\dedicatory{To the memory of Andrei Zelevinsky
\\ \cyrrm{Svetlo\u{i} pamyati Andreya Vladlenovicha Zelevinskogo posvyashchae{t}sya}
}

\maketitle

\tableofcontents


\pagebreak[3]

\section{Introduction}

The goal of the paper is to introduce and study noncommutative algebras attached to surfaces (with marked boundary points and punctures) and their triangulations.
This provides an instance of the noncommutative cluster theory (which is the main theme of the forthcoming paper  \cite{BR3}).

Since each surface can be obtained by gluing edges of a polygon (actually, in many ways),
the most important object of study are {\it noncommutative polygons} and their {\it noncommutative triangulations}.

In the commutative case, cluster structure (of type $A_{n-3}$) on an $n$-gon is based on the  {\it Ptolemy relations}:
\begin{equation}
\label{eq:ptolemey}
x_{ik}x_{j\ell}=x_{ij}x_{k\ell}+x_{i\ell}x_{jk}
\end{equation}
for all  quadrilaterals $(i,j,k,\ell)$ inscribed in a circle, $1\le i,j,k,\ell\le n$, so that the chords $(i,k)$ and $(j,\ell)$ are diagonals of the quadrilateral, and
$x_{ij}=x_{ji}$, $i\ne j$ is the Euclidean length of the chord $(ij)$. The Ptolemy relations \eqref{eq:ptolemey} can also be interpreted as  Pl\"ucker identities for $2\times n$ matrices.

In the  noncommutative version we do not assume that $x_{ij}=x_{ji}$ and we think of $x_{ij}$ as a measurement of a directed chord from $i$
to $j$. We suggest
the following noncommutative generalization of the Ptolemy identity based on the theory of noncommutative quasi-Pl\"ucker coordinates
developed in \cite{gr3}:
\begin{equation}
\label{eq:noncomptolemey}
x_{ik}x_{jk}^{-1}x_{j\ell}=x_{i\ell}+x_{ij}x_{kj}^{-1}x_{k\ell}.
\end{equation}
for every quadrilateral $(i,j,k,\ell)$, in which $(i,k)$ and $(j,\ell)$ are the diagonals.

Note that since elements $x_{ij}$ correspond to directed arrows,
the products of the form $x_{ij}x_{k\ell}^{-1}$, $x_{\ell k}^{-1}x_{ji}$ make sense only when  $\ell=j$.

It turns out that in order to establish the noncommutative Laurent Phenomenon and thus obtain a noncommutative cluster structure on the $n$-gon,
it is crucial to impose  additional
{\it triangle relations} (also suggested by properties of quasi-Pl\"ucker coordinates):
\begin{equation}
\label{eq:noncomtriangular}
x_{ij}x_{kj}^{-1}x_{ki}=x_{ik}x_{jk}^{-1}x_{ji}
\end{equation}
for all distinct $i,j,k$ (of course, \eqref{eq:noncomtriangular}
is redundant in the commutative case).

The triangle relations \eqref{eq:noncomtriangular} are of fundamental importance because they allow to introduce {\it noncommutative angles} $T_i^{j,k}:=x_{ji}^{-1}x_{jk}x_{ik}^{-1}$ in each triangle $(i,j,k)$ so that $T_i^{j,k}=T_i^{k,j}$ due to \eqref{eq:noncomtriangular}. That is,  the noncommutative angle at a vertex of a triangle does not depend on the order of the remaining two vertices. The "commutative" angles were introduced by Penner in \cite[Section 3]{Penner} (where they were called ``$h$-lengths") and each $x_{ij}=x_{ji}$ was viewed as the $\lambda$-length of the side $(i,j)$ of an ideal  triangle $(i,j,k)$ (see also \cite[Lemma 7.9]{FTh}, \cite[Section 12]{FG}, and \cite[Section 1.2]{GL}, in the latter paper the term ``angle" was used, apparently, for the first time) and thus noncommutative angles together with the "noncommutative $\lambda$-lengths" $x_{ij}$ can be thought of as  a totally noncommutative  metric on the Lobachevsky plane. The term ``angle" is justified by the following observation. The noncommutative Ptolemy relations \eqref{eq:noncomptolemey} together with the triangle relations \eqref{eq:noncomtriangular}  are equivalent to:
$$T_j^{ik}=T_j^{i\ell}+ T_j^{k\ell}$$
for every quadrilateral $(i,j,k,\ell)$, in which $(i,k)$ and $(j,\ell)$ are the diagonals. In other words, the (both commutative and noncommutative)  angles are additive, which justifies the name. Using  additivity of noncommutative angles, we establish the first instance of the noncommutative Laurent Phenomenon for the $n$-gon with vertices $1,\ldots,n$:
$$x_{ij}=\sum_{k=i}^{j-1} x_{i,1} T_1^{k,k+1}x_{1,j}$$
for all $2\le i<j\le n-1$, e.g., each $x_{ij}$ is a noncommutative Laurent polynomial in $x_{1,k}$, $x_{k,1}$, $k=2,\ldots,n-1$ and all $x_{i,i\pm 1}$. In fact, the latter elements correspond to a triangulation of the $n$-gon where each triangle has a vertex at $1$. We  generalize this to any triangulation of the $n$-gon  in Theorem \ref{th:noncomlaurent n-gon}, and, as expected, the commutative ``limit" of this result (with all $x_{ij}=x_{ji}$) specializes to the Schiffler formula (\cite[Theorem 1.2]{schiffler}).

These arguments extend verbatim if we replace a polygon with a surface $\Sigma$ with marked points.
That is, for each such $\Sigma$ one defines a $\ZZ$-algebra $\AA_\Sigma$ generated by $x_\gamma^{\pm 1}$, where $\gamma$ runs over homotopy classes of curves on $\Sigma$ between marked points subject to the triangle and noncommutative Ptolemy  relations. The Noncommutative Laurent Phenomenon (Theorem  \ref{th:noncomlaurent surface}) asserts that for a given triangulation $\Delta$ of $\Sigma$ each $x_\gamma$ belongs to the subalgebra generated by all $x_{\gamma'}^{\pm 1}$, $\gamma'\in \Delta$.
In any case, the assignments $\Sigma\mapsto \AA_\Sigma$ and $\Sigma\mapsto \TT_\Sigma$ define functors from the category
of surfaces with marked points to respectively the category of algebras and the category of groups (Theorem \ref{th:functorial triangular}).

A surprising byproduct of our approach is that the corresponding {\it triangle group} $\TT_\Delta$ (generated by all $t_\gamma$, $\gamma\in \Delta$ subject to the triangle relations) does not depend on the triangulation of $\Sigma$, therefore, is a topological invariant of
$\Sigma$ (Theorem \ref{th:TDeltaDelta' surface}). Moreover, each $\TT_\Delta$ is either free or a one-relator group  which looks like the fundamental group of $\Sigma$, however it is different from $\pi_1(\Sigma)$. For instance, if $\Sigma_n$ is the sphere with $n$ punctures, then $\TT_\Delta$ is a free group in
$5$ generators if $n=3$ and it is a  $1$-relator torsion-free  group in $4n-7$ generators if $n\ge 4$.\footnote{Misha Kapovich explained to us that $\TT_\Delta$ is related to the fundamental group of a ramified two-fold cover of $\Sigma$.}
 It turns out that each group $\TT_\Delta$ has a ``universal cover"
$\TT_\Sigma$  which is a group generated by  $t_\gamma$, as $\gamma$ runs over all isotopy classes of directed curves on $\Sigma$ between marked point, subject to the triangle relations (see Sections \ref{subsec:Big noncommutative polygons} and
 \ref{subsec:triangular groups} for details).
This group, which we refer to as {\it big triangle group} is of interest as well: if $\Sigma$ is the $n$-gon, we prove (Proposition \ref{pr:big triangular presentation}) that $\TT_\Sigma$  has a presentation with $\frac{(n-1)(n+2)}{2}$
generators and $(n-3)^2$ relations and expect
that the multiplicative group of $\AA_\Sigma$ is  isomorphic to $\TT_\Sigma$.

For each marked point $i$ on $\Sigma$ and each triangulation $\Delta$ we also introduce a {\it total (noncommutative) angle} $T_i^\Delta\in \AA_\Sigma$ in Section \ref{subsec:regular surface} to be the sum of noncommutative angles of all adjacent triangles.  Similarly to the commutative case, we establish (Theorem \ref{th:total anglle surface}) that the total angles do not depend on the choice of a triangulation $\Delta$. Thus the collection of the total angles $\{T_i\}$ can be thought of as a noncommutative version of a (hyperbolic) Riemann structure on $\Sigma$.
Using them we define in Section \ref{subsec:regular surface} the algebra $\cU_\Sigma$ to be the subalgebra  of $\AA_\Sigma$ generated by all noncommutative edges $x_\gamma$, the inverses of the boundary edges and all noncommutative angles $T_i$ and argue that $\cU_\Sigma$ is a totally noncommutative analogue of the {\it upper cluster algebra} corresponding to $\Sigma$ (see e.g., \cite{BFZ-cluster}).

As an application of our noncommutative Laurent phenomenon, taking $\Sigma$ to be a cylinder with no punctures,
one marked point on the inner boundary and $k$ marked points on the outer boundary, we  prove Laurentness of the
following noncommutative recursion for each $k\in 1+2\ZZ_{>0}$:

\begin{equation}
\label{eq:exchange relations sigma1k U}
\begin{cases} U_{n-k}DU_n=C_n+ U_{n-1}\overline D U_{n+1-k} & \text{if $n$ is even}\\
U_n\overline D U_{n-k}=C_n+ U_{n+1-k} D U_{n-1} &\text {if $n$ is odd} \\
\end{cases}
\end{equation}
for all $n\ge k+1$, where $D,\overline D$, and $C_i$, $i\in \ZZ_{> 0}$ belong to a noncommutative ground ring so that $C_{n+k-1}=C_{k-1}$ for $n\in \ZZ_{>0}$.

We prove (Theorem \ref{th:discrete Kontsevich}) that for odd $k>0$ this recursion has a unique solution in the group algebra $\QQ F_{2k+1}$ of the free group $F_{2k+1}$ freely generated by
$D,\overline D,C_1,\ldots,C_{k-1}$, $U_1,\ldots, U_k$, more precisely, each $U_n$  is a sum of elements of $F_{2k+1}$.
We also prove (Theorem \ref{th:discrete Kontsevich}) that the element $H_n$ in the skew field of fractions of $\QQ F_{2k+1}$, $n\ge k$,
given by
\begin{equation}
\label{eq:Hn}
H_n:=  \begin{cases}
\overline D U_{n+1-k}U_n^{-1}+D U_{n+k-1}U_n^{-1}  & \text{if $n$ is even}\\
U_n^{-1} U_{n+1-k}D+ U_n^{-1}U_{n+k-1}\overline D & \text{if $n$ is odd}\\
\end{cases}
\end{equation}
belongs to $\ZZ F_{2k+1}$ and does not depend on $n$ hence is a ``noncommutative conserved quantity."




Setting $D=\overline D=C_i=1$ for all $i>0$, we recover the Laurentness of the noncommutative discrete dynamical system established by  Di Francesco and Kedem in \cite[Theorem 6.2]{KDF}
(conjectured by M. Kontsevich in \cite[Section 3]{kontsevich}).

We finish the introduction with establishing Laurentness of the following noncommutative recursion (which specializes to the discrete integrable system recently studied by P. Di Francesco in \cite{DF}, see Section \ref{sect:kontsevich} for details) in the skew field $\FF$ freely generated by
 $A,\overline A_i$, $B_i,\overline B_i$, $U_{i,i}$, $V_{i,i}$,  $U_{i,i+1}$, $i\in \ZZ$:
\begin{equation}
\label{eq:difrancesco1}
U_{i+1,j}A_j V_{j+1,i}=B^{-1}_{i+1}+U_{i+1,j+1}\overline A_j V_{ji},~V_{i+1,j}B_j U_{j+1,i}=A^{-1}_{i+1}+V_{i+1,j+1}\overline B_j U_{ji}\ ,
\end{equation}
\begin{equation}
\label{eq:difrancesco2}
U_{ij}A_j V_{j+1,i}= U_{i,j+1}\overline A_j V_{ij},~V_{ij}B_j U_{j+1,i}=V_{i,j+1}\overline B_j U_{ij} \ .
\end{equation}
We prove (Theorem \ref{th:infinite strip})
that this recursion has a (unique) solution in the group algebra $\QQ T_\infty$ of the free group  $\TT_\infty$ freely generated by $A_i,\overline A_i$, $B_i,\overline B_i$, $U_{i,i}$, $V_{i,i}$,  $U_{i,i+1}$, $i\in \ZZ$, more precisely, each $U_{ij}$ and $V_{ij}$  is a sum of elements of the group. We also prove (Theorem \ref{th:infinite strip}) that  the elements
$H_{ij}^\pm\in Frac(\ZZ \TT_\infty)$, $i\in \ZZ$, given by
\begin{equation}
\label{eq:Hnprimeinfty}
H_{ij}^+:=U_{ji}^{-1}(U_{j,i-1}A_{i-1} + U_{j,i+1}\overline A_i),~
H_{ij}^-:=V_{ji}^{-1}(V_{j,i-1}B_{i-1}+V_{j,i+1}\overline B_i^{-1})
\end{equation}
belong to $\ZZ \TT_\infty$ and do not depend on $j$.


These examples and their treatment in Section \ref{sect:kontsevich} suggest the following general approach to constructing noncommutative discrete integrable systems. That is, such a system consists of a marked surface $\Sigma$, its automorphism $\tau:\Sigma:\to \Sigma$ permuting marked points, and a triangulation $\Delta$  so that the collection ${\mathcal T}=\{x_\gamma\in \AA_\Sigma,\gamma\in \cup_{k\in \ZZ} \tau^k(\Delta)\}$ evolves in ``discrete time" $k\in \ZZ$ and for each marked point $p$ of $\Sigma$, the total noncommutative angle $T_p$ is a (noncommutative) conserved quantity. The noncommutative Laurent Phenomenon (Theorems \ref{th:noncomlaurent surface} and \ref{th:A_Delta sigma}) then guarantees that each ${\mathcal T}$ belongs to the algebra isomorphic to the group algebra of $\TT_\Delta$.

In Appendix we collect relevant results on noncommutative localizations.

\bigskip

{\bf Acknowledgments}. This work was partly done during our visits to Mathematisches Forschungsinstitut Oberwolfach, Max-Planck-Institut f\"ur Mathematik,
Institut des Hautes \'Etudes Scientifiques, and Centre de Recerca Matem\`{a}tica, Barcelona. We gratefully acknowledge the support of these institutions.
We are very grateful to Alexander Goncharov
and especially to Maxim Kontsevich for their encouragement and support. Thanks are due to George Bergman, Dolors Herbera, and Alexander Lichtman for stimulating discussions of noncommutative localizations and unique factorizations, and to Misha Kapovich, Feng Luo, and Misha Shapiro for explaining important aspects of low dimensional topology and hyperbolic geometry.

\section{Noncommutative polygons}

\label{sect:Noncommutative polygons}

\subsection{Definition and main results}
\label{subsect:Definition and basic results}
For each $n\ge 3$ consider a cyclic order $i\mapsto i^+$ on $[n]=\{1,2,\ldots,n\}$ by
$$i^+=\begin{cases}
i+1 & \text{if $i<n$}\\
1   & \text{if $i=n$}
\end{cases}$$
(and $i\mapsto i^-$ to be the inverse of $i\mapsto i^+$).  We will view $[n]$ with this cyclic order
as $n$ points on a circle (or vertices of a convex $n$-gon) and each pair $(i,j)$ as a chord from $i$ to $j$ (or as an edge or diagonal of the $n$-gon).

We also say that a sequence $\ii=(i_1,\ldots,i_\ell)$ of distinct elements in $[n]$ is {\it cyclic} if a cyclic permutation $\ii\mapsto (i_k,\ldots,i_\ell,i_1,\ldots,i_{k-1})$ is strictly increasing. In particular, the sequence $(k,k+1,\ldots,n,1,\ldots,k-1)$ is cyclic for each $k$.

\begin{definition}
\label{def:An}

Denote by ${\mathcal A}_n$ the $\QQ$-algebra generated by $x_{ij}$ and $x_{ij}^{-1}$, $i,j\in [n]$, $i\ne j$ subject to the relations:

(i) (Triangle relations) For any distinct indices $i,j,k\in [n]$:
\begin{equation}
\label{eq:triangle relations}
x_{ij}x_{kj}^{-1}x_{ki}=x_{ik}x_{jk}^{-1}x_{ji} \ .
\end{equation}

(ii) (Exchange relations) For any cyclic $(i,j,k,\ell)$ in $[n]$:
\begin{equation}
\label{eq:exchange relations}
x_{j\ell}=x_{jk}x_{ik}^{-1}x_{i\ell}+ x_{ji}x_{ki}^{-1}x_{k\ell} \ .
\end{equation}

\end{definition}

\bigskip

\begin{picture}(110,110)
\put(0,10){\includegraphics[scale=0.4]{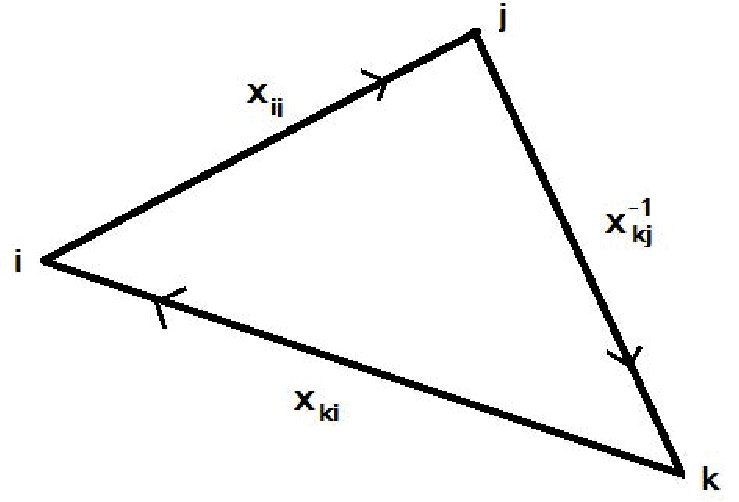}}
\put(135,70){{\bf=}}
\put(150,10){\includegraphics[scale=0.4]{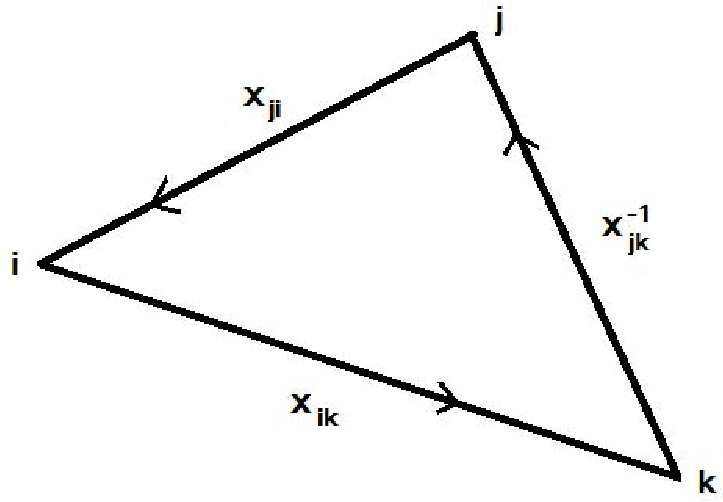}}
\put(300,0){\includegraphics[scale=0.4]{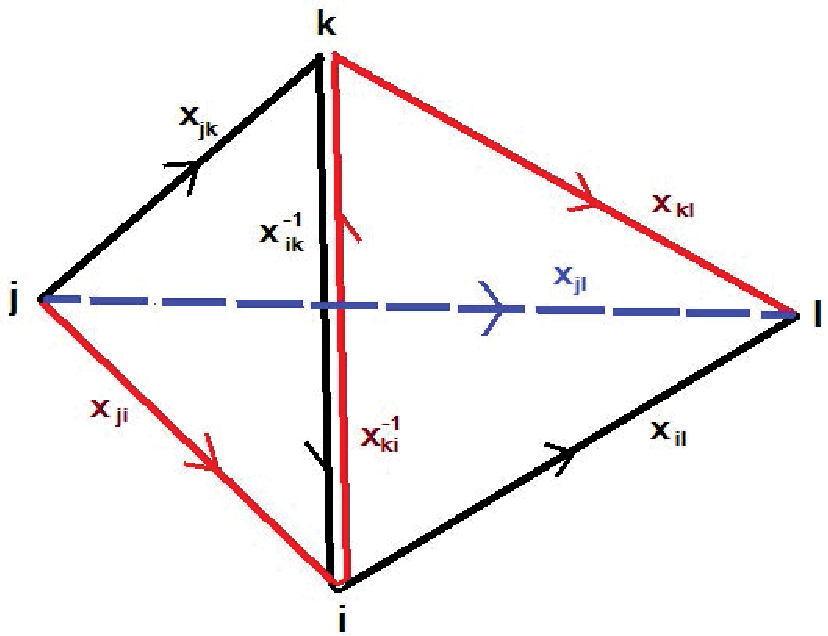}}
\put(150,0){Triangle and exchange relations}

\end{picture}

\begin{remark} One can show that the exchange relations \eqref{eq:exchange relations} are equivalent to noncommutative
Ptolemy relations \eqref{eq:noncomptolemey} provided the triangle relations \eqref{eq:triangle relations} hold. Namely, multiplying \eqref{eq:exchange relations}
by $x_{ik}x_{jk}^{-1}$ on the left and using the triangle relation \eqref{eq:triangle relations}, we obtain \eqref{eq:noncomptolemey}:
$$
x_{ik}x_{jk}^{-1}x_{j\ell}=x_{i\ell}+x_{ik}x_{jk}^{-1}x_{ji}x_{ki}^{-1}x_{k\ell}=x_{i\ell}+x_{ij}x_{kj}^{-1}x_{ki}x_{ki}^{-1}x_{k\ell}=x_{i\ell}+x_{ij}x_{kj}^{-1}x_{k\ell}
$$
Conversely, multiplying \eqref{eq:noncomptolemey} on the left by $x_{jk}x_{ik}^{-1}$ and using \eqref{eq:triangle relations}, we recover \eqref{eq:exchange relations}.
\end{remark}

At the first glance the number of relations of ${\mathcal A}_n$ greatly exceeds the number of generators, moreover, we expect that the subalgebra  of ${\mathcal A}_n$ generated by all $x_{ij}$ is a free algebra in $n^2-n$ generators.

However, we will demonstrate below that the algebra ${\mathcal A}_n$ is ``rationally" generated only by  $3n-4$ free generators.



Denote by $F_m$ the free group on $m$ generators, so that its group algebra $\QQ F_m$  is the free Laurent polynomial algebra $\QQ\langle c_1^{\pm 1},\ldots,c_{m}^{\pm 1}\rangle$. Following Amitsur and Cohn (see e.g., \cite{Cohn} or Section \ref{sect:appendix} below) denote by $\FF_m$ the free skew field on $m$ generators, which is the ``largest" skew field of fractions of $\QQ F_m$. The following is our first main result, in which we freely use notation of Section \ref{sect:appendix}.

\begin{theorem}
\label{th:A_n divisible} For each $n\ge 2$ the algebra $\AA_n$ contains a subalgebra $\AA'_n$ isomorphic to the free group algebra $\QQ F_{3n-4}$ so that
 $\AA_n$ is a universal localization of $\AA'_n$ by a certain multiplicative submonoid of $A_n'\setminus\{0\}$.

\end{theorem}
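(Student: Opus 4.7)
The plan is to build $\AA_n$ explicitly as a universal localization of a free group algebra attached to a chosen triangulation. Fix the ``fan'' triangulation $\Delta$ of the $n$-gon with apex $1$, whose diagonals are $(1,k)$, $k=3,\ldots,n-1$. This gives $2n-3$ undirected, hence $4n-6$ directed edges of $\Delta$, and $n-2$ triangles $(1,j,j+1)$, each contributing one triangle relation \eqref{eq:triangle relations}. Solving the relation in triangle $(1,j,j+1)$ for $x_{j+1,1}$ produces
$$x_{j+1,1} = x_{j+1,j}\,x_{1,j}^{-1}\,x_{1,j+1}\,x_{j,j+1}^{-1}\,x_{j,1}, \qquad j=2,\ldots,n-1,$$
so we may drop $x_{3,1},\ldots,x_{n,1}$ and designate the remaining $4n-6-(n-2)=3n-4$ directed edges as ``free generators'':
$$\mathcal{G}=\{x_{1,j}:2\le j\le n\}\cup\{x_{2,1}\}\cup\{x_{j,j+1},x_{j+1,j}:2\le j\le n-1\}.$$

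Next I would define $\AA'_n\subseteq\AA_n$ as the subalgebra generated by $\mathcal{G}\cup\mathcal{G}^{-1}$. Because each dropped $x_{j+1,1}$ is a Laurent monomial in $\mathcal{G}$, every directed edge of $\Delta$ and its inverse lies in $\AA'_n$. The noncommutative Laurent Phenomenon stated in the introduction (to be established in Theorem \ref{th:noncomlaurent n-gon}, via additivity $T_j^{ik}=T_j^{i\ell}+T_j^{k\ell}$ of the noncommutative angles) then expresses each off-triangulation $x_{ij}$ as a positive sum of products of elements of $\mathcal{G}^{\pm1}$, hence $x_{ij}\in\AA'_n$. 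The multiplicative submonoid $S\subset\AA'_n\setminus\{0\}$ to be inverted is the one generated by these off-triangulation sums; inverting $S$ is exactly what is needed to recover all generators $x_{ij}^{\pm1}$ of $\AA_n$.

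The main content is to show $\AA'_n\cong \QQ F_{3n-4}$, i.e.\ that $\mathcal{G}$ generates $\AA'_n$ freely. For this I would construct a universal model. Start with $B:=\QQ F_{3n-4}$ on a formal copy $\tilde{\mathcal{G}}$ of $\mathcal{G}$; set $\tilde{x}_{j+1,1}$ equal to the right-hand side of the triangle formula above (so it lies in and is invertible in $B$); then recursively define formal off-triangulation elements $\tilde{x}_{ij}\in B$ by the Laurent sum formula from the introduction; finally pass to the universal localization $\mathcal{B}_S$ of $B$ by the multiplicative monoid generated by these sums. One then checks that every triangle relation \eqref{eq:triangle relations} and every cyclic exchange relation \eqref{eq:exchange relations} holds in $\mathcal{B}_S$, producing a ring homomorphism $\AA_n\to \mathcal{B}_S$. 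The images of $\mathcal{G}$ are the free generators of $B\subset \mathcal{B}_S$, so the restriction $\AA'_n\to B$ is an injection; combined with the obvious surjection $B\twoheadrightarrow \AA'_n$ this forces $\AA'_n\cong \QQ F_{3n-4}$, and the universal property of $\mathcal{B}_S$ identifies it with $\AA_n$.

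The hard step is the relation-check inside $\mathcal{B}_S$: all Ptolemy relations must hold, including those for quadrilaterals whose diagonals and sides are \emph{all} off $\Delta$, and one cannot appeal to the Laurent Phenomenon in $\AA_n$ without circularity. My approach is a double induction on the number of diagonals of $\Delta$ crossed by the quadrilateral $(i,j,k,\ell)$ and on its ``depth'' in the fan, reducing the general case to a local manipulation of the additive angle identity $T_j^{ik}=T_j^{i\ell}+T_j^{k\ell}$ applied in the model $B$. A secondary subtlety, handled along the way, is verifying that each element of $S$ is nonzero in the free group algebra $B$ — this follows because the Laurent expansion is a sum of pairwise distinct reduced words in $F_{3n-4}$, so no cancellation occurs and $\mathcal{B}_S$ is a genuine (nontrivial) universal localization.
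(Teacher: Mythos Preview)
Your overall architecture matches the paper's: define the triangle group $\TT_\Delta$, identify $\AA'_n=\AA_\Delta$ with $\QQ\TT_\Delta\cong\QQ F_{3n-4}$, define the Schiffler sums $t_{pq}^\Delta\in\QQ\TT_\Delta$, and exhibit $\AA_n$ as the universal localization at the monoid they generate. The freeness count (Theorem~\ref{th:T_Delta polygon}) is done in the paper exactly as you do it, by solving one triangle relation per triangle.

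Where you diverge is at the step you yourself flag as hard: verifying \emph{all} triangle and exchange relations among the $t_{pq}^\Delta$ inside $\QQ\TT_\Delta[S^{-1}]$. You propose a double induction on crossing number and fan depth, reducing to angle additivity. The paper avoids this entirely by a different device. It first proves, for \emph{adjacent} triangulations $\Delta,\Delta'$, that the assignment $t_{pq}^{\Delta'}\mapsto t_{pq}^\Delta$ extends to an isomorphism $\psi_{\Delta,\Delta'}:\QQ\TT_{\Delta'}[S_{\Delta'}^{-1}]\to\QQ\TT_\Delta[S_\Delta^{-1}]$; the content here is a single combinatorial bijection (Proposition~\ref{pr:combin bijection Delta Delta'}) matching admissible sequences before and after a flip. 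Once these mutation isomorphisms exist and compose transitively, any given triangle or exchange relation is handled by choosing a triangulation $\Delta_0$ in which that triangle or quadrilateral is \emph{literally a face}, so the relation is tautological in $\QQ\TT_{\Delta_0}$, and then transporting it back via $\psi_{\Delta,\Delta_0}$ (Lemma~\ref{le:relations tDelta}).

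So the paper trades your global induction over all quadrilaterals in a fixed $\Delta$ for a local computation at a single flip, repeated along a path in the exchange graph. Your route is not wrong in principle, but the inductive step you describe is only a sketch; carrying it out directly in $B$ for quadrilaterals far from $\Delta$ would amount to re-deriving the flip-compatibility anyway. The paper's approach also has the advantage of being triangulation-independent from the start, which is what makes the later extension to arbitrary surfaces (Theorem~\ref{th:A_Delta sigma}) go through with no extra work.
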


We prove the theorem in Section \ref{subsect:A_Delta}. In fact, it will follow from a more precise assertion (Theorem \ref{th:A_Delta}).

In view of universality of the localization (Lemma \ref{le:universal property of localization}), Theorem \ref{th:A_n divisible} implies the following immediate corollary.

\begin{corollary} The canonical monomorphism of algebras $\varphi':\AA_n'\hookrightarrow \FF_{3n-4}$ uniquely extends to a homomorphism of algebras
\begin{equation}
\label{eq:phi from A'}
\varphi:\AA_n\to \FF_{3n-4}
\end{equation}
\end{corollary}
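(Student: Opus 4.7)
The plan is to deduce this corollary directly from Theorem~\ref{th:A_n divisible} together with the universal property of localization (Lemma~\ref{le:universal property of localization}). First, I would record that the canonical monomorphism $\varphi':\AA_n'\hookrightarrow \FF_{3n-4}$ is built tautologically: under the isomorphism $\AA_n'\cong \QQ F_{3n-4}$ supplied by Theorem~\ref{th:A_n divisible}, the free group algebra $\QQ F_{3n-4}$ embeds into its free skew field of fractions $\FF_{3n-4}$ as recalled in Section~\ref{sect:appendix}, and this embedding provides $\varphi'$.

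Next, Theorem~\ref{th:A_n divisible} realizes $\AA_n$ as the universal localization of $\AA_n'$ at some multiplicative submonoid $S\subset \AA_n'\setminus\{0\}$. Because $\varphi'$ is injective and $S$ consists of nonzero elements, the image $\varphi'(S)$ lies in $\FF_{3n-4}\setminus\{0\}$; since $\FF_{3n-4}$ is a skew field, every such element is a unit. Hence $\varphi'$ sends $S$ into the group of units of $\FF_{3n-4}$, which is precisely the hypothesis needed to apply the universal property.

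The universal property of localization then produces a unique algebra homomorphism $\varphi:\AA_n\to \FF_{3n-4}$ whose restriction to $\AA_n'$ is $\varphi'$, and both existence and uniqueness are supplied simultaneously by that universal property. I do not anticipate any substantive obstacle: the corollary collapses to the two-line observation that injectivity of $\varphi'$ forces elements of $S$ to have nonzero, hence invertible, images in the skew field $\FF_{3n-4}$, at which point Lemma~\ref{le:universal property of localization} finishes the argument. If anything deserves care, it is only the bookkeeping that the multiplicative monoid $S$ furnished by the proof of Theorem~\ref{th:A_n divisible} actually sits inside $\AA_n'\setminus\{0\}$ (and not merely inside $\AA_n'$), but this is already built into the statement of that theorem.
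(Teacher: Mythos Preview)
Your proposal is correct and follows essentially the same approach as the paper, which simply states that the corollary is immediate from Theorem~\ref{th:A_n divisible} combined with the universality of localization (Lemma~\ref{le:universal property of localization}). You have merely spelled out the one observation the paper leaves implicit: that injectivity of $\varphi'$ forces $\varphi'(S)\subset \FF_{3n-4}^\times$, so the hypothesis of Lemma~\ref{le:universal property of localization} is satisfied.
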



In fact, we expect that \eqref{eq:phi from A'} is injective, so far we can deduce this from another, ``innocent looking" conjectural property of the group algebras $\QQ F_m$ (Conjecture \ref{conj:rigid linear}, see also  Section \ref{subsec:rigidity=>injectivity}).

\begin{remark}
Injectivity of \eqref{eq:phi from A'} would imply, in particular, that $\AA_n$ has no zero divisors, which is a rather non-trivial assertion because of the following ``counter-example" which was communicated to us by George Bergman. The universal localization $\QQ\langle x,y\rangle[(xy)^{-1}]$ of the free algebra $\QQ\langle x,y\rangle$ has a zero-divisor $y(xy)^{-1} x-1$.

\end{remark}

\begin{remark}
\label{re:injective A to A'} Given $n'\ge n$ and an injective map ${\bf j}:[n]\hookrightarrow [n']$ for some $n'>n$, clearly, the assignments $x_{ij}\mapsto x_{{\bf j}(i),{\bf j}(j)}$ define a homomorphism of algebras ${\bf j}_\star:\AA_n\to \AA_{n'}$. One can conjecture that each ${\bf j}_\star$ is injective. In fact, this would directly follow from the injectivity of each \eqref{eq:phi from A'}.
\end{remark}

Now we explore the ``cluster" structure of ${\mathcal A}_n$.
We say that a pair $(i,k)$ {\it crosses} $(j,\ell)$  if $(i,j,k,\ell)$ is cyclic.

A {\it triangulation} $\Delta$ of $[n]$  is a maximal crossing-free subset of $[n]\times [n]\setminus \{(i,i)|i\in [n]\}$. Clearly, each triangulation of $[n]$ has cardinality $4n-6$.

For each triangulation $\Delta$ of $[n]$ define:

$\bullet$   The subalgebra ${\mathcal A}_\Delta$  of ${\mathcal A}_n$ generated by $x_{ij}$, $i,j\in [n]$, $i\ne j$ and $x_{ij}^{-1}$, $(i,j)\in \Delta$.

$\bullet$ The {\it triangle} group $\TT_\Delta$ generated by all $t_{ij}$, $(i,j)\in \Delta$ subject to the relations:
$$t_{ij}t_{kj}^{-1}t_{ki}=t_{ik}t_{jk}^{-1}t_{ji}$$
for all $i,j,k\in [n]$ such that $(i,j),(j,k),(k,i)\in \Delta$.

The term ``triangle group" is normally used for a group generated by reflections about sides of a triangle. In this paper we are using it in a somewhat similar way: a group generated by ``side lengths'' of noncommutative triangles.

\begin{theorem}
\label{th:T_Delta polygon}
Each  $\TT_\Delta$ is a free group in $3n-4$ generators.

\end{theorem}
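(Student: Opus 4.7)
\textbf{Proof proposal for Theorem \ref{th:T_Delta polygon}.} The plan is to proceed by induction on $n$. First I will verify the base case $n=3$: here $\Delta$ is forced to consist of all $6$ directed edges between $1,2,3$, and there is exactly one triangle relation $t_{12}t_{32}^{-1}t_{31}=t_{13}t_{23}^{-1}t_{21}$. Solving this relation for a single generator (say $t_{12}$) expresses it as a word in the remaining five, so by a single Tietze transformation $\TT_\Delta$ is free of rank $5=3\cdot 3-4$.

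For the inductive step, assume the result for triangulations of the $(n-1)$-gon and let $\Delta$ be a triangulation of $[n]$, $n\ge 4$. The dual graph of $\Delta$ is a tree with $n-2$ vertices and at least two leaves, so $\Delta$ contains an \emph{ear}, that is, a triangle $(i^-,i,i^+)$ whose two non-diagonal sides are boundary edges of the $n$-gon. Deleting $i$ from $[n]$ converts the diagonal $(i^-,i^+)\in\Delta$ into a boundary edge of the $(n-1)$-gon on vertex set $[n]\setminus\{i\}$, and yields a triangulation $\Delta'$ of that polygon. By induction $\TT_{\Delta'}$ is free of rank $3(n-1)-4=3n-7$.

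Next I will show $\TT_\Delta\cong\TT_{\Delta'}\ast F_3$. Comparing presentations, $\TT_\Delta$ is obtained from $\TT_{\Delta'}$ by adjoining the four new generators $t_{i^-,i},\ t_{i,i^-},\ t_{i^+,i},\ t_{i,i^+}$ attached to the ear, together with the single new triangle relation
\[
t_{i^-,i}\,t_{i^+,i}^{-1}\,t_{i^+,i^-}=t_{i^-,i^+}\,t_{i,i^+}^{-1}\,t_{i,i^-}.
\]
This relation can be solved unambiguously for one of the new generators, e.g.
\[
t_{i^-,i}=t_{i^-,i^+}\,t_{i,i^+}^{-1}\,t_{i,i^-}\,t_{i^+,i^-}^{-1}\,t_{i^+,i},
\]
expressing $t_{i^-,i}$ as a word in the three other new generators and in the two old generators $t_{i^-,i^+},\,t_{i^+,i^-}\in\TT_{\Delta'}$. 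The Tietze transformation removing $t_{i^-,i}$ together with this relation produces a presentation of $\TT_\Delta$ with generating set $\TT_{\Delta'}\sqcup\{t_{i,i^-},t_{i^+,i},t_{i,i^+}\}$ and no relations beyond those of $\TT_{\Delta'}$; this is precisely $\TT_{\Delta'}\ast F_3$, free of rank $(3n-7)+3=3n-4$.

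The main subtlety is ensuring that no unexpected relation among generators of $\TT_\Delta$ is hidden: in particular that the three new generators that survive are not constrained by any relation inherited from $\TT_{\Delta'}$. This is where it matters that \emph{all} triangle relations of $\Delta$ other than the one at the ear already live inside $\TT_{\Delta'}$ --- a fact that must be checked by observing that every other triangle of $\Delta$ has vertex set disjoint from $\{i\}$, so all three of its directed sides are generators of $\TT_{\Delta'}$. With that verified, the free product decomposition above is immediate, and the induction closes.
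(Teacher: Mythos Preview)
Your proof is correct and follows essentially the same approach as the paper: induction on $n$, remove an ear vertex, and observe that the resulting presentation is that of $\TT_{\Delta'}\ast F_3$. The paper phrases the inductive step as a free product $\TT_{\hat\Delta}\ast\TT_{\Delta_0}$ modulo identifying the two shared edge generators (and proves the slightly sharper statement that a specific set of $3n-4$ edge generators is free, which is why it invokes two ears rather than one), but the underlying argument is identical to your Tietze-transformation computation.
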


We prove Theorem \ref{th:T_Delta polygon} in Section \ref{subsec:proof TDelta is free}. We generalize it in Theorem \ref{th:TDeltaDelta' surface} to all surfaces.


Clearly,
the assignments $t_{ij}\mapsto x_{ij}$,  $(i,j)\in \Delta$ define a homomorphism of algebras:
\begin{equation}
\label{eq:i_Delta}
\ii_\Delta:\QQ\TT_\Delta \to  {\mathcal A}_\Delta \ ,
\end{equation}
where $\QQ\TT_\Delta$ is the group algebra of $\TT_\Delta$.



	Recall (see, e.g., \eqref{eq:canonical embedding localization}) that for a given algebra $\AA$ with no zero divisors and a submonoid $S\subset \AA\setminus \{0\}$ one has a universal localization $\AA[S^{-1}]$ of $\AA$ by $S$.

\begin{theorem}
\label{th:A_Delta}
For each triangulation $\Delta$ of $[n]$ one has:

(a) The homomorphism $\ii_\Delta$ given by \eqref{eq:i_Delta} is an isomorphism of algebras.

(b)  $\AA_n=\AA_\Delta[{\bf S}^{-1}]$, where ${\bf S}$ is the multiplicative submonoid of $\AA_\Delta$ generated by all $x_{ij}$.

\end{theorem}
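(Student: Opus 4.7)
The plan is to establish (a) by independently proving surjectivity and injectivity of $\ii_\Delta$, after which (b) will follow as a short formal consequence. The common thread is the explicit expression of every generator $x_{ij}$ of $\AA_\Delta$ as a Laurent polynomial in $\{x_{k\ell}^{\pm 1}:(k,\ell)\in\Delta\}$, i.e., a noncommutative Laurent phenomenon for the $n$-gon.

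For surjectivity of $\ii_\Delta$, I would argue by induction on the number $c(i,j)$ of diagonals of $\Delta$ crossed by $(i,j)$. The case $c(i,j)=0$ means $(i,j)\in\Delta$ and $x_{ij}=\ii_\Delta(t_{ij})$. For the inductive step pick any $(k,\ell)\in\Delta$ crossed by $(i,j)$; the four sides of the quadrilateral with diagonals $(i,j)$ and $(k,\ell)$ have strictly smaller crossing number, so the exchange relation \eqref{eq:exchange relations} for this quadrilateral, which involves $x_{k\ell}^{\pm1}\in\AA_\Delta$, expresses $x_{ij}$ as a Laurent polynomial in elements lying in $\ii_\Delta(\QQ\TT_\Delta)$ by induction. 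For the fan triangulation from a single vertex one bypasses this altogether via the closed-form additivity-of-angles formula from the introduction.

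For injectivity --- the main obstacle --- the strategy is to construct an explicit inverse. Form the universal localization $\tilde\AA_\Delta:=(\QQ\TT_\Delta)[\tilde{\bf S}^{-1}]$, where $\tilde{\bf S}$ is the multiplicative monoid generated by the $t_{ij}$ for $(i,j)\in\Delta$ together with the Laurent expressions from Step~1 (viewed as elements of $\QQ\TT_\Delta$) for the remaining pairs, and let $X_{ij}\in\tilde\AA_\Delta$ be the corresponding element, with $X_{ij}=t_{ij}$ when $(i,j)\in\Delta$. The key computation is to verify that every triangle relation \eqref{eq:triangle relations} and every exchange relation \eqref{eq:exchange relations} holds in $\tilde\AA_\Delta$ with $x_{ij}$ replaced by $X_{ij}$. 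I would organize this by induction on the distance from $\Delta$ in the flip graph of triangulations of the $n$-gon: since any two triangulations are connected by a finite sequence of flips and each flip is itself an exchange relation, the required identities ultimately reduce to the defining triangle relations of $\TT_\Delta$. Once this verification is in hand, the assignment $x_{ij}\mapsto X_{ij}$ defines a homomorphism $\AA_n\to\tilde\AA_\Delta$ whose restriction to $\AA_\Delta$ inverts $\ii_\Delta$.

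Part (b) then follows quickly. After identifying $\AA_\Delta\cong\QQ\TT_\Delta$ via (a), the monoid $\tilde{\bf S}$ matches ${\bf S}$, so $\tilde\AA_\Delta=\AA_\Delta[{\bf S}^{-1}]$; the universal property of localization produces a canonical homomorphism $\AA_\Delta[{\bf S}^{-1}]\to\AA_n$, which is surjective since $\AA_n$ is generated by the $x_{ij}^{\pm1}$, all of which lie in $\AA_\Delta[{\bf S}^{-1}]$ by Step~1. The homomorphism $\AA_n\to\AA_\Delta[{\bf S}^{-1}]$ built in Step~2 is its inverse. The genuinely technical content of the whole proof is the flip-induction of Step~2: one must confirm not merely that different derivations of the Laurent expression for a single $x_{ij}$ coincide in $\tilde\AA_\Delta$, but that the entire system of triangle and exchange identities in $\AA_n$ survives the substitution $x_{ij}\leadsto X_{ij}$, with no unexpected relation arising among the $X_{ij}$.
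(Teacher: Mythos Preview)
Your proposal is correct and follows essentially the same architecture as the paper's proof: define candidate elements $X_{ij}$ in the localization of $\QQ\TT_\Delta$, verify they satisfy all triangle and exchange relations (so that $x_{ij}\mapsto X_{ij}$ gives a map $\AA_n\to\QQ\TT_\Delta[S_\Delta^{-1}]$), and observe this map and the localized $\ii_\Delta$ are mutual inverses. The paper carries this out as Theorem~\ref{th:universal localization n-gon}, Lemma~\ref{le:relations tDelta}, and Proposition~\ref{pr:canonical isomorphism localization polygon}.

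The one substantive difference is how $X_{ij}$ is pinned down. You define it recursively via the exchange relation along some chosen diagonal, which leaves a well-definedness question and makes the flip-induction of Step~2 somewhat open-ended (``the required identities ultimately reduce to the defining triangle relations'' is true but is exactly the nontrivial content). The paper instead takes $X_{ij}=t_{ij}^\Delta$ to be the closed-form sum over admissible sequences from Theorem~\ref{th:noncomlaurent n-gon}; the flip-induction then becomes the concrete identity $\varphi_{\Delta,\Delta'}(t_{pq}^{\Delta'})=t_{pq}^\Delta$ for adjacent $\Delta,\Delta'$, proved by an explicit combinatorial bijection (the involution $[J_P]$ of Propositions~\ref{pr:monotonic J_P}--\ref{pr:combin bijection Delta Delta'}). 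Once this is in hand, the verification of any triangle or exchange relation (your Step~2) is immediate: choose a triangulation $\Delta_0$ containing the relevant triangle or quadrilateral, where the relation holds tautologically, and transport it via $\psi_{\Delta,\Delta_0}$. So the explicit formula buys a canonical object to work with and localizes all the difficulty into a single flip computation, at the cost of that computation being fairly intricate.
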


We will prove Theorem \ref{th:A_Delta} in Section \ref{subsect:A_Delta}.  In fact, Theorem \ref{th:A_Delta}(a)  establishes a {\it noncommutative cluster structure} on ${\mathcal A}_n$ and Theorem \ref{th:A_Delta}(b) -- a {\it noncommutative Laurent Phenomenon} (see also Section \ref{subsect:Noncommutative Laurent Phenomenon}).


\subsection{Noncommutative Laurent Phenomenon}
\label{subsect:Noncommutative Laurent Phenomenon}
For each even sequence $\ii=(i_1,\ldots,i_{2m})\in [n]^{2m}$ such that adjacent indices are distinct define the monomial $x_\ii\in {\mathcal A}_n$ by:
$$x_\ii:=x_{i_1,i_2}x_{i_3,i_2}^{-1}x_{i_3,i_4}\cdots x_{i_{2m-1},i_{2m-2}}^{-1}x_{i_{2m-1},i_{2m}} \ .$$

\begin{definition}
\label{def:admissible}
For a directed chord $(i,j)$, $i,j\in [n]$, $i\ne j$ and a triangulation $\Delta$ of $[n]$, we say that a
sequence $\ii=(i_1,\ldots,i_{2m})\in [n]^{2m}$ is $(i,j,\Delta)$-{\it admissible} if:

\noindent (i) $i_1=i$, $i_{2m}=j$ and $(i_s,i_{s+1})\in \Delta$ for $s=1,\ldots,2m-1$;

\noindent (ii) each chord $(i_{2s},i_{2s+1})$, $s=1,\ldots,m-1$ intersects $(i,j)$;

\noindent (iii) If ${\bf p}:=(i_k,i_{k+1})\cap (i,j)\ne \emptyset$ and ${\bf q}:=(i_{\ell},i_{\ell+1})\cap (i,j)\ne \emptyset$ for some $k<\ell$, then the point ${\bf p}$ of $(i,j)$ is closer in the path to $i$ than the point ${\bf q}$.

We denote by
$Adm_\Delta(i,j)$ the set of all $(i,j,\Delta)$-admissible sequences $\ii$.

\end{definition}

\begin{theorem} (Noncommutative Laurent Phenomenon)
\label{th:noncomlaurent n-gon}
Let $\Delta$ be a triangulation of  $[n]$. Then
\begin{equation}
\label{eq:noncomlaurent n-gon}
x_{ij}=\sum_{\ii\in Adm_\Delta(i,j)} x_\ii \ .
\end{equation}
for all $i,j\in [n]$, $i\ne j$.

\end{theorem}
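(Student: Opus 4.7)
The plan is to argue by induction on $r := r(i,j,\Delta)$, the number of diagonals of $\Delta$ crossed by the chord $(i,j)$. The base case $r = 0$ forces $(i,j) \in \Delta$ by maximality; then the only $(i,j,\Delta)$-admissible sequence is $\ii = (i,j)$ and the formula reduces to $x_{ij} = x_{ij}$.

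For $r \geq 1$, let $T = (i,c,d)$ be the unique triangle of $\Delta$ incident to $i$ into whose interior $(i,j)$ enters; then $(i,c), (i,d), (c,d) \in \Delta$ and $(c,d)$ is the first diagonal of $\Delta$ that $(i,j)$ crosses from $i$. Since $(c,i,d,j)$ is cyclic, the exchange relation \eqref{eq:exchange relations} gives
\begin{equation*}
x_{ij} = x_{id}\,x_{cd}^{-1}\,x_{cj} + x_{ic}\,x_{dc}^{-1}\,x_{dj}.
\end{equation*}
Each of $(c,j)$ and $(d,j)$ lies in the subpolygon on the $j$-side of $(c,d)$ and crosses strictly fewer than $r$ diagonals of $\Delta$, so the inductive hypothesis expands $x_{cj}$ and $x_{dj}$ as sums over $Adm_\Delta(c,j)$ and $Adm_\Delta(d,j)$. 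Substituting, each term $x_{id}x_{cd}^{-1}x_{\ii'}$ with $\ii'=(c,i'_2,\ldots,i'_{2m'})$ equals $x_{\jj}$ for the prepended sequence $\jj=(i,d,c,i'_2,\ldots,i'_{2m'})$, and the symmetric identity holds for the second summand; whenever $i'_2 = d$ (resp.\ $i''_2 = c$), the internal factor $x_{cd}^{-1}x_{cd}$ (resp.\ $x_{dc}^{-1}x_{dc}$) collapses in $\AA_n$, reducing $\jj$ to $(i,d,i'_3,\ldots,i'_{2m'})$ (resp.\ $(i,c,i''_3,\ldots,i''_{2m''})$).

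The principal difficulty is the combinatorial bijection: one must verify that this procedure produces each element of $Adm_\Delta(i,j)$ exactly once. The crucial observation is that for any $\jj \in Adm_\Delta(i,j)$ with $m \geq 2$ one has $j_2 \in \{c,d\}$, because if $j_2 \notin \{c,d\}$ then the edge $(i,j_2) \in \Delta$ separates $j_2$ from the far side of $(i,j)$, and no $(j_2,j_3) \in \Delta$ could then cross $(i,j)$ without crossing the triangulation edge $(i,j_2)$. One then classifies $\jj$ by $(j_2,j_3)$: if it equals $(c,d)$ or $(d,c)$, then $\jj$ is a direct prepending (no collapse) of a tail in $Adm_\Delta(d,j)$ or $Adm_\Delta(c,j)$; otherwise $(j_2,j_3)$ is a later crossing diagonal of $\Delta$ sharing a vertex with $(c,d)$, and $\jj$ is uniquely recovered as the collapse of the formal sequence obtained by inserting $(d,c)$ or $(c,d)$ after the initial pair. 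The remaining check---that the admissibility conditions (i)--(iii) on the tail correspond to those on $\jj$---reduces to the geometric fact that, within the subpolygon on the $j$-side of $(c,d)$, a diagonal of $\Delta$ crosses $(c,j)$ or $(d,j)$ iff it crosses the portion of $(i,j)$ past $(c,d)$ on the appropriate side, with the linear order of crossings preserved.
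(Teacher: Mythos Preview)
Your approach is correct and takes a genuinely different route from the paper. The paper deduces Theorem~\ref{th:noncomlaurent n-gon} from Theorem~\ref{th:noncompol n-gon} (the $y$-version), which it proves by induction on the size $n$ of the polygon: one fixes an ear vertex (say $n$, with $(1,n-1)\in\Delta$), removes it, and reduces to the $(n-1)$-gon via the exchange relation $y_{1,j}^n=y_{1,j}^{n-1}+y_{1,n-1}^n y_{n-1,j}^1$. Your argument instead inducts directly on the crossing number $r(i,j,\Delta)$ and peels off the first triangle at $i$; this is closer in spirit to Schiffler's original combinatorial proof and avoids passing through the $y$-coordinates altogether. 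The paper's detour buys a simultaneous proof of Theorem~\ref{th:noncompol n-gon}, while your route is more self-contained for the $x$-statement alone.

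Two remarks on the execution. First, the claim that $r(c,j),r(d,j)<r(i,j)$ deserves a line: since $(c,d)\in\Delta$, every diagonal of $\Delta$ lies entirely on one side of $(c,d)$; those in the $j$-side subpolygon $P'$ either avoid $\{c,d\}$ (and then cross $(c,j)$, $(d,j)$, and the tail of $(i,j)$ simultaneously) or emanate from $c$ (resp.\ $d$), in which case they can cross $(d,j)$ and $(i,j)$ but not $(c,j)$ (resp.\ vice versa); summing gives $r(i,j)=1+r(c,j)+\#\{\text{diagonals from }c\}\ge 1+r(c,j)$, and symmetrically. Second, in the bijection the case $j_2=d$, $j_3\neq c$ does require knowing that $j_3\in P'$: this follows because $(d,j_3)\in\Delta$ must not cross $(i,c)\in\Delta$, forcing $j_3$ to lie on the $d$-side of $(i,c)$, hence between $d$ and $c$ through $j$. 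With that in hand, condition~(iii) for $\jj$ forces all later even--odd edges $(j_{2s},j_{2s+1})$ to cross $(i,j)$ strictly past $p_0=(c,d)\cap(i,j)$, hence to lie in $P'$ and (by the trichotomy above) to cross $(c,j)$; this is the content of your ``remaining check,'' and it goes through.
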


We prove Theorem \ref{th:noncomlaurent n-gon} in Section \ref{subsect:proof of Theorems th:noncomlaurent n-gon and th:noncompol n-gon}.

\begin{remark} This is a noncommutative generalization of Schiffler's formula (\cite{schiffler}).

\end{remark}

Now we illustrate Theorem \ref{th:noncomlaurent n-gon} for each {\it starlike} triangulation
\begin{equation}
\label{eq:starlike}
\Delta_i=\{(i,j),(j,i)|j\in [n]\setminus \{i\}\}\cup\{(k,k^\pm),k\in [n]\},\quad i\in [n] \ .
\end{equation}

\begin{example} Fix $i\in [n]$. Then for each $k,\ell\in [n]\setminus \{i\}$ such that $(i,k,\ell)$ is cyclic, the following relation holds in ${\mathcal A}_n$:
$$x_{k\ell}=\sum_s x_{ki}x_{si}^{-1}x_{s,s^+}x_{i,s^+}^{-1}x_{i\ell} \, $$
where summation is over all $s=k,k^+,\ldots,\ell^-$ in cyclic order. Hence
$x_{k\ell}= \ii_{\Delta_i}(\sum\limits_s t_{ki}t_{si}^{-1}t_{s,s^+}t_{i,s^+}^{-1}t_{i\ell})$.


\end{example}

\begin{example}
\label{ex:Schiffler x n=5,6}
 (a) If $n=5$ and $\Delta=\{(1,3), (3,1),(1,4), (4,1);(i,i^\pm)|i\in [5]\}$, then
$$x_{25}=x_{21}x_{41}^{-1}x_{45}+x_{23}x_{13}^{-1}x_{15}+x_{21}x_{31}^{-1}x_{34}x_{14}^{-1}x_{15}.$$

(b) If $n=6$ and $\Delta=\{(1,3),(3,1),(3,6),(6,3),(4,6),(6,4);(i,i^\pm)|i\in [6]\}$, then
$$
x_{25}=x_{23}x_{63}^{-1}x_{65}+x_{21}x_{31}^{-1}x_{36}x_{46}^{-1}x_{45}+
x_{21}x_{31}^{-1}x_{34}x_{64}^{-1}x_{65}+x_{23}x_{13}^{-1}x_{16}x_{46}^{-1}x_{45}
+x_{23}x_{13}^{-1}x_{16}x_{36}^{-1}x_{34}x_{64}^{-1}x_{65}.
$$

\end{example}

\begin{picture}(200,110)
\put(60,20){\includegraphics[scale=0.4]{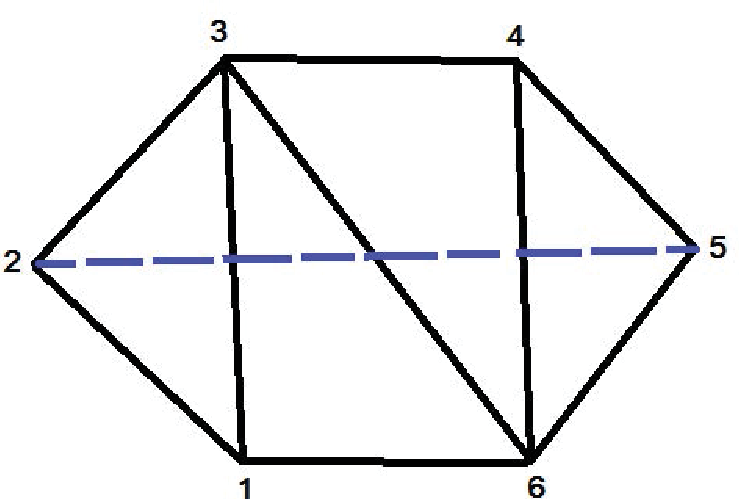}}
\put(220,10){\includegraphics[scale=0.3]{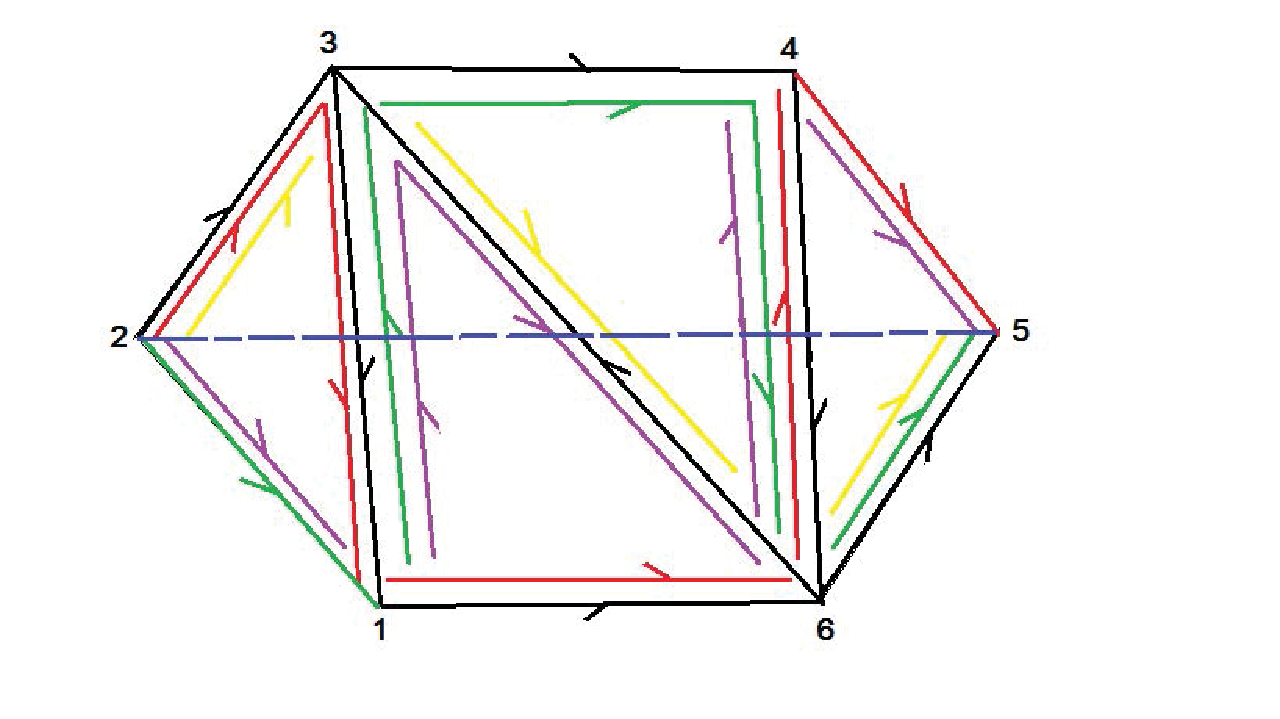}}
\put(100,0){A triangulation of a hexagon and all $(2,5)$-admissible sequences}
\end{picture}

\medskip

In fact, we will streamline the formula for $x_{ij}$ by introducing new coordinates $y_{ij}^k\in {\mathcal A}_n$ for distinct $i,j,k\in [n]$ by:
$$y_{ij}^k:=x_{ki}^{-1}x_{kj} \ .$$
We refer to $y_{ij}^k$ as  {\it noncommutative sectors} and denote by ${\mathcal Q}_n$ the subalgebra of ${\mathcal A}_n$ generated by all $y_{ij}^k$ (with the convention $y_{ii}^k=1$).

\begin{theorem}
\label{th:presentation y}
The algebra ${\mathcal Q}_n$ is generated by all $y_{ij}^k$ subject to the relations:

(i) (triangle relations):
\begin{equation}
\label{eq:triangle relations y ijk}
y_{ij}^ky_{ji}^k=1,~y_{ij}^ky_{jk}^iy_{ki}^j=1
\end{equation}
for distinct $i,j,k\in [n]$ and
\begin{equation}
\label{eq:triangle relations y ijkl}
y_{ij}^\ell y_{jk}^\ell y_{ki}^\ell=1
\end{equation}
for distinct $i,j,k,\ell\in [n]$.

(ii) (exchange relations) For all cyclic $(i,j,k,\ell)$ in $[n]$:
\begin{equation}
\label{eq:exchange relations y}
y_{i\ell}^j=y_{ij}^ky_{j\ell}^i+y_{i\ell}^k\ .
\end{equation}

\end{theorem}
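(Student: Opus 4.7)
The approach has two stages: (a) verify that the stated relations hold in $\mathcal{A}_n$, obtaining a surjection from the abstract algebra $\mathcal{R}_n$ presented by these generators and relations onto $\mathcal{Q}_n$; and (b) establish that these relations are complete.

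For (a), the first relations in (i), $y_{ij}^k y_{ji}^k = 1$ and $y_{ij}^\ell y_{jk}^\ell y_{ki}^\ell = 1$, collapse telescopically using the definition $y_{ij}^k := x_{ki}^{-1} x_{kj}$. The three-center triangle relation $y_{ij}^k y_{jk}^i y_{ki}^j = 1$ expands to $x_{ki}^{-1} x_{kj} x_{ij}^{-1} x_{ik} x_{jk}^{-1} x_{ji} = 1$, which is a rearrangement of \eqref{eq:triangle relations}: inverting that identity gives $x_{ki}^{-1} x_{kj} x_{ij}^{-1} = x_{ji}^{-1} x_{jk} x_{ik}^{-1}$, and the claim follows. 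Finally, left-multiplying the $y$-exchange relation \eqref{eq:exchange relations y} by $x_{ji}$ and applying the consequence $x_{ji} x_{ki}^{-1} x_{kj} x_{ij}^{-1} = x_{jk} x_{ik}^{-1}$ of \eqref{eq:triangle relations} reproduces the defining exchange relation \eqref{eq:exchange relations}. Hence the natural map $\phi : \mathcal{R}_n \twoheadrightarrow \mathcal{Q}_n$ is a well-defined surjection.

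For (b), I would fix the starlike triangulation $\Delta_1$ from \eqref{eq:starlike} and invoke Theorem \ref{th:A_Delta}(a) to identify $\mathcal{A}_{\Delta_1}$ with the group algebra $\QQ\TT_{\Delta_1}$ of a free group of rank $3n-4$. Inside $\QQ\TT_{\Delta_1}$, the sectors $y_{ij}^1 = x_{1i}^{-1} x_{1j}$ lie in a free subgroup (by Nielsen--Schreier applied to the $x_{1j}$'s), and together with the boundary sectors $y_{k^-,k^+}^k$, $k \in [n]\setminus\{1\}$, they form a natural base set $B$. I would then show that in $\mathcal{R}_n$ the relations (i) and (ii) suffice to rewrite every monomial in the $y_{ij}^k$ in terms of $B$: the cocycle relations \eqref{eq:triangle relations y ijkl} reduce sectors at a fixed center to $(n-2)$ independent ones, the cross-center relations \eqref{eq:triangle relations y ijk} express sectors at centers other than $1$ in terms of sectors at vertex $1$ and boundary sectors, and the exchange relation \eqref{eq:exchange relations y} applied iteratively with $k = 1$ eliminates sectors whose defining chord crosses a diagonal of $\Delta_1$. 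Because $\phi(B)$ generates a free subalgebra of $\QQ\TT_{\Delta_1}$, no hidden relations can hold in $\mathcal{R}_n$ among elements of $B$, and $\phi$ must be injective.

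The main obstacle is confluence of this rewriting system: the same sector $y_{i\ell}^j$ admits several expansions via \eqref{eq:exchange relations y} with different auxiliary vertex $k$, and one must verify these all yield the same element of $\mathcal{R}_n$. I would handle this by an induction on the number of crossings of the chord $(i,\ell)$ with diagonals of $\Delta_1$, paralleling the proof of the Noncommutative Laurent Phenomenon (Theorem \ref{th:noncomlaurent n-gon}); the triangle relations \eqref{eq:triangle relations y ijk} and \eqref{eq:triangle relations y ijkl} are precisely what is needed to reconcile distinct expansion routes at each inductive step. Once confluence is established, the freeness of $\TT_{\Delta_1}$ propagates to $B$ via Nielsen--Schreier and closes the argument.
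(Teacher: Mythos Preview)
Your verification in part (a) is correct and matches the paper's.

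For part (b), the paper takes a much cleaner route that sidesteps the confluence problem entirely. The key tool is Proposition~\ref{pr:free action factorization}, which gives an isomorphism
\[
f:\AA_n \xrightarrow{\sim} (\QQ F_n) * \mathcal{Q}_n, \qquad x_{ij}\mapsto c_i * y_{i^-,j}^i .
\]
This immediately implies that $\mathcal{Q}_n$ is the quotient of $\AA_n$ by the ideal generated by $x_{i,i^-}-1$, $i\in[n]$. Hence $\mathcal{Q}_n$ inherits a presentation directly from Definition~\ref{def:An}: it is generated by $y_{ij}:=y_{i^-,j}^i$ subject to $y_{i,i^-}=1$ and the images of the triangle and exchange relations \eqref{eq:triangle relations}, \eqref{eq:exchange relations}. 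Since $y_{ij}^k = y_{ki}^{-1}y_{kj}$, one then checks that this presentation is equivalent to the one in Theorem~\ref{th:presentation y}; the paper observes this is ``reverse engineering'' the computations in your part (a). No rewriting system, no confluence, no appeal to Theorem~\ref{th:A_Delta}.

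Your approach could in principle work, but the confluence step is a genuine gap as written: you would need to show, purely from relations (i) and (ii), that two different expansions of the same $y_{i\ell}^j$ via \eqref{eq:exchange relations y} with different auxiliary vertices $k$ agree in $\mathcal{R}_n$. This is essentially proving a pentagon-type coherence, and while it is plausible, it is not a consequence of the Laurent Phenomenon (which is a statement in $\AA_n$, not in $\mathcal{R}_n$). Your Nielsen--Schreier claim also needs care: you assert that the $y_{ij}^1$ together with the boundary sectors $y_{k^-,k^+}^k$ freely generate; this is true, but it is exactly what the paper derives from the free-product decomposition (see Proposition~\ref{pr:T_Delta polygon U} and its corollary), not from Nielsen--Schreier alone.
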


We prove Theorem  \ref{th:presentation y} in Section \ref{subsec:Free factorizations   and proof of Theorem}.

For any sequence $\jj=(j_0,j_1,\ldots,j_{2m})\in [n]^{2m}$ such that adjacent indices are distinct define a monomial $y_\jj\in {\mathcal Q}_n$ by:
$$y_\jj:=y_{j_0 j_2}^{j_1}y_{j_2 j_4}^{j_3}\cdots y_{j_{2m-2} j_{2m}}^{j_{2m-1}} \ .$$

The following  is a  ``polynomial equivalent" in ${\mathcal Q}_n$ of Theorem \ref{th:noncomlaurent n-gon}.

\begin{theorem} (Noncommutative polynomial phenomenon)
\label{th:noncompol n-gon} Let $\Delta$ be a triangulation of $[n]$. Then for any triple $(i,j,k)$ of distinct indices such that $(i,k)\in \Delta$ one has:
\begin{equation}
\label{eq:noncompol n-gon}
y_{kj}^i=\sum_{\ii\in Adm_\Delta(i,j)} y_{(k,\ii)}
\end{equation}
where $(k,\ii)$ stands for the sequence $\ii$ preceded by $k$.

\end{theorem}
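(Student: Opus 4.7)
The plan is to deduce \eqref{eq:noncompol n-gon} directly from Theorem \ref{th:noncomlaurent n-gon} by left-multiplying its Laurent expansion of $x_{ij}$ by $x_{ik}^{-1}$ and repackaging each resulting monomial as a sector monomial $y_{(k,\ii)}$. The starting observation is that the very definition of noncommutative sectors gives $y_{kj}^i = x_{ik}^{-1}x_{ij}$, so it suffices to prove the identity $x_{ik}^{-1}\,x_\ii = y_{(k,\ii)}$ for every $\ii=(i_1,\ldots,i_{2m}) \in Adm_\Delta(i,j)$.

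For this verification, I would unpack $y_{(k,\ii)}$ with $\jj := (k,i_1,\ldots,i_{2m}) = (i_0,i_1,\ldots,i_{2m})$, so $i_0 = k$ and $i_1 = i$. Writing each factor via $y_{ab}^c = x_{ca}^{-1}x_{cb}$, one obtains
$$y_{(k,\ii)} \;=\; y_{i_0,i_2}^{i_1}\,y_{i_2,i_4}^{i_3}\cdots y_{i_{2m-2},i_{2m}}^{i_{2m-1}} \;=\; x_{i_1,k}^{-1}\,x_{i_1,i_2}\,x_{i_3,i_2}^{-1}x_{i_3,i_4}\cdots x_{i_{2m-1},i_{2m-2}}^{-1}x_{i_{2m-1},i_{2m}} \;=\; x_{ik}^{-1}\,x_\ii,$$
the last equality using $i_1 = i$ and the definition of $x_\ii$. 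Summing over $\ii \in Adm_\Delta(i,j)$ and invoking \eqref{eq:noncomlaurent n-gon} then yields $\sum_{\ii} y_{(k,\ii)} = x_{ik}^{-1}\,x_{ij} = y_{kj}^i$, which is \eqref{eq:noncompol n-gon}.

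The hypothesis $(i,k)\in\Delta$ is used precisely to ensure that the first factor $y_{k,i_2}^i$ of each $y_{(k,\ii)}$ is built from $\Delta$-edges (the remaining factors being so automatically by the admissibility of $\ii$); with this, the right-hand side is a genuine \emph{polynomial} expression in the $\Delta$-sectors, which is what the name ``noncommutative polynomial phenomenon'' emphasizes against the Laurent formula of Theorem \ref{th:noncomlaurent n-gon}. I do not expect any genuine obstacle: the substantive content is entirely carried by Theorem \ref{th:noncomlaurent n-gon}, and the present statement is merely its polynomial repackaging in the sector subalgebra ${\mathcal Q}_n$, the only care being the routine index bookkeeping in the factor-by-factor comparison above.
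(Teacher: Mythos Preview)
Your algebraic manipulation is correct: the identity $x_{ik}^{-1}x_\ii = y_{(k,\ii)}$ holds exactly as you wrote, so Theorems \ref{th:noncomlaurent n-gon} and \ref{th:noncompol n-gon} are indeed equivalent via left multiplication by $x_{ik}^{\pm 1}$.

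The problem is one of logical dependence. In the paper, Theorem \ref{th:noncomlaurent n-gon} is \emph{not} proved independently; it is stated (Section \ref{subsect:proof of Theorems th:noncomlaurent n-gon and th:noncompol n-gon}) to be a direct corollary of Theorem \ref{th:noncompol n-gon}, and only the latter is proved, by induction on $n$. The inductive step peels off a boundary vertex $n$ with $(n-1,1)\in\Delta$, applies the exchange relation \eqref{eq:exchange relations y} in the form $y_{1,j}^n = y_{1,j}^{n-1} + y_{1,n-1}^n y_{n-1,j}^1$, uses the inductive hypothesis for the smaller triangulation $\hat\Delta$ of $[n-1]$ on each summand, and matches the resulting terms with the decomposition of $Adm_\Delta(n,j)$ into sequences beginning $(n,1,\ldots)$ versus $(n,n-1,\ldots)$. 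So your sentence ``the substantive content is entirely carried by Theorem \ref{th:noncomlaurent n-gon}'' has it backwards: in the paper's architecture, the substantive content is the inductive proof of \eqref{eq:noncompol n-gon}, and your proposal, as written, is circular. What you have actually supplied is the (easy) converse implication, namely the paper's one-line derivation of \eqref{eq:noncomlaurent n-gon} from \eqref{eq:noncompol n-gon}, run in reverse.
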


We prove Theorem  \ref{th:noncompol n-gon} in Section \ref{subsect:proof of Theorems th:noncomlaurent n-gon and th:noncompol n-gon}.

\begin{example}
\label{ex:Schiffler y n=5,6} Following Example \ref{ex:Schiffler x n=5,6},

 (a) If $n=5$ and $\Delta=\{(1,3), (3,1),(1,4),(4,1);(i,i^\pm)|i\in [5]\}$, then
$$y_{15}^2=y_{15}^4+ y_{13}^2y_{35}^1+y_{14}^3y_{45}^1\ .$$

(b) If $n=6$ and $\Delta=\{(1,3),(3,1),(3,6),(6,3),(4,6),(6,4);(i,i^\pm)|i\in [5]\}$, then
$$
y_{15}^2=y_{16}^3 y_{65}^4+y_{13}^2y_{35}^6+y_{14}^3 y_{46}^5+y_{13}^2y_{36}^1 y_{65}^4+y_{13}^2y_{36}^1y_{64}^3y_{45}^6\ .
$$

\end{example}


Similarly to Section \ref{subsect:Definition and basic results}, for each triangulation $\Delta$ of $[n]$ define:

$\bullet$   The subalgebra ${\mathcal Q}_\Delta$  of ${\mathcal Q}_n$ generated by all $y_{ij}^k$, $i,j,k\in [n]$ such that  $(i,k),(k,j)\in \Delta$.

$\bullet$ the subgroup $\UU_\Delta$ of $\TT_\Delta$ generated by
\begin{equation}
\label{eq:uijk}
u_{ij}^k:=t_{ki}^{-1}t_{kj}\
\end{equation}
for $i,j,k\in [n]$ such that  $(i,k),(kj)\in \Delta$.

Clearly, the restriction of the homomorphism $\ii_\Delta$ given by \eqref{eq:i_Delta} to $\QQ\UU_\Delta\subset \QQ\TT_\Delta$ is a surjective homomorphism of algebras:
\begin{equation}
\label{eq:i_Delta U'}
\ii'_\Delta:\QQ\UU_\Delta \twoheadrightarrow  {\mathcal Q}_\Delta \ .
\end{equation}

Since ${\mathcal Q}_\Delta$ is a subalgebra of $\AA_\Delta$, the following is an immediate corollary of Theorem
\ref{th:A_Delta}.

\begin{corollary}
\label{cor:A_Delta Q} For each triangulation $\Delta$ one has:

(a) The homomorphism $\ii'_\Delta$ given by \eqref{eq:i_Delta U'} is an isomorphism.

(b)  ${\mathcal Q}_n={\mathcal Q}_\Delta[{{\bf S}'}^{-1}]$ for some multiplicative submonoid ${\bf S}'\subset {\mathcal Q}_\Delta\setminus \{0\}$.

(c) $\ii'_\Delta$ extends to a monomorphism of algebras $\QQ {\mathcal Q}_n\hookrightarrow Frac({\mathcal Q}_\Delta)=\FF_{2n-4}$.
\end{corollary}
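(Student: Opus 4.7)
The plan is to derive all three parts as structured consequences of Theorem \ref{th:A_Delta} (the cluster/Laurent structure of $\AA_\Delta$) combined with Theorem \ref{th:noncompol n-gon} (the polynomial expansion of sectors $y_{kj}^i$).

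For (a), I would restrict the isomorphism $\ii_\Delta : \QQ\TT_\Delta \xrightarrow{\sim} \AA_\Delta$ of Theorem \ref{th:A_Delta}(a) to the subgroup algebra $\QQ\UU_\Delta$. The restriction is automatically injective. Surjectivity onto ${\mathcal Q}_\Delta$ follows because $\UU_\Delta$ contains each $u_{ij}^k$ together with its inverse $u_{ji}^k$ (from $u_{ij}^k u_{ji}^k = t_{ki}^{-1}t_{kj}t_{kj}^{-1}t_{ki} = 1$), so the image is exactly the subalgebra of $\AA_\Delta$ generated by the $y_{ij}^k$ with $(i,k),(k,j)\in \Delta$, which is ${\mathcal Q}_\Delta$ by definition.

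For (b), the goal is to show each generator $y_{ab}^c \in {\mathcal Q}_n$ lies in a universal localization of ${\mathcal Q}_\Delta$. If $(c,a)\in \Delta$, Theorem \ref{th:noncompol n-gon} (with $i=c$, $k=a$, $j=b$) writes $y_{ab}^c$ as a sum of admissible monomials $y_{(a,\ii)}$ whose factors all lie in ${\mathcal Q}_\Delta$, since consecutive indices of an admissible $\ii$ span edges of $\Delta$. If $(c,a)\notin \Delta$, pick any neighbor $k$ of $c$ in $\Delta$ and factor
\[
y_{ab}^c \;=\; (y_{ka}^c)^{-1}\, y_{kb}^c,
\]
valid by the composition $y_{ab}^c = y_{ak}^c y_{kb}^c$ together with the involution $y_{ak}^c y_{ka}^c = 1$; both $y_{ka}^c$ and $y_{kb}^c$ fall under the previous case since $(c,k)\in \Delta$. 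Letting ${\bf S}'\subset {\mathcal Q}_\Delta$ be the multiplicative submonoid generated by all such ``denominators'' $y_{ka}^c$ with $(c,k)\in \Delta$, we obtain ${\mathcal Q}_n \subseteq {\mathcal Q}_\Delta[{{\bf S}'}^{-1}]$. The reverse inclusion holds because each element of ${\bf S}'$ already has its inverse in ${\mathcal Q}_n$, and equality then follows from the universal property of universal localization, with injectivity of the induced map ensured by the common embedding into the domain $\AA_n$.

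For (c), part (a) identifies ${\mathcal Q}_\Delta$ with the group algebra $\QQ\UU_\Delta$. Since $\TT_\Delta$ is free of rank $3n-4$ (Theorem \ref{th:T_Delta polygon}), $\UU_\Delta$ is also free by the Nielsen--Schreier theorem. A direct rank count yields $2n-4$: for each vertex $k$ of $\Delta$, fixing a reference neighbor $i^*_k$ and using $u_{ij}^k = (u_{i^*_k, i}^k)^{-1} u_{i^*_k, j}^k$, there are $d_\Delta(k) - 1$ ``star'' generators per vertex, totalling $\sum_k (d_\Delta(k)-1) = 2(2n-3) - n = 3n-6$; the $n-2$ triangle relations $u_{ij}^k u_{jk}^i u_{ki}^j = 1$, one per triangle of $\Delta$, cut this down to $2n-4$ free generators. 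Hence $Frac({\mathcal Q}_\Delta) \cong \FF_{2n-4}$, and combining with (b) -- using that universal localizations of free group algebras embed into their Cohn universal skew fields of fractions -- yields the desired monomorphism ${\mathcal Q}_n \hookrightarrow \FF_{2n-4}$.

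The main obstacle is the careful verification in (b) that ${\mathcal Q}_n$ equals (rather than merely contains) ${\mathcal Q}_\Delta[{{\bf S}'}^{-1}]$; checking the universal property against any algebra map from ${\mathcal Q}_\Delta$ sending ${\bf S}'$ to units rests on the no-zero-divisor property of $\AA_n$ inherited from Theorem \ref{th:A_Delta}. A secondary technical point in (c) is confirming that the $n-2$ triangle relations exhaust all relations among the star generators $u_{i^*_k, j}^k$; this can be handled either by exhibiting an explicit Nielsen basis of $\UU_\Delta$ inside $\TT_\Delta$, or by induction on flips starting from a starlike triangulation \eqref{eq:starlike}, where the count is transparent.
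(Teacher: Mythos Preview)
Parts (a) and the generating-set half of (b) are handled correctly and in the spirit of the paper, which records the whole corollary as immediate from Theorem~\ref{th:A_Delta} without spelling out details.

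There is, however, a genuine gap in your argument for (c), and a related one in your closing remarks on (b). You invoke as a general fact that ``universal localizations of free group algebras embed into their Cohn universal skew fields of fractions''. This is not known in general: the paper only proves it for \emph{factor-closed} submonoids (Corollary~\ref{cor:free divisible}), and the analogous embedding $\AA_n\to\FF_{3n-4}$ is left conditional on Conjecture~\ref{conj:rigid linear} (see Proposition~\ref{pr:A_n divisible conj}). You do not verify that your ${\bf S}'$ is factor-closed, and there is no reason to expect this to be any easier for ${\mathcal Q}_\Delta$ than for $\AA_\Delta$. In the same vein, your appeal to ``the no-zero-divisor property of $\AA_n$ inherited from Theorem~\ref{th:A_Delta}'' is unjustified: Theorem~\ref{th:A_Delta}(a) shows only that $\AA_\Delta\cong\QQ\TT_\Delta$ is a domain, while $\AA_n$ is a universal \emph{localization} of it and could in principle acquire zero divisors---the paper flags exactly this possibility via Bergman's example $\QQ\langle x,y\rangle[(xy)^{-1}]$.

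A secondary point: your rank computation for $\UU_\Delta$ by counting $3n-6$ star generators modulo $n-2$ triangle relations is only a heuristic; subtracting relations from generators does not by itself establish freeness. The paper obtains $\UU_\Delta\cong F_{2n-4}$ cleanly from the free-product splitting $\TT_\Delta\cong F_n*\UU_\Delta$ of Proposition~\ref{pr:T_Delta polygon U} together with $\TT_\Delta\cong F_{3n-4}$ (Theorem~\ref{th:T_Delta polygon}) and $F_m*F_k\cong F_{m+k}$; this also supplies the identification $Frac({\mathcal Q}_\Delta)\cong\FF_{2n-4}$ without any generator--relation bookkeeping.
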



%
%


%
%

\subsection{Regular elements in noncommutative polygons}

\label{subsec:regular polygon}
We start with a more economical presentation of ${\mathcal A}_n$. Denote by $\cU_n$ the subalgebra of $\AA_n$ generated by  all $x_{ij}$, $i\ne j$ and $x_{i,i^\pm}^{-1}$.
The following result is obvious.

\begin{lemma}
\label{le:upper bound type A} The algebra  $\cU_n$ satisfies the following relations

(a) (reduced triangle relations) for all $i,j\in [n]$, $i\notin \{j^-,j\}$:
\begin{equation}
\label{eq:triangle relations upper}
x_{i,j^-}x_{j,j^-}^{-1}x_{ji}=x_{ij}x_{j^-,j}^{-1}x_{j^-,i} \ .
\end{equation}

(b) (reduced exchange relations) for all cyclic $(i,j,k)$ in $[n]$ such that $i^-\notin \{j,k\}$:
\begin{equation}
\label{eq:exchange relations upper}
x_{ij}x_{j^-,j}^{-1}x_{j^-,k}= x_{ik}+x_{i,j^-}x_{j,j^-}^{-1}x_{jk},~x_{k,j^-}x_{j,j^-}^{-1}x_{ji}= x_{ki}+x_{kj}x_{j^-,j}^{-1}x_{j^-,i} \ .
\end{equation}

\end{lemma}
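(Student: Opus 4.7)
The strategy is to derive each stated identity by specializing the defining relations \eqref{eq:triangle relations} and \eqref{eq:exchange relations} of $\AA_n$ to indices that keep every inverse inside $\cU_n$. By construction, the inverses available in $\cU_n$ are exactly $x_{a,b}^{\pm 1}$ for cyclically adjacent pairs $\{a,b\}$; thus the entire content of the lemma is to verify that only $x_{j^-,j}^{\pm 1}$ and $x_{j,j^-}^{\pm 1}$ appear in the reduced relations.

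For part (a), I would simply substitute $k=j^-$ into the triangle relation \eqref{eq:triangle relations}, obtaining
\[
x_{ij}x_{j^-,j}^{-1}x_{j^-,i}=x_{i,j^-}x_{j,j^-}^{-1}x_{ji},
\]
which is \eqref{eq:triangle relations upper}. The hypothesis $i\notin\{j^-,j\}$ is exactly what is needed to make the triple $i,j,j^-$ distinct, and the only inverses appearing are boundary inverses, hence lie in $\cU_n$.

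For part (b), the key observation is that, under the stated hypotheses (interpreted so that $i,j^-,j,k$ are pairwise distinct), the ordered tuple $(i,j^-,j,k)$ is a cyclic quadrilateral whose diagonals are $(i,j)$ and $(j^-,k)$. The noncommutative Ptolemy relation \eqref{eq:noncomptolemey} — which, as noted in the remark following Definition \ref{def:An}, is equivalent to \eqref{eq:exchange relations} once \eqref{eq:triangle relations} is assumed — applied to this quadrilateral yields
\[
x_{ij}x_{j^-,j}^{-1}x_{j^-,k}=x_{ik}+x_{i,j^-}x_{j,j^-}^{-1}x_{jk},
\]
which is the first half of \eqref{eq:exchange relations upper}. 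For the second half, my preferred route is to invoke the anti-involution $\sigma\colon x_{ab}\mapsto x_{ba}$ of $\AA_n$: a direct check shows $\sigma$ preserves \eqref{eq:triangle relations} and sends the exchange relation indexed by a cyclic $(i,j,k,\ell)$ to the exchange relation indexed by its cyclic rotation $(k,\ell,i,j)$, so $\sigma$ extends to an anti-automorphism of $\AA_n$; applying $\sigma$ to the first reduced exchange relation reproduces the second verbatim. If one prefers a direct derivation, the alternative is to apply \eqref{eq:exchange relations} to the cyclic quadrilateral $(j^-,j,k,i)$, multiply the result on the left by $x_{k,j^-}x_{j,j^-}^{-1}$, and use the triangle identity $x_{k,j^-}x_{j,j^-}^{-1}x_{jk}=x_{kj}x_{j^-,j}^{-1}x_{j^-,k}$ to cancel the inadmissible inverse $x_{j^-,k}^{-1}$.

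There is no real obstacle here; the lemma is essentially bookkeeping. The one point that requires a moment of care is confirming that $(i,j^-,j,k)$ really is a valid cyclic quadrilateral under the hypotheses, so that Ptolemy (or, equivalently, the combination of exchange and triangle) applies; once this is checked, every remaining step is a one-line substitution and every inverse that survives the cancellations is of the form $x_{j,j^-}^{\pm 1}$, which lies in $\cU_n$.
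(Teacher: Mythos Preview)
Your proposal is correct and matches the paper's approach: the paper declares the lemma ``obvious'' and gives no proof, and your derivation by specializing the triangle relation (with $k=j^-$) and the Ptolemy relation (on the cyclic quadruple $(i,j^-,j,k)$), followed by the anti-involution $x_{ab}\mapsto x_{ba}$ for the second exchange relation, is exactly the routine verification the authors are leaving to the reader. Your parenthetical remark about reading the hypothesis so that $i,j^-,j,k$ are pairwise distinct is the right way to handle the slight mismatch between the stated condition $i^-\notin\{j,k\}$ and what is actually needed for the quadrilateral to be cyclic.
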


\begin{remark}
We expect that these relations are defining for the algebra $\cU_n$.
\end{remark}

Noncommutative Laurent phenomenon \eqref{th:noncomlaurent n-gon} guarantees that  $\cU_n$ belongs to each subalgebra $\AA_\Delta$. The following conjecture implies, in particular, that $\cU_n$ is a totally noncommutative analogue of the upper cluster algebra of type $A_{n-3}$.

\begin{conjecture} For each $n\ge 2$ one has:
\label{conj:upper bound type A}
\begin{equation}
\label{eq:ij_delta cup sharp } \cU_n= \bigcap_{\Delta}{\mathcal A}_\Delta \ ,
\end{equation}
where the intersection is over all triangulations $\Delta$ of $[n]$.

\end{conjecture}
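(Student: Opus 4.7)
The plan is to prove both inclusions of \eqref{eq:ij_delta cup sharp } separately.

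For the easy inclusion $\cU_n\subseteq\bigcap_\Delta\AA_\Delta$: every boundary chord $(i,i^\pm)$ cannot cross any other chord, so it belongs to every triangulation of $[n]$, whence $x_{i,i^\pm}^{-1}\in\AA_\Delta$ for each $\Delta$. Since $\AA_\Delta$ by definition contains all $x_{ij}$, it contains every generator of $\cU_n$, giving $\cU_n\subseteq\AA_\Delta$ for each $\Delta$.

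For the hard inclusion $\bigcap_\Delta\AA_\Delta\subseteq\cU_n$, I would adapt the flip/mutation strategy of the commutative upper cluster algebra theorem of \cite{BFZ-cluster} to the noncommutative setting. The crucial input is Theorem \ref{th:A_Delta}(a), identifying $\AA_\Delta$ with the free group algebra $\QQ\TT_\Delta$ in the $3n-4$ generators $t_{ij}$, $(i,j)\in\Delta$. For each internal (non-boundary) chord $d=\{i,k\}$, define a $\ZZ$-grading on $\QQ\TT_\Delta$ by the character of $\TT_\Delta$ sending $t_{ik},t_{ki}\mapsto 1$ and all other $t_{pq}\mapsto 0$; call an element of $\AA_\Delta$ \emph{regular at $d$} if its homogeneous components of strictly negative degree all vanish. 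The main technical step is to prove: if $\Delta'$ is the flip of $\Delta$ at $d$ and $f\in\AA_\Delta\cap\AA_{\Delta'}$, then $f$ is regular at $d$ as an element of $\AA_\Delta$. Since the flip graph on triangulations of $[n]$ is connected and every internal chord belongs to some triangulation, iterating this statement forces any $f\in\bigcap_\Delta\AA_\Delta$ to be regular at every internal chord, so $f$ lies in the subalgebra generated by all $x_{ij}$ together with $x_{i,i^\pm}^{-1}$, which is precisely $\cU_n$.

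The main obstacle is the regular-at-$d$ step. In $\QQ\TT_\Delta$ the exchange relation $x_{j\ell}=x_{jk}x_{ik}^{-1}x_{i\ell}+x_{ji}x_{ki}^{-1}x_{k\ell}$ (together with its three orientation-reversed siblings involving $(\ell,j)$ and $(k,i)$) expresses the generators of $\TT_{\Delta'}$ corresponding to the flipped quadrilateral as sums of $\TT_\Delta$-monomials of $d$-degree $-1$. In the commutative case the Ptolemy identity $x_{ik}x_{j\ell}=x_{ij}x_{k\ell}+x_{i\ell}x_{jk}$ combined with unique factorization in the Laurent polynomial ring disposes of the matter immediately; in the present setting one must instead rule out cancellations among noncommutative Laurent monomials in the $\TT_{\Delta'}$-variables that could artificially lower the $d$-degree of an element below $0$. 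I would approach this by normalizing to reduced words in $\TT_\Delta$ via the triangle relations \eqref{eq:triangle relations} (which are homogeneous under the grading and hence preserve $d$-degree) and then reading off the $d$-valuation directly from the support of the reduced expansion, exactly as in the Schiffler-type formula of Theorem \ref{th:noncomlaurent n-gon}.
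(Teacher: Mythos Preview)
The statement you are trying to prove is \emph{Conjecture}~\ref{conj:upper bound type A}: the paper explicitly leaves it open and supplies no proof, so there is nothing to compare your argument against. What remains is to assess whether your proposal actually closes the conjecture.

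The easy inclusion is fine and is exactly what the paper observes just before the conjecture. The hard inclusion, however, has two genuine gaps.

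First, the step you yourself flag as the ``main obstacle''---that $f\in\AA_\Delta\cap\AA_{\Delta'}$ forces non-negative $d$-degree---is not carried out. In the commutative BFZ argument this is where unique factorisation in a Laurent polynomial ring is used, and you correctly note that this tool is unavailable here; but the replacement you suggest (reading off $d$-valuation from reduced words via Theorem~\ref{th:noncomlaurent n-gon}) is only a hope, not an argument. The Schiffler formula controls $x_\gamma$ for a single chord $\gamma$, not an arbitrary element of $\AA_{\Delta'}$, and gives no mechanism for bounding cancellations in general $\QQ\TT_{\Delta'}$-combinations.

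Second, and more seriously, even if one grants the regular-at-$d$ step for every internal chord $d$, your concluding sentence does not follow. In the commutative Laurent polynomial ring, non-negative degree in every internal variable does force membership in the polynomial subring. In the free group algebra $\QQ\TT_\Delta$ this fails: an element such as $t_{ik}\,t_{jk}\,t_{ik}^{-1}$ has $d$-degree~$0$ under your character (and can be arranged to have non-negative degree in every other internal chord as well), yet it visibly involves $t_{ik}^{-1}$ and there is no reason for it to lie in $\cU_n$. The abelianised grading simply cannot see conjugation, so ``regular at every chord'' is strictly weaker than ``no internal inverses appear''. Any proof of the conjecture must confront this noncommutative phenomenon directly, and your outline does not.
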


We say that an element $x\in \AA_n$ is {\it regular} if it belongs to each subalgebra $\AA_\Delta$ as $\Delta$ runs over all triangulations $\Delta$ of $[n]$.
Thus, Conjecture \ref{conj:upper bound type A}  asserts that each regular element of ${\mathcal A}_n$ belongs to $\cU_n$, i.e., is a noncommutative polynomial in $x_{ij}$ and $x_{i,i^\pm}^{-1}$.




\subsection{Noncommutative angles}
\label{subsect:Noncommutative angles}
Now we take advantage of the ``invariant" algebra ${\mathcal Q}_n$ and will view the ambient algebra ${\mathcal A}_n$ as some ``Galois extension'' of ${\mathcal Q}_n$
(in fact,  Proposition \ref{pr:free action factorization} below guarantees that ${\mathcal A}_n$ is freely generated by $x_{i,i^-}$, $i\in [n]$ and ${\mathcal Q}_n$).

However, we want a more symmetric and ``geometric" presentation of ${\mathcal A}_n$ over ${\mathcal Q}_n$.
The following result provides such a presentation of $\AA_n$, $n\ge 3$.

\begin{proposition}
\label{pr:sector plus angle}
The algebra ${\mathcal A}_n$ is generated by
${\mathcal Q}_n$ and  $(T_i^{jk})^{\pm 1}$ for all distinct $i,j,k\in [n]$
subject to:

(i) (triangle relations) $T_i^{jk}=T_i^{kj}$ for all distinct $(i,j,k)$ in $[n]$.

(ii) (modified exchange relations) $T_i^{j\ell}=T_i^{jk}+T_i^{k\ell}$ for any cyclic $(i,j,k,\ell)$ in $[n]$.

(iii) (consistency relations) $y_{ji}^k T_i^{jk}=y_{ji}^\ell T_i^{j\ell}$ for all distinct $i,j,k,\ell\in [n]$.

\end{proposition}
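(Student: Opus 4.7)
The plan is to identify the abstract algebra $\AA'$ defined by the claimed presentation with $\AA_n$ by building mutually inverse homomorphisms. Throughout, the key observation is the identity $y_{ji}^k T_i^{jk}=x_{ij}^{-1}$ in $\AA_n$, which immediately explains why (iii) forces the auxiliary index $k$ to drop out and lets us recover $x_{ij}=(T_i^{jk})^{-1}y_{ij}^k$.

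First I would verify that the relations (i)--(iii) hold in $\AA_n$ under the assignment $T_i^{jk}\mapsto x_{ji}^{-1}x_{jk}x_{ik}^{-1}$. Relation (i), i.e.\ $T_i^{jk}=T_i^{kj}$, is just the triangle relation \eqref{eq:triangle relations} after multiplying on the left by $x_{ji}^{-1}$ and on the right by $x_{ki}^{-1}$ (and inverting). Relation (ii) follows from the exchange relation \eqref{eq:exchange relations} by multiplying on the left by $x_{ji}^{-1}$ and on the right by $x_{i\ell}^{-1}$. Relation (iii) is a one-line calculation: using the triangle relation to rewrite $x_{kj}^{-1}x_{ki}=x_{ij}^{-1}x_{ik}x_{jk}^{-1}x_{ji}$, one finds $y_{ji}^k T_i^{jk}=x_{ij}^{-1}$, which is manifestly independent of $k$. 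This produces a homomorphism $\pi:\AA'\to \AA_n$, and it is surjective because $x_{ij}=(T_i^{jk})^{-1}y_{ij}^k$ puts every generator of $\AA_n$ in the image.

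Next I would construct the inverse $\psi:\AA_n\to \AA'$ by the formula $\psi(x_{ij})=(T_i^{jk})^{-1}y_{ij}^k$ for any $k\neq i,j$, which is well-defined by (iii). The substantive step is to check that $\psi$ respects the defining triangle and exchange relations of $\AA_n$. Choosing consistent auxiliary indices and expanding, the triangle relation reduces, after the central $T$-factors cancel via (i), to the identity $y_{ij}^k y_{jk}^i y_{ki}^j=1$, which is one of the defining triangle relations of $\QQ_n$ provided by Theorem \ref{th:presentation y}. The exchange relation, after the analogous expansion and cancellation of $T$-factors via (i), reduces to the exchange relation \eqref{eq:exchange relations y} in $\QQ_n$ combined with the modified exchange (ii).

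Finally I would check that $\pi\circ\psi=\mathrm{id}_{\AA_n}$ and $\psi\circ\pi=\mathrm{id}_{\AA'}$ on generators. The first is immediate since $\pi\psi(x_{ij})=x_{ik}x_{jk}^{-1}x_{ji}\cdot x_{ki}^{-1}x_{kj}$ collapses to $x_{ij}$ by the triangle relation of $\AA_n$. For the second, on a generator $y_{ij}^k$ one gets a product of four $\psi(x_{\bullet})$'s whose $T$-factors cancel by (i) and whose $y$-factors collapse to $y_{ij}^k$ by $y_{ij}^k y_{jk}^i y_{ki}^j=1$; on a generator $T_i^{jk}$ the analogous cancellation yields $T_i^{kj}=T_i^{jk}$. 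The main obstacle is purely bookkeeping: one must choose the auxiliary indices in the definition of $\psi(x_{\bullet})$ coherently so that the $T$-factors telescope by (i) and the $y$-chains close up via the triangle relations of $\QQ_n$; once this is done, no further input beyond Theorem \ref{th:presentation y} and relations (i)--(iii) is required.
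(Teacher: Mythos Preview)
Your proposal is correct and follows essentially the same route as the paper: both construct mutually inverse homomorphisms $\AA'\to\AA_n$ (via $y_{ij}^k\mapsto x_{ki}^{-1}x_{kj}$, $T_i^{jk}\mapsto x_{ji}^{-1}x_{jk}x_{ik}^{-1}$) and $\AA_n\to\AA'$ (via $x_{ij}\mapsto (T_i^{jk})^{-1}y_{ij}^k$), using the consistency relation (iii) to see that the latter is well-defined and the identity $y_{ij}^k y_{jk}^i y_{ki}^j=1$ from Theorem~\ref{th:presentation y} to close the loop. Your write-up is somewhat more explicit about the intermediate verifications than the paper's terse treatment, but the architecture and the key identities are identical.
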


\begin{proof} Denote by $\AA_n'$ the algebra whose presentation is given in the proposition.
It is easy to see that the assignments
$y_{ij}^k\mapsto x_{ki}^{-1}x_{kj}, T_i^{jk}\mapsto x_{ji}^{-1}x_{jk}x_{ik}^{-1}$
for distinct $i,j,k\in [n]$
define a homomorphism of algebras $\AA'_n\to \AA_n$.

On the other hand, the consistency relations imply that the element $(T_i^{jk})^{-1}y_{ij}^k$ does not depend on $k$. 

The following is immediate.

\begin{lemma} The assignments
$x_{ij}\mapsto (T_i^{jk})^{-1}y_{ij}^k$ for distinct $i,j\in [n]$ define a homomorphism of algebras $f:\AA_n\to \AA'_n$.
\end{lemma}

In particular, $f(x_{ji}^{-1}x_{jk}x_{ik}^{-1})=y_{ij}^k T_j^{ik}(T_j^{ki})^{-1}y_{jk}^iy_{ki}^jT_i^{jk}=
y_{ij}^k y_{jk}^iy_{ki}^jT_i^{jk}=T_i^{jk}$ by \eqref{eq:triangle relations y ijk}.

These homomorphisms are, clearly, inverse to each other and hence are isomorphisms.

The proposition is proved.
\end{proof}

We refer to $T_i^{jk}:=x_{ji}^{-1}x_{jk}x_{ik}^{-1}$ for all distinct $i,j,k\in [n]$ as {\it noncommutative angles} by a number of reasons. First,
because of the triangle relations in Proposition \ref{pr:sector plus angle} (so that we can attach $T_i^{jk}$ to the angle in the triangle $(i,j,k)$ at the vertex $i$) and, second, because of the modified exchange relations (ii) of Proposition \ref{pr:sector plus angle}
can be viewed as an ``addition law" of angles in a quadrilateral. In fact, such an addition law holds in more general situation.

\begin{corollary}
\label{cor:angle is additive}
For any cyclic $(i_0,i_1,i_2,\ldots,i_\ell)$  one has:
$T_{i_0}^{i_1,i_k}= T_{i_0}^{i_1,i_2}+T_{i_0}^{i_2,i_3}+\cdots +T_{i_0}^{i_{\ell-1},i_{\ell}}$.
In particular, $T_1^{2,n}=T_1^{23}+T_1^{34}+\cdots +T_1^{n-1,n}$.
\end{corollary}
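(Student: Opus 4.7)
The plan is to prove this by a straightforward induction on $\ell$, using the modified exchange relations from Proposition \ref{pr:sector plus angle}(ii) as the engine. The correct statement (modulo the evident typo) should read $T_{i_0}^{i_1,i_\ell}$ on the left-hand side, matching the specialization $T_1^{2,n}$.

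For the base case $\ell=3$, the claim $T_{i_0}^{i_1,i_3}=T_{i_0}^{i_1,i_2}+T_{i_0}^{i_2,i_3}$ is exactly the modified exchange relation applied to the cyclic quadruple $(i_0,i_1,i_2,i_3)$, which is cyclic by hypothesis (any sub-sequence preserving the cyclic order remains cyclic). So there is nothing to check beyond invoking Proposition \ref{pr:sector plus angle}(ii).

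For the inductive step, suppose the formula holds for all cyclic sequences of length $\ell-1$. Given $(i_0,i_1,\ldots,i_\ell)$ cyclic, first observe that the truncated sequence $(i_0,i_1,\ldots,i_{\ell-1})$ is again cyclic, so by the inductive hypothesis
\begin{equation*}
T_{i_0}^{i_1,i_{\ell-1}}=T_{i_0}^{i_1,i_2}+T_{i_0}^{i_2,i_3}+\cdots+T_{i_0}^{i_{\ell-2},i_{\ell-1}}.
\end{equation*}
Next, the quadruple $(i_0,i_1,i_{\ell-1},i_\ell)$ is cyclic, so the modified exchange relation yields
\begin{equation*}
T_{i_0}^{i_1,i_\ell}=T_{i_0}^{i_1,i_{\ell-1}}+T_{i_0}^{i_{\ell-1},i_\ell}.
\end{equation*}
Substituting the first identity into the second gives the desired formula.

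There is no real obstacle here; the only subtlety is the trivial combinatorial fact that extracting any four elements in cyclic order from a cyclic sequence yields a cyclic quadruple, which is what allows us to invoke Proposition \ref{pr:sector plus angle}(ii) both for the quadruple $(i_0,i_1,i_{\ell-1},i_\ell)$ and throughout the induction. The specialization to $i_0=1,\;i_k=k+1$ then gives the displayed identity $T_1^{2,n}=T_1^{23}+T_1^{34}+\cdots+T_1^{n-1,n}$ as an immediate consequence.
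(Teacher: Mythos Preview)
Your proof is correct and is exactly the argument the paper has in mind; the corollary is stated without proof in the paper, as an immediate consequence of the additivity relation in Proposition~\ref{pr:sector plus angle}(ii), and your induction simply spells this out. Your observation about the typo ($i_k$ should be $i_\ell$) is also correct.
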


Moreover, this view is supported by the following observation.
For each triangulation $\Delta$ of $n$ and each $i\in [n]$ define the
{\it total angle} $T_i^\Delta$ around the vertex $i$ to be the sum of all noncommutative angles in $\Delta$ at the vertex $i$. For instance, we have in Example \ref{ex:Schiffler y n=5,6}:
$$T_1^\Delta=T_1^{23}+T_1^{34}+T_1^{45},~T_2^\Delta=T_2^{13},~T_3^\Delta=T_3^{12}+T_3^{14},~T_4^\Delta=T_4^{13}+T_4^{15},~T_5^\Delta=T_5^{14} \ .$$

\begin{corollary} \label{cor:total angle}
$T_i^\Delta=T_i^{i^-,i^+}$ for any triangulation $\Delta$ of $[n]$ and any $i\in [n]$.
In particular, $T_i^\Delta$ does not depend on a choice of $\Delta$.

\end{corollary}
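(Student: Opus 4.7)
The plan is to apply Corollary~\ref{cor:angle is additive} (additivity of noncommutative angles) to the fan of triangles around $i$ and collapse the telescoping sum.

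Fix a triangulation $\Delta$ of $[n]$ and a vertex $i\in[n]$. Let $j_1,j_2,\ldots,j_m$ be the vertices joined to $i$ by an edge of $\Delta$, listed in the cyclic order of $[n]$ starting at $i^+$. Since the boundary chords $(i,i^\pm)$ belong to every triangulation, we have $j_1=i^+$ and $j_m=i^-$. The first step is to observe that the triangles of $\Delta$ incident to $i$ are precisely the fan
\[
(i,j_1,j_2),\ (i,j_2,j_3),\ \ldots,\ (i,j_{m-1},j_m),
\]
so that by definition
\[
T_i^\Delta\ =\ \sum_{s=1}^{m-1} T_i^{j_s,j_{s+1}}.
\]

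Next, I would verify that the sequence $(i,j_1,j_2,\ldots,j_m)=(i,i^+,j_2,\ldots,j_{m-1},i^-)$ is cyclic in $[n]$ in the sense of Section~\ref{subsect:Definition and basic results}: indeed, beginning at $i$ and moving once around the circle visits $i,i^+,j_2,\ldots,j_{m-1},i^-$ in order, so after an appropriate cyclic shift this sequence becomes strictly increasing. With this cyclicity in hand, Corollary~\ref{cor:angle is additive} (applied with $i_0=i$ and $i_s=j_s$) gives
\[
T_i^{j_1,j_m}\ =\ T_i^{j_1,j_2}+T_i^{j_2,j_3}+\cdots+T_i^{j_{m-1},j_m}.
\]

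Combining the two displays and using $j_1=i^+$, $j_m=i^-$ together with the symmetry $T_i^{jk}=T_i^{kj}$ from Proposition~\ref{pr:sector plus angle}(i), we conclude
\[
T_i^\Delta\ =\ T_i^{i^+,i^-}\ =\ T_i^{i^-,i^+},
\]
which is an element independent of $\Delta$. There is essentially no obstacle here once the fan structure of triangles around $i$ is recognized; the only point requiring a brief check is that the neighbor sequence around $i$ is cyclic in the sense needed to invoke Corollary~\ref{cor:angle is additive}, and this is immediate from the fact that the neighbors of $i$ in $\Delta$ sit in $[n]\setminus\{i\}$ in their induced cyclic order and include both $i^+$ and $i^-$.
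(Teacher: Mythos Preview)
Your argument is correct and is exactly the intended one: the paper states this result as an immediate corollary of the additivity formula in Corollary~\ref{cor:angle is additive}, and your proof simply unwinds that implication by listing the neighbors of $i$ in $\Delta$ as a fan from $i^+$ to $i^-$ and telescoping. There is nothing to add.
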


\begin{remark} Based on Corollary \ref{cor:total angle}, we can view $T_i:=T_i^{i^-,i^+}$ as the {\it total angle} of the noncommutative $n$-gon at the vertex $i$. The sum of all total angles $T:=T_1+T_2+\cdots +T_n$ also does not depend on a choice of triangulations and, in particular, can be specialized to any constant value (e.g., to $\pi\cdot(n-2)$).
\end{remark}

\begin{remark} The independence of $T_i$ of a choice of $\Delta$ means that $T_i$ is invariant under noncommutative mutations.
We will encounter the noncommutative angles again in Section \ref{sect:Noncommutative triangulated surfaces}.

\end{remark}

\subsection{Big triangle group of noncommutative polygons}
\label{subsec:Big noncommutative polygons}

For each $n\ge 2$ let $\TT_n$ be a group generated by $t_{ij}$, $i,j\in [n]$, $i\ne j$ subject to the triangle relations
$$t_{ij} t_{kj}^{-1}t_{ki}=t_{ik} t_{jk}^{-1}t_{ji}$$
for all distinct $i,j,k\in [n]$; and refer to this group as the {\it big triangle group} of the $n$-gon.

The following is obvious.

\begin{lemma} For any $n\ge 3$ one has:

(a) the assignments
$t_{ij}\mapsto
\begin{cases}
t_{1j} & \text{if $i=n$}\\
t_{i1} & \text{if $j=n$}\\
t_{ij} & \text{otherwise}\\
\end{cases}$
for $i,j\in [n]$, $i\ne j$  (with   the convention $t_{11}=1$)
define an epimorphism of groups $\pi_n^+:\TT_n\twoheadrightarrow \TT_{n-1}$.

(b) The assignments $t_{ij}\mapsto t_{ij}$ for $i,j\in [n-1]$, $i\ne j$ define an injective  homomorphism of groups
$\TT_{n-1} \hookrightarrow \TT_n$
which splits $\pi_n^+$.

\end{lemma}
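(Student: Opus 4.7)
The plan is to reduce both parts to verifying that the stated generator-level assignments respect the defining triangle relations of the groups involved.

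For part (a), I would first check that for each triple of distinct $i,j,k\in[n]$ the image of the triangle relation $t_{ij}t_{kj}^{-1}t_{ki}=t_{ik}t_{jk}^{-1}t_{ji}$ holds in $\TT_{n-1}$; surjectivity is then immediate, since every generator $t_{ij}$ of $\TT_{n-1}$ (with $i,j\in[n-1]$) lies in the image of the generator $t_{ij}$ of $\TT_n$. The verification proceeds by case-splitting on how many of $i,j,k$ equal $n$ (at most one, since they are distinct). If none does, the relation is preserved verbatim and is already a defining relation of $\TT_{n-1}$. If exactly one does, by the symmetry of the triangle relation under swapping $(i,k)$ I may assume $k=n$; then the assignment maps $t_{ij}t_{nj}^{-1}t_{ni}=t_{in}t_{jn}^{-1}t_{ji}$ to $t_{ij}t_{1j}^{-1}t_{1i}=t_{i1}t_{j1}^{-1}t_{ji}$, which is exactly the triangle relation of $\TT_{n-1}$ at the triple $(i,j,1)$, provided both $i,j\in[n-1]\setminus\{1\}$. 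The remaining subcases, in which either $i$ or $j$ equals $1$, are precisely where the convention $t_{11}=1$ applies: direct substitution collapses both sides to the identity.

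For part (b), I would observe that the assignments $t_{ij}\mapsto t_{ij}$ for $i,j\in[n-1]$, $i\ne j$, respect the presentation of $\TT_{n-1}$, since every triangle relation among indices in $[n-1]$ is already a defining relation of $\TT_n$. Hence they extend to a homomorphism $\iota:\TT_{n-1}\to\TT_n$. On each generator $t_{ij}$ with $i,j\in[n-1]$, neither index equals $n$, so $\pi_n^+\circ\iota$ acts as the identity on generators; therefore $\pi_n^+\circ\iota=\mathrm{id}_{\TT_{n-1}}$, which exhibits $\iota$ as a section of $\pi_n^+$ and forces $\iota$ to be injective.

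The only point requiring any care — what passes for a ``main obstacle'' here — is tracking the role of the convention $t_{11}=1$: it is exactly what is needed to make $\pi_n^+$ well-defined on those triangle relations of $\TT_n$ that simultaneously involve the indices $1$ and $n$. Apart from this minor bookkeeping, the lemma is essentially an exercise in unwinding definitions, consistent with the authors' label ``obvious''.
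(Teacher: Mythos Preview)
Your proposal is correct and is precisely the routine verification one performs to justify what the paper simply labels ``obvious'' (the paper gives no proof at all). The only minor looseness is your appeal to ``symmetry under swapping $(i,k)$'' to reduce to the case $k=n$; strictly speaking one should also glance at $i=n$ and $j=n$ separately, but these go through by the identical mechanism, so the argument stands.
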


The following result gives a presentation of $\TT_n$.

\begin{proposition}
\label{pr:big triangular presentation} For each $n\ge 3$ the group  $\TT_n$ is generated by  $t_{ij}$, $1\le i<j\le n$ and $t_{i1}$, $i=2,\ldots,n$, subject to:
$$t_{i1}t_{j1}^{-1}t_{jk}t_{1k}^{-1}t_{1j}t_{ij}^{-1}t_{ik}=t_{ik}t_{1k}^{-1}t_{1j}t_{ij}^{-1}t_{i1}t_{j1}^{-1}t_{jk}$$
for all $2\le i<j<k\le n$.
\end{proposition}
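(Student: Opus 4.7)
My plan is to prove the proposition by performing Tietze transformations on the defining presentation of $\TT_n$ given in Section~\ref{subsec:Big noncommutative polygons}. The strategy is to use the triangle relations involving vertex $1$ to eliminate the ``backward'' generators $t_{ji}$ with $2\le i<j\le n$, and then to rewrite each surviving triangle relation in terms of the generators that remain.

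First, I would solve for the backward generators. For each pair $2\le i<j\le n$, the triangle relation for the triple $\{1,i,j\}$ reads $t_{1j}t_{ij}^{-1}t_{i1}=t_{1i}t_{ji}^{-1}t_{j1}$, which rearranges to
$$t_{ji} \;=\; t_{j1}\,t_{i1}^{-1}\,t_{ij}\,t_{1j}^{-1}\,t_{1i}.$$
These $\binom{n-1}{2}$ identities allow a Tietze elimination of the $\binom{n-1}{2}$ generators $t_{ji}$ ($2\le i<j\le n$) together with the relations that produced them. What remains are exactly the $\tfrac{(n-1)(n+2)}{2}$ generators listed in the proposition, subject to the $\binom{n-1}{3}$ triangle relations indexed by triples $\{i,j,k\}\subset\{2,\ldots,n\}$ with $i<j<k$.

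Next, I would substitute the above formula for each of the three backward generators $t_{ji}$, $t_{kj}$, $t_{ki}$ appearing in the relation $t_{ij}t_{kj}^{-1}t_{ki}=t_{ik}t_{jk}^{-1}t_{ji}$. The product $t_{kj}^{-1}t_{ki}$ collapses at the internal pair $t_{k1}^{-1}t_{k1}$; after cancelling a further common trailing factor $t_{1i}$ on each side, the relation becomes
$$t_{ij}\,t_{1j}^{-1}\,t_{1k}\,t_{jk}^{-1}\,t_{j1}\,t_{i1}^{-1}\,t_{ik}\,t_{1k}^{-1} \;=\; t_{ik}\,t_{jk}^{-1}\,t_{j1}\,t_{i1}^{-1}\,t_{ij}\,t_{1j}^{-1}.$$
Inverting both sides and then multiplying the result on the left by $t_{ik}t_{1k}^{-1}$ and on the right by $t_{ik}$ transforms this, word for word, into the stated relation
$$t_{i1}\,t_{j1}^{-1}\,t_{jk}\,t_{1k}^{-1}\,t_{1j}\,t_{ij}^{-1}\,t_{ik} \;=\; t_{ik}\,t_{1k}^{-1}\,t_{1j}\,t_{ij}^{-1}\,t_{i1}\,t_{j1}^{-1}\,t_{jk}.$$
Since every manipulation here is reversible, the derived equation and the stated relation are equivalent in the group.

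To conclude, I would appeal to the standard Tietze theorem: the sequence of moves above carries the defining presentation of $\TT_n$ isomorphically to the presentation given in the proposition. The main obstacle is the routine but error-prone bookkeeping in the substitution step, where one must correctly invert the five-letter word for $t_{kj}$ and track the internal and trailing cancellations that reduce the expanded identity to its stated compact form. No conceptual difficulty arises, because the surviving generators behave as formally independent letters throughout the computation, so no unintended collapses can occur.
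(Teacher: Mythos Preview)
Your approach is essentially the paper's: use the triangle relations at vertex~$1$ to solve for the backward generators $t_{ji}$ ($2\le i<j\le n$) via Tietze moves, then substitute into the remaining relations. Your substitution and the final rearrangement to the displayed relation are correct.

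There is one point you pass over that needs to be said. The defining presentation of $\TT_n$ carries \emph{three} triangle relations for each unordered triple $\{a,b,c\}$, namely $T_a^{bc}=T_a^{cb}$, $T_b^{ac}=T_b^{ca}$, $T_c^{ab}=T_c^{ba}$, not one. You implicitly collapse these when you write ``subject to the $\binom{n-1}{3}$ triangle relations indexed by triples''. This is legitimate, but only because the three relator words are cyclic conjugates of one another (up to inversion): for instance
\[
t_{ji}^{-1}t_{jk}t_{ik}^{-1}t_{ij}t_{kj}^{-1}t_{ki}
\quad\text{and}\quad
t_{ik}^{-1}t_{ij}t_{kj}^{-1}t_{ki}t_{ji}^{-1}t_{jk}
\]
differ by a cyclic shift. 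Hence they generate the same normal subgroup, and one relation per triple suffices. This same observation is what makes the two ``other'' relations for each triple $\{1,a,b\}$ collapse to tautologies once you eliminate $t_{ba}$, and what guarantees that for a triple $\{i,j,k\}\subset\{2,\dots,n\}$ all three substituted relations coincide with your displayed one. Without this remark your Tietze bookkeeping is incomplete. The paper also leaves the point implicit, but it should be stated.
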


\begin{proof} By a slight abuse of notation, set $T_i^{jk}=t_{ji}^{-1}t_{jk}t_{ik}^{-1}$.
Clearly, if $n=3$, then $\TT_3$ is free in $t_{12},t_{13},t_{23},t_{21},t_{31}$. Furthermore, let $n\ge 4$. Then we can group the defining relations for $\TT_n$ into the following quadruples for $2\le i<j<k\le n$:
\begin{equation}
\label{eq:T_4 1ijk}
T_1^{ij}=T_1^{ji},~T_1^{ik}=T_1^{ki},~T_1^{jk}=T_1^{kj},~T_j^{ik}=T_j^{ki} \ .
\end{equation}

It is easy to see that each such quadruple \eqref{eq:T_4 1ijk} is equivalent to the following quadruple of relations (here $(i',j')\in \{(i,j),(i,k),(j,k)\}$):
$t_{j',i'}= t_{j',1}t_{i',1}^{-1}t_{i',j'}t_{1,j'}^{-1}t_{1,i'},~ t_{i1}t_{j1}^{-1}t_{jk}t_{1k}^{-1}t_{1j}t_{ij}^{-1}t_{ik}=t_{ik}t_{1k}^{-1}t_{1j}t_{ij}^{-1}t_{i1}t_{j1}^{-1}t_{jk}$.
Thus, eliminating the redundant generators $t_{j',i'}$, we finish the proof of the proposition.
\end{proof}

The following is obvious.

\begin{lemma} For each $n$ one has:

(a) The assignments $t_{ij}\mapsto x_{ij}$ define a ring epimorphism $\pi_n:\ZZ \TT_n\twoheadrightarrow \AA_n$.

(b) For each triangulation $\Delta$ of $[n]$ the assignments $t_{ij}\mapsto t_{ij}$ define a group homomorphism  $\hat {\bf j}_\Delta:\TT_\Delta\to \TT_n$.

(c) The symmetric group $S_n$ acts on $\TT_n$ by automorphisms: $\sigma(t_{ij}):=t_{\sigma(i),\sigma(j)}$ for  $\sigma\in S_n$, $i,j\in [n]$, $i\ne j$.

\end{lemma}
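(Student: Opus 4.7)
The three assertions in this lemma all amount to ``checking defining relations are respected,'' so the plan is to reduce each one to the universal property of the corresponding presentation.

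For part (a), I would first observe that the triangle relations in $\AA_n$ (equation \eqref{eq:triangle relations}) are exactly the defining relations of $\TT_n$. Hence sending $t_{ij}\mapsto x_{ij}$ defines a group homomorphism from $\TT_n$ into the unit group $\AA_n^\times$ (the $x_{ij}$ are invertible in $\AA_n$ by construction). By the universal property of group algebras, this extends uniquely to a $\ZZ$-algebra homomorphism $\pi_n\colon \ZZ\TT_n\to \AA_n$. Surjectivity is immediate because the $x_{ij}$ together with their inverses generate $\AA_n$ as a $\QQ$-algebra, and in particular as a $\ZZ$-algebra (the integer combinations suffice to hit all $x_{ij}$, $x_{ij}^{-1}$, and the triangle/exchange relations in $\AA_n$ have integer coefficients so there is no scalar obstruction).

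For part (b), note that the defining relations of $\TT_\Delta$ (as given just before Theorem \ref{th:T_Delta polygon}) are the triangle relations $t_{ij}t_{kj}^{-1}t_{ki}=t_{ik}t_{jk}^{-1}t_{ji}$ for those triples $i,j,k$ for which $(i,j),(j,k),(k,i)\in\Delta$. These form a subset of the defining relations of $\TT_n$, which impose the same triangle identity for \emph{every} triple of distinct indices. Therefore the assignment $t_{ij}\mapsto t_{ij}$ (for $(i,j)\in\Delta$) into $\TT_n$ satisfies every defining relation of $\TT_\Delta$, and the universal property of the presentation of $\TT_\Delta$ produces the desired homomorphism $\hat{\bf j}_\Delta\colon \TT_\Delta\to\TT_n$.

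For part (c), I would apply the permutation $\sigma$ to the defining relation $t_{ij}t_{kj}^{-1}t_{ki}=t_{ik}t_{jk}^{-1}t_{ji}$ and observe that it produces the relation $t_{\sigma(i),\sigma(j)}t_{\sigma(k),\sigma(j)}^{-1}t_{\sigma(k),\sigma(i)} = t_{\sigma(i),\sigma(k)}t_{\sigma(j),\sigma(k)}^{-1}t_{\sigma(j),\sigma(i)}$, which is again a defining relation of $\TT_n$ (corresponding to the triple $\sigma(i),\sigma(j),\sigma(k)$). Hence $\sigma$ determines an endomorphism of $\TT_n$, and since $\sigma^{-1}$ induces the two-sided inverse, each $\sigma$ acts by an automorphism. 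The assignment $\sigma\mapsto\sigma_\star$ is multiplicative by inspection on generators, giving an honest $S_n$-action.

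No step poses a real obstacle here: each verification is a direct application of the universal property of a group/algebra presentation, which is precisely why the authors label the statement ``obvious.'' The only mild subtlety is in (a), where one must note that $\pi_n$ is well-defined \emph{and} surjective; the latter uses that $\AA_n$ is generated (not merely rationally generated) by the $x_{ij}^{\pm 1}$, so every element of $\AA_n$ is a $\ZZ$-linear combination of monomials in $x_{ij}^{\pm 1}$ already lying in the image of $\pi_n$.
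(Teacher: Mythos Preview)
Your approach is correct and matches the paper's implicit reasoning (the lemma is stated as ``obvious'' with no proof given): all three parts reduce to checking that the relevant defining relations are preserved, exactly as you do.

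One small caveat on part (a): your justification of surjectivity is not quite right. You argue that because the defining relations of $\AA_n$ have integer coefficients, the $\QQ$-algebra $\AA_n$ is already generated over $\ZZ$ by the $x_{ij}^{\pm1}$; but this is false in general (e.g.\ $\tfrac{1}{2}\in\AA_n$ is not a $\ZZ$-combination of monomials in the $x_{ij}^{\pm1}$). The issue is really a minor inconsistency in the paper's own statement: elsewhere (e.g.\ in \eqref{eq:i_Delta} and \eqref{eq:i_Delta surface}) the corresponding maps are written with $\QQ$-coefficients, $\QQ\TT_\Delta\to\AA_\Delta$, and the intended statement here is almost certainly $\pi_n:\QQ\TT_n\twoheadrightarrow\AA_n$ (or else $\AA_n$ should be taken over $\ZZ$). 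With either reading your argument goes through verbatim; just drop the parenthetical about ``no scalar obstruction.''
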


\begin{conjecture}
The restriction of $\pi_n$ to $\TT_n$ is an isomorphism of monoids $\TT_n\widetilde \to \AA_n^\times$.
\end{conjecture}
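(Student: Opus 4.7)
The conjecture has two parts: $\pi_n|_{\TT_n}$ is (i) injective and (ii) surjective onto $\AA_n^\times$. The image sits inside $\AA_n^\times$ trivially, since each generator $t_{ij}\in\TT_n$ is invertible and $\pi_n$ is a ring map. Because $\AA_n$ is a $\QQ$-algebra, $\QQ^\times$ lies in $\AA_n^\times$ but not in the image of $\pi_n|_{\TT_n}$; I therefore read the conjecture as asking that the induced map $\TT_n \to \AA_n^\times/\QQ^\times$ be an isomorphism. My strategy throughout is to fix an arbitrary triangulation $\Delta$ of $[n]$ and invoke Theorem \ref{th:A_Delta}, which identifies $\AA_n$ with the universal localization $\AA_\Delta[{\bf S}^{-1}]$ of the free group algebra $\AA_\Delta \cong \QQ\TT_\Delta \cong \QQ F_{3n-4}$ at the multiplicative monoid ${\bf S}$ generated by all $\pi_n(t_{ij})$.

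For surjectivity, I would combine two ingredients. Because $\TT_\Delta$ is a free (hence bi-orderable) group, Kaplansky's unit conjecture holds classically for $\QQ\TT_\Delta$, giving $\AA_\Delta^\times = \QQ^\times \cdot \TT_\Delta$. Next, by the Cohn--Schofield theory of universal localizations of semifirs, the unit group of $\AA_\Delta[{\bf S}^{-1}]$ is generated, modulo $\AA_\Delta^\times$, by ${\bf S}^{\pm 1}$. Since ${\bf S}^{\pm 1} \subset \pi_n(\TT_n)$ by definition, this yields $\AA_n^\times = \QQ^\times \cdot \pi_n(\TT_n)$, which is precisely surjectivity modulo scalars.

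For injectivity, the cleanest approach is to use the conjectural embedding $\varphi:\AA_n \hookrightarrow \FF_{3n-4}$ of \eqref{eq:phi from A'}, valid under Conjecture \ref{conj:rigid linear}. The composite $\TT_n \xrightarrow{\pi_n} \AA_n^\times \xrightarrow{\varphi} \FF_{3n-4}^\times$ restricts on $\TT_\Delta \hookrightarrow \TT_n$ to the canonical embedding $\TT_\Delta \hookrightarrow (\QQ\TT_\Delta)^\times \hookrightarrow \FF_{3n-4}^\times$, which is injective. To upgrade injectivity to all of $\TT_n$, one would invoke the finite presentation of Proposition \ref{pr:big triangular presentation}: each non-$\Delta$ generator $t_{ij}$ has image in $\AA_n$ determined uniquely by the Laurent formula of Theorem \ref{th:noncomlaurent n-gon}, and the $(n-3)^2$ defining relations of $\TT_n$ are exactly the identities forced by these Laurent expressions, so no further relations can appear in $\FF_{3n-4}^\times$.

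The principal obstacle is injectivity, on two fronts. First, the argument depends on Conjecture \ref{conj:rigid linear}; an unconditional proof would require either a direct normal-form theorem or some other faithful representation for $\TT_n$, and for $n\ge 4$ the group $\TT_n$ is a multi-relator group of intricate combinatorial structure, so verifying that Proposition \ref{pr:big triangular presentation} gives a \emph{complete} set of relations — no hidden word in $t_{ij}$'s collapses in $\AA_n^\times$ — is essentially the heart of the conjecture. A subsidiary difficulty for surjectivity is that the Cohn--Schofield description of units of universal localizations must be applied carefully: as the excerpt recalls right after \eqref{eq:phi from A'}, universal localizations of free algebras can harbor unexpected zero divisors, so one must verify the semifir hypotheses and the precise form of the unit-generation statement in the setting of $\AA_\Delta[{\bf S}^{-1}]$.
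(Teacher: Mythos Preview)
The statement you are attempting to prove is explicitly labeled a \emph{Conjecture} in the paper; there is no proof given, and indeed the paper treats it as open. So there is nothing to compare your proposal against: your task was vacuous in the sense that the paper offers no argument at all.

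That said, your plan is honestly presented as a plan rather than a proof, and you correctly identify the main obstacles. A few comments on the substance. Your surjectivity step asserts that, by ``Cohn--Schofield theory,'' the unit group of a universal localization $\AA_\Delta[{\bf S}^{-1}]$ is generated over $\AA_\Delta^\times$ by ${\bf S}^{\pm 1}$. This is not a standard theorem; universal localizations of semifirs can have quite wild unit groups, and the paper itself emphasizes (just after \eqref{eq:phi from A'}) that such localizations may even acquire zero divisors. You would need to supply a precise reference or argument here, and I do not believe one exists at this level of generality. Your injectivity step is conditional on Conjecture \ref{conj:rigid linear}, which the paper also leaves open, so even granting everything else you would only obtain a conditional result. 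Finally, your reading of the conjecture modulo $\QQ^\times$ is reasonable and probably what the authors intend, but it is worth flagging that as stated the surjectivity onto all of $\AA_n^\times$ is literally false for exactly the reason you give.
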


%
%
%
%

\begin{theorem}
\label{th:piDelta}
For any triangulation $\Delta$ of $[n]$ there exists an epimorphism
$\pi_\Delta:\TT_n\twoheadrightarrow \TT_\Delta$ such that
$$\hat {\bf j}_\Delta\circ \pi_\Delta=Id_{\TT_\Delta}\ .$$
In particular, $\hat {\bf j}_\Delta$ is an injective homomorphism $\TT_\Delta\hookrightarrow \TT_n$.

\end{theorem}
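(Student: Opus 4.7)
The plan is to specify $\pi_\Delta$ on the generators $t_{ij}$ of $\TT_n$ and then verify that every triangle relation of $\TT_n$ descends to a true identity in $\TT_\Delta$. For $(i,j)\in\Delta$ I would set $\pi_\Delta(t_{ij}):=t_{ij}$; this immediately gives $\pi_\Delta\circ\hat{\bf j}_\Delta=Id_{\TT_\Delta}$, and surjectivity of $\pi_\Delta$ is automatic because its image contains all the generators of $\TT_\Delta$. For $(i,j)\notin\Delta$, define $\pi_\Delta(t_{ij})\in \TT_\Delta$ by induction on the crossing number $d(i,j)$ of the chord $(i,j)$ with the diagonals of $\Delta$. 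In the base case $d=1$, writing $(k,\ell)\in\Delta$ for the unique crossed diagonal, both triangles $(i,k,\ell)$ and $(j,k,\ell)$ lie in $\Delta$, and I would take the explicit zigzag monomials
$$\pi_\Delta(t_{ij}):=t_{i\ell}\,t_{k\ell}^{-1}\,t_{kj}, \qquad \pi_\Delta(t_{ji}):=t_{jk}\,t_{\ell k}^{-1}\,t_{\ell i}.$$
A direct computation using the two $\Delta$-triangle relations $(i,k,\ell)$ and $(j,k,\ell)$ shows that this satisfies both triangle relations $(i,j,k)$ and $(i,j,\ell)$ of $\TT_n$ after projection. For $d\ge 2$ I would reduce inductively via the triangle $(i,j,v)$, where $v$ is an endpoint of the last crossed diagonal (the one closest to $j$), so that $d(i,v)<d$ and $(j,v)\in\Delta$ automatically; the definition extends by concatenating the depth-$1$ building block with the already-defined $\pi_\Delta(t_{iv}),\pi_\Delta(t_{vi})$.

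The heart of the proof is the verification, for every triple $\{i,j,k\}\subset [n]$ not forming a triangle of $\Delta$, of the identity
$$\pi_\Delta(t_{ij})\,\pi_\Delta(t_{kj})^{-1}\,\pi_\Delta(t_{ki})=\pi_\Delta(t_{ik})\,\pi_\Delta(t_{jk})^{-1}\,\pi_\Delta(t_{ji})$$
in the free group $\TT_\Delta$ (of rank $3n-4$ by Theorem~\ref{th:T_Delta polygon}). Both sides unwind to explicit monomials in the generators of $\TT_\Delta$, and one must show the equation is a consequence of the $n-2$ defining $\Delta$-triangle relations. A clean route is to use the isomorphism $\QQ\TT_\Delta\cong\AA_\Delta$ of Theorem~\ref{th:A_Delta}(a), under which group elements of $\TT_\Delta$ embed faithfully in $\AA_\Delta\hookrightarrow\AA_n$: the asserted identity is then equivalent to a monomial equality inside $\AA_n$, which can be dispatched using the triangle and exchange relations of $\AA_n$ that are built into its presentation.

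The main obstacle is combinatorial consistency: for a fixed non-$\Delta$ chord $(i,j)$, each triple $\{i,j,v\}$ imposes a conjugation-type constraint on the pair $(\pi_\Delta(t_{ij}),\pi_\Delta(t_{ji}))$ in $\TT_\Delta$, and the mutual compatibility of all these constraints is the substantive content of the theorem. A tidy organisation that I would pursue is first to prove the theorem for the starlike triangulation $\Delta_1$ of \eqref{eq:starlike} (the fan at vertex $1$), where only triangles of the form $(i,j,1)$ arise in the recursion and the inductive formula reduces to a short product in $t_{i1},t_{j1},t_{1i},t_{1j}$; one can then transport the result along flips of triangulations, using the flip-invariance of $\TT_\Delta$ (the polygon specialisation of Theorem~\ref{th:TDeltaDelta' surface}) to propagate the retraction from $\Delta_1$ to an arbitrary $\Delta$. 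Injectivity of $\hat{\bf j}_\Delta$ follows at once from the existence of the retraction $\pi_\Delta$.
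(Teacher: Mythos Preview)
Your overall plan (define $\pi_\Delta$ on generators, check the triangle relations) is the right shape, and your $d=1$ formula together with the two checks against the $\Delta$-triangles $(i,k,\ell)$ and $(j,k,\ell)$ is correct. But the proof is not complete, and neither of your two proposed shortcuts closes the gap.

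The route through $\AA_n$ buys you nothing. By Theorem~\ref{th:A_Delta}(a), $\QQ\TT_\Delta\cong\AA_\Delta$, and $\AA_\Delta$ is by definition a \emph{sub}algebra of $\AA_n$; hence two monomials in the $\Delta$-variables are equal in $\AA_n$ iff they are already equal in $\AA_\Delta$, i.e.\ in $\TT_\Delta$. The extra relations of $\AA_n$ that are not in $\AA_\Delta$ (exchange relations, and triangle relations for non-$\Delta$ triples) all involve variables $x_{pq}$ with $(p,q)\notin\Delta$ or sums, so they cannot produce new monomial identities among $\Delta$-variables. You are sent back to verifying the identity in $\TT_\Delta$ by hand. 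The flip-transport idea is likewise incomplete: defining $\pi_\Delta:=f_{\Delta,\Delta_1}\circ\pi_{\Delta_1}$ gives a homomorphism, but to see it is a \emph{retraction} you need $\pi_{\Delta_1}(t_\gamma)=f_{\Delta_1,\Delta}(t_\gamma)$ for every $\gamma\in\Delta$, which for $\gamma\notin\Delta_1$ is exactly the unproved consistency you flagged. And even for the starlike $\Delta_1$, you have only checked the relations $(i,j,k)$ and $(i,j,\ell)$ at depth $1$; the homomorphism $\pi_{\Delta_1}$ must respect \emph{every} triangle relation $(a,b,c)$, and these are intertwined across all chords.

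The paper avoids all of this by organising the induction differently: instead of inducting on the crossing number of a single chord, it inducts on $n$. One picks a leaf vertex (say $n$, with $(n-1,1)\in\Delta$), defines $\tau_{ij}$ for $i,j\in[n-1]$ via the inductive hypothesis applied to $\hat\Delta$ on $[n-1]$, and then sets
\[
\tau_{i,n}:=\tau_{i,n-1}\,\tau_{1,n-1}^{-1}\,\tau_{1,n},\qquad \tau_{n,i}:=\tau_{n,1}\,\tau_{n-1,1}^{-1}\,\tau_{n-1,i}
\]
for $1<i<n$. The point of this choice is that one computes directly $\hat T_i^{n,j}=\hat T_i^{n-1,j}$ and $\hat T_i^{j,n}=\hat T_i^{j,n-1}$, so every triangle relation involving the new vertex $n$ reduces instantly to one in $[n-1]$, which holds by induction. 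This collapses the entire consistency check to a two-line computation, where your approach would require a case-by-case induction on triples.
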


We prove Theorem \ref{th:piDelta} in Section \ref{subsec:Retraction of TT_n onto TT_Delta and proof of Theorem th:piDelta}.

The following is obvious.

\begin{corollary}
For any triangulations $\Delta,\Delta'$ of $n$ the composition $\tau_{\Delta,\Delta'}:=\pi_{\Delta'} \circ \hat {\bf j}_\Delta$ is an isomorphism $\TT_\Delta\to \TT_{\Delta'}$ such that $\tau_{\Delta,\Delta}=Id_{\TT_\Delta}$ and
$\tau_{\Delta,\Delta''}=\tau_{\Delta',\Delta''}\circ \tau_{\Delta,\Delta'}$
for any triangulation $\Delta''$ of $[n]$.

\end{corollary}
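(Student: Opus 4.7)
The plan is to derive the corollary from Theorem~\ref{th:piDelta}, the central structural point being that all the retractions $\pi_\Delta$ share a single kernel $N\triangleleft \TT_n$. The identity $\tau_{\Delta,\Delta}=Id_{\TT_\Delta}$ is immediate from the defining relation $\pi_\Delta\circ \hat{\bf j}_\Delta=Id_{\TT_\Delta}$ of Theorem~\ref{th:piDelta}.

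For the cocycle condition $\tau_{\Delta,\Delta''}=\tau_{\Delta',\Delta''}\circ \tau_{\Delta,\Delta'}$, I would introduce the idempotent endomorphism $e_{\Delta'}:=\hat{\bf j}_{\Delta'}\circ \pi_{\Delta'}$ of $\TT_n$, whose image is the subgroup $\hat{\bf j}_{\Delta'}(\TT_{\Delta'})$. Unpacking the definitions yields
$$\tau_{\Delta',\Delta''}\circ \tau_{\Delta,\Delta'}=\pi_{\Delta''}\circ e_{\Delta'}\circ \hat{\bf j}_\Delta,$$
so the cocycle condition reduces to $\pi_{\Delta''}\circ e_{\Delta'}\circ \hat{\bf j}_\Delta=\pi_{\Delta''}\circ \hat{\bf j}_\Delta$, which in turn follows from $\pi_{\Delta''}\circ e_{\Delta'}=\pi_{\Delta''}$, i.e., from the inclusion $\ker\pi_{\Delta'}\subseteq \ker\pi_{\Delta''}$. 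Exchanging the roles of $\Delta'$ and $\Delta''$ gives the opposite inclusion. Once it is established that $\ker\pi_\Delta=N$ is independent of $\Delta$, each $\pi_\Delta$ descends to an isomorphism $\bar\pi_\Delta:\TT_n/N\xrightarrow{\sim}\TT_\Delta$, and $\tau_{\Delta,\Delta'}=\bar\pi_{\Delta'}\circ \bar\pi_\Delta^{-1}$ becomes a composition of isomorphisms for which the claimed cocycle law is tautological. The isomorphism assertion and the composition law are thereby reduced to the single structural statement that all the $\pi_\Delta$ have the same kernel.

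The main obstacle is thus the $\Delta$-independence of $\ker\pi_\Delta$. I would reduce to the case where $\Delta$ and $\Delta'$ differ by a single diagonal flip, since any two triangulations of $[n]$ are linked by a sequence of such flips, and then inspect the explicit construction of $\pi_\Delta$ provided by the proof of Theorem~\ref{th:piDelta}. When the diagonal $(j,\ell)\in \Delta$ is swapped for $(i,k)\in \Delta'$ in an inscribed quadrilateral $(i,j,k,\ell)$, the triangle relations~\eqref{eq:triangle relations} of $\TT_n$ should allow one to rewrite the generator $t_{j\ell}$ modulo $\ker\pi_{\Delta'}$ as a word in the $\Delta'$-generators, and conversely; symmetry of this rewriting is exactly what forces the two kernels to coincide. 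This is the delicate step, since at the group level one has only the triangle relations at one's disposal (the exchange relations live only in $\AA_n$), so the argument is purely combinatorial and depends on the precise formulas produced by the construction of $\pi_\Delta$.
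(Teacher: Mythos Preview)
The paper gives no proof at all; it simply labels the corollary ``obvious.'' Your proposal is therefore already more careful than the paper: the reduction of both the isomorphism claim and the cocycle identity to the single statement that $\ker\pi_\Delta$ is independent of $\Delta$ is correct and is the right way to organise the argument. The identity $\tau_{\Delta,\Delta}=Id_{\TT_\Delta}$ is indeed immediate from the retraction property.

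There is, however, a genuine gap in your final ``delicate step,'' and it is more serious than you indicate. Theorem~\ref{th:piDelta} only asserts the \emph{existence} of a retraction $\pi_\Delta$; its proof makes inductive choices (which ear of the triangulation to peel off, and implicitly which $S_n$-relabeling to apply when the chosen ear is not at vertex~$n$). Different legitimate choices can give retractions with \emph{different} kernels. Concretely, take $n=4$, $\Delta$ with diagonal $(1,3)$, $\Delta'$ with diagonal $(2,4)$. The construction in the proof yields $\pi_\Delta(t_{24})=t_{23}t_{13}^{-1}t_{14}$. For $\Delta'$ one may set either $\pi_{\Delta'}(t_{13})=t_{14}t_{24}^{-1}t_{23}$ or $\pi_{\Delta'}(t_{13})=t_{12}t_{42}^{-1}t_{43}$; both satisfy all triangle relations and hence give valid retractions. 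The first choice yields $\ker\pi_{\Delta'}=\ker\pi_\Delta$, but the second does not: the element $t_{13}t_{43}^{-1}t_{42}t_{12}^{-1}\in\ker\pi_{\Delta'}$ maps under $\pi_\Delta$ to $t_{14}t_{34}^{-1}t_{32}t_{12}^{-1}$, which is nontrivial in the free group $\TT_\Delta$.

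So the corollary is true only for a \emph{coherent system} of retractions, and producing such a system is precisely the content you defer. Your flip-based strategy is on the right track, but it cannot proceed by comparing independently-constructed $\pi_\Delta$'s; it must instead \emph{construct} the $\pi_{\Delta'}$ for a flipped triangulation from $\pi_\Delta$ (e.g., via $\pi_{\Delta'}:=\varphi_{\Delta,\Delta'}\circ\pi_\Delta$ for a suitable isomorphism $\varphi_{\Delta,\Delta'}:\TT_\Delta\to\TT_{\Delta'}$ as in Lemma~\ref{le:triangular noncanonical isomorphisms1}), and then check that this is again a retraction splitting $\hat{\bf j}_{\Delta'}$. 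With that in place, kernel-independence and the cocycle law follow.
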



\subsection{Representation of ${\mathcal A}_n$ and ${\mathcal Q}_n$ in noncommutative $2\times n$ matrices}
\label{sect:2xn matrices}

In what follows, we identify the free skew field generated by  by all  $a_{1i}$, $a_{2i}$, $i\in [n]$ with $\FF_{2n}$
and view it as totally noncommutative rational functions on the space $Mat_{2\times n}$ of $2\times n$ matrices.

Following \cite{gr3} and \cite{BR} (see also \cite{ggrw}) define  $2\times 2$-{\it positive quasiminors} by
$$\left |\begin{matrix} a_{1i}&\boxed {a_{1j}}\\a_{2i}&a_{2j}\end{matrix}
\right |_+=
\sgn(i-j)(a_{1j}-a_{1i}a_{2i}^{-1}a_{2j}),~ \left |\begin{matrix} a_{1i}&a_{1j}\\a_{2i}&\boxed {a_{2j}}\end{matrix}
\right |_+=  \sgn(j-i)(a_{2j}-a_{2i}a_{1i}^{-1}a_{1j})$$
for  $i,j\in [n]$ and {\it positive} quasi-Pl\"ucker coordinates $Q_{ij}^k$ for distinct $i,j,k\in [n]$  by:
\begin{equation}
\label{eq:quasiplucker definition}
Q_{ij}^k=\left |\begin{matrix} a_{1k}&\boxed {a_{1i}}\\a_{2k}&a_{2i}\end{matrix}
\right |_+^{-1}\cdot \left |\begin{matrix} a_{1k}&\boxed {a_{1j}}\\a_{2k}&a_{2j}\end{matrix}
\right |_+=\left |\begin{matrix} a_{1k}& a_{1i}\\a_{2k}&\boxed {a_{2i}}\end{matrix}
\right |_+^{-1}\cdot \left |\begin{matrix} a_{1k}&a_{1j}\\a_{2k}&\boxed {a_{2j}}\end{matrix}
\right |_+
\end{equation}
(the latter identity is proved in \cite[Section 4.3]{gr3} and in
\cite[Proposition 4.2.1]{ggrw}).

\begin{theorem}
\label{th:phi tilde}
For each $n\ge 2$ one has:

(a)
The assignments
$x_{ij}\mapsto  \left |\begin{matrix} a_{1i}&\boxed {a_{1j}}\\a_{2i}&a_{2j}\end{matrix}
\right |_+, ~x_{ij}\mapsto \left |\begin{matrix} a_{1i}&a_{1j}\\a_{2i}&\boxed {a_{2j}}\end{matrix}
\right |_+$ for all distinct $i,j\in [n]$ define respectively the homomorphisms of algebras
\begin{equation}
\label{eq:phi plus minus}
\varphi_n^+:{\mathcal A}_n\to \FF_{2n}, ~\varphi_n^-:{\mathcal A}_n\to \FF_{2n}\ .
\end{equation}

(b) The restrictions of $\varphi_n^+$ and $\varphi_n^-$ to ${\mathcal Q}_n$ are equal to an algebra homomorphsim $\varphi_n:{\mathcal Q}_n \to \FF_{2n}$ such that $\varphi_n(y_{ij}^k)= Q_{ij}^k$ for distinct $i,j,k\in [n]$.
%

\end{theorem}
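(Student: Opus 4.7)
The plan is to reduce both parts of the theorem to known identities among the positive quasi-Pl\"ucker coordinates $Q_{ij}^k$ on $Mat_{2\times n}$ established in \cite{gr3, ggrw}. Write $[i,j]_+$ for the positive quasiminor with the box at position $(1,j)$ and $[i,j]_-$ for the one with the box at $(2,j)$, so that by definition $\varphi_n^\pm(x_{ij})=[i,j]_\pm$ and $[k,i]_+^{-1}[k,j]_+=Q_{ij}^k=[k,i]_-^{-1}[k,j]_-$ by \eqref{eq:quasiplucker definition}.

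For part (a), I would verify that the defining relations of $\AA_n$ are mapped to identities in $\FF_{2n}$; the argument for $\varphi_n^-$ is completely symmetric, so I focus on $\varphi_n^+$. Using $[k,i]_+^{-1}[k,j]_+=Q_{ij}^k$ and its immediate consequence $[i,k]_+=[i,j]_+ Q_{jk}^i$, the triangle relation \eqref{eq:triangle relations} becomes, after factoring $[i,j]_+$ on the left from both sides, the quasi-Pl\"ucker identity $Q_{ji}^k=Q_{jk}^i Q_{ki}^j$ (equivalently $Q_{ij}^k Q_{jk}^i Q_{ki}^j=1$), which is precisely \eqref{eq:triangle relations y ijk} and is established in the quasi-Pl\"ucker model in \cite{gr3}. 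Similarly, dividing the putative image of the exchange relation \eqref{eq:exchange relations} by $[j,i]_+$ on the left converts it into
$$Q_{i\ell}^j=Q_{ik}^j Q_{k\ell}^i+Q_{i\ell}^k\ ,$$
which is the three-term noncommutative Pl\"ucker relation for $2\times n$ quasi-Pl\"ucker coordinates, proved in \cite[Section 4]{gr3}. One must separately check that the sign factors $\sgn(i-j)$ in the definitions of $[i,j]_\pm$ combine to the same overall parity on both sides of each relation; this is a short case analysis that works for any triple of distinct indices (triangle relation) and for any cyclic quadruple (exchange relation).

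For part (b), the two equalities in \eqref{eq:quasiplucker definition} immediately give
$$\varphi_n^+(y_{ij}^k)=[k,i]_+^{-1}[k,j]_+=Q_{ij}^k=[k,i]_-^{-1}[k,j]_-=\varphi_n^-(y_{ij}^k)\ ,$$
so the restrictions of $\varphi_n^\pm$ to ${\mathcal Q}_n$ coincide and produce a well-defined homomorphism $\varphi_n:{\mathcal Q}_n\to\FF_{2n}$ with $\varphi_n(y_{ij}^k)=Q_{ij}^k$.

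The main obstacle is organizational rather than conceptual: carefully tracking the $\sgn(i-j)$ factors across all index orderings and matching the precise indexing of the triangle and Pl\"ucker relations cited from \cite{gr3, ggrw} to the shapes of \eqref{eq:triangle relations} and \eqref{eq:exchange relations} in Definition \ref{def:An}. Once this bookkeeping is settled, both assertions follow formally from the quasi-Pl\"ucker calculus.
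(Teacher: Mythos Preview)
Your argument is correct, but it proceeds along a different line than the paper. You verify directly that the positive quasiminors $[i,j]_\pm$ satisfy the triangle and exchange relations of $\AA_n$, by factoring out a leftmost quasiminor and reducing everything to identities among the $Q_{ij}^k$ (namely $Q_{ij}^kQ_{jk}^iQ_{ki}^j=1$ and the three-term Pl\"ucker relation), both of which are in \cite{gr3}. Part (b) then drops out from~\eqref{eq:quasiplucker definition}. The sign bookkeeping you flag is indeed routine once one observes that every expression you write is already a product of one $[i,j]_\pm$ and several $Q$'s, and the $Q$'s are sign-free ratios.

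The paper instead first establishes $\varphi_n:{\mathcal Q}_n\to\FF_{2n}$ via Theorem~\ref{th:presentation y} and the cited quasi-Pl\"ucker identities, then proves a structural isomorphism $\AA_n\cong(\QQ F_n)\ast{\mathcal Q}_n$ (Proposition~\ref{pr:free action factorization}) sending $x_{ij}\mapsto c_i\ast y_{i^-,j}^i$, and finally builds $\varphi_n^\pm$ as $(\psi_n^\pm\ast\varphi_n)$ composed with this isomorphism, where $\psi_n^\pm(c_i)=[i,i^-]_\pm$. Your route is shorter and more elementary; the paper's route costs the extra Proposition~\ref{pr:free action factorization}, but that free-product decomposition is reused later (e.g.\ for Proposition~\ref{pr:T_Delta polygon U}) and gives a conceptual explanation of why $\varphi_n^+$ and $\varphi_n^-$ agree on ${\mathcal Q}_n$: they differ only in the $\QQ F_n$ factor.
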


\begin{proof} Our proof is based on Proposition \ref{pr:free action factorization} below. It follows from \cite[Section 4.4]{gr3} that positive quasi-Pl\"ucker coordinates satisfy \eqref{eq:triangle relations y ijk}, \eqref{eq:triangle relations y ijkl}, and \eqref{eq:exchange relations y}
This implies that the assignments
\begin{equation}
\label{eq:phin}
y_{ij}^k\mapsto Q_{ij}^k
\end{equation}
for distinct $i,j\in [n]$ define a homomorphism of algebras $\varphi_n:{\mathcal Q}_n\to \FF_{2n}$.

Furthermore, for any $\QQ$-algebras $\AA$ and ${\mathcal B}$ denote by $\AA*{\mathcal B}$ their free product, i.e., the universal algebra generated by $\AA$ and ${\mathcal B}$ as subalgebras (with no relations between them).
The most fundamental property of the free product is that  any algebra homomorphisms $f_1:\AA\to {\mathcal C}$, $f_2:{\mathcal B}\to {\mathcal C}$ canonically lift
to an algebra homomorphism $f_1*f_2:\AA*{\mathcal B}\to {\mathcal C}$.

Denote by $F_m$ the free group generated by $c_i^{\pm 1}$, $i=1,\ldots,m$.

By definition, the group algebra $\QQ F_m$, is free Laurent polynomial algebra $\QQ<c_1^{\pm 1},\ldots,c_{m}^{\pm 1}>$.

\begin{proposition}
\label{pr:free action factorization}
For each $n\ge 2$ the assignments $x_{ij}\mapsto c_i*y_{i^-,j}^i$, $i,j\in [n]$, $i\ne j$ define an isomorphism of algebras
\begin{equation}
\label{eq:quasi-coaction}
f:\AA_n\widetilde \to  (\QQ F_n)*{\mathcal Q}_n\ .
\end{equation}

\end{proposition}

\begin{proof} Let us prove that the homomorphism \eqref{eq:quasi-coaction} is well-defined.
We need the following obvious fact.

\begin{lemma} Let ${\mathcal B}$ be  a $\QQ$-algebra and let $c_1,\ldots,c_n$ be invertible elements of ${\mathcal B}$. Then the assignments
\begin{equation}
\label{eq:xij ci xij}
x_{ij}\mapsto c_i*x_{ij}
\end{equation}
for $i,j\in [n]$, $i\ne j$ define a homomorphism of algebras $\AA_n\to {\mathcal B}*\AA_n$.

\end{lemma}

By the above Lemma ${\mathcal B}:=\QQ F_n$ generated by $c_i^{\pm 1}$, $i\in [n]$, the assignments
\eqref{eq:xij ci xij} define a homomorphism of algebras
\begin{equation}
\label{eq:AnGnAn}\AA_n\to (\QQ F_n)*\AA_n \ .
\end{equation}

Furthermore, the assignments $c_i\mapsto c_i*x_{i,i^-}^{-1}$, $i\in [n]$ define an algebra homomorphism
$f_1:\QQ F_n\to (\QQ F_n)*\AA_n$ and the identity map $\AA_n\to \AA_n$ defines a homomorphism of algebras $f_2:\AA_n\to (\QQ F_n)*\AA_n$.
This gives an algebra homomorphism $f_1*f_2:(\QQ F_n)*\AA_n\to (\QQ F_n)*\AA_n$ determined by $c_i\mapsto c_i*x_{i,i^-}^{-1}$, $x_{ij}\mapsto x_{ij}$.
Then the composition of the homomorphism \eqref{eq:AnGnAn} with $f_1*f_2$ is a homomorphism of algebras
$$\AA_n \to  (\QQ F_n)*\AA_n$$
given by
$x_{ij}\mapsto c_i*x_{ij}\mapsto c_i*x_{i,i^-}^{-1}x_{ij}=c_i*y_{i^-,j}^i$
for all $i,j\in [n]$, $i\ne j$.  Since the image of the latter homomorphism belongs to $(\QQ F_n)*{\mathcal Q}_n$,
we see that the algebra homomorphism $f:\AA_n \to  (\QQ F_n)*{\mathcal Q}_n$ given by \eqref{eq:quasi-coaction} is well-defined.

It remains to show that $f$ is invertible. Indeed, denote by $f'_1:\QQ F_n\to \AA_n$ the homomorphism of algebras given by
$c_i\mapsto x_{i,i^-}$, $i\in [n]$ and denote by $f'_2$ the natural inclusion ${\mathcal Q}_n \hookrightarrow \AA_n$.
This defines a homomorphism of algebras  $g=f'_1*f'_2:(\QQ F_n)*{\mathcal Q}_n\to \AA_n$
which is determined by $c_i\mapsto x_{i,i^-}$, $y_{ij}\mapsto y_{ij}$. This immediately implies that
$(g\circ f)(x_{ij})=g(c_i*y_{i^-,j}^i)=x_{i,i^-}y_{i^-,j}^i=x_{ij}$
for all $i\ne j$. Therefore, $g\circ f=Id$. Similarly,
$$(f\circ g)(c_i)=f(x_{i,i^-})=c_i*y_{i^-,i^-}^i=c_i*1=c_i,~(f\circ g)(y_{i^-,j}^i)=f(y_{i^-,j}^i)$$
$$=f(x_{i,i^-}^{-1}x_{ij})=f(x_{i,i^-})^{-1}f(x_{ij})=(c_i*x_{i,i^-})^{-1}c_i*x_{ij}=x_{i,i^-}^{-1}x_{ij}=y_{ij}\ .$$
Therefore, $f\circ g=Id$ as well.

Proposition \ref{pr:free action factorization} is proved.
\end{proof}

Now we can finish the proof of Theorem \ref{th:phi tilde}. Define algebra homomorphisms $\psi_n^+,\psi_n^-:\QQ F_n\to \FF_{2n}$ by
$$\psi_n^+(c_i)=  \left |\begin{matrix} a_{1i}&\boxed {a_{1,i^-}}\\a_{2i}&a_{2,i^-}\end{matrix}
\right |_+,\psi_n^-(c_i)= \left |\begin{matrix} a_{1i}&a_{1,i^-}\\a_{2i}&\boxed {a_{2,i^-}}\end{matrix}
\right |_+$$
for $i\in [n]$.
Universality of free products gives natural homomorphsims of algebras
$$\psi_n^+*\varphi_n, \psi_n^-*\varphi_n:(\QQ F_n)*{\mathcal Q}_n\to \FF_{2n}\ ,$$
where $\varphi_n$ is given by \eqref{eq:phin}.
Composing these homomorphisms with the isomorphism  \eqref{eq:quasi-coaction}, we obtain respectively algebra homomorphsims $\varphi_n^+,\varphi_n^-:\AA_n\to \FF_{2n}$, whose restriction to ${\mathcal Q}_n$ equals $\varphi_n$.

Finally, note that
$$\varphi_n^+(x_{ij})=\psi_n^+*\varphi_n(c_i*y_{i^-,j}^i)=\left |\begin{matrix} a_{1i}&\boxed {a_{1,i^-}}\\a_{2i}&a_{2,i^-}\end{matrix}
\right |_+\cdot Q_{i^-,j}^i=\left |\begin{matrix} a_{1i}&\boxed {a_{1j}}\\a_{2i}&a_{2j}\end{matrix}
\right |_+\ ,$$
$$\varphi_n^-(x_{ij})=\psi_n^-*\varphi_n(c_i*y_{i^-,j}^i)=\left |\begin{matrix} a_{1i}&a_{1,i^-}\\a_{2i}&\boxed {a_{2,i^-}}\end{matrix}
\right |_+\cdot Q_{i^-,j}^i=\left |\begin{matrix} a_{1i}&a_{1j}\\a_{2i}&\boxed {a_{2j}}\end{matrix}
\right |_+\ $$
for all distinct $i,j\in [n]$.

Theorem \ref{th:phi tilde} is proved. \end{proof}

\begin{remark} Proposition \ref{pr:free action factorization} is a noncommutative algebraic analogue of the following assertion: if a group $G$ acts freely on a set $X$,
then there a bijection $X\widetilde \to G\times (X/G)$.

\end{remark}

\begin{remark} Unlike $\varphi_n$ (see Theorem \ref{th:ii' phi}), the homomorphisms $\varphi_n^+$ and $\varphi_n^-$ are not injective. In particular, one can show that $x_{ij}x_{kj}^{-1}+1-x_{ik}x_{jk}^{-1}\in Ker~\varphi_n^-$ for all distinct $i,j,k\in [n]$.

\end{remark}

For any groups $G$ and $H$ denote by $G*H$ their free product. It is well-known (see, e.g.,  \cite{Cohn}) that $\QQ (G*H)=(\QQ G)*(\QQ H)$.

\begin{proposition} \label{pr:T_Delta polygon U}
For each triangulation $\Delta$ of $[n]$ the assignments
$t_{ij}\to c_i*u_{i^-,j}^i$ for all $(i,j)\in \Delta$ (in the notation of \eqref{eq:uijk}) define an isomorphism of (free) groups
\begin{equation}
\label{eq:free factorization T=GU}
\TT_\Delta\widetilde \to F_n*\UU_\Delta\ .
\end{equation}

\end{proposition}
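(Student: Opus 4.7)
The strategy is to lift the algebra factorization of Proposition \ref{pr:free action factorization} to the level of groups, constructing $f$ and a candidate inverse $g$ directly from the universal property of group free products, and then to read off the rank of $\UU_\Delta$ from Theorem \ref{th:T_Delta polygon}.

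\textbf{Step 1 (well-definedness of the forward map).} I would first check that the assignments $t_{ij}\mapsto c_i\cdot t_{ij}$ define a group homomorphism $\TT_\Delta\to F_n*\TT_\Delta$: substituting into a triangle relation $t_{ij}t_{kj}^{-1}t_{ki}=t_{ik}t_{jk}^{-1}t_{ji}$, the inserted $c_k$'s on each side cancel at once (the inner pair $c_k\,c_k^{-1}$ disappears), so both sides reduce to $c_i$ times the corresponding side of the original relation. Next, by the universal property of $F_n*\TT_\Delta$, the rule $c_i\mapsto c_i\, t_{i,i^-}^{-1}$ together with the identity on $\TT_\Delta$ defines a homomorphism $F_n*\TT_\Delta\to F_n*\UU_\Delta$ (note that $t_{i,i^-}\in\UU_\Delta^{-1}\cdot\{1\}$ is not needed; $t_{i,i^-}$ lies in $\TT_\Delta$, and its target $u^i_{i^-,i^-}=1$ is replaced via $c_i\mapsto c_i\,t_{i,i^-}^{-1}$). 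Composing the two homomorphisms yields $t_{ij}\mapsto c_i t_{i,i^-}^{-1}t_{ij}=c_i\cdot u_{i^-,j}^i$, and the image lies in $F_n*\UU_\Delta$ by definition of $u_{i^-,j}^i$; this is the desired $f$.

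\textbf{Step 2 (inverse).} By the universal property of the free product of groups, to give a homomorphism $g:F_n*\UU_\Delta\to \TT_\Delta$ it suffices to give homomorphisms on each factor. On $F_n$ I set $c_i\mapsto t_{i,i^-}$, which is legitimate because $(i,i^-)\in\Delta$ for every triangulation. On $\UU_\Delta$ I use the natural inclusion $\UU_\Delta\hookrightarrow \TT_\Delta$, under which $u_{i^-,j}^i\mapsto t_{i,i^-}^{-1}t_{ij}$.

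\textbf{Step 3 (mutual inversion).} On generators of $\TT_\Delta$ one computes
$$g(f(t_{ij}))=g(c_i\cdot u_{i^-,j}^i)=t_{i,i^-}\cdot(t_{i,i^-}^{-1}t_{ij})=t_{ij}.$$
Conversely, on generators of $F_n*\UU_\Delta$:
$$f(g(c_i))=f(t_{i,i^-})=c_i\cdot u^{i}_{i^-,i^-}=c_i,$$
$$f(g(u_{i^-,j}^i))=f(t_{i,i^-})^{-1}f(t_{ij})=(c_i)^{-1}(c_i\cdot u_{i^-,j}^i)=u_{i^-,j}^i.$$
Hence $f$ and $g$ are mutually inverse, which establishes the isomorphism \eqref{eq:free factorization T=GU}.

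\textbf{Step 4 (freeness and rank).} By Theorem \ref{th:T_Delta polygon}, $\TT_\Delta$ is free of rank $3n-4$, while $F_n$ is free of rank $n$. A free product of groups is free iff each factor is free, and then the ranks add. Combined with Steps 1--3 this forces $\UU_\Delta$ to be a free group of rank $2n-4$, which is a pleasant byproduct of the free-product decomposition.

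The main obstacle to anticipate is merely bookkeeping: ensuring that the ``scaling'' $t_{ij}\mapsto c_i t_{ij}$ preserves the triangle relations (Step 1), which is immediate because $c_i$ sits at the leftmost position on both sides and the inner $c_k$'s (resp.\ $c_j$'s) cancel against their inverses at the same slot. All other verifications follow the template of Proposition \ref{pr:free action factorization} verbatim.
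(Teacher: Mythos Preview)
Your proof is correct and follows essentially the same route as the paper's: build $f$ by first twisting $t_{ij}\mapsto c_i t_{ij}$ (which preserves the triangle relations since the inner $c$'s cancel), then renormalize via $c_i\mapsto c_i t_{i,i^-}^{-1}$ to land in $F_n*\UU_\Delta$, and invert by $c_i\mapsto t_{i,i^-}$ together with the inclusion $\UU_\Delta\hookrightarrow\TT_\Delta$. Two small cosmetic points: in Step~1 the intermediate target should be $F_n*\TT_\Delta$ (only the \emph{composition} lands in $F_n*\UU_\Delta$), and in Step~3 you might note that the $u_{i^-,j}^i$ generate $\UU_\Delta$ (since $u_{ij}^k=(u_{k^-,i}^k)^{-1}u_{k^-,j}^k$), so checking $f\circ g$ on them suffices; your Step~4 is the content of the paper's subsequent Corollary rather than the Proposition itself.
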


\begin{proof} We essentially copy the proof of Proposition \ref{pr:free action factorization}. Indeed, the following fact is obvious.

\begin{lemma} Let $G$ be any group and  let $c_1,\ldots,c_n\in G$. Then for any triangulation $\Delta$ of $[n]$ the assignments
\begin{equation}
\label{eq:tij ci tij}
t_{ij}\mapsto c_i*t_{ij}
\end{equation}
for $(i,j)\in \Delta$,  define a homomorphism of groups
$\TT_\Delta\to G*\TT_\Delta$.

\end{lemma}

Clearly, the assignments
$c_i\mapsto c_i*t_{i,i^-}^{-1}$
for $i\in [n]$ define a group homomorphism $F_n\to F_n*\TT_\Delta$. Composing this  with \eqref{eq:tij ci tij} taken with $G=F_n=<c_1,\ldots,c_n>$ and applying the multiplication homomorphism $\TT_\Delta*\TT_\Delta\to \TT_\Delta$, we obtain a group homomorphism:
$\TT_\Delta\to  F_n*\TT_\Delta*\TT_\Delta\to F_n*\TT_\Delta$
given by $t_{ij}\mapsto c_i*u_{i^-,j}^i$
for all $i,j\in \Delta$. Clearly, the image of this homomorphism contains all $c_i$ and $u_{ij}^k$, $(i,j),(jk)\in \Delta$ and is contained in $F_n*\UU_\Delta$, hence
this gives a group homomorphism \eqref{eq:free factorization T=GU}.
It is also clear that the homomorphism $F_n*\UU_\Delta\to \TT_\Delta$ given by
$c_i\mapsto t_{i,i^-},~u_{ij}^k\mapsto u_{ij}^k$
is inverse of \eqref{eq:free factorization T=GU}.

The proposition is proved.
\end{proof}

Taking into account that $F_n*F_m\cong F_{m+n}$, we obtain an obvious corollary from Theorem \ref{th:T_Delta polygon}.

\begin{corollary} For each triangulation $\Delta$ of $[n]$ the group $\UU_\Delta$ is isomorphic to $F_{2n-4}$, the free group in $2n-4$ generators.

\end{corollary}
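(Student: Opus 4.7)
The plan is to combine Theorem \ref{th:T_Delta polygon} with Proposition \ref{pr:T_Delta polygon U}. Proposition \ref{pr:T_Delta polygon U} exhibits an isomorphism $\TT_\Delta \cong F_n * \UU_\Delta$, so in particular $\UU_\Delta$ sits inside $\TT_\Delta$ as a free factor, hence as a subgroup. By Theorem \ref{th:T_Delta polygon}, $\TT_\Delta$ is itself a free group on $3n-4$ generators, so $\UU_\Delta$ is a subgroup of a free group. The Nielsen--Schreier theorem then immediately yields that $\UU_\Delta$ is free.

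To pin down the rank, I would then write $\UU_\Delta \cong F_r$ for some $r$ and use the hint $F_n * F_r \cong F_{n+r}$ (which the author singles out in the sentence introducing the corollary). Substituting into the isomorphism of Proposition \ref{pr:T_Delta polygon U} gives
\[
F_{3n-4} \;\cong\; \TT_\Delta \;\cong\; F_n * \UU_\Delta \;\cong\; F_n * F_r \;\cong\; F_{n+r}.
\]
Since the rank of a free group is an invariant (detected, e.g., by the rank of its abelianization $\ZZ^{\text{rk}}$), this forces $n+r = 3n-4$, so $r = 2n-4$, and the corollary follows.

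There is essentially no obstacle here: both ingredients (Nielsen--Schreier and $F_n*F_m\cong F_{n+m}$) are classical, and the combinatorial content of $\Delta$ has already been absorbed into Theorem \ref{th:T_Delta polygon} and Proposition \ref{pr:T_Delta polygon U}. One could alternatively avoid citing Nielsen--Schreier explicitly by invoking Grushko's theorem, which asserts $\mathrm{rank}(A*B)=\mathrm{rank}(A)+\mathrm{rank}(B)$ and applied to $F_{3n-4}\cong F_n * \UU_\Delta$ still yields $\mathrm{rank}(\UU_\Delta)=2n-4$; the freeness then comes, as above, from the subgroup theorem. Either route is routine and the proof is at most a few lines.
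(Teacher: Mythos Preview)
Your proof is correct and follows exactly the paper's approach: the paper simply says ``Taking into account that $F_n*F_m\cong F_{m+n}$, we obtain an obvious corollary from Theorem \ref{th:T_Delta polygon},'' which is precisely your argument combining $\TT_\Delta\cong F_{3n-4}$ with $\TT_\Delta\cong F_n*\UU_\Delta$. If anything, your write-up is more careful than the paper's one-line remark, since you make explicit (via Nielsen--Schreier or Grushko) why $\UU_\Delta$ must be free before matching ranks.
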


Furthermore, denote by  $\FF'_{2n-4}$ the  skew sub-field of $\FF_{2n}$ generated by   $\varphi_n({\mathcal Q}_n)$, i.e., by all $Q_{ij}^k$.

\begin{proposition}
\label{pr:F' free}
$\FF'_{2n-4}$ is isomorphic to $\FF_{2n-4}$.
\end{proposition}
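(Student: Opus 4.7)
The strategy is to transfer the free-skew-field structure across the chain of already-established isomorphisms, invoking the universal property of the free skew field to lift $\varphi_n$ to the fraction field level.

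First I would identify $Frac(\mathcal{Q}_n)$ intrinsically with $\FF_{2n-4}$. Combining Theorem \ref{th:T_Delta polygon} (which gives $\TT_\Delta\cong F_{3n-4}$) with the free product decomposition $\TT_\Delta\cong F_n*\UU_\Delta$ of Proposition \ref{pr:T_Delta polygon U}, and using that a free product of free groups is free of summed rank, one gets $\UU_\Delta\cong F_{2n-4}$ for any triangulation $\Delta$ of $[n]$. Corollary \ref{cor:A_Delta Q}(a) then gives an algebra isomorphism $\mathcal{Q}_\Delta\cong \QQ\UU_\Delta\cong \QQ F_{2n-4}$, whose Amitsur--Cohn skew field of fractions is, by definition, $\FF_{2n-4}$. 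Corollary \ref{cor:A_Delta Q}(b) says $\mathcal{Q}_n=\mathcal{Q}_\Delta[(\mathbf{S}')^{-1}]$ is a universal localization of $\mathcal{Q}_\Delta$, and universal localization does not enlarge the universal skew field of fractions, so $Frac(\mathcal{Q}_n)\cong \FF_{2n-4}$.

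Second I would use the universal property of $\FF_{2n-4}$. The composition
\[
\QQ F_{2n-4}\xrightarrow{\sim}\mathcal{Q}_\Delta\hookrightarrow \mathcal{Q}_n\xrightarrow{\varphi_n}\FF_{2n}
\]
provided by Theorem \ref{th:phi tilde}(b) sends each free generator $u_{ij}^k$ to the positive quasi-Pl\"ucker coordinate $Q_{ij}^k$, where $(i,k),(k,j)\in\Delta$. Because $\FF_{2n}$ is a skew field, the universality of $\FF_{2n-4}$ as the universal skew field of fractions of $\QQ F_{2n-4}$ guarantees this extends uniquely to a homomorphism of skew fields $\tilde\varphi_n:\FF_{2n-4}\to \FF_{2n}$. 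Being a nonzero homomorphism out of a simple ring, $\tilde\varphi_n$ is automatically injective. Its image is by construction the skew subfield generated by the $Q_{ij}^k$ for $(i,k),(k,j)\in\Delta$, and this coincides with the subfield generated by $\varphi_n(\mathcal{Q}_n)$, namely $\FF'_{2n-4}$, because $\mathcal{Q}_n$ is a localization of $\mathcal{Q}_\Delta$ and localization inside a skew field is an intrinsic operation. Hence $\FF'_{2n-4}\cong \FF_{2n-4}$.

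The main obstacle is verifying that the extension $\tilde\varphi_n$ exists, which by Cohn's theorem amounts to showing that every matrix over $\QQ F_{2n-4}$ that is full becomes invertible in $\FF_{2n}$, or equivalently that the $2n-4$ chosen $Q_{ij}^k$ are rationally free in $\FF_{2n}$. I would check this for the starlike triangulation $\Delta=\Delta_1$ of \eqref{eq:starlike}, where the $Q_{ij}^k$ have the explicit form \eqref{eq:quasiplucker definition}; a generic specialization of the $a_{1i},a_{2i}$ (for instance setting $a_{1i}=1$, keeping the $a_{2i}$ free) converts the relevant quasi-Pl\"ucker coordinates into algebraically independent elements of a commutative subfield of $\FF_{2n}$, which rules out any nontrivial rational identity among them. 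Independence of the conclusion on $\Delta$ (via the mutation isomorphisms $\mathcal{Q}_\Delta\cong\mathcal{Q}_{\Delta'}$ inherited from Theorem \ref{th:A_Delta}) then completes the argument.
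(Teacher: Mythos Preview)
Your overall framework is sound and you correctly isolate the crux: the entire content of the proposition is the existence of the extension $\tilde\varphi_n:\FF_{2n-4}\to\FF_{2n}$, i.e., the rational freeness of the $2n-4$ chosen $Q_{ij}^k$ inside $\FF_{2n}$. However, your proposed verification of this step does not work. First, setting $a_{1i}=1$ does not land you in a commutative subfield of $\FF_{2n}$: the $a_{2i}$ still anti-commute freely, so the resulting quasi-Pl\"ucker coordinates live in a copy of $\FF_n$, not in a commutative field. Second, even if you intended a genuine commutative specialization (say to $\QQ(a_{21},\dots,a_{2n})$), the transcendence degree is only $n$, so for $n\ge 5$ you cannot possibly produce $2n-4$ algebraically independent elements there. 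Third, and most fundamentally, commutative algebraic independence under a specialization does not rule out noncommutative rational identities: the implication goes the wrong way. So the ``main obstacle'' you flag is real, and your proposed resolution does not overcome it.

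The paper's proof avoids this difficulty by a concrete matrix manipulation rather than an abstract extension argument. Writing $A=[C\,|\,B]$ with $C$ the first two columns, one observes that the entries of $C^{-1}B$ are exactly the $2n-4$ quasi-Pl\"ucker coordinates $q_{1j}^2,q_{2j}^1$ ($j=3,\dots,n$), and that these generate the same subfield as all the $Q_{ij}^k$ (by $GL_2$-invariance of quasi-Pl\"ucker coordinates). The key step is then Lemma~\ref{le:C-1B}: since the entries of $[C\,|\,B]$ generate $\FF_{2n}$, so do the entries of $[C\,|\,C^{-1}B]$. Now Schofield's theorem (Proposition~\ref{pr:Schofield}) applied with $\ell=2n$ forces these $2n$ elements to be \emph{free} generators of $\FF_{2n}$; in particular the $2n-4$ entries of $C^{-1}B$ freely generate a copy of $\FF_{2n-4}$. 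This is precisely the rational freeness you needed but could not establish, obtained here by embedding the problem into a larger one where Schofield's criterion applies directly.
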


\begin{proof} Denote:
\begin{equation}
\label{eq:ABC}
A=\begin{pmatrix} a_{11} &\cdots & a_{1n}\\
a_{21} &\cdots & a_{2n}\\
\end{pmatrix}, B=\begin{pmatrix} a_{13} &\cdots & a_{1n}\\
a_{23} &\cdots & a_{2n}\\
\end{pmatrix},
C=\begin{pmatrix} a_{11} &a_{12}\\
a_{21} & a_{22}\\
\end{pmatrix}
\end{equation}
so that $A=[C\,|\,B]$.

The following lemma easily follows from Theorem 4.4.4 and Proposition 4.5.2 in \cite{ggrw}.

\begin{lemma} 
$C^{-1}B =
\begin{pmatrix}  q_{13}^2 &\cdots & q_{1n}^2\\
q_{23}^1 &\cdots & q_{2n}^1\\
\end{pmatrix}$,
where $q_{ij}^k=q_{ij}^k(A)=\sgn(k-j)\sgn(k-i)Q_{ij}^k$ for distinct $i,j,k\in [n]$  are quasi-Pl\"ucker coordinates (in the notation \eqref{eq:quasiplucker definition}).

\end{lemma}

It was proved in \cite[Section 4]{gr3} that $q_{ij}^k(A)=q_{ij}^k(DA)$ for all distinct $i,j,k\in [n]$ and any invertible $2\times 2$ matrix $D$ over $\FF_{2n}$. In particular, taking $D=C^{-1}$, we see that $q_{ij}^k=q_{ij}^k([C\,|\,B])=q_{ij}^k([I_2\,|\,C^{-1}B])$, therefore, each $q_{ij}^k$ belongs to the skew subfield of $\FF_{2n}$ generated by the matrix coefficients of $C$ (here $I_2$ is the $2\times 2$ identity matrix). This proves that $\FF'_{2n-4}$ is a sub-field of $\FF_{2n}$ generated by the entries of $C^{-1}B$, i.e., by all $q_{1j}^2,q_{2j}^1$, $j=3,\ldots,n$.

It remains to show that matrix coefficients of $C^{-1}B$ (freely) generate a free subfield of $\FF_{2n}$. We need the following obvious fact.

\begin{lemma} \label{le:C-1B}
Let $\FF$ be a skew field, $C\in GL_m(\FF)$ and $B\in Mat_{m,n-m}(\FF)$ such that matrix coefficients of the partitioned matrix $A=[C\,|\,B]$ generate $\FF$. Then the matrix coefficients of $[C\,|\,C^{-1}B]$ also generate $\FF$.

\end{lemma}

Now we take
$m=2$ and $B,C$ as in \eqref{eq:ABC},  $\FF=\FF_{2n}$, the free skew field freely generated by matrix coefficients of $A=[C\,|\,B]$. Then $C\in GL_2(\FF_{2n})$ and $B\in Mat_{2,n-2}(\FF_{2n})$. Then, by Lemma \ref{le:C-1B},  the matrix coefficients $A'=[C\,|\,C^{-1}B]$ also generate $\FF_{2n}$.
Since $A'$ is $2\times n$, then Proposition \ref{pr:Schofield} implies that the matrix coefficients of $A'$ are free generators of $\FF_{2n}$. In particular, the matrix coefficients of the $2\times (n-2)$ matrix $C^{-1}B$ are free generators of the free skew sub-field of $\FF_{2n}$. That is, $\FF'_{2n-4}$ is freely generated by the matrix coefficients $q_{1j}^2,q_{2j}^1$, $j=3,\ldots,n$ of $C^{-1}B$.

The proposition is proved.
\end{proof}

\begin{remark} Proposition \ref{pr:F' free} and its proof generalize verbatim to $m\times n$ matrices.

\end{remark}


\begin{theorem}
\label{th:ii' phi}
For each triangulation $\Delta$ of $[n]$ the homomorphism
\begin{equation}
\label{eq:idelta phi_+}
\varphi_n\circ \ii'_\Delta:\QQ \UU_\Delta\to \FF'_{2n-4}
\end{equation}
is injective.

\end{theorem}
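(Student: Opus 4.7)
My strategy is to identify $\varphi_n\circ \ii'_\Delta$ with the canonical embedding of a free group algebra into its free skew field of fractions, which is injective by the Amitsur--Cohn construction (Section \ref{sect:appendix}). Three ingredients are available: (a) $\QQ\UU_\Delta$ is the free group algebra on $2n-4$ generators, by the corollary following Proposition \ref{pr:T_Delta polygon U}; (b) $\FF'_{2n-4}\cong \FF_{2n-4}$ is the free skew field on $2n-4$ generators, by Proposition \ref{pr:F' free}; and (c) the canonical map $\QQ F_{2n-4}\hookrightarrow \FF_{2n-4}$ is injective.

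I would fix free generators $u_1,\ldots,u_{2n-4}$ of $\UU_\Delta$ produced by Proposition \ref{pr:T_Delta polygon U}, and set $h_i:=(\varphi_n\circ \ii'_\Delta)(u_i)\in \FF'_{2n-4}$. First I would show that the $h_i$'s generate $\FF'_{2n-4}$ as a skew field: by Corollary \ref{cor:A_Delta Q}(a) each $y_{ij}^k$ lies in ${\mathcal Q}_\Delta$, and by Theorem \ref{th:noncompol n-gon} it is an explicit noncommutative polynomial in the $u_i$'s, so its image $Q_{ij}^k=\varphi_n(y_{ij}^k)$ is the same polynomial expression in the $h_i$'s; in particular the free generators $q_{1j}^2,q_{2j}^1$ ($j=3,\ldots,n$) of $\FF'_{2n-4}$ produced in the proof of Proposition \ref{pr:F' free} are reached. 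It then remains to upgrade ``generating'' to ``freely generating'': since $\FF'_{2n-4}$ is a free skew field of rank $2n-4$ and the $h_i$'s form a generating set of that cardinality, a rigidity property of free skew fields forces the $h_i$'s to be free generators themselves. Under the resulting identification, $\varphi_n\circ \ii'_\Delta$ becomes the canonical embedding $\QQ F_{2n-4}\hookrightarrow \FF_{2n-4}$, hence injective.

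The main obstacle is this final rigidity step, i.e., showing that any generating set of $\FF_m$ of cardinality $m$ must be free. This is a Cohn--Schofield style statement that should be available from the appendix and is closely related to the reasoning behind Proposition \ref{pr:Schofield}. A concrete alternative that sidesteps the rigidity input is to construct an explicit rational change-of-variable: express each $h_i$ as a noncommutative rational function in the $q_{1j}^2, q_{2j}^1$ and invert the polynomial expressions of the previous paragraph, so that the two generating sets are related by mutually inverse rational maps, and freeness of the $h_i$'s is inherited from freeness of the standard quasi-Pl\"ucker generators.
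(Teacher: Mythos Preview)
Your approach is essentially the paper's own proof. The paper also chooses free generators $u_1,\ldots,u_{2n-4}$ of $\UU_\Delta$, shows that their images generate $\FF'_{2n-4}$ (this is Lemma~\ref{le:quasiplucker embedding delta}, proved just as you outline via the polynomial phenomenon), and then invokes exactly the rigidity statement you isolate. That statement is not merely ``related to'' Proposition~\ref{pr:Schofield}: it \emph{is} Proposition~\ref{pr:Schofield}, which says that any $\ell$ elements generating $\FF_\ell$ are automatically free generators. So your ``main obstacle'' is already provided by the paper, and no explicit rational change of variables is needed.
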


\begin{proof}
%
%
Taking any free generating set  $u_1,\ldots,u_{2n-4}$ of the free group $\UU_\Delta\cong F_{2n-4}$,
we see that $t_i:=\varphi_n(\ii'_\Delta(u_i))$, $i=1,\ldots,2n-4$ generate $\FF'_{2n-4}$ due to the following fact.

\begin{lemma}
\label{le:quasiplucker embedding delta}
For each triangulation $\Delta$ of $[n]$ the image  $\varphi_n({\mathcal Q}_\Delta)$ generates the skew field $\FF'_{2n-4}$.

\end{lemma}

\begin{proof} Denote by $\FF_{2n-4}''$ the skew subfield of $\FF_{2n}$ generated by image  $\varphi_n({\mathcal Q}_\Delta)$. Since image  ${\mathcal Q}_\Delta\subset \QQ_n$, we have an obvious inclusion $\FF_{2n-4}''\subseteq \FF_{2n-4}'$.
\end{proof}

Therefore using Proposition \ref{pr:Schofield} with $\ell=2n-4$, we see that  $t_1,\ldots,t_{2n-4}$ are free generators of $\FF'_{2n-4}$ and hence
the  homomorphism \eqref{eq:idelta phi_+}  is injective.

Theorem \ref{th:ii' phi} is proved.
\end{proof}

\subsection{Some symmetries of noncommutative polygons}
First, we establish a new presentation of $\AA_n$ in generators $\tilde x_{ij}^{\pm 1}:=\sgn(j-i)x_{ij}^{\pm 1}$   and $\tilde T_i^{jk}=\tilde x_{ji}^{-1}\tilde x_{jk}\tilde x_{ik}^{-1}=\sgn(i-j)\sgn(k-j)\sgn(k-i)  x_{ji}^{-1} x_{jk}  x_{ik}^{-1}$ for distinct $i,j,k\in [n]$ (see also Section \ref{subsect:Noncommutative angles}). Also define $\tilde y_{ij}^k=\tilde x_{ki}^{-1}\tilde x_{kj}=\sgn(i-k)\sgn(j-k)y_{ij}^k$
for distinct $i,j,k\in [n]$.

We need the following useful fact.

\begin{lemma}
\label{le:presentation An tilde} For each $n\ge 2$ one has:

(a) The algebra $\AA_n$ is generated by $\tilde x_{ij}$ for distinct $i,j\in [n]$ subject to the relations:
\begin{equation}
\label{eq:triangle relations tilde}
\tilde T_i^{jk}=-\tilde T_i^{kj}
\end{equation}
for any distinct $i,j,k\in [n]$:

\begin{equation}
\label{eq:exchange relations tilde}
\tilde T_i^{jk}+\tilde  T_i^{k\ell}+\tilde  T_i^{\ell j}=0
\end{equation}
for any distinct $i,j,k,\ell\in [n]$.

(b) The algebra ${\mathcal Q}_n$ is generated by all $\tilde y_{ij}^k$ subject to the relations:

\begin{equation}
\label{eq:triangle relations y ijk tilde}
\tilde y_{ij}^k\tilde y_{ji}^k=1,~\tilde y_{ij}^k\tilde y_{jk}^i\tilde y_{ki}^j=-1
\end{equation}
for distinct $i,j,k\in [n]$,
\begin{equation}
\label{eq:triangle relations y ijkl tilde}
\tilde y_{ij}^\ell \tilde y_{jk}^\ell \tilde y_{ki}^\ell=1,~\tilde y_{ik}^j\tilde y_{ki}^\ell +\tilde y_{i\ell}^j\tilde y_{\ell i}^k=1
\end{equation}
for distinct $i,j,k,\ell\in [n]$.

\end{lemma}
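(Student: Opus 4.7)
The lemma is a reformulation of the presentations of $\AA_n$ (from Definition \ref{def:An}) and of ${\mathcal Q}_n$ (from Theorem \ref{th:presentation y}) in terms of signed generators, the advantage being that the relations now hold uniformly over all distinct indices, with no cyclicity hypothesis. My plan is to construct, in each part, mutually inverse algebra homomorphisms between the old and new presentations. Since $x_{ij}\mapsto \tilde x_{ij}$, $y_{ij}^k\mapsto \tilde y_{ij}^k$ and their inverses are just $\pm 1$ rescalings of the generators, the entire content of the proof is checking that the two systems of relations are equivalent.

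For part (a), introduce $\epsilon(i,j,k):=\sgn(i-j)\sgn(k-j)\sgn(k-i)$, so that $\tilde T_i^{jk}=\epsilon(i,j,k)\,T_i^{jk}$. A direct computation shows $\epsilon(i,k,j)=-\epsilon(i,j,k)$ (each of the three factor-pairs reverses), which combined with the original triangle relation $T_i^{jk}=T_i^{kj}$ yields \eqref{eq:triangle relations tilde}. For the exchange relation \eqref{eq:exchange relations tilde}, I observe first that $\epsilon(i,j,k)$ is controlled by whether $(i,j,k)$ is in the same or the opposite cyclic order as the standard cyclic order on $[n]$. Second, the sum $\tilde T_i^{jk}+\tilde T_i^{k\ell}+\tilde T_i^{\ell j}$ is invariant under cyclic permutations of $(j,k,\ell)$ and antisymmetric under transpositions; hence it suffices to treat $j<k<\ell$ and split into cases by the position of $i$. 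In each case the sign bookkeeping reduces the claim to an instance of the additivity of noncommutative angles (Corollary \ref{cor:angle is additive}) at vertex $i$, applied to the three points $j,k,\ell$ taken in their cyclic order as seen from $i$. Conversely, the original exchange relation of Definition \ref{def:An} is recovered by specializing \eqref{eq:exchange relations tilde} to cyclic $(i,j,k,\ell)$, so the presentation is faithfully rewritten.

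For part (b), the identities \eqref{eq:triangle relations y ijk tilde} and the first relation of \eqref{eq:triangle relations y ijkl tilde} are routine sign checks against the corresponding identities in Theorem \ref{th:presentation y}: for example $\tilde y_{ij}^k\tilde y_{jk}^i\tilde y_{ki}^j$ accrues exactly three sign factors of the form $\sgn(a-b)\sgn(b-a)=-1$, producing the predicted $-1$. The substantive point is the second relation $\tilde y_{ik}^j\tilde y_{ki}^\ell+\tilde y_{i\ell}^j\tilde y_{\ell i}^k=1$. To establish it, I would multiply the original exchange \eqref{eq:exchange relations y} on the right by $y_{\ell i}^k$, use $y_{i\ell}^k y_{\ell i}^k=1$ from \eqref{eq:triangle relations y ijk} to eliminate one term, then rewrite the remaining cubic in $y$'s as $y_{ik}^j y_{ki}^\ell$ using the second identity of \eqref{eq:triangle relations y ijk}; finally, inserting the sign factors confirms the identity for cyclic $(i,j,k,\ell)$, and non-cyclic cases reduce to this by cyclic relabeling of $(j,k,\ell)$ with the corresponding sign corrections.

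The main obstacle is precisely the sign accounting in the universal (non-cyclic) cases of \eqref{eq:exchange relations tilde} and of the second identity in \eqref{eq:triangle relations y ijkl tilde}: one must verify that for every combinatorial arrangement of $i,j,k,\ell$ on the cycle $[n]$, the signs conspire so that the universal tilde-relation reduces to a previously-established cyclic identity. Once this case analysis is organized (using cyclic symmetry to cut the number of cases) the remainder of the argument is essentially tautological.
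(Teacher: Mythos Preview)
Your proposal is correct and follows essentially the same route as the paper. The key reduction in (a) --- that $\tilde r_{i;j,k,\ell}:=\tilde T_i^{jk}+\tilde T_i^{k\ell}+\tilde T_i^{\ell j}$ is skew-symmetric in $j,k,\ell$ modulo \eqref{eq:triangle relations tilde}, so only the cyclic case needs checking --- is exactly what the paper does; the paper then computes $\tilde r_{i;j,k,\ell}$ directly as $(-\tilde y_{ik}^j\tilde y_{ki}^\ell+1-\tilde y_{i\ell}^j\tilde y_{\ell i}^k)\tilde x_{i\ell}^{-1}$, which simultaneously yields the second relation of \eqref{eq:triangle relations y ijkl tilde} and hence handles (a) and (b) in one stroke, whereas you treat (b) by a separate manipulation (which also works, though it needs \eqref{eq:triangle relations y ijkl} in addition to \eqref{eq:triangle relations y ijk}).
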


\begin{proof} Prove (a).
Denote by $\AA_n''$ the algebra freely generated by all $\tilde x_{ij}^{\pm 1}$, $i\ne j$. That is, $\AA_n''$ is the group algebra of a free group in $n^2-n$ generators. Define $\tilde r_{ijk}=\tilde T_i^{kj}(\tilde T_i^{jk})^{-1}$.
for all distinct  $i,j,k\in [n]$.
Clearly,
$$\tilde r_{ijk}=\tilde x_{ki}^{-1}\tilde x_{kj}\tilde x_{ij}^{-1}\tilde x_{ik}\tilde x_{jk}^{-1}\tilde x_{ji}= -x_{ki}^{-1}x_{kj}x_{ij}^{-1}x_{ik}x_{jk}^{-1}x_{ji}=\tilde y_{ij}^k\tilde y_{jk}^i \tilde y_{ki}^j=-y_{ij}^k y_{jk}^i y_{ki}^j$$
for all distinct  $i,j,k\in [n]$. Denote by ${\mathcal I}'$ the ideal in $\AA''_n$ generated by all $\tilde r_{ijk}+1$.
Then the quotient $\AA_n':=\AA_n''/{\mathcal I}'$  is an algebra generated by $x_{ij}$, $i,j\in [n]$, $i\ne j$  subject to the triangle relations
\eqref{eq:triangle relations}.

Furthermore, for any distinct $i,j,k,\ell\in [n]$ define $\tilde r_{i;j,k,\ell}\in \AA_n'$ by
$\tilde r_{i;j,k,\ell}=\tilde T_i^{jk}+\tilde  T_i^{k\ell}+\tilde  T_i^{\ell j}$.

Clearly,
$\tilde r_{i;j,k,\ell}=-\tilde r_{i;k,j,\ell}=-\tilde r_{i;j,\ell,k}$
for all $i,j,k,\ell$, i.e., $r_{i;j,k,\ell}$ is skew-symmetric in $j,k,\ell$ because of \eqref{eq:triangle relations tilde}. Note also that
$$\tilde r_{i;j,k,\ell}=\tilde x_{ji}^{-1}(\tilde x_{jk}\tilde x_{ik}^{-1}\tilde x_{i\ell} + \tilde x_{ji}\tilde x_{ki}^{-1}\tilde x_{k\ell}-\tilde x_{j\ell})\tilde x_{i\ell}^{-1}
=(\tilde y_{ik}^j\tilde y_{k\ell}^i + \tilde y_{i\ell}^k-\tilde y_{i\ell}^j)\tilde x_{i\ell}^{-1}=(-\tilde y_{ik}^j\tilde y_{ki}^\ell + 1-\tilde y_{i\ell}^j\tilde y_{\ell i}^k)\tilde x_{i\ell}^{-1}$$
for all distinct $i,j,k,\ell\in [n]$. Moreover, if $(i,j,k,\ell)$ is cyclic, i.e., $(i,k)$ crosses $(j,\ell)$,  this gives:
$$\tilde r_{i;j,k,\ell}=\pm x_{ji}^{-1}(x_{jk}x_{ik}^{-1}x_{i\ell} + x_{ji}x_{ki}^{-1}x_{k\ell}-x_{j\ell})x_{i\ell}^{-1} \ .$$
Therefore, if we denote by ${\mathcal I}$ the ideal in $\AA'_n$ generated by all $\tilde r_{i;j,k,\ell}$, then, obviously, $\AA'_n/{\mathcal I}\cong \AA_n$.

This proves (a).

Part (b) also follows because the relations \eqref{eq:triangle relations y ijk tilde} and  \eqref{eq:triangle relations y ijkl tilde} are equivalent to
\eqref{eq:triangle relations y ijk},
\eqref{eq:triangle relations y ijkl}, and
\eqref{eq:exchange relations y}.
The lemma is proved.
\end{proof}

In the notation of Lemma \ref{le:presentation An tilde}  define the action of the symmetric group $S_n$ on the set $\tilde X=\{\tilde x_{ij}|i,j\in [n], i\ne j\}$ by the formula
$$w(\tilde x_{ij})=\tilde x_{w(i),w(j)}$$
for all $w\in S_n$, $i,j\in [n]$, $i\ne j$.

\begin{proposition} For each $n\ge 2$ one has:

(a) The above action uniquely extends to an action of $S_n$ on $\AA_n$ by algebra automorphisms.

(b) The action commutes with homomorphisms $\varphi_n^+,\varphi_n^-:\AA_n\to \FF_{2n}$ given by \eqref{eq:phi plus minus}, where the action of $S_n$ on $\FF_{2n}$ is given by
$w(a_{s,i})=a_{s,w(i)}$
for $s=1,2$, $i\in [n]$, $w\in S_n$.

(c) The subalgebra ${\mathcal Q}_n$ is invariant under the $S_n$-action, i.e.,
$w(\tilde y_{ij}^k)= \tilde y_{w(i),w(j)}^{w(k)}$
for all $i,j,k\in [n]$, $w\in S_n$.

\end{proposition}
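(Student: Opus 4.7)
The plan is to leverage the symmetric presentation of $\AA_n$ and $\mathcal{Q}_n$ given by Lemma \ref{le:presentation An tilde}, where the normalization $\tilde x_{ij}=\sgn(j-i)x_{ij}$ precisely absorbs the asymmetries built into the original $x_{ij}$ and makes every defining relation index-equivariant.

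For (a), I start from the free associative algebra $\widetilde \AA$ on the symbols $\tilde x_{ij}^{\pm 1}$ with $i\ne j$, on which $S_n$ evidently acts by algebra automorphisms via $w(\tilde x_{ij})=\tilde x_{w(i),w(j)}$. The key calculation is
$$w(\tilde T_i^{jk})=w\bigl(\tilde x_{ji}^{-1}\tilde x_{jk}\tilde x_{ik}^{-1}\bigr)=\tilde x_{w(j),w(i)}^{-1}\tilde x_{w(j),w(k)}\tilde x_{w(i),w(k)}^{-1}=\tilde T_{w(i)}^{w(j),w(k)}.$$
Consequently both relation families \eqref{eq:triangle relations tilde} and \eqref{eq:exchange relations tilde} are $S_n$-stable (each is indexed by a set of distinct indices that $S_n$ permutes among itself), hence the two-sided ideal they cut out in $\widetilde \AA$ is $S_n$-invariant, so the action descends to the quotient, which equals $\AA_n$ by Lemma \ref{le:presentation An tilde}(a).

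For (b), it is enough to verify the commutation on the generators $\tilde x_{ij}$ and then extend by the homomorphism property. A direct computation yields
$$\varphi_n^+(\tilde x_{ij})=\sgn(j-i)\sgn(i-j)\bigl(a_{1j}-a_{1i}a_{2i}^{-1}a_{2j}\bigr)=a_{1i}a_{2i}^{-1}a_{2j}-a_{1j},$$
$$\varphi_n^-(\tilde x_{ij})=\sgn(j-i)^2\bigl(a_{2j}-a_{2i}a_{1i}^{-1}a_{1j}\bigr)=a_{2j}-a_{2i}a_{1i}^{-1}a_{1j}.$$
The $\sgn$ factors collapse, leaving expressions that are manifestly $S_n$-equivariant: applying $w$ to $\FF_{2n}$ sends each to $\varphi_n^\pm(\tilde x_{w(i),w(j)})=\varphi_n^\pm\circ w(\tilde x_{ij})$. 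Two algebra homomorphisms from $\AA_n$ to $\FF_{2n}$ that agree on a generating set are equal, so this yields commutation on all of $\AA_n$.

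For (c), since $\tilde y_{ij}^k=\tilde x_{ki}^{-1}\tilde x_{kj}$ by definition, one immediately computes
$$w(\tilde y_{ij}^k)=w(\tilde x_{ki})^{-1}w(\tilde x_{kj})=\tilde x_{w(k),w(i)}^{-1}\tilde x_{w(k),w(j)}=\tilde y_{w(i),w(j)}^{w(k)},$$
so $\mathcal{Q}_n$ is $S_n$-stable with the advertised transformation rule. The only subtle point in the whole argument is the sign cancellation in part (b); this is not accidental but is exactly the design principle behind the tilde normalization of Lemma \ref{le:presentation An tilde}, which was engineered so that both the defining relations of $\AA_n$ and its images under $\varphi_n^\pm$ transform equivariantly under the natural column-permutation action of $S_n$ on $Mat_{2\times n}$.
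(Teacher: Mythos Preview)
Your proof is correct and follows essentially the same route as the paper: use the symmetric presentation of Lemma \ref{le:presentation An tilde}, observe that $w(\tilde T_i^{jk})=\tilde T_{w(i)}^{w(j),w(k)}$ so the defining ideal is $S_n$-stable, and handle (b) and (c) by direct computation on generators. Your treatment of (b) is actually more explicit than the paper's, which simply asserts equivariance; your sign-cancellation check is the right verification.
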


\begin{proof} Prove (a). The following fact is obvious.

\begin{lemma} The $S_n$ action on $\tilde X$ uniquely extends to that on $\AA_n''=\QQ\langle\tilde X\rangle$ by algebra automorphisms.
\end{lemma}

Thus, it suffices to prove that the $S_n$-action on $\AA_n''$ preserves the ideal of triangle relations \eqref{eq:triangle relations tilde} and exchange relations \eqref{eq:exchange relations tilde}.

Let us prove that the ideal ${\mathcal I}'$ of $\AA''$ generated by all $\tilde r_{ijk}$ is invariant under the $S_n$-action. Indeed,
for distinct $i,j,k\in [n]$ and  $w\in S_n$ one has
$$w(\tilde r_{ijk})=w(\tilde x_{ij})w(\tilde x_{kj})^{-1}w(\tilde x_{ki})w(\tilde x_{ji})^{-1}w(\tilde x_{jk})w(\tilde x_{ik})^{-1}=\tilde r_{w(i),w(j),w(k)} \ .$$
This proves that $S_n({\mathcal I}')={\mathcal I}'$ hence  $S_n$ acts on $\AA_n'$ by algebra automorphisms.

It remains to prove that the ideal of exchange relations \eqref{eq:exchange relations tilde} in $\AA_n'$ is invariant under the $S_n$-action.
Now we show that the ideal ${\mathcal I}$ of $\AA'_n=\AA''_n/{\mathcal I}'_n$ generated by all $\tilde r_{i;j,k,\ell}$ is invariant under the $S_n$-action.
Indeed,
$$w(\tilde r_{i;j,k,\ell})=w(\tilde T_i^{jk})+w(\tilde T_i^{k\ell})+w(\tilde T_i^{\ell j})=\tilde T_{w(i)}^{w(j),w(k)} + \tilde  T_{w(i)}^{w(k),w(\ell)}+\tilde T_{w(i)}^{w(\ell),w(j)}=\tilde  r_{w(i);w(j),w(k),w(\ell)}$$
for all distinct $i,j,k,\ell\in [n]$.
This proves that $S_n({\mathcal I})={\mathcal I}$.

Part (a) is proved.

Part (b) follows from the fact that the homomorphisms $\varphi_n^+,\varphi_n^-:\AA_n\to \FF_{2n}$  from Theorem \ref{th:phi tilde} commute with the $S_n$-action.

Part (c) is obvious.

The proposition is proved.
\end{proof}

The Lie algebra $gl_n(\QQ)$ (viewed as $Mat_{n\times n}$) naturally acts on the space $Mat_{2\times n}$ by right multiplications, i.e.,
$E_{ij}(a_{s,t})=\delta_{t,j}a_{s,i}$ for $s\in \{1,2\}$, $i,j,t\in [n]$), where $E_{ij}\in gl_n(\QQ)$ are the matrix units.

This action uniquely extends to $\FF_{2n}$ by the Leibniz rule:
$E(fg)=E(f)g+fE(g),~E(h^{\,-1})=-h^{-1}E(h)h^{-1}$
for any $E\in gl_n(\QQ)$, $f,g\in \FF_{2n}$, $h\in \FF_{2n}\setminus \{0\}$.

\begin{proposition}
\label{pr:gln action}
For each $n\ge 2$
there exists a unique action of  $gl_n(\QQ)$  on ${\mathcal Q}_n$ by derivations
such that the homomorphism $\varphi_n:{\mathcal Q}_n\to \FF_{2n}$ from Theorem \ref{th:phi tilde}(b)
is $gl_n(\QQ)$-equivariant. The action is given by:
%
\begin{equation}
\label{eq:gln action}
E_{i',j'}(\tilde y_{i,j}^k)=\begin{cases}
0 & \text{if $j'\notin\{i,j,k\}$}\\
\tilde y_{i,i'}^k&\text{if $j'=j$}\\
-\tilde y_{i,i'}^k \tilde y_{ij}^k &\text{if $j'=i$}\\
-\tilde y_{i,i'}^k \tilde y_{kj}^i &\text{if $j'=k$}\\
\end{cases}
\end{equation}
for any distinct indices $i,j,k\in [n]$.
\end{proposition}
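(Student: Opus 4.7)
The idea is to use injectivity of $\varphi_n:{\mathcal Q}_n\to\FF_{2n}$ (which follows by composing Theorem \ref{th:ii' phi} with Corollary \ref{cor:A_Delta Q}(a,c)) and transport the derivation action from $\FF_{2n}$ to ${\mathcal Q}_n$. Since $\varphi_n({\mathcal Q}_n)$ is generated by the elements $\varphi_n(\tilde y_{ij}^k)$, it suffices to check that each $E_{i'j'}$ carries these generators into $\varphi_n({\mathcal Q}_n)$ and that the formula \eqref{eq:gln action} holds after applying $\varphi_n$. Uniqueness of the action is then automatic, since any derivation extending $\varphi_n$-equivariantly is determined by its values on a generating set.

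The main step is a direct Leibniz-rule computation. Write $[k|l]:=a_{1l}-a_{1k}a_{2k}^{-1}a_{2l}$ in $\FF_{2n}$, so that $\varphi_n(\tilde y_{ij}^k)=[k|i]^{-1}[k|j]$. Because $[k|i]$ and $[k|j]$ involve only the variables $a_{s,i},a_{s,j},a_{s,k}$, one has $E_{i'j'}([k|i]^{-1}[k|j])=0$ whenever $j'\notin\{i,j,k\}$. If $j'=j$, then $E_{i'j'}([k|i])=0$ and $E_{i'j'}([k|j])=[k|i']$, giving $\varphi_n(\tilde y_{i,i'}^k)$. If $j'=i$, then $E_{i'j'}([k|j])=0$ and Leibniz on the inverse yields $-[k|i]^{-1}[k|i'][k|i]^{-1}[k|j]=-\varphi_n(\tilde y_{i,i'}^k\tilde y_{ij}^k)$. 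The remaining case $j'=k$ is the delicate one: a direct computation (using $E_{i'k}(a_{2k}^{-1})=-a_{2k}^{-1}a_{2,i'}a_{2k}^{-1}$) produces
\[
E_{i'k}\bigl([k|i]^{-1}[k|j]\bigr)=[k|i]^{-1}[k|i']\, a_{2k}^{-1}\bigl(a_{2i}\,[k|i]^{-1}[k|j]-a_{2j}\bigr).
\]
This must be simplified to $-\varphi_n(\tilde y_{i,i'}^k\,\tilde y_{kj}^i)$.

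The crucial identity is the fundamental \emph{quasi-Pl\"ucker expansion} for $2\times n$ matrices: whenever $i,k\in[n]$ are distinct, the $j$-th column of $A$ decomposes as
\[
a_{s,j}=a_{s,i}\cdot q_{ij}^k+a_{s,k}\cdot q_{kj}^i,\qquad s=1,2,
\]
where $q_{\bullet\bullet}^\bullet=\varphi_n(\tilde y_{\bullet\bullet}^\bullet)$; this is the content of \cite{gr3, ggrw} and is precisely the Cramer-like characterization of quasi-Pl\"ucker coordinates. Applied with $s=2$, it gives $a_{2i}\,[k|i]^{-1}[k|j]-a_{2j}=-a_{2k}\,[i|k]^{-1}[i|j]$, so the bracketed factor above becomes $-a_{2k}\cdot\varphi_n(\tilde y_{kj}^i)$ and the $a_{2k}^{-1}$ cancels, producing exactly $-\varphi_n(\tilde y_{i,i'}^k\,\tilde y_{kj}^i)$ as required.

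These four cases show that each derivation $E_{i'j'}$ of $\FF_{2n}$ preserves $\varphi_n({\mathcal Q}_n)$, so via the injection $\varphi_n$ it defines a derivation on ${\mathcal Q}_n$ satisfying \eqref{eq:gln action}; the bracket relations $[E_{i'j'},E_{i''j''}]$ of $gl_n(\QQ)$ hold in ${\mathcal Q}_n$ because they hold in $\FF_{2n}$ and $\varphi_n$ is injective. Equivariance of $\varphi_n$ is built in, and uniqueness follows because $\tilde y_{ij}^k$ generate ${\mathcal Q}_n$. The main obstacle is the case $j'=k$, where one must recognize that the surviving expression collapses by means of the quasi-Pl\"ucker expansion above; once that identity is invoked, the other three cases are routine Leibniz calculations.
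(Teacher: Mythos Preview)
Your proof is correct and follows essentially the same route as the paper: both compute the action of $E_{i'j'}$ directly on the images $q_{ij}^k=\varphi_n(\tilde y_{ij}^k)=[k|i]^{-1}[k|j]$ in $\FF_{2n}$ via the Leibniz rule, and then pull back along the injective $\varphi_n$. The only notable difference is in the simplification of the delicate case $j'=k$: the paper rewrites everything in terms of quasiminors using $a_{2k}^{-1}a_{2j}=-\underline x_{jk}^{-1}\underline x_{kj}$ and then collapses the expression with the elementary identity $(\underline x_{ik}-\underline x_{jk})\underline x_{jk}^{-1}\underline x_{kj}=-\underline x_{ij}$, whereas you keep the factor $a_{2k}^{-1}$ explicit and eliminate it via the Cramer-type expansion $a_{2j}=a_{2i}\,q_{ij}^k+a_{2k}\,q_{kj}^i$; these two identities are trivially equivalent, so the arguments coincide in substance.
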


\begin{proof} Indeed, in view of Theorem \ref{th:ii' phi}, it suffices to prove \eqref{eq:gln action} for $q_{ij}^k=\varphi_n(\tilde y_{ij}^k)$.
Indeed, if we abbreviate $\underline x_{ij}=\left |\begin{matrix} a_{1i}&\boxed {a_{1j}}\\a_{2i}&a_{2j}\end{matrix}
\right |$ for distinct $i,j\in [n]$, then
$$E_{i'j'}(\underline x_{ij})=E_{i',j'}(a_{1j}-a_{1i}a_{2i}^{-1}a_{2j})=
\begin{cases}
0 & \text{if $j'\notin\{i,j\}$}\\
a_{1,i'}-a_{1i}a_{2i}^{-1}a_{2,i'} & \text{if $j'=j$}\\
-E_{i',i}(a_{1i}a_{2i}^{-1})a_{2j}& \text{if $j'=i$}\\
\end{cases}=
\begin{cases}
0 & \text{if $j'\notin\{i,j\}$}\\
\underline x_{i,i'} & \text{if $j'=j$}\\
\underline x_{i,i'}\underline x_{ji}^{-1}\underline x_{ij}& \text{if $j'=i$}\\
\end{cases}$$
because $-E_{i',i}(a_{1i}a_{2i}^{-1})=-a_{1,i'}a_{2i}^{-1}+a_{1i}a_{2i}^{-1}a_{2,i'}a_{2i}^{-1}=-\underline x_{i,i'}a_{2i}^{-1}$
and $a_{2i}^{-1}a_{2j}=-\underline x_{ji}^{-1}\underline x_{ij}$
for $i\ne j$.
Therefore,

$$E_{i'j'}(q_{ij}^k)=E_{i'j'}(\underline x_{ki}^{-1}\underline x_{kj})=E_{i'j'}(\underline x_{ki}^{-1})\underline x_{kj}+\underline x_{ki}^{-1} E_{i'j'}(\underline x_{kj})
=\begin{cases}
\underline x_{ki}^{-1}E_{i'j}(\underline x_{kj}) & \text{if $j'=j$}\\
E_{i'i}(\underline x_{ki}^{-1})\underline x_{kj}& \text{if $j'=i$}\\
E_{i',k}(\underline x_{ki}^{-1})\underline x_{kj}+\underline x_{ki}^{-1}\underline E_{i'k}(x_{kj})& \text{if $j'=k$}\\
0 & \text{otherwise}\\
\end{cases}.
$$

Note that
$$E_{i',k}(\underline x_{ki}^{-1})\underline x_{kj}+\underline x_{ki}^{-1}E_{i'k}(\underline x_{kj})=
-\underline x_{ki}^{-1}(\underline x_{k,i'}\underline x_{ik}^{-1}\underline x_{ki})\underline x_{ki}^{-1}\underline x_{kj}
+\underline x_{ki}^{-1}(\underline x_{k,i'}\underline x_{jk}^{-1}\underline x_{kj})$$
$$=\underline x_{ki}^{-1}\underline x_{k,i'}(-\underline x_{ik}^{-1}+\underline x_{jk}^{-1})\underline x_{kj}
=-\underline x_{ki}^{-1}\underline x_{k,i'}\underline x_{ik}^{-1}(\underline x_{ik}-\underline x_{jk})\underline x_{jk}^{-1}\underline x_{kj}=
\underline x_{ki}^{-1}\underline x_{k,i'}\underline x_{ik}^{-1}\underline x_{ij}$$
because
$$(\underline x_{ik}-\underline x_{jk})\underline x_{jk}^{-1}\underline x_{kj}=((a_{1k}-a_{1i}a_{2i}^{-1}a_{2k})-(a_{1k}-a_{1j}a_{2j}^{-1}a_{2k}))(-a_{2k}^{-1}a_{2j})=-(-a_{1i}a_{2i}^{-1}a_{2j}+a_{1j})=-\underline x_{ij}$$
Therefore,
$$E_{i'j'}(q_{ij}^k)=\begin{cases}
0 & \text{if $j'\notin\{i,j,k\}$}\\
\underline x_{ki}^{-1}\underline x_{k,i'} & \text{if $j'=j$}\\
-\underline x_{ki}^{-1}\underline x_{k,i'}\underline x_{ki}^{-1}\underline x_{kj}& \text{if $j'=i$}\\
-\underline x_{ki}^{-1}\underline x_{k,i'}\underline x_{ik}^{-1}\underline x_{ij}& \text{if $j'=k$}\\
\end{cases}
=\begin{cases}
0 & \text{if $j'\notin\{i,j,k\}$}\\
q_{i,i'}^k& \text{if $j'=j$}\\
-q_{i,i'}^kq_{ij}^k& \text{if $j'=i$}\\
-q_{i,i'}^kq_{kj}^i& \text{if $j'=k$}\\
\end{cases}.
$$
The proposition is proved.
\end{proof}

For $i,j\in [n]$ define the elements $\tilde y_{ij}\in {\mathcal F}_n$ by:
$$\tilde y_{ij}=\tilde y_{i^-,j}^i=\tilde x_{i,i^-}^{-1}\tilde x_{ij}$$
(with the convention that $y_{ii}=0$). Clearly, $\tilde y_{i,i^-}=1$ and $\tilde y_{i,i^+}=\tilde x_{i,i^-}^{-1}\tilde x_{i,i^+}$.

Denote by $\overline \AA'_n$ the subalgebra of ${\mathcal Q}_n$ generated by all $\tilde y_{ij}$ and $\tilde y_{i,i^+}^{-1}$.
The following is an immediate corollary of Proposition \ref{pr:gln action}.

\begin{corollary} For each $i,j,i',j'\in [n]$ one has:
$E_{i',j'}(\tilde y_{ij})=\begin{cases}
0 & \text{if $j'\notin\{i^-,i,k\}$}\\
\tilde y_{i,i'} &\text{if $j'=j$}\\
-\tilde y_{i,i'}\tilde y_{ij} &\text{if $j'=i^-$}\\
\tilde y_{i,i'}(\tilde y_{i^-,i})^{-1}\, \tilde y_{i^-,j} &\text{if $j'=i$}\\
\end{cases}$.

In particular, $\overline \AA'_n$ is invariant under the $gl_n(\QQ)$-action.

\end{corollary}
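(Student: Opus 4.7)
The plan is to deduce the formula directly from Proposition \ref{pr:gln action} by specialising the three-index quasi-sector $\tilde y_{a,b}^c$ at $(a,b,c) = (i^-, j, i)$, since by definition $\tilde y_{ij} = \tilde y_{i^-,j}^i$. Under this substitution the excluded set $\{a,b,c\}$ becomes $\{i^-, j, i\}$, which yields the vanishing case at once.

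For the three remaining cases I would substitute into \eqref{eq:gln action}. In the case $j'=j$ the formula gives $E_{i',j}(\tilde y_{i^-,j}^i) = \tilde y_{i^-,i'}^i$, which equals $\tilde y_{i,i'}$ by the very definition of the latter. In the case $j'=i^-$ (playing the role of $a$) it gives $-\tilde y_{i^-,i'}^i\,\tilde y_{i^-,j}^i = -\tilde y_{i,i'}\,\tilde y_{ij}$, again by definition. In the case $j'=i$ (playing the role of $c$) it gives $-\tilde y_{i^-,i'}^i\,\tilde y_{i,j}^{i^-} = -\tilde y_{i,i'}\cdot \tilde y_{i,j}^{i^-}$, and I would then rewrite $\tilde y_{i,j}^{i^-}$ in the new variables by expanding $\tilde y_{i^-,i} = \tilde x_{i^-,(i^-)^-}^{-1}\tilde x_{i^-,i}$ and $\tilde y_{i^-,j} = \tilde x_{i^-,(i^-)^-}^{-1}\tilde x_{i^-,j}$, so that the shared prefactor $\tilde x_{i^-,(i^-)^-}$ cancels and one obtains $(\tilde y_{i^-,i})^{-1}\tilde y_{i^-,j} = \tilde x_{i^-,i}^{-1}\tilde x_{i^-,j} = \tilde y_{i,j}^{i^-}$.

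For the invariance of $\overline{\AA}'_n$ under $gl_n(\QQ)$ I would verify that $E_{i',j'}$ sends each defining generator back into $\overline{\AA}'_n$. For generators of the form $\tilde y_{ij}$ this is read off from the explicit formula just derived, the only potentially exotic factor being $(\tilde y_{i^-,i})^{-1}$ in the $j'=i$ case; since $(i^-)^+ = i$ this factor equals $\tilde y_{i^-,(i^-)^+}^{-1}$, hence is one of the defining generators of $\overline{\AA}'_n$. For generators of the form $\tilde y_{i,i^+}^{-1}$ I would apply the Leibniz rule $E_{i',j'}(\tilde y_{i,i^+}^{-1}) = -\tilde y_{i,i^+}^{-1}\,E_{i',j'}(\tilde y_{i,i^+})\,\tilde y_{i,i^+}^{-1}$ and insert the formula for $E_{i',j'}(\tilde y_{i,i^+})$, upon which every factor is manifestly in $\overline{\AA}'_n$.

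The only mildly subtle step is the identity $\tilde y_{i,j}^{i^-} = (\tilde y_{i^-,i})^{-1}\tilde y_{i^-,j}$ used in the $j'=i$ case; the rest of the argument is a direct substitution into the Proposition, and I foresee no further obstacle.
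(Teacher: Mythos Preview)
Your approach is exactly the one the paper intends: the Corollary is declared an ``immediate corollary'' of Proposition~\ref{pr:gln action}, and your substitution $(i,j,k)\mapsto(i^-,j,i)$ together with the identity $\tilde y_{i,j}^{i^-}=(\tilde y_{i^-,i})^{-1}\tilde y_{i^-,j}$ is precisely what is required; your treatment of the invariance of $\overline{\AA}'_n$ via the Leibniz rule is also correct. Note that your computation in the case $j'=i$ correctly produces $-\tilde y_{i,i'}(\tilde y_{i^-,i})^{-1}\tilde y_{i^-,j}$, so the missing minus sign in the printed statement (and the stray ``$k$'' in the first case, which should be ``$j$'') are typos in the paper, not errors on your part.
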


\begin{remark} Note, however, that the subalgebra $\cU_n$ of ${\mathcal A}_n$ defined in Section \ref{subsec:regular polygon}
is not $gl_n(\QQ)$-invariant.

\end{remark}
\subsection{Extended noncommutative $n$-gons}
\label{subsec:Extended noncommutative n-gons}
In this section we define a larger algebra $\widetilde {\mathcal A}_n$ which is an extension of ${\mathcal Q}_n$ and can be viewed as a carrier of {\it double} noncommutative triangulations of the $n$-gon.

\begin{definition}
Let  ${\mathcal A}_n^\pm$ be the algebra generated by $x_{ij}^\varepsilon$ and $(x_{ij}^\varepsilon)^{-1}$, $i,j\in [n]$, $i\ne j$, $\varepsilon\in \{-,+\}$ subject to the relations:

(i) (triangle relations) For any triple $(i,j,k)$ of distinct indices in $[n]$:
\begin{equation}
\label{eq:triangle relations pm}
x_{ij}^+(x_{kj}^-)^{-1}x_{ki}^+=x_{ik}^-(x_{jk}^+)^{-1}x_{ji}^- \ .
\end{equation}

(ii) (exchange relations) For all cyclic $(i,j,k,\ell)$ in $[n]$:
\begin{equation}
\label{eq:exchange relations pm}
x_{j\ell}^-=x_{jk}^+(x_{ik}^+)^{-1}x_{i\ell}^- + x_{ji}^-(x_{ki}^+)^{-1}x_{k\ell}^+,~
x_{j\ell}^+=x_{jk}^+(x_{ik}^-)^{-1}x_{i\ell}^- + x_{ji}^-(x_{ki}^-)^{-1}x_{k\ell}^+ \ .
\end{equation}

\end{definition}

The following result is obvious.

\begin{lemma} The assignments $x_{ij}^{\,\pm}\mapsto x_{ij}$ define an epimorphism of algebras $\widetilde {\mathcal A}_n\twoheadrightarrow {\mathcal A}_n$.

\end{lemma}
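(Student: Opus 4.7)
The plan is to construct the map as the unique algebra homomorphism induced by the universal property of the presentation of $\widetilde{\mathcal{A}}_n = \mathcal{A}_n^\pm$, and then to observe surjectivity is immediate.

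First I would let $\widetilde{\mathcal{A}}_n^{\mathrm{free}}$ denote the free $\QQ$-algebra on the symbols $x_{ij}^\varepsilon$ and $(x_{ij}^\varepsilon)^{-1}$ for $i\ne j$ and $\varepsilon\in\{-,+\}$, and consider the $\QQ$-algebra homomorphism $\widetilde{\mathcal{A}}_n^{\mathrm{free}}\to\mathcal{A}_n$ sending both $x_{ij}^+$ and $x_{ij}^-$ to $x_{ij}$ (and their formal inverses to $x_{ij}^{-1}$, which makes sense since $x_{ij}$ is invertible in $\mathcal{A}_n$). To descend this to $\widetilde{\mathcal{A}}_n$, it suffices to check that the two families of defining relations \eqref{eq:triangle relations pm} and \eqref{eq:exchange relations pm} land in the ideal of relations of $\mathcal{A}_n$.

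For the triangle relations \eqref{eq:triangle relations pm}, the image of the left-hand side is $x_{ij}x_{kj}^{-1}x_{ki}$ and the image of the right-hand side is $x_{ik}x_{jk}^{-1}x_{ji}$; these coincide in $\mathcal{A}_n$ by \eqref{eq:triangle relations}. For the two exchange relations \eqref{eq:exchange relations pm}, in both cases the image of the left-hand side is $x_{j\ell}$ and the image of the right-hand side is $x_{jk}x_{ik}^{-1}x_{i\ell}+x_{ji}x_{ki}^{-1}x_{k\ell}$; these coincide in $\mathcal{A}_n$ by \eqref{eq:exchange relations} applied to the cyclic tuple $(i,j,k,\ell)$. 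Thus the kernel of $\widetilde{\mathcal{A}}_n^{\mathrm{free}}\to\mathcal{A}_n$ contains the defining relations of $\widetilde{\mathcal{A}}_n$, so the map factors through a well-defined algebra homomorphism $\widetilde{\mathcal{A}}_n\to\mathcal{A}_n$.

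Finally, surjectivity is transparent: every generator $x_{ij}$ of $\mathcal{A}_n$ is the image of $x_{ij}^+$ (or of $x_{ij}^-$), so the image contains a generating set and hence equals $\mathcal{A}_n$. There is essentially no obstacle here; the statement is a routine ``presentation compatibility'' check, and the only thing to be careful about is keeping track that the two distinct exchange relations in $\widetilde{\mathcal{A}}_n$ collapse to a single relation in $\mathcal{A}_n$ (which they do, since $x_{ik}^+$ and $x_{ik}^-$ both go to $x_{ik}$).
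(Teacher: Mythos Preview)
Your proof is correct and is precisely the routine verification the paper has in mind; the paper itself simply declares the lemma ``obvious'' and gives no argument. Your check that the triangle relations \eqref{eq:triangle relations pm} and both exchange relations \eqref{eq:exchange relations pm} specialize to \eqref{eq:triangle relations} and \eqref{eq:exchange relations} under $x_{ij}^\pm\mapsto x_{ij}$, together with the evident surjectivity, is exactly what is needed.
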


\medskip

In what follows, we adopt a convention for all distinct $i,j,k\in [n]$:

\noindent $x_{ij}^k:=
\begin{cases}
x_{ij}^+ & \text{if the triangle $(i,j,k)$ is to the {\bf right} of the chord $(i,j)$ when one goes from $i$ to $j$}\\
x_{ij}^- & \text{if the triangle $(i,j,k)$ is to the {\bf left} of the chord $(i,j)$ when one goes from $i$ to $j$}
\end{cases};$

Equivalently,
$x_{ij}^k=x_{ij}^\ell$
whenever $(i,k)$ crosses $(j,\ell)$.


The following result is a generalization of Proposition \ref{pr:sector plus angle}.

\begin{proposition}
\label{pr: sector plus angle pm}

The algebra $\widetilde  {\mathcal A}_n$ is generated by
${\mathcal Q}_n$ and $( T_i^{jk})^{\pm 1}$ for all distinct triples $(i,j,k)$
subject to:

(i) (triangle relations)
$ T_i^{jk}= T_i^{kj}$ for all distinct $i,j,k\in [n]$.

(ii) (modified exchange relations)
$  T_i^{j\ell}= T_i^{jk}+ T_i^{k\ell}$ for all cyclic $(i,j,k,\ell)$ in $[n]$.

(iii) (consistency relations)
$y_{ji}^k T_i^{jk}=y_{ji}^\ell T_i^{j\ell}$
for all cyclic $(i,j,k,\ell)$ in $[n]$.

\end{proposition}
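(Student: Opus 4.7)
The plan is to mirror the proof of Proposition~\ref{pr:sector plus angle}, now tracking the $\pm$ decoration on edges in $\widetilde{\AA}_n$. Let $\widetilde{\AA}_n'$ denote the algebra defined by the presentation in the statement---generated by an abstract copy of ${\mathcal Q}_n$ together with invertible elements $T_i^{jk}$ modulo relations (i), (ii), (iii). I will construct mutually inverse homomorphisms $f\colon\widetilde{\AA}_n'\to\widetilde{\AA}_n$ and $g\colon\widetilde{\AA}_n\to\widetilde{\AA}_n'$.

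First I would define $f$ on generators by the superscript-aware analogues of the formulas used in the proof of Proposition~\ref{pr:sector plus angle}, namely $y_{ij}^k\mapsto (x_{ki}^j)^{-1}x_{kj}^i$ and $T_i^{jk}\mapsto (x_{ji}^k)^{-1}x_{jk}^i(x_{ik}^j)^{-1}$, using the convention for $x_{ab}^c$ from the paper. Well-definedness breaks into two pieces. I would first check that the images of $y_{ij}^k$ satisfy all defining relations of ${\mathcal Q}_n$ in Theorem~\ref{th:presentation y}: the two-term triangle relations \eqref{eq:triangle relations y ijk} are immediate (one is a tautology, the other is a direct rewriting of \eqref{eq:triangle relations pm}), while the four-vertex relation \eqref{eq:triangle relations y ijkl} and the sector exchange \eqref{eq:exchange relations y} require combining both forms of \eqref{eq:exchange relations pm} with \eqref{eq:triangle relations pm}. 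I would then check the three new relations: (i) is another rewriting of \eqref{eq:triangle relations pm}; (iii) follows from the direct computation $y_{ji}^k T_i^{jk}=(x_{ij}^k)^{-1}$ using \eqref{eq:triangle relations pm}, together with the observation that cyclicity of $(i,j,k,\ell)$ places $k$ and $\ell$ on the same side of the chord $(i,j)$, so $x_{ij}^k=x_{ij}^\ell$; and (ii) $T_i^{j\ell}=T_i^{jk}+T_i^{k\ell}$ is extracted from one of the forms of \eqref{eq:exchange relations pm} by left-multiplying by a common $(x_{ji}^\bullet)^{-1}$ and right-multiplying by a common $(x_{i\ell}^\bullet)^{-1}$.

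Next I would define $g$ by $x_{ij}^k\mapsto (T_i^{jk})^{-1}y_{ij}^k$, where $k$ is taken to be any third vertex lying on the side of $(i,j)$ consistent with the $\pm$ superscript on $x_{ij}$. Well-definedness---that the value depends only on the side, not on the particular $k$---is precisely relation (iii). To see that the triangle and exchange relations \eqref{eq:triangle relations pm}, \eqref{eq:exchange relations pm} of $\widetilde{\AA}_n$ hold after this substitution: the triangle relation reduces to the four-vertex identity \eqref{eq:triangle relations y ijkl} in ${\mathcal Q}_n$; each form of the exchange relation reduces, upon using additivity (ii) to split each $T$ appropriately, to a rewriting of the sector exchange \eqref{eq:exchange relations y}. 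The verifications $f\circ g=\mathrm{id}$ and $g\circ f=\mathrm{id}$ are then single-line calculations: on $x_{ij}^k$ one computes $(f\circ g)(x_{ij}^k) = x_{ik}^j(x_{jk}^i)^{-1}x_{ji}^k\cdot(x_{ki}^j)^{-1}x_{kj}^i = x_{ij}^k$ by \eqref{eq:triangle relations pm}; on $y_{ij}^k$ and $T_i^{jk}$ the identity $g\circ f=\mathrm{id}$ follows from the definitions together with \eqref{eq:triangle relations y ijk}.

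The principal obstacle is extracting the additivity (ii) from \eqref{eq:exchange relations pm} and, in the reverse direction, recovering both forms of \eqref{eq:exchange relations pm} from (ii) combined with \eqref{eq:exchange relations y}. The exchange relation \eqref{eq:exchange relations pm} comes in two versions (one for $x_{j\ell}^-$, one for $x_{j\ell}^+$), and one must correctly match each of $x_{j\ell}^i$ and $x_{j\ell}^k$ to the correct sign, depending on the cyclic position of the quadrilateral $(i,j,k,\ell)$. Once this dictionary between $\pm$ labels and third-vertex superscripts is fixed consistently with the conventions of the paper, the remaining manipulations parallel those in the proof of Proposition~\ref{pr:sector plus angle} word for word.
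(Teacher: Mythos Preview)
Your proposal is correct and follows essentially the same approach as the paper's own proof: define mutually inverse homomorphisms $f\colon\widetilde{\AA}_n'\to\widetilde{\AA}_n$ via $y_{ij}^k\mapsto (x_{ki}^j)^{-1}x_{kj}^i$, $T_i^{jk}\mapsto (x_{ji}^k)^{-1}x_{jk}^i(x_{ik}^j)^{-1}$ and $g\colon\widetilde{\AA}_n\to\widetilde{\AA}_n'$ via $x_{ij}^k\mapsto (T_i^{jk})^{-1}y_{ij}^k$, with well-definedness of $g$ coming from relation (iii). Your writeup is in fact more explicit than the paper's (and avoids a typo there, where the image of $y_{ij}^k$ is misprinted as $(x_{ki}^j)^{-1}x_{jk}^i$).
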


\begin{proof} We proceed similarly to the proof of Proposition \ref{pr:sector plus angle}. Denote by $\tilde \AA_n'$ the algebra whose presentation is given in the proposition.
It is easy to see that the assignments
$y_{ij}^k\mapsto (x_{ki}^j)^{-1}x_{jk}^i, T_i^{jk}\mapsto (x_{ji}^k)^{-1}x_{jk}^i(x_{ik}^j)^{-1}$
for distinct $i,j,k\in [n]$
define a homomorphism of algebras $\tilde \AA'_n\to \tilde \AA_n$.

On the other hand, the consistency relations imply that the  $( T_i^{jk})^{-1}y_{ij}^k=( T_i^{j\ell})^{-1}y_{ij}^\ell$ if $(ik)$ crosses $(j\ell)$. 

The following is immediate.

\begin{lemma} The assignments
$x_{ij}^k\mapsto ( T_i^{jk})^{-1}y_{ij}^k$ for distinct $i,j,k\in [n]$ define a homomorphism of algebras $\tilde f:\tilde \AA_n\to \tilde \AA'_n$.
\end{lemma}

In particular, $\tilde f((x_{ji}^k)^{-1}x_{jk}^i(x_{ik}^i)^{-1})=y_{ij}^k  T_j^{ik}( T_j^{ki})^{-1}y_{jk}^iy_{ki}^j T_i^{jk}=
y_{ij}^k y_{jk}^iy_{ki}^j T_i^{jk}= T_i^{jk}$ by \eqref{eq:triangle relations y ijk}.

These homomorphisms are, clearly, inverse to each other and hence are isomorphisms.

The proposition is proved.
\end{proof}

Similarly to Section \ref{subsect:Definition and basic results}, for each triangulation $\Delta$ of $[n]$ define:

$\bullet$ The subalgebra $\tilde \AA_\Delta$  of $\tilde \AA_n$ generated by $x_{k \ell}^\pm$ for all distinct $k,\ell\in [n]$ and by $(x_{ij}^\pm)^{-1}$  for all $(i,j)\in \Delta$.

$\bullet$ The group $\tilde \TT_\Delta$ generated by all $t_{ij}^\pm$ subject to the triangle relations \eqref{eq:triangle relations pm}.

Clearly, $\tilde \TT_\Delta$ is a free group on $5(n-2)$ generators. It is also clear that the assignments  $t_{ij}^\pm \mapsto x_{ij}^\pm$, $(i,j)\in \Delta$ define a homomorphism of algebras
$\tilde \ii_\Delta:\ZZ \tilde \TT_\Delta\to \tilde \AA_\Delta$.
	
The following is immediate.
	
\begin{proposition}
\label{pr:laurent phenomenon pm}
(Laurent Phenomenon for extended noncommutative polygons) For each triangulation $\Delta$ of $[n]$ the homomorpism $\tilde \ii_\Delta$ is an epimorphism.

\end{proposition}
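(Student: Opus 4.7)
The plan is to show that every generator $x_{k\ell}^{\varepsilon}$ of $\tilde\AA_\Delta$ (with $\varepsilon\in\{+,-\}$ and $k\ne\ell$ in $[n]$) lies in the image of $\tilde\ii_\Delta$. By construction, that image is the subalgebra $\AA^\flat_\Delta\subseteq\tilde\AA_\Delta$ generated by $x_{ij}^{\pm}$ and $(x_{ij}^{\pm})^{-1}$ for $(i,j)\in\Delta$, so the task reduces to proving $x_{k\ell}^{\varepsilon}\in\AA^\flat_\Delta$ for every $(k,\ell)\notin\Delta$. I will run induction on the crossing number $N(k,\ell)$, defined as the number of chords of $\Delta$ crossing $(k,\ell)$, in exact parallel with the proof of Theorem~\ref{th:noncomlaurent n-gon}. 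The base case $N(k,\ell)=0$ is immediate: since $\Delta$ is a maximal crossing-free set, $(k,\ell)\in\Delta$, so $x_{k\ell}^{\pm}$ is a defining generator of $\AA^\flat_\Delta$.

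For the inductive step, assume $N(k,\ell)>0$ and pick any $(p,q)\in\Delta$ crossing $(k,\ell)$; relabeling if necessary, $(p,k,q,\ell)$ is cyclic. Applying the extended exchange relations~\eqref{eq:exchange relations pm} with the substitution $i\to p,\ j\to k,\ k\to q,\ \ell\to\ell$ yields
\begin{equation*}
x_{k\ell}^{-}=x_{kq}^{+}(x_{pq}^{+})^{-1}x_{p\ell}^{-}+x_{kp}^{-}(x_{qp}^{+})^{-1}x_{q\ell}^{+},\qquad x_{k\ell}^{+}=x_{kq}^{+}(x_{pq}^{-})^{-1}x_{p\ell}^{-}+x_{kp}^{-}(x_{qp}^{-})^{-1}x_{q\ell}^{+}.
\end{equation*}
The middle factors $(x_{pq}^{\pm})^{-1}$ and $(x_{qp}^{\pm})^{-1}$ lie in $\AA^\flat_\Delta$ because $(p,q),(q,p)\in\Delta$, so it suffices to see that the four sub-chords $(k,q),(p,\ell),(k,p),(q,\ell)$ each have crossing number strictly smaller than $N(k,\ell)$: any chord of $\Delta$ crossing one of these sub-chords has endpoints separated by it, and since no edge of $\Delta$ may cross $(p,q)$, such a chord is forced to also cross $(k,\ell)$; conversely $(p,q)\in\Delta$ itself crosses $(k,\ell)$ but shares an endpoint with each of the four sub-chords, so it crosses none of them. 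The inductive hypothesis thus places $x_{kq}^{\pm},x_{p\ell}^{\pm},x_{kp}^{\pm},x_{q\ell}^{\pm}$ in $\AA^\flat_\Delta$, and hence $x_{k\ell}^{\pm}\in\AA^\flat_\Delta$ as well.

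The main obstacle is the planar-combinatorial step asserting that each application of~\eqref{eq:exchange relations pm} strictly decreases the crossing number on every one of the four sub-chords simultaneously; this is exactly the geometric input used in the inductive proof of Theorem~\ref{th:noncomlaurent n-gon}, and once it is in hand the extended (two-sign) case requires no new algebraic content beyond running both equations of~\eqref{eq:exchange relations pm} in parallel so as to recover $x_{k\ell}^{+}$ and $x_{k\ell}^{-}$ simultaneously from the same geometric induction.
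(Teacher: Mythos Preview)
Your argument is correct: the induction on the crossing number $N(k,\ell)$ together with the exchange relations \eqref{eq:exchange relations pm} does exactly what is needed, and your planar lemma (any $\Delta$-edge crossing a sub-chord must also cross $(k,\ell)$, while $(p,q)$ itself drops out) is the right geometric input. The paper does not supply a proof at all---it simply declares the proposition ``immediate''---so you have in effect written out the details that the authors left implicit.

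One minor inaccuracy worth flagging: you twice claim this runs ``in exact parallel with the proof of Theorem~\ref{th:noncomlaurent n-gon}.'' In fact the paper proves Theorem~\ref{th:noncomlaurent n-gon} (via Theorem~\ref{th:noncompol n-gon}) by induction on $n$, deleting an ear vertex, not by induction on crossing number. Your crossing-number induction is a different (and arguably cleaner) route; it is standard in cluster-algebra arguments and works perfectly well here, but it is not literally the same scheme the paper uses for the unextended case. This does not affect the validity of your proof, only the accuracy of the cross-reference.
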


%

Note, however, that $\tilde \ii_\Delta$ is not an isomorphsim, unlike its counterpart $\ii_\Delta$ given by \eqref{eq:i_Delta}.

\begin{proposition} For each triangulation $\Delta$ of $[n]$ the kernel of $\tilde \ii_\Delta$ contains the elements

\begin{equation}
\label{eq:plus minus relations}
(\partial_{i_4,i_1}-\partial_{i_3,i_1})T_{i_1}^{i_4,i_5} +
T_{i_2}^{i_1,i_3}(\partial_{i_1,i_4}^{-1}-\partial_{i_1,i_3}^{-1})+(t_{i_3,i_1}^-)^{-1}t_{i_3,i_4}^+(t_{i_1,i_4}^+)^{-1}-(t_{i_3,i_1}^+)^{-1}t_{i_3,i_4}^+
(t_{i_1,i_4}^-)^{-1}
\end{equation}
for each $5$-tuple $(i_1,i_2,i_3,i_4,i_5)$ in the cyclic order such that  $(i_k,i_\ell)\in \Delta$ for all distinct $(k,\ell)\in [5]\times [5]$ except for $(k,\ell)=(2,4),(4,2),(2,5),(5,2)$, where we abbreviated  $\partial_{ij}=(t_{ij}^+)^{-1}t_{ji}^-$.

\end{proposition}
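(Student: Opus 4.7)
The plan is to show directly that the image of the displayed element under $\tilde\ii_\Delta$ vanishes in $\tilde\AA_\Delta$. Since $\tilde\AA_\Delta\subset\tilde\AA_n$, it suffices to verify the identity in the ambient algebra $\tilde\AA_n$, where both the extended exchange relations \eqref{eq:exchange relations pm} and the symmetric presentation of Proposition~\ref{pr: sector plus angle pm} are available.

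First I would expand all abbreviations: $\partial_{ij}=(x_{ij}^+)^{-1}x_{ji}^-$ and $T_i^{jk}=(x_{ji}^k)^{-1}x_{jk}^i(x_{ik}^j)^{-1}$, with the $\pm$ superscripts on each $x_{ab}^c$ determined by the side-convention for the pentagon $(i_1,\dots,i_5)$. The key observation is that the two ``mixed-sign'' monomials at the end of the element are exactly of the form that arises when one subtracts the two formulas in \eqref{eq:exchange relations pm}:
\begin{equation*}
x_{j\ell}^+-x_{j\ell}^-=x_{jk}^+\bigl((x_{ik}^-)^{-1}-(x_{ik}^+)^{-1}\bigr)x_{i\ell}^-+x_{ji}^-\bigl((x_{ki}^-)^{-1}-(x_{ki}^+)^{-1}\bigr)x_{k\ell}^+.
\end{equation*}
This strongly suggests that the element in question arises by combining two applications of this ``differenced'' exchange relation to appropriate cyclic quadruples.

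The main step is therefore to apply the differenced exchange relation to the cyclic quadruple $(i,j,k,\ell)=(i_1,i_3,i_4,i_5)$ (whose ``non-$\Delta$'' diagonal $(i_3,i_5)$ it eliminates), producing an expression in which the factors $(x_{i_3,i_1}^{\varepsilon})^{-1}x_{i_3,i_4}^+$ and $(x_{i_1,i_4}^{\varepsilon'})^{-1}$ appear together with the angle $T_{i_1}^{i_4,i_5}$ coming from the triangle $(i_1,i_4,i_5)$; and symmetrically to the quadruple $(i_1,i_2,i_3,i_4)$, which contributes the angle $T_{i_2}^{i_1,i_3}$ together with the $\partial_{i_1,i_3}^{\pm 1}$ factors. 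After using the triangle relations \eqref{eq:triangle relations pm} to reconcile chiralities of the angles, combining the two identities should make the ``bulk'' monomials (those depending on $i_2$ and $i_5$ through $x_{i_2,i_1}^-$, $x_{i_2,i_3}^+$, $x_{i_4,i_5}^+$, $x_{i_1,i_5}^-$) cancel, leaving precisely the displayed four-term sum. The main obstacle is the bookkeeping: tracking the $\pm$ superscripts on each $x_{ab}^c$ dictated by the side-of-chord convention, aligning the two exchange relations so that their shared factors telescope, and invoking the consistency relations of Proposition~\ref{pr: sector plus angle pm} to identify angles computed relative to different auxiliary vertices. I expect no new algebraic input beyond \eqref{eq:triangle relations pm}, \eqref{eq:exchange relations pm}, and Proposition~\ref{pr: sector plus angle pm}; the entire verification should reduce to a careful accounting exercise.
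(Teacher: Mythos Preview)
Your overall strategy---verify the identity in $\tilde\AA_n$ using the exchange relations---is the right one, but the specific plan has two problems.

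First, a factual slip: in the pentagon $(i_1,\dots,i_5)$ the chords \emph{excluded} from $\Delta$ are $(i_2,i_4)$ and $(i_2,i_5)$, so $(i_3,i_5)$ \emph{is} in $\Delta$. Both diagonals of your quadruple $(i_1,i_3,i_4,i_5)$ lie in $\Delta$; there is nothing ``non-$\Delta$'' to eliminate there.

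Second, and more seriously, your differenced exchange relation for $(i_1,i_2,i_3,i_4)$ computes $x_{i_2,i_4}^+-x_{i_2,i_4}^-$, and $(i_2,i_4)\notin\Delta$: so that identity is not expressed purely in the $\Delta$-generators and does not directly hand you a kernel element. You would need a further step to eliminate $x_{i_2,i_4}^\pm$, and your sketch does not say how the two differenced relations are to be combined so that all non-$\Delta$ terms drop out. The promised cancellation of ``bulk'' monomials is exactly where the argument is missing.

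The paper sidesteps this entirely. Instead of differencing the two $\pm$ exchange relations, it computes a \emph{single} non-$\Delta$ quantity, namely $x_{i_2,i_5}^-$, in two different ways: once via the quadrilateral $(i_1,i_2,i_4,i_5)$ (which introduces $x_{i_2,i_4}^+$, then expanded via $(i_1,i_2,i_3,i_4)$), and once via $(i_1,i_2,i_3,i_5)$ (which introduces $x_{i_3,i_5}^+$, then expanded via $(i_1,i_3,i_4,i_5)$). Each route, after one further expansion, lands entirely in $\Delta$-generators; equating the two expressions and rearranging yields the displayed element. This ``compute one thing two ways'' trick automatically cancels the non-$\Delta$ intermediary and avoids the bookkeeping problem your approach runs into.
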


\begin{proof} Without loss of generality, we assume that $i_k=k$ for $k=1,2,3,4,5$. Then
$$
x_{25}^-=x_{21}^-(x_{41}^+)^{-1}x_{45}^+ + x_{24}^+(x_{14}^+)^{-1}x_{15}^-
$$
hence
$
x_{25}^-=x_{21}^-(x_{41}^+)^{-1}x_{45}^+ +
x_{23}^+(x_{13}^-)^{-1}x_{14}^-(x_{14}^+)^{-1}x_{15}^-
+x_{21}^-(x_{31}^-)^{-1}x_{34}^+(x_{14}^+)^{-1}x_{15}^-$.
On the other hand,
$$
x_{25}^-=x_{21}^-(x_{31}^+)^{-1}x_{35}^+ + x_{23}^+(x_{13}^+)^{-1}x_{15}^-
$$
and
$$
x_{35}^+=x_{31}^-(x_{41}^-)^{-1}x_{45}^+ + x_{34}^+(x_{14}^-)^{-1}x_{15}^-
$$
hence
$
x_{25}^-=x_{21}^-(x_{31}^+)^{-1}x_{31}^-(x_{41}^-)^{-1}x_{45}^+ +
x_{23}^+(x_{13}^+)^{-1}x_{15}^-
+x_{21}^-(x_{31}^+)^{-1}x_{34}^+(x_{14}^-)^{-1}x_{15}^-$.

Comparing the expressions for $x_{25}^-$, we obtain a relation in $\tilde \AA_\Delta$ which gives the appropriate element in the kernel of $\tilde \ii_\Delta$. The proposition is proved.
\end{proof}

\begin{remark} It is natural to conjecture that the kernel of $\tilde \ii_\Delta$ is generated (as a two-sided ideal in $\ZZ \tilde \TT_\Delta$) by the elements \eqref{eq:plus minus relations}.
\end{remark}

\subsection{Further generalizations and specializations}
\begin{definition}
\label{def:hat An}
Let $\widehat {\mathcal A}_n$ be the algebra generated by all $x_{ij}^k, (x_{ij}^k)^{-1}$, where  $i,j,k$ are distinct indices in $[1,n]$ subject to the relations:

(i) (triangle relations) $ T_i^{jk}= T_i^{kj}$
for all distinct $i,j,k$, where $ T_i^{jk}=(x_{ji}^k)^{-1}x_{jk}^i(x_{ik}^j)^{-1}$.

(ii) (exchange relations) $ T_i^{j\ell}= T_i^{jk}+ T_i^{k\ell}$ whenever $(i,k)$ crosses $(j,\ell)$.

\end{definition}

The following result is obvious.

\begin{lemma} (a) The assignments $x_{ij}^k\mapsto x_{ij}$ define an epimorphism of algebras $\widehat {\mathcal A}_n\to {\mathcal A}_n$.

\noindent (b) The assignments $x_{ij}^k\mapsto x_{ij}^k$ define an epimorphism of algebras $\widehat {\mathcal A}_n\to \widetilde {\mathcal A}_n$ (as in Section \ref{subsec:Extended noncommutative n-gons}).
\end{lemma}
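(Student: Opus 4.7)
\proof The plan is to check in both parts that the defining relations of $\widehat {\mathcal A}_n$ are respected, and then to verify that the generators of the target algebra lie in the image. The content is essentially unpacking the convention $x_{ij}^k = x_{ij}^\varepsilon$ from Section \ref{subsec:Extended noncommutative n-gons} and invoking the presentations of $\mathcal{A}_n$ and $\widetilde{\mathcal{A}}_n$ in terms of noncommutative angles already established in Propositions \ref{pr:sector plus angle} and \ref{pr: sector plus angle pm}.

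For part (a), under the assignment $x_{ij}^k \mapsto x_{ij}$ the element $T_i^{jk} = (x_{ji}^k)^{-1}x_{jk}^i(x_{ik}^j)^{-1}$ of $\widehat{\mathcal A}_n$ maps to $x_{ji}^{-1}x_{jk}x_{ik}^{-1} \in \mathcal{A}_n$, which is exactly the noncommutative angle used in Proposition \ref{pr:sector plus angle}. Thus the triangle relation $T_i^{jk}=T_i^{kj}$ in $\widehat{\mathcal A}_n$ maps to the identity $x_{ji}^{-1}x_{jk}x_{ik}^{-1} = x_{ki}^{-1}x_{kj}x_{ij}^{-1}$, which is a rearrangement of the triangle relation \eqref{eq:triangle relations} in $\mathcal{A}_n$. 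The exchange relation $T_i^{j\ell} = T_i^{jk} + T_i^{k\ell}$ maps to the additivity of noncommutative angles, which holds in $\mathcal{A}_n$ by Proposition \ref{pr:sector plus angle}(ii). Hence the assignment extends to a well-defined algebra homomorphism, which is surjective since every generator $x_{ij}$ of $\mathcal{A}_n$ is the image of $x_{ij}^k$ for any permitted $k$.

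For part (b), under the assignment $x_{ij}^k \mapsto x_{ij}^k$, where on the right-hand side $x_{ij}^k$ is interpreted via the $\pm$ convention of Section \ref{subsec:Extended noncommutative n-gons}, the element $T_i^{jk}$ maps to the corresponding noncommutative angle in $\widetilde{\mathcal A}_n$ that appears in Proposition \ref{pr: sector plus angle pm}. The triangle relations (i) and the exchange relations (ii) of Definition \ref{def:hat An} map to exactly the triangle and modified exchange relations satisfied by the $T_i^{jk}$ in $\widetilde{\mathcal A}_n$ by Proposition \ref{pr: sector plus angle pm}(i)--(ii). Surjectivity is clear since for each pair $(i,j)$ and each sign $\varepsilon$ there exists a choice of $k$ for which $x_{ij}^k = x_{ij}^\varepsilon$ under the convention, so every generator of $\widetilde{\mathcal A}_n$ lies in the image.

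The only mild subtlety, and the one place where one must be careful rather than merely mechanical, is to confirm in part (b) that no extra relations among the $T_i^{jk}$ and $y_{ij}^k$ are hidden in the presentation of $\widetilde{\mathcal A}_n$ (for instance the consistency relations (iii) of Proposition \ref{pr: sector plus angle pm}); but these follow automatically from the $\pm$-convention and are therefore not an obstruction.\endproof
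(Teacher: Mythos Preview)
The paper states this lemma as ``obvious'' and gives no proof; your verification is correct and supplies exactly the routine check the paper omits, namely that under each assignment the angle relations (i)--(ii) of Definition~\ref{def:hat An} land on relations already known in the target by Propositions~\ref{pr:sector plus angle} and~\ref{pr: sector plus angle pm}.

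One small remark on your closing paragraph: the consistency relations (iii) of Proposition~\ref{pr: sector plus angle pm} are relations that hold in the \emph{target} $\widetilde{\mathcal A}_n$, not in the source $\widehat{\mathcal A}_n$, so they are irrelevant to well-definedness of the map (they would matter only if you were trying to show the map is an isomorphism, which it is not). You can safely delete that paragraph without weakening the argument.
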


We refer to each $ T_{i}^{jk}$ as the generalized  noncommutative angle and view it as a certain measure of the angle at the vertex $i$ in the triangle $(ijk)$. For any triangulation  $\Delta$ of the $n$-gon and $i\in [n]$, define the {\it total angle} $ T_i^\Delta$ to be the sum of all noncommutative angles of all triangles of $\Delta$ at the vertex $i$.

\begin{theorem} For any triangulations $\Delta$ and $\Delta'$ of the $n$-gon, we have $ T_{\Delta}= T_{\Delta'}$.

\end{theorem}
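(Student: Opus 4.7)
The plan is to reduce the invariance statement to the effect of a single \emph{diagonal flip} and then check the local identity at each vertex of the flipped quadrilateral using the defining relations of $\widehat{{\mathcal A}}_n$. Recall that any two triangulations of the convex $n$-gon are connected by a finite sequence of flips: given a triangulation $\Delta$ containing a diagonal $(i,k)$ which is the common edge of two triangles $(i,j,k)$ and $(i,k,\ell)$ (so that $(i,j,k,\ell)$ is cyclic), the flip replaces $(i,k)$ by $(j,\ell)$, producing a new triangulation $\Delta'$ whose triangles differ from those of $\Delta$ only in the quadrilateral $(i,j,k,\ell)$. It therefore suffices to show that $T_p^{\Delta}=T_p^{\Delta'}$ for every vertex $p\in [n]$ whenever $\Delta$ and $\Delta'$ differ by a single flip.

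For vertices $p\notin \{i,j,k,\ell\}$ the collections of triangles of $\Delta$ and $\Delta'$ that contain $p$ coincide, so $T_p^{\Delta}=T_p^{\Delta'}$ trivially. For $p\in \{i,j,k,\ell\}$ the contributions coming from triangles outside the quadrilateral are unchanged, so it is enough to compare the contributions from the two triangles inside the quadrilateral. Concretely one needs the four local identities
\begin{align*}
T_i^{jk}+T_i^{k\ell} &= T_i^{j\ell}, & T_k^{ij}+T_k^{i\ell} &= T_k^{j\ell},\\
T_j^{ik} &= T_j^{i\ell}+T_j^{k\ell}, & T_\ell^{ik} &= T_\ell^{ij}+T_\ell^{jk}.
\end{align*}
The first is exactly the exchange relation of Definition \ref{def:hat An}(ii) applied to the cyclic quadruple $(i,j,k,\ell)$. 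The other three follow by applying the same relation to the cyclic quadruples $(k,\ell,i,j)$, $(j,k,\ell,i)$ and $(\ell,i,j,k)$ respectively, and then using the triangle relations $T_p^{ab}=T_p^{ba}$ from Definition \ref{def:hat An}(i) to rewrite each angle in the form with its upper indices in the order appearing in the sum. Note that in each case the two diagonals of the quadrilateral are $(i,k)$ and $(j,\ell)$, i.e.\ they cross, so the exchange relations of Definition \ref{def:hat An}(ii) are indeed applicable.

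The only step that requires any care is matching the cyclic orders and the order of upper indices so that the exchange relations apply verbatim; this is routine once one fixes the convention that at a vertex $p$ of a triangle $(p,a,b)$ the contribution to $T_p^\Delta$ is $T_p^{ab}=T_p^{ba}$. No nontrivial computation in $\widehat{{\mathcal A}}_n$ is needed beyond the two families of defining relations, and in particular the proof does not invoke the Laurent phenomenon or the structure of triangle groups. Concatenating the local equalities above over a sequence of flips connecting $\Delta$ to $\Delta'$ completes the proof that $T_p^\Delta = T_p^{\Delta'}$ for every $p$, hence $T_\Delta = T_{\Delta'}$.
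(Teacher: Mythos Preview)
Your proof is correct and follows essentially the same approach the paper uses for the analogous results (Corollary \ref{cor:total angle} in $\mathcal{A}_n$ and Theorem \ref{th:total anglle surface} for surfaces): reduce to a single flip and invoke the additivity of noncommutative angles encoded in the exchange relations of Definition \ref{def:hat An}(ii), together with the triangle relations (i). The paper does not spell out a proof of this particular theorem, but your argument is precisely the intended one.
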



Furthermore, let  ${\mathcal A}'_n$ be  the algebra generated by $x_{ij}$, $c_i^{jk}=c_i^{kj}$,
$d_i^{jk}=d_i^{kj}$ and their inverses subject to the relations:

(i) (triangle relations) $T_i^{jk}=T_i^{kj}$
for all distinct $i,j,k\in [n]$, where
$$T_i^{jk}=x_{ji}^{-1}x_{jk}x_{ik}^{-1}\ ;$$

(ii) (exchange relations) $(d_i^{j\ell})^{-1}T_i^{j\ell}(c_i^{j\ell})^{-1}
=(d_i^{jk})^{-1}T_i^{jk}(c_i^{jk})^{-1}+(d_i^{k\ell})^{-1}T_i^{k\ell}(c_i^{k\ell})^{-1}$ for any cyclic $(i,j,k,\ell)$ in $[n]$.

\begin{proposition} The assignments $x_{ij}^k\mapsto c_i^{jk}x_{ij}d_j^{ik}$ define a homomorphism  of algebras:
\begin{equation}
\label{eq:hat phi}
\hat \varphi: \hat {\mathcal A}_n\hookrightarrow {\mathcal A}'_n\ .
\end{equation}

\end{proposition}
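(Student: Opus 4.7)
The plan is to verify directly that the relations defining $\hat{\mathcal A}_n$ (Definition \ref{def:hat An}) hold in ${\mathcal A}'_n$ after applying $\hat\varphi$. Since $\hat{\mathcal A}_n$ is presented on generators $x_{ij}^k$ and their inverses, and ${\mathcal A}'_n$ is a $\QQ$-algebra, it suffices to check the triangle and exchange relations.

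The key preliminary computation is to evaluate $\hat\varphi(T_i^{jk})$ where $T_i^{jk}=(x_{ji}^k)^{-1}x_{jk}^i(x_{ik}^j)^{-1}$. Substituting and writing out the three factors gives
\[
\hat\varphi(T_i^{jk})= (d_i^{jk})^{-1} x_{ji}^{-1}(c_j^{ik})^{-1}\cdot c_j^{ki}\,x_{jk}\,d_k^{ji}\cdot (d_k^{ij})^{-1}x_{ik}^{-1}(c_i^{kj})^{-1}.
\]
Using the symmetries $c_j^{ik}=c_j^{ki}$ and $d_k^{ij}=d_k^{ji}$ (built into ${\mathcal A}'_n$), the inner pairs cancel, leaving
\[
\hat\varphi(T_i^{jk})= (d_i^{jk})^{-1}\bigl(x_{ji}^{-1}x_{jk}x_{ik}^{-1}\bigr)(c_i^{jk})^{-1}=(d_i^{jk})^{-1}T_i^{jk}(c_i^{jk})^{-1},
\]
where on the right $T_i^{jk}$ denotes the noncommutative angle inside ${\mathcal A}'_n$ and we used $c_i^{kj}=c_i^{jk}$.

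With this formula in hand, the triangle relation $\hat\varphi(T_i^{jk})=\hat\varphi(T_i^{kj})$ in $\hat{\mathcal A}_n$ is immediate: the outer factors $(d_i^{jk})^{-1}$ and $(c_i^{jk})^{-1}$ are symmetric in $j,k$ by the defining symmetries of $c$ and $d$, and the inner $T_i^{jk}=T_i^{kj}$ holds by the triangle relation (i) of ${\mathcal A}'_n$. Similarly, applying $\hat\varphi$ to the exchange relation $T_i^{j\ell}=T_i^{jk}+T_i^{k\ell}$ of $\hat{\mathcal A}_n$ yields
\[
(d_i^{j\ell})^{-1}T_i^{j\ell}(c_i^{j\ell})^{-1}=(d_i^{jk})^{-1}T_i^{jk}(c_i^{jk})^{-1}+(d_i^{k\ell})^{-1}T_i^{k\ell}(c_i^{k\ell})^{-1},
\]
which is precisely the exchange relation (ii) of ${\mathcal A}'_n$.

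There is no genuine obstacle here: the content is the cancellation in the computation of $\hat\varphi(T_i^{jk})$, and everything else is a tautological match of relations. The only point requiring care is bookkeeping of the upper indices of $c$ and $d$ (which are \emph{unordered} pairs), so that the symmetries $c_i^{jk}=c_i^{kj}$, $d_i^{jk}=d_i^{kj}$ are correctly invoked at each cancellation; once this is tracked, the proof is complete.
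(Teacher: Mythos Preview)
Your proof is correct and follows essentially the same approach as the paper: both hinge on the computation $\hat\varphi(T_i^{jk})=(d_i^{jk})^{-1}T_i^{jk}(c_i^{jk})^{-1}$ and then verify that the triangle and exchange relations of $\hat{\mathcal A}_n$ map to the corresponding relations (i) and (ii) of ${\mathcal A}'_n$. The paper phrases this via an intermediate free algebra $\hat{\mathcal A}'_n$, but the substance is identical.
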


\begin{proof} Denote by $\hat {\mathcal A}'_n$ the algebra freely generated by all $x_{ij}^k$. Then, clearly, the assignments $x_{ij}^k\mapsto c_i^{jk}x_{ij}d_j^{ik}$ define an algebra homomorphism
$$\hat {\mathcal A}'_n\to {\mathcal A}'_n\ .$$
Denote $ T_i^{'jk}:=(x_{ji}^k)^{-1}x_{jk}^i(x_{ik}^j)^{-1}$.
We need the following fact.

\begin{lemma}
$\hat \varphi'( T_i^{'jk})=(d_i^{jk})^{-1}T_i^{jk}(c_i^{jk})^{-1}$.
\end{lemma}

\begin{proof} Indeed,
$$\hat \varphi'( T_i^{'jk})=\hat \varphi'((x_{ji}^k)^{-1}x_{jk}^i(x_{ik}^j)^{-1})=(c_j^{ik}x_{ji}d_i^{jk})^{-1}c_j^{ik}x_{jk}d_k^{ij}(c_i^{jk}x_{ik}d_k^{ij})^{-1}$$
$$=(d_i^{jk})^{-1}x_{ji}x_{jk}x_{ik}(c_i^{jk})^{-1}=(d_i^{jk})^{-1}T_i^{jk}(c_i^{jk})^{-1}\ .$$
The lemma is proved.
\end{proof}

We can finish now the proof of the proposition. The lemma implies that $\hat \varphi'( T_i^{'jk})=\hat \varphi'( T_i^{'kj})$ and:
$$\hat \varphi'( T_i^{'j\ell}- T_i^{'jk}- T_i^{'k\ell})=
(d_i^{j\ell})^{-1}T_i^{j\ell}(c_i^{j\ell})^{-1}-(d_i^{jk})^{-1}T_i^{jk}(c_i^{jk})^{-1}-(d_i^{k\ell})^{-1}T_i^{k\ell}(c_i^{k\ell})^{-1}=0\ .$$
This proves the proposition.
\end{proof}

\begin{proposition} For each collection of integers $a=\{{a_i^{jk}}={a_i^{kj}}|i,j,k\in [n]~are~distinct\}$, the assignments
$x_{ij}^k\mapsto (T_i^{jk})^{a_i^{jk}} x_{ij}(T_j^{ik})^{-{a_i^{jk}}}$ define an algebra homomorphism
$\varphi_a: \hat {\mathcal A}_n\to {\mathcal A}_n$.
\end{proposition}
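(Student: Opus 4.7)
The plan is to check that $\varphi_a$, first defined on the free algebra on the $(x_{ij}^k)^{\pm 1}$, descends to $\hat{\mathcal A}_n$. This amounts to showing that both families of defining relations---the triangle relations $\hat T_i^{jk}=\hat T_i^{kj}$ and the additivity $\hat T_i^{j\ell}=\hat T_i^{jk}+\hat T_i^{k\ell}$ for $(i,k)$ crossing $(j,\ell)$---are sent to identities in ${\mathcal A}_n$. Both follow at once from a single key identity:
\[
\varphi_a(\hat T_i^{jk})\ =\ T_i^{jk}\qquad\text{in}\ {\mathcal A}_n,
\]
where $T_i^{jk}=x_{ji}^{-1}x_{jk}x_{ik}^{-1}$ is the noncommutative angle of Proposition~\ref{pr:sector plus angle}.

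To prove this identity I would expand
\[
\varphi_a(\hat T_i^{jk})\ =\ \varphi_a\bigl((x_{ji}^k)^{-1}\bigr)\,\varphi_a(x_{jk}^i)\,\varphi_a\bigl((x_{ik}^j)^{-1}\bigr)
\]
factor by factor, using the abbreviations $u=T_i^{jk}$, $v=T_j^{ik}$, $w=T_k^{ij}$ and $\alpha=a_i^{jk}$, $\beta=a_j^{ik}$, $\gamma=a_k^{ij}$. Each of the three factors has the shape $(T_I^{JK})^{\bullet}\,x_{IJ}\,(T_J^{IK})^{-\bullet}$, so the product is a sandwich of $x$'s between powers of $u,v,w$ at four internal interfaces. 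Invoking the stated symmetry $a_I^{JK}=a_I^{KJ}$ (so that $a_j^{ki}=\beta$ and $a_i^{kj}=\alpha$) together with $T_J^{IK}=T_J^{KI}$, the $v$-powers meeting at the boundary between the first two factors cancel, as do the $w$-powers meeting between the second and third. What survives is the conjugate $u^{\alpha}\,(x_{ji}^{-1}x_{jk}x_{ik}^{-1})\,u^{-\alpha}=u^{\alpha}\,T_i^{jk}\,u^{-\alpha}=T_i^{jk}$, since $T_i^{jk}=u$ commutes with its own powers.

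Given this identity, both families of relations of $\hat{\mathcal A}_n$ are handled automatically. The triangle relation goes to $T_i^{jk}=T_i^{kj}$, a defining relation of ${\mathcal A}_n$; the additivity goes to $T_i^{j\ell}=T_i^{jk}+T_i^{k\ell}$, which is exactly the additivity of noncommutative angles established in Corollary~\ref{cor:angle is additive} (equivalently, the modified exchange relation in Proposition~\ref{pr:sector plus angle}(ii)). Hence $\varphi_a$ descends to the claimed algebra homomorphism $\hat{\mathcal A}_n\to{\mathcal A}_n$.

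The main obstacle is purely the three-fold index bookkeeping in the telescoping: one must carefully track where each of the exponents $\alpha,\beta,\gamma$ lands and to which of the triangle's three angles $u,v,w$ it is attached, and then invoke the symmetry of $a$ at each interface so the middle $v$- and $w$-powers really do cancel. Once this is done cleanly, the residual conjugation by $u^{\alpha}$ collapses because $u$ commutes with $u^{\alpha}$, and the clean formula $\varphi_a(\hat T_i^{jk})=T_i^{jk}$ falls out.
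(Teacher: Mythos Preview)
Your strategy is correct and is precisely the paper's argument carried out directly: the paper factors $\varphi_a=\psi\circ\hat\varphi$ through the intermediate algebra ${\mathcal A}'_n$, using the already-proved identity $\hat\varphi(\hat T_i^{jk})=(d_i^{jk})^{-1}T_i^{jk}(c_i^{jk})^{-1}$ and then specializing $c_i^{jk}\mapsto(T_i^{jk})^{a_i^{jk}}$, $d_i^{jk}\mapsto(T_i^{jk})^{-a_i^{jk}}$. Since $\psi$ sends $(d_i^{jk})^{-1}T_i^{jk}(c_i^{jk})^{-1}$ to $T_i^{jk}$, the composite again yields your key identity $\varphi_a(\hat T_i^{jk})=T_i^{jk}$, and both checks follow. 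The paper's route is shorter only because it recycles the previous proposition; your direct telescoping is the same computation.

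One bookkeeping caveat: for the $w$-powers at the second interface to cancel you need the trailing exponent on $\varphi_a(x_{jk}^i)$ to be $-\gamma=-a_k^{ij}$, matching the leading $w^{\gamma}$ coming from $(\varphi_a(x_{ik}^j))^{-1}$. Under the displayed formula as literally printed (second exponent $-a_i^{jk}$) that interface reads $w^{-\beta}\cdot w^{\alpha}$ and does \emph{not} collapse. The paper's own $\psi$ sends $d_j^{ik}\mapsto(T_j^{ik})^{-a_j^{ik}}$, so the intended assignment is $x_{ij}^k\mapsto(T_i^{jk})^{a_i^{jk}}x_{ij}(T_j^{ik})^{-a_j^{ik}}$; with this reading your telescoping gives $u^{\alpha}x_{ji}^{-1}v^{-\beta}\cdot v^{\beta}x_{jk}w^{-\gamma}\cdot w^{\gamma}x_{ik}^{-1}u^{-\alpha}=u^{\alpha}T_i^{jk}u^{-\alpha}=T_i^{jk}$ exactly as you claim. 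So the discrepancy is a typo in the displayed exponent, not a gap in your argument.
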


\begin{proof} Clearly, $\varphi_a=\psi\circ \hat \varphi$, where $\hat \varphi$ is given by \eqref{eq:hat phi} and $\psi:{\mathcal A}'_n\to {\mathcal A}_n$ is an epimorphism  given by
$$x_{ij}\mapsto x_{ij},~c_i^{jk}\mapsto (T_i^{jk})^a,~d_i^{jk}\mapsto (T_i^{jk})^{-a}\ .$$
\end{proof}

\begin{remark} Note that if $a_i^{jk}=1$, then
$\varphi_a(x_{ij}^k)=x_{ki}^{-1}x_{kj}x_{jk}x_{ik}^{-1}x_{ij}$.
\end{remark}

\subsection{Free factorizations of $\AA_n$ and proof of Theorem \ref{th:presentation y}}
\label{subsec:Free factorizations   and proof of Theorem}

 First, we verify that the relations \eqref{eq:triangle relations y ijk}, \eqref{eq:triangle relations y ijkl}, and \eqref{eq:exchange relations y} hold.
The left hand side of the first relation \eqref{eq:triangle relations y ijk} is:
$$y_{ij}^ky_{ji}^k=(x_{ki}^{-1}x_{kj})(x_{kj}^{-1}x_{ki})=1\ .$$

Furthermore, the left hand side of the second relation \eqref{eq:triangle relations y ijk} is:
$$y_{ij}^ky_{jk}^iy_{ki}^j=(x_{ki}^{-1}x_{kj})(x_{ij}^{-1}x_{ik}) (x_{jk}^{-1}x_{ji})=(x_{ki}^{-1}x_{kj}x_{ij}^{-1})(x_{ik} x_{jk}^{-1}x_{ji})=1$$
for all distinct $i,j,k\in [n]$ by the triangle relations \eqref{eq:triangle relations}. Similarly, the left hand side of \eqref{eq:triangle relations y ijkl} is:
$$y_{ij}^\ell y_{jk}^\ell y_{ki}^\ell=(x_{\ell i}^{-1}x_{\ell j}) (x_{\ell j}^{-1}x_{\ell k}) (x_{\ell k}^{-1}x_{\ell i})= 1$$
for all distinct quadruples $(i,j,k,\ell)$.

Finally, the difference between the right and left hand sides of \eqref{eq:exchange relations y} is:
$$y_{ij}^ky_{j\ell}^i+y_{i\ell}^k-y_{i\ell}^j= (x_{ki}^{-1}x_{kj}) (x_{ij}^{-1}x_{i \ell})+x_{ki}^{-1}x_{k\ell}-x_{ji}^{-1}x_{j\ell}=(x_{ji}^{-1}x_{jk}x_{kj}^{-1})x_{i \ell}+x_{ki}^{-1}x_{k\ell}-x_{ji}^{-1}x_{j\ell}$$
$$=x_{ji}^{-1}(x_{jk}x_{kj}^{-1}x_{i \ell}+x_{ji}x_{ki}^{-1}x_{k\ell}-x_{j\ell})=0$$
for all cyclic $(i,j,k,\ell)$ by the exchange relations \eqref{eq:exchange relations}.

Now let us show that the relations \eqref{eq:triangle relations y ijk}, \eqref{eq:triangle relations y ijkl}, \eqref{eq:exchange relations y} are defining. Indeed, Proposition \ref{pr:free action factorization} implies
an epimorphism of algebras $\AA_n\twoheadrightarrow {\mathcal Q}_n$ given by
$x_{ij}\mapsto y_{i^-,j}^i$. In other words, ${\mathcal Q}_n$ is isomorphic to the quotient of $\AA_n$ by the ideal generated by elements $x_{i,i^-}-1$, $i\in [n]$.

Therefore, we obtain the following obvious result.

\begin{lemma} The algebra ${\mathcal Q}_n$ is generated by all $y_{ij}:=y_{i^-,j}^i$ and $y_{ij}^{-1}$, $i,j\in [n]$, $i\ne j$,  subject to $y_{i,i^-}=1$, $i\in [i]$ and the relations \eqref{eq:triangle relations}, \eqref{eq:exchange relations}, i.e.,
\begin{equation}
\label{eq:triangle relations yij}
y_{ij}y_{kj}^{-1}y_{ki}=y_{ik}y_{jk}^{-1}y_{ji}
\end{equation}
for any distinct indices $i,j,k\in [n]$;
\begin{equation}
\label{eq:exchange relations yij}
y_{j\ell}=y_{jk}y_{ik}^{-1}y_{i\ell}+ y_{ji}y_{ki}^{-1}y_{k\ell}
\end{equation}
for all cyclic $(l,k,j,i)$ in $[n]$.
\end{lemma}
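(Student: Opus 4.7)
The statement is labeled ``obvious'' because it is a direct consequence of the free factorization in Proposition \ref{pr:free action factorization}, which gives an isomorphism
$$f:\AA_n\widetilde\to (\QQ F_n)*{\mathcal Q}_n,\qquad x_{ij}\mapsto c_i*y_{i^-,j}^i.$$
The plan is to use $f$ to identify the quotient $\AA_n/I$, where $I$ is the two-sided ideal generated by $\{x_{i,i^-}-1: i\in[n]\}$, with ${\mathcal Q}_n$, and then to read off a presentation of ${\mathcal Q}_n$ from the presentation of $\AA_n$ given in Definition \ref{def:An}.

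First I would compute $f(x_{i,i^-})=c_i*y_{i^-,i^-}^i=c_i*1=c_i$, using the convention $y_{ii}^k=1$. Thus $f$ carries $I$ onto the two-sided ideal $I'$ of $(\QQ F_n)*{\mathcal Q}_n$ generated by $\{c_i-1 : i\in[n]\}$. But killing the free generators $c_1,\ldots,c_n$ of the $\QQ F_n$-factor in a free product is the same as deleting that factor, so the canonical epimorphism $(\QQ F_n)*{\mathcal Q}_n\twoheadrightarrow {\mathcal Q}_n$ sending each $c_i$ to $1$ and each $y_{ij}^k$ to itself has kernel exactly $I'$. Composing with $f$, we get an isomorphism $\AA_n/I\widetilde\to {\mathcal Q}_n$ under which the class of $x_{ij}$ maps to $y_{i^-,j}^i=y_{ij}$.

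Finally I would transport the presentation of $\AA_n$ through this isomorphism. The algebra $\AA_n/I$ is presented by generators $x_{ij}^{\pm 1}$ ($i\ne j$) subject to $x_{i,i^-}=1$ together with the triangle relations \eqref{eq:triangle relations} and exchange relations \eqref{eq:exchange relations}. Renaming $x_{ij}$ to $y_{ij}$ in these relations produces exactly \eqref{eq:triangle relations yij} and \eqref{eq:exchange relations yij}, which yields the stated presentation of ${\mathcal Q}_n$. Since the reduction from $\AA_n$ to its quotient and the free-product factorization are both straightforward, there is no real obstacle here; the whole argument rests on having Proposition \ref{pr:free action factorization} available.
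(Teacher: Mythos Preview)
Your argument is correct and is exactly the paper's approach: the paper also invokes Proposition \ref{pr:free action factorization} to identify ${\mathcal Q}_n$ with the quotient $\AA_n/\langle x_{i,i^-}-1:i\in[n]\rangle$ (noting that $f(x_{i,i^-})=c_i$, so killing these generators in the free product leaves ${\mathcal Q}_n$), and then reads off the presentation by renaming $x_{ij}$ to $y_{ij}$. You have simply written out the details that the paper leaves implicit in calling the lemma ``obvious.''
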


Since $y_{ij}^k=y_{ki}^{-1}y_{kj}$, the relations \eqref{eq:triangle relations y ijk} directly follow from \eqref{eq:triangle relations yij} and the relations
\eqref{eq:exchange relations y} directly follow from
\eqref{eq:exchange relations yij} (this is obvious if we ``reverse engineer" the fist part of the proof and replace all $x_{ij}$ by $y_{ij}$ there).

Therefore, Theorem \ref{th:presentation y} is proved.
\endproof

The following obvious corollary from the proof of Theorem \ref{th:presentation y} will be instrumental in Section \ref{sect:Noncommutative triangulated surfaces}.

\begin{corollary} For each triangulation $\Delta$ of $[n]$ the $\UU_\Delta$ is generated by $u_{ij}^k$,  $(i,k),(jk)\in \Delta$
subject to the relations \eqref{eq:triangle relations y ijk} and \eqref{eq:triangle relations y ijkl}, i.e.,  for all distinct $i,j,k,\ell\in [n]$ such that $(i,j),(jk)\in \Delta$ one has:
$$u_{ii}^k=1,~u_{ij}^k u_{ji}^k=u_{ij}^k u_{jk}^i u_{ki}^j,~u_{ij}^\ell u_{jk}^\ell u_{ki}^\ell=1\ .$$

\end{corollary}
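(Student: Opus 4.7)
The plan is to mirror at the group level the argument used to prove Theorem \ref{th:presentation y}, combining it with the free-product decomposition $\TT_\Delta \cong F_n * \UU_\Delta$ from Proposition \ref{pr:T_Delta polygon U}.

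First I would verify that all three families of relations hold in $\UU_\Delta\subset\TT_\Delta$ by unfolding $u_{ij}^k = t_{ki}^{-1}t_{kj}$. The diagonal identity $u_{ii}^k=1$, the inversion $u_{ij}^k u_{ji}^k=1$, and the star identity $u_{ij}^\ell u_{jk}^\ell u_{ki}^\ell = 1$ are immediate telescopings inside $\TT_\Delta$. The triangle cycle $u_{ij}^k u_{jk}^i u_{ki}^j = 1$ comes from substituting the defining relation $t_{ik}t_{jk}^{-1}t_{ji} = t_{ij}t_{kj}^{-1}t_{ki}$ of $\TT_\Delta$ into the middle factor, producing the cancellation $t_{ki}^{-1}t_{kj}\cdot t_{ij}^{-1}\cdot t_{ij}t_{kj}^{-1}t_{ki} = 1$. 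This yields a surjective group homomorphism $\pi : U'_\Delta \twoheadrightarrow \UU_\Delta$, where $U'_\Delta$ denotes the abstract group with the listed presentation.

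Next I would identify a convenient generating set of $\UU_\Delta$ using Proposition \ref{pr:T_Delta polygon U}: under the isomorphism $\TT_\Delta \cong F_n * \UU_\Delta$, the $\TT_\Delta$-generator $t_{ij}$ corresponds to $c_i * u_{i^-,j}^i$, so $\UU_\Delta$ is generated by $\mathcal{G} := \{u_{i^-,j}^i : (i,j)\in\Delta,\ j\neq i^-\}$, and its defining relations, obtained from the triangle relations of $\TT_\Delta$ after killing the $c_i$'s, give one equation per triangle $(i,j,k)\in\Delta$:
\[
u_{i^-,j}^i (u_{k^-,j}^k)^{-1} u_{k^-,i}^k = u_{i^-,k}^i (u_{j^-,k}^j)^{-1} u_{j^-,i}^j .
\]

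The critical step is to show that the same presentation on $\mathcal{G}$ is realized inside $U'_\Delta$. Apply the star relation at vertex $k$ with triple $(k^-,i,j)$ --- valid since $(k,k^-),(k,i),(k,j)\in\Delta$ --- to get $u_{k^-,i}^k u_{ij}^k u_{j,k^-}^k = 1$; combined with the inversion $u_{j,k^-}^k = (u_{k^-,j}^k)^{-1}$, this yields inside $U'_\Delta$ the reduction $u_{ij}^k = (u_{k^-,i}^k)^{-1} u_{k^-,j}^k$ (boundary cases $i=k^-$ or $j=k^-$ land on a basic generator or its inverse). So $\mathcal{G}$ also generates $U'_\Delta$; after substitution each star relation at a vertex $k$ telescopes to $1$ among the $u_{k^-,\cdot}^k$'s, and each triangle cycle $u_{ij}^k u_{jk}^i u_{ki}^j = 1$ unpacks to exactly the displayed triangle relation on $\mathcal{G}$ (up to cyclic rearrangement). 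Thus $\UU_\Delta$ and $U'_\Delta$ admit identical presentations on $\mathcal{G}$, so $\pi$ is an isomorphism.

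The main obstacle is confirming that the substitution into the corollary's star and triangle relations produces no \emph{new} constraints beyond the reduced triangle relations on $\mathcal{G}$. Star relations at a given vertex $k$ vanish identically after reduction because they involve only $u_{k^-,\cdot}^k$'s in a telescoping pattern; and the triangle cycle at a triangle $(i,j,k)$ unpacks to the same multiplicative identity as the $\TT_\Delta$-triangle relation at $(i,j,k)$, since each generator $u_{ij}^k$ is pinned to a single ``fan'' and no cross-vertex ambiguity can arise.
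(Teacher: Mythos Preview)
Your approach is correct and is precisely the one the paper intends: the corollary is stated as an ``obvious corollary from the proof of Theorem~\ref{th:presentation y}'', meaning one transports that argument to the group level by replacing Proposition~\ref{pr:free action factorization} with Proposition~\ref{pr:T_Delta polygon U}, which is exactly what you do. The only refinement worth noting is that the cleanest way to phrase your ``critical step'' is to observe that $\TT_\Delta\cong F_n*\UU_\Delta$ implies $\UU_\Delta\cong \TT_\Delta/\langle\langle t_{i,i^-}\rangle\rangle$, so $\UU_\Delta$ inherits the presentation of $\TT_\Delta$ with $t_{i,i^-}=1$; then your reduction $u_{ij}^k=(u_{k^-,i}^k)^{-1}u_{k^-,j}^k$ shows the corollary's relations are equivalent to those inherited relations, just as the paper does for $\mathcal{Q}_n$ by ``reverse engineering'' the verification step.
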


\subsection{Freeness of $\TT_\Delta$  and proof of Theorem \ref{th:T_Delta polygon}}
\label{subsec:proof TDelta is free}
Let $\Delta$ be  a triangulation of $[n]$. Fix a {\it directed} triangulation $\underline \Delta\subset \Delta$ so for each $(i,j)\in \Delta$ with $j\notin \{i^+,i^-\}$ exactly one out of $(i,j)$ and $(j,i)$ belongs to  $\underline \Delta$ and $\underline \Delta$ contains all $(i,i^\pm)$, $i\in [n]$. By definition,  any such $\underline \Delta$ has cardinality $3n-3$.

\begin{proposition} Given $i_0\in [n]$. Then for any triangulation $\Delta$ and any $\underline \Delta$ as above, the group $\TT_\Delta$ is freely generated by $t_{ij}$, $(i,j)\in \underline \Delta\setminus\{(i_0,i_0^+)\}$.

\end{proposition}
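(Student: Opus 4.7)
The plan is a Tietze-style elimination: starting from the full generating set $\Delta$ (of size $4n-6$) modulo the $n-2$ triangle relations, I want to solve these relations successively for $n-2$ carefully chosen ``designated'' generators, leaving behind a presentation with $3n-4$ generators and no relations, whose surviving generators are exactly those indexed by $\underline\Delta\setminus\{(i_0,i_0^+)\}$.

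To choose the designated generators I would exploit the dual tree of $\Delta$: its vertices are the $n-2$ triangles and its edges correspond to the $n-3$ internal diagonals; this graph is connected (the polygon is simply-connected) with the right Euler count, hence a tree. Let $T_0$ be the unique triangle of $\Delta$ containing the boundary edge $(i_0,i_0^+)$, and root the dual tree at $T_0$. For each non-root triangle $T$, its parent edge in the dual tree is some internal diagonal $\{d_1,d_2\}$; since exactly one of $(d_1,d_2),(d_2,d_1)$ lies in $\underline\Delta$, define $\epsilon(T)$ to be the direction that does \emph{not} lie in $\underline\Delta$. Finally set $\epsilon(T_0):=(i_0,i_0^+)$. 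By construction the set $\{\epsilon(T):T\text{ a triangle of }\Delta\}$ has exactly $n-2$ elements and its complement in $\Delta$ is precisely $\underline\Delta\setminus\{(i_0,i_0^+)\}$.

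Next I would process the triangles from the leaves of the dual tree toward $T_0$. The key algebraic input is that every triangle relation $t_{ij}t_{kj}^{-1}t_{ki}=t_{ik}t_{jk}^{-1}t_{ji}$ contains each of its six directed edges exactly once, hence can be solved uniquely for any chosen generator as a word in the other five. When I reach a triangle $T$, previously processed children of $T$ have rewritten the generators attached to the diagonals $T$ shares with them; however the edge of $T$ carrying $\epsilon(T)$ (namely $T$'s parent edge in the dual tree, or the chosen boundary edge for $T_0$) is distinct from every child-shared edge of $T$ and has not been touched. So $t_{\epsilon(T)}$ still appears exactly once in the current form of $T$'s relation, and a legitimate Tietze elimination removes it.

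The main point requiring care is verifying that these substitutions really remain decoupled down the recursion, i.e.\ that solving for $t_{\epsilon(T')}$ in a child $T'$ never reintroduces $t_{\epsilon(T)}$ in a way that would obstruct later elimination. This follows because the word replacing $t_{\epsilon(T')}$ lies in the subgroup generated by the remaining five directed edges of $T'$, all incident to vertices of $T'$, whereas $\epsilon(T)$ is a direction on $T$'s parent edge, which shares only one vertex (at most) with the diagonal $\{d_1,d_2\}$ joining $T$ to $T'$. After the full sequence of $n-2$ Tietze eliminations all relations have been removed, so $\TT_\Delta$ is free on the $3n-4$ surviving generators, which are exactly the elements of $\underline\Delta\setminus\{(i_0,i_0^+)\}$, proving the proposition and recovering Theorem~\ref{th:T_Delta polygon} as a corollary.
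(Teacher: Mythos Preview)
Your proof is correct. Both your argument and the paper's exploit the dual tree of $\Delta$, but the algebraic packaging differs: the paper proceeds by induction on $n$, locating an ear vertex (equivalently a leaf of the dual tree), removing the corresponding triangle to obtain a triangulation $\hat\Delta$ of $[n-1]$, invoking the inductive hypothesis on $\TT_{\hat\Delta}$, and then reassembling $\TT_\Delta$ as a free product of $\TT_{\hat\Delta}$ with the triangle group of the removed triangle modulo identification of the two shared generators $t_{1,n-1},t_{n-1,1}$. Your Tietze procedure is the same recursion made explicit, one triangle at a time, and has the advantage that rooting at the triangle containing $(i_0,i_0^+)$ makes the role of $i_0$ transparent from the outset; the paper's written argument has to navigate the interaction between the choice of ear and the location of $i_0$ somewhat less directly.

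One small imprecision worth tightening: when $T'$ is not itself a leaf, the word substituted for $t_{\epsilon(T')}$ will already have absorbed edges coming from descendants of $T'$, not merely the five other directed edges of $T'$. The correct invariant is that this word lies in the subgroup generated by directed edges of triangles in the subtree rooted at $T'$; since $T$'s parent edge is shared only by $T$ and $T$'s parent, it is not an edge of any triangle in that subtree, so $t_{\epsilon(T)}$ indeed survives all substitutions with exactly one occurrence and can be eliminated in its turn.
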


\begin{proof} We proceed by induction on $n$. The assertion is obvious for $n\le 3$. Suppose that $n\ge 4$. Then it is easy to see that there exist distinct $j_0,j_0'\in [n]$ such that $(j_0^-,j_0^+),({j'}_0^-,{j'}_0^+)\in \Delta$.

Without loss of generality we may assume that $j'_0=n$ and $j_0\ne i$ (hence $j_0\notin \{i,n-1,n,1\}$). Then $\hat \Delta=\Delta\setminus\{(1,n),(n,1),(n-1,n),(n,n-1)\}$ is a triangulation of $[n-1]$ and
$\underline {\hat \Delta}=\underline \Delta\setminus\{(1,n),(n,1),(n-1,n),(n,n-1)\}$ is the corresponding directed triangulation.

The following result is obvious.

\begin{lemma}
\label{le:iota injective}
(a) The assignments $t_{ij}\mapsto t_{ij}$, $(i,j)\in \hat \Delta$, $t_{n-1,n}\mapsto 1$, $t_{n,n-1}\mapsto 1$, $t_{1,n}\mapsto 1$,
$t_{n,1}\mapsto t_{n-1,1}^{-1}t_{1,n-1}$ define an epimorphism of groups $\varphi:\TT_\Delta\twoheadrightarrow \TT_{\hat \Delta}$.

(b)  $\iota\circ \varphi=Id_{\TT_{\hat \Delta}}$, where $\iota:\TT_{\hat \Delta}\to \TT_\Delta$ is a homomorphism given by $\iota(t_{ij})=t_{ij}$ for $(i,j)\in \hat \Delta$.

(c) The homomorphism $\iota:\TT_{\hat \Delta}\to \TT_\Delta$ is injective.

\end{lemma}

Denote by $\Delta_0$ the triangulation of the triangle with the vertices $1,n-1,n$. Clearly, $\TT_\Delta$ is generated by $\TT_{\hat \Delta}$ (via the embedding $\iota$) and $\TT_{\Delta_0}$, more precisely,
$$\TT_\Delta=\TT_{\hat \Delta}*\TT_{\Delta_0}/\langle (t_{n-1,1}*1)(1*t_{n-1,1})^{-1},(t_{1,n-1}*1)(1*t_{1,n-1})^{-1}\rangle \ .$$
This and the inductive hypothesis (asserting that  $\TT_{\hat \Delta}$ is  freely generated by $t_{ij}$, $(i,j)\in \underline {\hat \Delta}\setminus\{(j_0,j_0^+)\}$)  imply (by eliminating $t_{n-1,1}$ and $t_{1,n-1}$ and setting $t_{k\ell}:=1*t_{k,\ell}$ for $(k,\ell)=(1,n), (n,1),(1,n-1),(n-1,1)$) that $\TT_\Delta$ is freely generated by all $t_{ij}$, $(i,j)\in\underline  \Delta\setminus\{(j_0,j_0^+)\}$.
\end{proof}

The theorem is proved.
\endproof

\subsection{Retraction of $\TT_n$ onto $\TT_\Delta$ and proof of Theorem \ref{th:piDelta}}
\label{subsec:Retraction of TT_n onto TT_Delta and proof of Theorem th:piDelta}
It suffices to construct an element $\tau_{ij}\in \TT_\Delta$ for each pair $(i,j)\in [n]\times [n]$, $i\ne j$ such that $\tau_{ij}=t_{ij}$ whenever  $(i,j)\in \Delta$ and for any distinct $i,j,k\in n$ one has the triangle relation:
\begin{equation}
\label{eq:triangle relation general}
\hat T_i^{jk}=\hat T_i^{kj}
\end{equation}
where $\hat T_i^{j,k}:=\tau_{ji}^{-1}\tau_{jk}\tau_{ik}^{-1}$.

We  construct such $\tau_{ij}$  by induction on $n$. Retain notation from the proof of Theorem \ref{th:T_Delta polygon} and assume, without loss of generality, that $(n-1,n+1)\in \Delta$. If $n\notin \{i,j\}$,
then, by deleting the vertex $n$ and using the natural inclusion $\TT_{\hat \Delta}\subset \TT_\Delta$ given by Lemma \ref{le:iota injective}(c),
we set $\tau_{ij}$ to be that one which belongs to $\TT_{\hat \Delta}$. Finally, we set $\tau_{1,n}:=t_{1,n}$, $\tau_{n,1}:=t_{n,1}$ and:
$$\tau_{i,n}:=\tau_{i,n-1} \tau_{1,n-1}^{-1} \tau_{1,n},~\tau_{n,i}:=\tau_{n,1} \tau_{n-1,1}^{-1} \tau_{n-1,i}$$
for $1<i<n$.

Now verify that so constructed elements satisfy
\eqref{eq:triangle relation general}. Indeed, if $i,j,k\in [n-1]$, we have nothing to prove because \eqref{eq:triangle relation general} holds  by the inductive hypothesis. Otherwise, it suffices to consider the case when $k=n$ and verify:
\begin{equation}
\label{eq:hat Tinj}
T_i^{n,j}=T_i^{j,n}
\end{equation}
for all $i,j\in [n-1]$, $i\ne j$.
Indeed,
$$\hat T_i^{n,j}=\tau_{ni}^{-1}\tau_{nj}\tau_{ij}^{-1}=\tau_{n-1,i}^{-1}\tau_{n-1,j}\tau_{ij}^{-1}=\hat T_i^{n-1,j},~\hat T_i^{j,n}=\tau_{ji}^{-1}\tau_{jn}\tau_{in}^{-1}=\tau_{ji}^{-1}\tau_{j,n-1}\tau_{i,n-1}^{-1}=\hat T_i^{j,n-1}$$
which, together with the inductive hypothesis, proves  \eqref{eq:hat Tinj}.

Therefore, the assignments $t_{ij}\mapsto \tau_{ij}$ for all $i\ne j$ define a group epimorphisms $\TT_n\to \TT_\Delta$.

Theorem \ref{th:piDelta} is proved.
\endproof

\subsection{Noncommutative Laurent Phenomenon and proof of Theorems \ref{th:noncomlaurent n-gon} and \ref{th:noncompol n-gon}}
\label{subsect:proof of Theorems th:noncomlaurent n-gon and th:noncompol n-gon}
Clearly, Theorem \ref{th:noncomlaurent n-gon} is a direct corollary of Theorem \ref{th:noncompol n-gon}, so we will only prove the latter one. We  proceed by induction on $n$.  In fact, due to the relations \eqref{eq:triangle relations y ijkl}
in the form $y_{kj}^i=y_{k,i^+}^iy_{i^+,j}^i$ (hence $y_{(k,\ii)}=y_{k,i^+}^iy_{(i^+,\ii)}$), it suffices to prove  \eqref{eq:noncompol n-gon} only with  $k=i^+$ (however, we will use the inductive hypothesis without this restriction).

Indeed, if $n\le 3$, the assertion is immediate. Now suppose that $n\ge 4$. In what follows we  retain some notation of Section \ref{subsec:proof TDelta is free}, that is, we fix a triangulation $\Delta$ and suppose that $(n-1,1)\in \Delta$ and $(j_0,j_0^+)\in \Delta$ for some $j_0\notin \{i,1,n-1,n\}$.  If $1 \notin\{i,j\}$, then the assertion \eqref{eq:noncompol n-gon} for $\Delta$ coincides with that for $\hat \Delta=\Delta\setminus\{(1,n),(n,1),(n-1,n),(n,n-1)\}$ and we have nothing to prove. Now suppose that $n\in \{i,j\}$. Without loss of generality we may assume that $i=n$ (the case $j=n$ is obtained by reversing all chords in $[n]$). Then, we will use the inductive hypothesis \eqref{eq:noncompol n-gon} for $\hat \Delta$ in the form:
$$y_{1,j}^{n-1}=\sum_{\ii'} y_{(1,\ii')},~y_{n-1,j}^1=\sum_{\ii''} y_{(n-1,\ii'')} \ ,$$
where the first (resp. the second) summation is over all $(n-1,j,\hat \Delta)$ (resp. $(1,j,\hat \Delta)$)-admissible sequences.

Using these and  the relation \eqref{eq:exchange relations y} in the form
$y_{1,j}^n=y_{1,j}^{n-1}+y_{1,n-1}^ny_{n-1,j}^1$, we obtain:
$$y_{1,j}^n=\sum_{\ii'} y_{(1,\ii')}+\sum_{\ii''} y_{1,n-1}^n y_{(n-1,\ii'')}=\sum_{\ii'} y_{(1,n,1,\ii')}+\sum_{\ii''} y_{(1,n,n-1,\ii'')}\ .$$
Clearly, this gives \eqref{eq:noncompol n-gon} because each $(n,j,\Delta)$-admissible sequence is either of the form $(n,1,\ii')$, where $\ii'$ is  $(n,j,\hat \Delta)$-admissible or is of the
form $(n,n-1,\ii'')$, where $\ii''$ is  $(1,j,\hat \Delta)$-admissible    (and vice versa).

Theorem \ref{th:noncompol n-gon} is proved.
\endproof

Therefore, Theorem \ref{th:noncomlaurent n-gon} is proved.
\endproof

\subsection{Noncommutative cluster variables and proof of Theorems \ref{th:A_n divisible} and \ref{th:A_Delta}}
\label{subsect:A_Delta}

For each triangulation $\Delta$ of $[n]$ and $(p,q)\in [n]\times [n]$, $p\ne q$ define an element
$t_{pq}^\Delta\in \QQ \TT_\Delta$ (in the notation of Theorem \ref{th:noncomlaurent n-gon}) by
\begin{equation}
\label{eq:noncomm schiffler TDelta n-gon}
t_{pq}^\Delta=\sum_{\ii\in Adm_\Delta(p,q)} t_\ii \ ,
\end{equation}
where $t_\ii\in \TT_\Delta$ is given by:
$t_\ii:=t_{i_1,i_2}t_{i_3,i_2}^{-1}t_{i_3,i_4}\cdots t_{i_{2m-1},i_{2m-2}}^{-1}t_{i_{2m-1},i_{2m}}$
for any $\ii\in [n]^{2m}$ (with the convention $t_{ii}=1$ for $i\in [n]$).

We need the following result.

\begin{theorem}
\label{th:universal localization n-gon} For any triangulations $\Delta$ and $\Delta'$ of $[n]$
the assignments $t_{ij}^{\Delta'}\mapsto t_{ij}^\Delta$ for $(i,j)\in [n]\times [n]$, $i\ne j$ define an isomorphism of algebras
\begin{equation}
\label{eq:universal localization homomorphism Delta Delta' n-gon}
\psi_{\Delta,\Delta'}:\QQ\TT_{\Delta'}[S_{\Delta'}^{-1}]\widetilde \to \QQ \TT_\Delta[S_\Delta^{-1}]\ ,
\end{equation}
where $S_\Delta$ (resp. $S_\Delta'$) is a submonoid in $\QQ \TT_\Delta$ generated by all $t_{ij}^\Delta$. These isomorphisms satisfy:
\begin{equation}
\label{eq:universal localization n-gon transitivity}
\psi_{\Delta,\Delta'}=\psi_{\Delta,\Delta''}\circ \psi_{\Delta'',\Delta'}
\end{equation}
for any triangulations $\Delta,\Delta',\Delta''$ of $[n]$.
\end{theorem}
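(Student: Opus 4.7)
The plan is to factor the desired isomorphism $\psi_{\Delta,\Delta'}$ through the already-constructed algebra $\AA_n$, using Theorem \ref{th:A_Delta} together with Theorem \ref{th:noncomlaurent n-gon}. The entire content is that both universal localizations sit as alternative presentations of $\AA_n$, and $\psi_{\Delta,\Delta'}$ is just the composition of the two identifications.

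First, for any triangulation $\Delta$, Theorem \ref{th:A_Delta}(a) gives an isomorphism $\ii_\Delta:\QQ\TT_\Delta\widetilde\to\AA_\Delta$ with $t_{ij}\mapsto x_{ij}$ for $(i,j)\in\Delta$, while Theorem \ref{th:A_Delta}(b) identifies $\AA_n=\AA_\Delta[{\bf S}^{-1}]$ for ${\bf S}$ the submonoid generated by all $x_{pq}$, $p\ne q$. By the universal property of the localization (Lemma \ref{le:universal property of localization}), $\ii_\Delta$ extends uniquely to a ring homomorphism $\QQ\TT_\Delta[S_\Delta^{-1}]\to\AA_n$, once we verify that $\ii_\Delta$ sends $S_\Delta$ into $\AA_n^\times$. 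But the noncommutative Laurent Phenomenon (Theorem \ref{th:noncomlaurent n-gon}) asserts exactly that $x_{pq}=\sum_{\ii\in Adm_\Delta(p,q)}x_\ii$ holds in $\AA_\Delta\subset\AA_n$, while by construction $t_{pq}^\Delta$ is the very same sum with each $x_{ij}$ replaced by $t_{ij}$. Hence $\ii_\Delta(t_{pq}^\Delta)=x_{pq}$ for every $p\ne q$, so $\ii_\Delta(S_\Delta)={\bf S}$ and the extension $\widetilde\ii_\Delta:\QQ\TT_\Delta[S_\Delta^{-1}]\to\AA_n$ is well-defined and surjective.

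Second, I claim $\widetilde\ii_\Delta$ is in fact an isomorphism. Injectivity follows from the universal property: the inverse of $\ii_\Delta:\QQ\TT_\Delta\widetilde\to\AA_\Delta$ extends through the natural inclusion $\AA_\Delta\hookrightarrow\AA_n=\AA_\Delta[{\bf S}^{-1}]$ to a homomorphism $\AA_n\to\QQ\TT_\Delta[S_\Delta^{-1}]$, because each $x_{pq}\in{\bf S}$ maps to $t_{pq}^\Delta\in S_\Delta$, which is invertible in the localization. The two maps are mutually inverse on the generators $t_{ij}$, $(i,j)\in\Delta$, and on their Laurent expansions, so by uniqueness they are inverse isomorphisms.

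Third, define
\[
\psi_{\Delta,\Delta'}:=\widetilde\ii_\Delta^{\,-1}\circ\widetilde\ii_{\Delta'}\colon\QQ\TT_{\Delta'}[S_{\Delta'}^{-1}]\widetilde\to\AA_n\widetilde\to\QQ\TT_\Delta[S_\Delta^{-1}].
\]
This is an isomorphism by construction. Tracing a generator, $\psi_{\Delta,\Delta'}(t_{ij}^{\Delta'})=\widetilde\ii_\Delta^{\,-1}(\widetilde\ii_{\Delta'}(t_{ij}^{\Delta'}))=\widetilde\ii_\Delta^{\,-1}(x_{ij})=t_{ij}^\Delta$, giving \eqref{eq:universal localization homomorphism Delta Delta' n-gon}. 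The cocycle identity \eqref{eq:universal localization n-gon transitivity} is now automatic, since
\[
\psi_{\Delta,\Delta''}\circ\psi_{\Delta'',\Delta'}=(\widetilde\ii_\Delta^{\,-1}\circ\widetilde\ii_{\Delta''})\circ(\widetilde\ii_{\Delta''}^{\,-1}\circ\widetilde\ii_{\Delta'})=\widetilde\ii_\Delta^{\,-1}\circ\widetilde\ii_{\Delta'}=\psi_{\Delta,\Delta'}.
\]

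The main obstacle is the verification that $\ii_\Delta$ sends the monoid $S_\Delta$ exactly onto $\bf S$, i.e.\ that the abstract noncommutative Schiffler sum $t_{pq}^\Delta$ evaluates to $x_{pq}$ under $\ii_\Delta$. This hinges on the strong form of the Laurent Phenomenon in Theorem \ref{th:noncomlaurent n-gon}---in particular, on condition (iii) in Definition \ref{def:admissible} governing the order in which an admissible path crosses $(p,q)$, which is what ensures the formula is a bona fide equality of noncommutative elements rather than a mere commutative identity. Once this is granted, everything else is the formal statement that two universal localizations presenting the same ring must be canonically isomorphic.
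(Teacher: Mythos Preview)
Your argument is circular. In the paper's logical structure, Theorem~\ref{th:A_Delta} is \emph{deduced from} Theorem~\ref{th:universal localization n-gon}, not the other way around: the proof of Theorem~\ref{th:A_Delta} (in Section~\ref{subsect:A_Delta}) goes through Proposition~\ref{pr:canonical isomorphism localization polygon}, whose key Lemma~\ref{le:relations tDelta} explicitly invokes the isomorphisms $\psi_{\Delta,\Delta_0}$ supplied by Theorem~\ref{th:universal localization n-gon}. So you cannot take Theorem~\ref{th:A_Delta} as input here. Concretely, without Theorem~\ref{th:universal localization n-gon} already in hand, you have no independent reason to know that $\ii_\Delta$ is injective or that $\AA_n$ is the universal localization $\AA_\Delta[{\bf S}^{-1}]$; those facts are precisely what the present theorem is meant to establish (in disguise).

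The paper's own proof avoids any appeal to $\AA_n$ and works entirely inside the group algebras $\QQ\TT_\Delta$. For adjacent triangulations $\Delta,\Delta'$ (differing by a single flip in a quadrilateral $(i,j,k,\ell)$), one writes down by hand a homomorphism $\varphi_{\Delta,\Delta'}:\QQ\TT_{\Delta'}\to\QQ\TT_\Delta[(t^\Delta_{j\ell})^{-1}]$ (Lemma~\ref{le:neighboring homomorphism}) and then proves the key identity $\varphi_{\Delta,\Delta'}(t_{pq}^{\Delta'})=t_{pq}^{\Delta}$ directly, via an explicit sign-respecting bijection between (decorated) admissible sequences for $\Delta$ and for $\Delta'$ (the involution $[J_P]$ of Propositions~\ref{pr:minimal involution} and~\ref{pr:combin bijection Delta Delta'}). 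This combinatorial matching is the real content; once it is done, universality of localization gives the isomorphism for adjacent flips, and the general case follows by composing along a chain of flips. Your shortcut through $\AA_n$ would collapse all of this, but only because it silently assumes the conclusion.
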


\begin{proof}
First,  prove the assertion for {\it adjacent} triangulations $\Delta,\Delta'$ of $\Sigma$, i.e., such that $\Delta\setminus \Delta'=\{(i,k),(k,i)\}$, $\Delta'\setminus \Delta=\{(j,\ell),(\ell,j)\}$, where $(i,j,k,\ell)$ is a cyclic quadruple.

By definition,
\begin{equation}
\label{eq:tDelta jl}
t^\Delta_{j\ell}=t_{jk}t_{i k}^{-1}t_{i\ell}+
t_{ji} t_{k i}^{-1} t_{k \ell},~t^\Delta_{\ell j}=t_{\ell i}t_{ki}^{-1}t_{kj}+
t_{\ell k} t_{ik}^{-1} t_{ij} \ .
\end{equation}

We need the following result.

\begin{lemma}
\label{le:neighboring homomorphism}
For any adjacent triangulations $\Delta,\Delta'$ of $[n]$ with $\Delta\setminus \Delta'=\{(i,k),(k,i)\}$, $\Delta\setminus \Delta'=\{(j,\ell),(\ell,j)\}$
there is a unique homomorphism of algebras
$\varphi_{\Delta',\Delta}:\QQ \TT_{\Delta'}\to \QQ \TT_\Delta[(t^\Delta_{j\ell})^{-1}]$
such that
$$\varphi_{\Delta,\Delta'}(t_{i',j'})= \begin{cases}
t_{i',j'} & \text{if $\{i',j'\} \ne \{j,\ell\}$}\\
t_{j\ell}^\Delta & \text{if $(i',j')=(j,\ell)$}\\
t_{\ell j}^\Delta & \text{if $(i',j')=(\ell,j)$}\\
\end{cases}$$ for all $(i',j')\in \Delta'$.

\end{lemma}

\begin{proof}
Indeed, 
it suffices only to prove that $\varphi_{\Delta,\Delta'}$ respects the triangle relations
$$T_{i'}^{j',k'}=T_{i'}^{k',j'}$$
for all triangles $(i',j',k')$ in $\Delta'$. Clearly, if $(i',j',k')$ belongs to $\Delta\cap \Delta'$, then we have nothing to prove.
It suffices only to consider the case when $(j',k')=(j,\ell)$, i.e., we have to prove that
$$\varphi_{\Delta,\Delta'}(T_{i'}^{j\ell})=\varphi_{\Delta,\Delta'}(T_{i'}^{\ell j})$$
for $i'\in \{i,k\}$. Taking into account that both $(i'j)$ and $(i'\ell)$ belong to $\Delta\cap \Delta'$, we have only to prove that in $\QQ \TT_\Delta$ one has:
$$t_{j i'}^{-1}t_{j\ell}^\Delta t_{i'\ell}^{-1}=t_{\ell i'}^{-1}t_{\ell j}^\Delta t_{i'j}^{-1} \ .$$
In view of \eqref{eq:tDelta jl}, this is equivalent to:
\begin{equation}
\label{eq:TDelta equal}
t_{j i'}^{-1}(t_{jk}t_{i k}^{-1}t_{i\ell}+t_{ji} t_{k i}^{-1} t_{k \ell})t_{i'\ell}^{-1}=t_{\ell i'}^{-1}(t_{\ell i}t_{ki}^{-1}t_{kj}+t_{\ell k} t_{ik}^{-1} t_{ij}) t_{i'j}^{-1}\ .
\end{equation}

If $i'=i$, then both sides of \eqref{eq:TDelta equal} are, clearly, equal to $T_i^{jk}+T_i^{k\ell}$, and if $i'=k$, then both sides of
\eqref{eq:TDelta equal} are equal to $T_k^{ij}+T_k^{i\ell}$.

This proves that $\varphi_{\Delta,\Delta'}$ is well-defined homomorphism of algebras.
\end{proof}

Furthermore, we prove that in the assumptions of Lemma \ref{le:neighboring homomorphism} one has
\begin{equation}
\label{eq:phi delta delta' neighbors n-gon}
\varphi_{\Delta,\Delta'}(t_{pq}^{\Delta'})=t_{pq}^{\Delta}
\end{equation}
for all $(p,q)\in [n]\times [n]$, $p\ne q$.

Define a partial order $\prec$ on $[n]^\bullet$ by the covering insertion relations $\ii\prec \ii'$ if
\begin{equation}
\label{eq:covering relation}
\ii=(\ldots,i_k,i_{k+1},i_{k+2},\ldots), \ii'=(\ldots,i_k,i_{k+1},a,i_{k+1},i_{k+2},\ldots)
\end{equation}
for any $a\in [n]$.

We need the following obvious fact.

\begin{lemma} For each $\ii\in [n]^\bullet$ there is a unique element $[\ii]$ such that:

$\bullet$  $[\ii]\preceq \ii$.

$\bullet$ $[\ii]$ is minimal in the partial order $\prec$.

\end{lemma}

Clearly, if $\ii,\ii'\in [n]^{2\bullet}$ and $\ii\prec \ii'$, then $t_\ii=t_{\ii'}$.

Furthermore, fix a distinct quadruple $P:=(i,j,k,\ell)$ in $[n]$
and denote by ${\underline P}$  the underlying set $\{i,j,k,\ell\}$.

For any  $\ii=(i_1,\ldots,i_r)\in [n]^r$, $r\ge 2$  define the {\it index set}  $Ind_\ii(P)\subset [r-1]$ by:
$$Ind_\ii(P)=\{s\in [r-1]: \{i_s,i_{s+1}\}\in\{\{i,k\},\{j,\ell\}\}$$
(with the convention that $i_k=0$ if $k\le 0$ and $i_k=\infty$ if $k>r$)
and the {\it index}  $ind_\ii(P)\in \ZZ_{\ge 0}$ by
$$ind_\ii(P)=\min Ind_\ii(P)$$
with the convention that
$\min \emptyset:=0$.

Denote by $I_P$ the set of all sequences $\ii$ such that $|Ind_\ii(P)|=1$

Clearly, $I_{P'}=I_P$ for any permutation $P'=(i',j',k',\ell')$ of $P=(i,j,k,\ell)$ such that $\{i',k'\}\in \{\{i,k\},\{j,\ell\}\}$.

\begin{proposition}
\label{pr:minimal in I_P}
For each $\ii\in I_P$ one has $[\ii]\in I_P$ and $ind_{[\ii]}(P)\equiv ind_\ii(P) \mod 2$.

\end{proposition}

\begin{proof}
We need the following fact.

\begin{lemma} Let $\ii,\ii'\in [n]^\bullet$ be such that $\ii\prec \ii'$ and  $\ii'\in I_P$. Then $\ii\in I_P$.
\end{lemma}

\begin{proof}
It suffices to prove the assertion only for $\ii$ and $\ii'={\bf j}_{ab}^t(\ii)$ as in \eqref{eq:covering relation}. Let  $s'=ind_{\ii'}(P)$. Since $|Ind_{\ii'}(P)|=1$, and $i'_{s'-1}\neq i'_{s'+1}$, $i'_{s'}\neq i_{s'+2}$, but $i'_{t+1}=i'_{t+3}$, then $s'\notin \{t+1,t+2\}$. In particular, $\{i_t,a\}\notin \{\{i,k\},\{j,\ell\}\}$. This immediately implies that  $|Ind_\ii(P)|=1$ and
\begin{equation}
\label{eq:s versus sprime}
Ind_\ii(P)=\begin{cases}
\{s'\} & \text{if $s'\le t$}\\
\{s'-2\} & \text{if $s'\ge t+3$}\\
\end{cases}\ .
\end{equation}
The lemma is proved.
\end{proof}

Thus, for any $\ii\in I_P$ we see that $\{\ii''\in [n]^\bullet: \ii''\prec \ii\}\subset I_P$, in particular, $[\ii]\in I_P$.

The proposition is proved. \end{proof}

For $a,b\in [n]$ and $1\le s<r$ define the map
${\bf j}_{ab}^s:[n]^r\to [n]^{r+2}$ by
$(\ldots,i_s,i_{s+1},\ldots)\mapsto (\ldots,i_s,a,b,i_{s+1},\ldots)$.
Define a map $J_P:I_P\times \{-1,1\}\to [n]^\bullet \times \{-1,1\}$ by
\begin{equation}
\label{eq:map J_P}
J_P(\ii,\varepsilon)=({\bf j}_{i'k'}^s(\ii), (-1)^{(s-1)\chi_{\{i,j\}} (i_s)})
\end{equation}
where $s:=ind_\ii(P)$ and $\chi_{\{b,c\}}(a)$ is the characteristic function, i.e., it is $1$ if $a\in \{b,c\}$ and $0$ otherwise, and the pair $(i',k')$
is determined by $\{i',k'\}={\underline P}\setminus \{i_s,i_{s+1}\}$ and:
\medskip

$\bullet$ If $s$ is odd then $\{i'\}={\underline P}_\varepsilon\setminus \{i_s,i_{s+1}\}$, where we abbreviated ${\underline P}_\varepsilon:=
\begin{cases} \{i,j\}  & \text{if $\varepsilon=-1$}\\
\{k,\ell\}  & \text{if $\varepsilon=1$}
\end{cases}$.

$\bullet$  If $s$ is even then
$\{i'\}=\begin{cases} \{i_{s-1}\} & \text{if $i_{s-1}\in {\underline P} \setminus \{i_s,i_{s+1}\}$}\\
{\underline P}\setminus \{i_s,i_{s+1}, i_{s+2}\} & \text{if $i_{s+2}\in {\underline P}\setminus \{i_{s-1},i_s,i_{s+1}\}$}\\
\{i,j\}\setminus \{i_s,i_{s+1}\} &\text{ otherwise}
\end{cases}$.

Let $\hat I_P$ be the set of all $(\ii,\varepsilon)\in I_P\times \{-1,1\}$ such that

$\bullet$ if $s=ind_\ii(P)$ is even, then $\varepsilon=1$;

$\bullet$ if $s=ind_\ii(P)$ is odd then:

(i) If $\{i_{s-1}\}={\underline P}_\varepsilon\setminus\{i_s,i_{s+1}\}$, $\{i_{s+2}\}={\underline P}_{-\varepsilon}\setminus\{i_s,i_{s+1}\}$,
$i_{s-2}\ne i_s$, $i_{s+3}\ne i_{s+1}$, then $i_s\in \{i,j\}$.

(ii) If $\{i_{s-1}\}=  {\underline P}_\varepsilon\setminus\{i_s,i_{s+1}\}$, $\{i_{s+2}\}\ne {\underline P}_{-\varepsilon}\setminus\{i_s,i_{s+1}\}$, then  $i_{s-2}\ne i_{s+1}$.


\begin{proposition}
\label{pr:monotonic J_P}
$J_P(\hat I_P)\subset \hat I_P$, that is,  $J_P$ is a map $J_P: \hat I_P\rightarrow \hat I_P$.


\end{proposition}

\begin{proof} We need the following fact.

\begin{lemma}
\label{le:ind+1} Let $\ii\in I_P$ and let $s=ind_\ii(P)$. Then
\begin{equation}
\label{eq:index shifted by one}
Ind_{{\bf j}_{i'k'}^s(\ii)}(P)=\{ind_\ii(P)+1\}
\end{equation}
for any $i',k'\in [n]$ such that  $\{i',k'\}=\underline P\setminus \{i_s,i_{s+1}\}$.
\end{lemma}

\begin{proof} Let $s=ind_\ii(P)$ and $\ii':={\bf j}_{i'k'}^s(\ii)$.
 Note that $s+1\in Ind_{\ii'}(P)$ because  $\{i'_{s+1}, i'_{s+2}\}\in \{\{i,k\},\{j,\ell \}\}$.
This and the fact that $\{i'_s, i'_{s+1}, i'_{s+2}, i'_{s+3}\}={\underline P}$ imply that  $s\notin Ind_{\ii'}(P)$ and $s+2\notin Ind_{\ii'}(P)$. Finally, if $s''\le s-1$ (res. $s''\ge s+3$), then $s'' \notin Ind_{\ii'}(P)$ because $s''\notin Ind_\ii(P)$ (resp. because $s''-2\notin Ind_\ii(P)$).

This proves \eqref{eq:index shifted by one}.
\end{proof}

Furthermore, let $(\ii,\varepsilon)\in \hat I_P$, $(\ii',\varepsilon'):= J_P(\ii,\varepsilon)$, $s:=Ind_\ii(P)$, $s'=Ind_{\ii'}(P)$. By Lemma \ref{le:ind+1},
$s'=s+1$. This, in particular, implies that  $\varepsilon'=(-1)^{(s-1)\chi_{\{i,j\}} (i_s)}\in \{1,(-1)^{s'}\}$.
If $s$ is odd, this proves the desired inclusion $J_P(\ii,\varepsilon)\in \hat I_P$.

It remains to consider the case when $s$ is even. Indeed, $\ii'={\bf j}_{i',k'}(\ii)$, where $i'=i'_{s'}$, $k'=i'_{s'+1}$ are given by the even case of \eqref{eq:map J_P}.
Note that
\begin{equation}
\label{eq:P_epsilon'}
P_{\varepsilon'}=\begin{cases}
 \{i,j\} & \text{if $i_s\in \{i,j\}$}\\
 \{k,\ell\} & \text{if $i_s\in \{k,\ell\}$}\\
\end{cases},P_{-\varepsilon'}=\begin{cases}
 \{i,j\} & \text{if $i_{s+1}\in \{i,j\}$}\\
 \{k,\ell\} & \text{if $i_{s+1}\in \{k,\ell\}$}\\
\end{cases}
\end{equation}
hence
$\{i_s\}=\{i'_{s'-1}\}= P_{\varepsilon'}\setminus\{i'_{s'},i'_{s'+1}\}$, $\{i_{s+1}\}=\{i'_{s'+2}\}=P_{-\varepsilon'}\setminus\{i'_{s'},i'_{s'+1}\}$.

Finally,
 $i'_{s'-2}\ne i'_{s'}$ and $i'_{s'+1}\ne i'_{s'+3}$ if and only if $\{i_{s-1},i_{s+2}\}\cap {\underline P}=\emptyset$ hence $\{i'\}=\{i,j\}\setminus \{i_s,i_{s+1}\}$.

This proves that $J_P(\ii,\varepsilon)\in \hat I_P$ for even $s$ as well.

The proposition is proved.
\end{proof}

Denote by $[I_P]\subset I_P$ the set of all $\ii \in \hat I_P$ such that $\ii=[\ii]$ is minimal in the partial order $\prec$ and abbreviate $[\hat I_P]:=\hat I_P\cap ([I_P]\times \{-1,1\})$.

Proposition \ref{pr:minimal in I_P} guarantees that the assignments $\ii\mapsto [\ii]$
define a projection $I_P\to [I_P]$ (resp. $\hat I_P\to [\hat I_P]$).

\begin{proposition}
\label{pr:minimal involution}
The assignments $(\ii,\varepsilon)\mapsto [J_P(\ii,\varepsilon)]$ define an involution  $[J_P]:\hat I_P\to \hat I_P$.
\end{proposition}

\begin{proof} Let $(\ii,\varepsilon)\in [\hat I_P]$, let $s:=ind_\ii(P)$, $(\ii',\varepsilon'):=[J_P(\ii,\varepsilon)]$, $s':=ind_{\ii'}(P)$. By definition,
\begin{equation}
\label{eq:ii' cases}
\ii'=[(\ldots,i_s,i',k',i_{s+1},\ldots)]=
\begin{cases}
(\ldots,i_s,i',k',i_{s+1},\ldots) & \text{if $i'\ne i_{s-1}$, $k'\ne i_{s+2}$}\\
(\ldots,i_{s-1},i_{s+2},\ldots) & \text{if $i'= i_{s-1}$, $k'= i_{s+2}$}\\
(\ldots,i_{s-1},i_s,i',i_{s+2},\ldots) & \text{if $i'\ne i_{s-1}$, $k'= i_{s+2}$}\\
(\ldots,i_{s-1},k',i_{s+1},i_{s+2},\ldots)& \text{if $i'= i_{s-1}$, $k'\ne i_{s+2}$}\\
\end{cases}
\end{equation}

in the notation \eqref{eq:map J_P}. In particular, $i'_{s'}=i'$, $i'_{s'+1}=k'$.

Note that, by Lemma \ref{le:ind+1} and Proposition \ref{pr:minimal in I_P}, $s'\equiv s+1\mod 2$.

First, show that $(\ii',\varepsilon')\in [\hat I_P]$ (i.e., $[J_P]$ is well-defined). If $s$ is odd, this is obvious. Suppose that $s$ is even. Then we have in each of the cases of \eqref{eq:ii' cases}:

$\bullet$  $i'\ne i_{s-1}$, $k'\ne i_{s+2}$.
Since $s'=s+1$ and $\{i'_{s'-1},i'_{s'},i'_{s'+1},i'_{s'+2}\}=\underline P$ and $i'\in \{i,j\}$, clearly, $(\ii',\varepsilon')\in [\hat I_P]$.

$\bullet$ $i'= i_{s-1}$, $k'= i_{s+2}$. Since $s'=s-1$ and $\{i'_{s'-1},i'_{s'+2}\}\cap {\underline P}=\emptyset$, clearly, $(\ii',\varepsilon')\in [\hat I_P]$.

$\bullet$ $i'\ne i_{s-1}$, $k'= i_{s+2}$. Since $s'=s+1$ and $\{i_s\}=\{i'_{s'-1}\}= {\underline P}_{\varepsilon'}\setminus \{i'_{s'},i'_{s'+1}\}$, $\{i'_{s'+2}\}=\{i_{s+3}\}\ne \{i_{s+1}\}= {\underline P}_{-\varepsilon'}\setminus \{i'_{s'},i'_{s'+1}\}$ by \eqref{eq:P_epsilon'} and $i'_{s'+1}=i_{s+2}\ne i_{s-1}=i'_{s'-2}$, clearly,  $(\ii',\varepsilon')\in [\hat I_P]$.

$\bullet$ $i'=   i_{s-1}$, $k'\ne i_{s+2}$.  Since $s'=s-1$ and
$\{i'_{s'+2}\}= {\underline P}_{-\varepsilon'}\setminus \{i'_{s'},i'_{s'+1}\}$ by \eqref{eq:P_epsilon'},
clearly, $(\ii',\varepsilon')\in [\hat I_P]$.


\medskip

Furthermore, let $(\ii'',\varepsilon'')=J_P(\ii',\varepsilon')$. That is,
$$\ii''={\bf j}_{i'',k''}^{s'}(\ii')\ ,$$
where $\varepsilon''=(-1)^{(s'-1)\chi_{\{i,j\}} (i'_{s'})}$, $\{i'',k''\}=\{i_s,i_{s+1}\}$ and one has (note that $\{i'_{s'},i'_{s'+1}\}=\{i',k'\}$):

$\bullet$ If $s$ is even,
then
$\{i'\}=\begin{cases} \{i_{s-1}\} & \text{if $i_{s-1}\in {\underline P} \setminus \{i_s,i_{s+1}\}$}\\
{\underline P}\setminus \{i_s,i_{s+1}, i_{s+2}\} & \text{if $i_{s+2}\in {\underline P}\setminus \{i_{s-1},i_s,i_{s+1}\}$}\\
\{i,j\}\setminus \{i_s,i_{s+1}\} &\text{ otherwise}
\end{cases}$,
$\varepsilon'=(-1)^{\chi_{ij}(i_s)}$, and:
\begin{equation}
\label{eq:universal i'k'}
\{i''\}=
{\underline P}_{\varepsilon'} \setminus \{i'_{s'},i'_{s'+1}\}={\underline P}_{\varepsilon'} \setminus \{i',k'\}=\begin{cases} \{i,j\} \setminus \{i'\} & \text{if $i_s\in \{i,j\}$}\\
\{k,\ell\} \setminus \{k'\} & \text{if $i_s\in \{k,\ell\}$}
\end{cases}=\{i_s\} \ .
\end{equation}

$\bullet$  If $s$ is odd, then: $\{i'\}={\underline P}_\varepsilon\setminus  \{i_s,i_{s+1}\}$,
\begin{equation}
\label{eq:universal i'k' odd}
\{i''\}=\begin{cases} \{i'_{s'-1}\} & \text{if $i'_{s'-1}\in {\underline P} \setminus \{i',k'\}$}\\
{\underline P}\setminus \{i',k', i'_{s'+2}\} & \text{if $i'_{s'+2}\in {\underline P}\setminus \{i'_{s'-1},i',k'\}$}\\
\{i,j\}\setminus \{i',k'\} &\text{ otherwise}
\end{cases}.
\end{equation}

First, show that $\varepsilon''=\varepsilon$. Indeed, by the above, $\varepsilon''=(-1)^{s\chi_{\{i,j\}} (i')}$.
Since $\varepsilon\in \{1,(-1)^s\}$, then the above implies that for even $s$ one has $\varepsilon''=\varepsilon=1$. If $s$ is odd,
then, by definition, $i'\in \{i,j\}$ iff $\varepsilon=-1$. This proves that $\varepsilon''=\varepsilon$ in this case as well.

Thus, it remains to prove that
\begin{equation}
\label{eq:ii prec ii'}
\ii\preceq \ii''\ .
\end{equation}

\medskip

To do so, show that $i''=i_s$ in each case of \eqref{eq:ii' cases}:

$\bullet$ $i'\ne i_{s-1}$, $k'\ne i_{s+2}$, $s'=s+1$, $\ii''=(\ldots,i_s,i',i'',k'',k',i_{s+1},\ldots)$,
where for even $s$ we have $i''=i_s$  by \eqref{eq:universal i'k'} and for odd $s$ we also have $i''=i_s$ by \eqref{eq:universal i'k' odd} because $i'_{s'-1}=i_s$ and $i'_{s'+2}=i_{s+1}$.

$\bullet$ $i'\ne  i_{s-1}$, $k'= i_{s+2}$, $s'=s+1$, $\ii''=(\ldots,i_s,i', i'',k'',i_{s+2},\ldots)$,
where for even $s$, $i''=i_s$
by \eqref{eq:universal i'k'} and for odd $s$ we have
$\{i''\}=
{\underline P}\setminus \{i',k'\} =\{i_s\}
$ by \eqref{eq:universal i'k' odd} because $i_s=i'_{s'-1}\in {\underline P}\setminus \{i',k'\}$.

$\bullet$ $i'= i_{s-1}$, $k'= i_{s+2}$, e.g.,  $\{i_{s-1},i_{s+2}\} ={\underline P}\setminus \{i_s,i_{s+1}\}$, $s'=s-1$,
$\ii''=(\ldots,i_{s-1},i'',k'',i_{s+2},\ldots)$, where for even $s$,  $i''=i_s$
by \eqref{eq:universal i'k'} and for odd $s$ we have: $i_{s-1}\in P_\varepsilon$, $i_{s+2}\in P_{-\varepsilon}$,
$i_{s-2}\ne i_s$, $i_{s+3}\ne i_{s+1}$ hence $i_s\in \{i,j\}$
and:
$\{i''\}=
\{i,j\}\setminus \{i_{s-1},i_{s+2}\}=i_s$
by \eqref{eq:universal i'k' odd}.


$\bullet$  $i'= i_{s-1}$, $k'\ne i_{s+2}$, $s'=s-1$, $\ii''=(\ldots,i_{s-1},i'',k'',k',i_{s+1},i_{s+2},\ldots)$,
where for even $s$, $i''=i_s$
by \eqref{eq:universal i'k'} and for odd $s$ we have $\{i''\}=
{\underline P}\setminus \{i',k', i'_{s'+2}\}=\{i_s,i_{s+1}\}\setminus \{i_{s+1}\} =\{i_s\}$
by \eqref{eq:universal i'k' odd} because:

$\bullet$  $i_{s-1}\in P_\varepsilon\setminus\{i_s,i_{s+1}\}$, $i_{s+2}\notin P_{-\varepsilon}\setminus\{i_s,i_{s+1}\}$
hence  $i_{s-2}\ne i_{s+1}$.

$\bullet$ $i_{s+1}=i'_{s'+2}\in {\underline P}\setminus \{i'_{s'-1},i',k'\}=\{i_s,i_{s+1}\}\setminus \{i_{s-2}\}$.

Thus, $i''=i_s$, $k''=i_{s+1}$ in all cases, which immediately implies \eqref{eq:ii prec ii'} in all these cases.

This proves that $[J_P]$ is an involution on $[\hat I_P]$.

The proposition is proved.
\end{proof}

Now suppose that $P=(i,j,k,\ell)$ where $\Delta\setminus \Delta'=\{(i,k),(k,i)\}$,  $\Delta'\setminus \Delta=\{(j,\ell),(\ell,j)\}$,
as in Lemma \ref{le:neighboring homomorphism}. In what follows, we assume that $(p,i)\cap (j,\ell)=\emptyset$  and $(p,q)\cap (i,j)\ne \emptyset$
(i.e., informally speaking,  $(i,j)$ is closer to $p$ than $(k,\ell)$).


By Definition \ref{def:admissible} of admissible sequences, if $\ii\in  Adm_\Delta(p,q)\subset [I_P]\sqcup Adm_{\Delta'}(p,q)$ then $[\ii]=\ii$ is minimal, its index $s:=ind_\ii(P)$ is positive and unique, and
$\{i_s,i_{s+1}\}=
\begin{cases}
\{i,k\} & \text{if $\ii\in  Adm_\Delta(p,q)$}\\
\{j,\ell\} & \text{if $\ii\in Adm_{\Delta'}(p,q)$}\\
\end{cases}$.

\begin{proposition}
\label{pr:combin bijection Delta Delta'}
Let $\Delta,\Delta'$ be triangulations of $[n]$ and $P=(i,j,k,\ell)$ as above.
Then the restriction of  $[J_P]$ to $(Adm_{\Delta'}(p,q)\times \{-1,1\})\cap [\hat I_P]$ is a bijection:
\begin{equation}
\label{eq:J_Delta,Delta'}
J_{\Delta,\Delta'}:(Adm_{\Delta'}(p,q)\times \{-1,1\})\cap [\hat I_P]\widetilde \to (Adm_{\Delta}(p,q)\times \{-1,1\})\cap [\hat I_P]
\end{equation}

\end{proposition}

\begin{proof}
We need the following  obvious fact.

\begin{lemma} Let $\ii\in Adm_\Delta(p,q)\sqcup Adm_{\Delta'}(p,q)$ such that $s:=ind_\ii(P)>0$. Then

(i) If $s$ is even, then $(\ii,1)$  belongs to $[\hat I_P]$.

(ii) If $s$ is odd, then both $(\ii,1)$ and $(\ii,-1)$ belong to $[\hat I_P]$.
\end{lemma}

Furthermore, for any triangulation $\Delta$ of $[n]$ and any $p,q\in [n]$ denote by $PreAdm_\Delta(p,q)$ the set of all $\ii\in [n]^\bullet$
such that $[\ii]\in Adm_\Delta(p,q)$. We need the following fact.

\begin{lemma}
\label{le:admissible bf j}
In the assumptions of Proposition \ref{pr:combin bijection Delta Delta'}, let  $\ii\in  Adm_{\Delta'}(p,q)$ and suppose that $s=ind_\ii(P)>0$. Then:

(a) if $s$ is odd, then  ${\bf j}_{i',k'}^s(\ii)\in PreAdm_\Delta(p,q)$
whenever $\{i_s,i_{s+1},i',k'\}=\{i,j,k,\ell\}$.

(b) if $s$ is even  then  $[J_P(\ii,1)]\in Adm_\Delta(p,q)\times \{-1,1\}$.

\end{lemma}

\begin{proof}
In what follows, we will write ${\bf p}\le {\bf p}'$ for any points ${\bf p},{\bf p}'$ in the chord $(p,q)$ such that either  ${\bf p}= {\bf p}'$ or ${\bf p}$ is closer to $p$ than ${\bf p'}$.


Prove (a). Indeed, it suffices to show that for $\ii=(i_1,\dots,i_{2m})\in Adm_{\Delta'}(p,q)$, one has
\begin{equation}
\label{eq:preadm}
\ii':=(\ldots,i_s,i',k',i_{s+1},\ldots)\in PreAdm_\Delta(p,q)\ ,
\end{equation}
where $s=ind_\ii(P)$ is odd (note that  $\{i_s,i_{s+1}\}=\{j,\ell\}$ and $\{i',k'\}=\{i,k\}$).

Let ${\bf p}_-$ and ${\bf p}_+$ be  the intersection points of $(p,q)$ respectively with  $(i_{s-1},i_s)$  and $(i_{s+1},i_{s+2})$
(with the convention that ${\bf p}_-=p$ if $s=1$ and ${\bf p'}=q$ if $s=2m-1$). Clearly, ${\bf p}_-<{\bf p}_+$.

We now consider a number of cases.

{\bf Case 1}. Suppose that $(p,q)\cap (i_s,i_{s+1})\ne \emptyset$,  $3\le s\le 2m-3$ (i.e., $\{p,q\}\cap \{i,j,k,\ell\}=\emptyset$).
Since $(i_r,i_{r+1})\in \Delta$
for $r=s-1,s,s+1$, the above and convexity of the $n$-gon $[n]$ imply that there exist $i'',k''\in [n]$ such that $\{i'',k''\}=\{i',k'\}$ and $(p,q)\cap (i_s,i'')\ne \emptyset$, $(p,q)\cap (i_s,k'')\ne \emptyset$, $(p,q)\cap (i',k')\ne \emptyset$ and
$${\bf p}_-\le (p,q)\cap (i_s,i'')<(p,q)\cap (i',k')<(p,q)\cap (i_s,k'') \le {\bf p}_+ \ .$$
In turn, this immediately implies \eqref{eq:preadm} in this case.


{\bf Case 2}. Suppose that $(p,q)\cap (i_s,i_{s+1})= \emptyset$, $3\le s\le 2m-3$. By definition, ${\bf p}_-<{\bf p}_0<{\bf p}_+$.
Then the convexity of the $n$-gon $[n]$ implies that there exist $i'',k''\in [n]$ such that $\{i'',k''\}=\{i',k'\}$ and $(p,q)\cap (i_s,k'')= \emptyset$, $(p,q)\cap (i_{s+1},k'')= \emptyset$. This and
the facts that $(i'',k'')\cap (i_s,i_{s+1})\ne \emptyset$ and that $i''$ does not belong to the convex hull of
${\bf p}_-$, ${\bf p}_+$, $i_s$, $i_{s+1}$ imply that $(p,q)\cap (i_s,i'')\ne \emptyset$, $(p,q)\cap (i_{s+1},i'')\ne \emptyset$, $(p,q)\cap (i',k')\ne \emptyset$ and
$${\bf p}_-\le (p,q)\cap (i_s,i'')<(p,q)\cap (i',k')<(p,q)\cap (i_{s+1},k'') \le {\bf p}_+\ .$$
In turn, this immediately implies \eqref{eq:preadm} in this case.

{\bf Case 3}. Suppose that $s=1$ or $s=2m-1$. If $s=1=2m-1$, we have nothing to prove because $\ii=(i_1,i_2)=(p,q)$, $\ii'=(p,i',k',q)\in Adm_\Delta(p,q)$. Therefore it remains to consider the sub-case when $s=1$, $m\ge 2$ (the sub-case $s=2m-1\ge 3$ is identical to it). Indeed,  the facts that $(i',k')\cap (i_1,i_2)\ne \emptyset$ implies that there exist  $i'',k''\in [n]$ such that $\{i'',k''\}=\{i',k'\}$ and
$(p,q)\cap (i_1,i'')= \emptyset$. This and
the facts that $(i'',k'')\cap (i_1,i_2)\ne \emptyset$ and that $k''$ does not belong to the convex hull of
${\bf p}_-=p=i_1$, $i_2$ ${\bf p}_+$ imply that $(p,q)\cap (i_2,k'')\ne \emptyset$, $(p,q)\cap (i_{s+1},i'')\ne \emptyset$, $(p,q)\cap (i',k')\ne \emptyset$ and
$$(p,q)\cap (i',k')<(p,q)\le {\bf p}_+\ .$$
In turn, this immediately implies \eqref{eq:preadm} in this case.

This finishes the proof of part (a).

Prove (b) now. That is, we have to show that
\begin{equation}
\label{eq:preadm even}
\ii':=[(\ldots,i_s,i',k',i_{s+1},\ldots)]\in Adm_\Delta(p,q)\ ,
\end{equation}
where $s=ind_\ii(P)$ is even  and $i',k'$ are as in \eqref{eq:map J_P} (note that $\{i_s,i_{s+1}\}=\{j,\ell\}$ and $\{i',k'\}=\{i,k\}$).

Denote ${\bf p}_0:=(p,q)\cap (i_s,i_{s+1})$ and consider a number of cases.

{\bf Case 1}. Suppose that $\{i_{s-1},i_{s+2}\}=\{i,k\}$.
Then $i'=i_{s-1}$, $k'=i_{s+2}$ by \eqref{eq:map J_P} and
$$\ii'=(\ldots,i_{s-1},i_{s+2},\ldots)\ ,$$
i.e., $\ii'$ is obtained from $\ii$ by simultaneously replacing $i_s$ with $i_{s-1}$ and $i_{s+1}$ with $i_{s+2}$. This  immediately implies  \eqref{eq:preadm even} in this case.

{\bf Case 2}. Suppose that $i_{s+2}\in \{i,k\}$, $i_{s-1}\notin \{i,k\}$ (the case $i_{s-1}\in \{i,k\}$, $i_{s+2}\notin \{i,k\}$ is identical to it).
Then $k'=i_{s+2}$ by \eqref{eq:map J_P} and
$$\ii'=(\ldots,i_s,i',i_{s+2},\ldots)\ ,$$
i.e., $\ii'$ is obtained from $\ii$ by replacing $i_{s+1}$ with $i'$. Thus, to prove \eqref{eq:preadm even}, it suffices to show that $(p,q)\cap (i_s,i')\ne \emptyset$.
Indeed, suppose that  $(p,q)\cap (i_s,i')\ne \emptyset$. If $s=2$,  $i_{s-1}=p\notin \{i,k\}$, then taking into account that $(i_s,i')\in \Delta$, we see that $i'$ belongs to the interior of the convex hull of $p,{\bf p}_0, i_s$.  If  $s\ge 4$, $(i_{s-2},i_{s-1})\in \Delta$, $(p,q)\cap (i_{s-2},i_{s-1})\ne \emptyset$, then $i'$  belongs to the interior of the convex hull of $p,{\bf p}_0, i_{s-1},i_s$. This contradicts to that $i'$ is a vertex of the convex $n$-gon $[n]$, which  immediately implies \eqref{eq:preadm even} in this case.

{\bf Case 3}. Suppose that $\{i_{s-1},i_{s+2}\}\cap  \{i,k\}=\emptyset$
Then $i'=i$, $k'=k$ by \eqref{eq:map J_P} and
$$\ii'=(\ldots,i_s,i,k,i_{s+2},\ldots)\ .$$

Thus, to prove \eqref{eq:preadm even}, it suffices to show that $(p,q)\cap (i_s,i)\ne \emptyset$, $(p,q)\cap (k,i_{s+1})\ne \emptyset$.
Since $(p,i)\cap (j,\ell)= \emptyset$, using the same argument as in {\bf Case 2},  we see that if $(p,q)\cap (i_s,i)= \emptyset$, then $i'$  belongs to the interior of the convex hull of $p,{\bf p}_0, i_{s-1},i_s$; and if $(p,q)\cap (k,i_{s+1})=\emptyset$, then $k$  belongs to the interior of the convex hull of $q,{\bf p}_0, i_s,i_{s+1}$. This  finishes the proof of  \eqref{eq:preadm even} in this case.

This finishes the proof of (b).

Lemma \ref{le:admissible bf j} is proved.
\end{proof}

Using Lemma \ref{le:admissible bf j}(b) with  $P=(i,j,k,\ell)$ such that $(p,i)\cap (j,\ell)=\emptyset$ and $(p,q)\cap (i,j)\ne \emptyset$  and Lemma \ref{le:admissible bf j}(a) with any $i',k'$ such that $\{i',k'\}=\{i,k\}$, we see that
$$[J_P]((Adm_{\Delta'}(p,q)\times \{-1,1\})\cap [\hat I_P])\subset  (Adm_{\Delta}(p,q)\times \{-1,1\})\cap [\hat I_P]$$
hence $J_{\Delta,\Delta'}$ given by \eqref{eq:J_Delta,Delta'}  is a well-defined map
$$(Adm_{\Delta'}(p,q)\times \{-1,1\})\cap [\hat I_P]\hookrightarrow   (Adm_\Delta(p,q)\times \{-1,1\})\cap [\hat I_P]\ .$$
Interchanging $\Delta$ and $\Delta'$, taking into account  that $(p,j)\cap (i,k)=\emptyset$,  and applying Lemma \ref{le:admissible bf j} again,  we see that
$$[J_P]((Adm_\Delta(p,q)\times \{-1,1\})\cap [\hat I_P])\subset  (Adm_{\Delta'}(p,q)\times \{-1,1\})\cap [\hat I_P] \ .$$
This gives a well-defined  map
$$J_{\Delta',\Delta}:(Adm_\Delta(p,q)\times \{-1,1\})\cap [\hat I_P]\hookrightarrow   (Adm_{\Delta'}(p,q)\times \{-1,1\})\cap [\hat I_P]\ .$$
Since $[J_P]$ is an involution by Proposition \ref{pr:minimal involution}, the maps $J_{\Delta,\Delta'}$ and $J_{\Delta',\Delta}$ are inverse of each other, hence each of them is a bijection.

Proposition \ref{pr:combin bijection Delta Delta'} is proved.
\end{proof}

Furthermore, we need the following obvious fact.

\begin{lemma}
\label{le:long ptolemy}
In the assumptions of Lemma \ref{le:neighboring homomorphism} let $s\in [2m-1]$ be odd and let $\ii=(i_1,\ldots,i_{2m})\in [n]^{2m}$,
$m\ge 1$ be such that $\{i_{s'},i_{s'+1}\}\ne \{j,\ell\}$
for $r\in [2m-1]\setminus \{s\}$.

(a) If $\{i_s,i_{s+1}\}=\{j,\ell\}$ then
$\varphi_{\Delta,\Delta'}(t_\ii)= t_{{\bf j}_{ik}^s(\ii)}+t_{{\bf j}_{ki}^s(\ii)}$.

(b) If  $\{i_s,i_{s+1}\}=\{i,k\}$, then $\varphi_{\Delta,\Delta'} (t_{{\bf j}_{j\ell}^s(\ii)}+t_{{\bf j}_{\ell j}^s(\ii)})=t_\ii$.
\end{lemma}

Now we are ready to prove \eqref{eq:phi delta delta' neighbors n-gon}. Indeed,
$t^{\Delta'}_{pq}=t_0+t_-+t_+$, where
$$t_0=\sum_{\ii'\in Adm_{\Delta'}(p,q): ind_{\ii'}(P)=0} t_{\ii'},~
t_-=\sum_{\ii'\in Adm_{\Delta'}(p,q): ind_{\ii'}(P)\in 2\ZZ+1} t_{\ii'},~t_+=\sum_{\ii'\in Adm_{\Delta'}(p,q): ind_{\ii'}(P)\in 2\ZZ_{\ge 1}} t_{\ii'}\ .$$
Clearly,
$$\varphi_{\Delta,\Delta'}(t_0)=t_0=\sum_{\ii\in Adm_{\Delta}(p,q): ind_\ii(P)=0} t_\ii \ .$$
Furthermore, combining Proposition \ref{pr:combin bijection Delta Delta'} and Lemma \ref{le:long ptolemy}, we obtain:
$$\varphi_{\Delta,\Delta'}(t_-)=\sum_{\ii'\in Adm_{\Delta'}(p,q): ind_\ii(P)\in 2\ZZ+1} t_{J_{\Delta,\Delta'}(\ii,1)}+t_{J_{\Delta,\Delta'}(\ii,-1)}=\sum_{\ii\in Adm_{\Delta}(p,q): ind_\ii(P)\in 2\ZZ_{\ge 1}} t_{\ii}\ ,$$
$$\varphi_{\Delta,\Delta'}(t_+)=\sum_{\ii\in Adm_\Delta(p,q): ind_\ii(P)\in 2\ZZ+1} \varphi_{\Delta,\Delta'}(t_{J_{\Delta',\Delta}(\ii,1)}
+t_{J_{\Delta',\Delta}(\ii,-1)})
=\sum_{\ii\in Adm_\Delta(pq): ind_\ii(P)\in 2\ZZ+1} t_{\ii}\ .
$$

This finishes the proof of \eqref{eq:phi delta delta' neighbors n-gon}.

Furthermore, we define a homomorphism $\psi_{\Delta,\Delta'}$ as follows. First, composing $\varphi_{\Delta,\Delta'}$ with  the universal localization by $S_\Delta$ and taking into the account that
$t^\Delta_{j\ell}\in S_\Delta$, we obtain a homomorphism of algebras:
$$\varphi'_{\Delta,\Delta'}:\QQ \TT_{\Delta'}\to \QQ \TT_\Delta[(t^\Delta_{j\ell})^{-1}] $$
such that $\varphi'_{\Delta,\Delta'}(t_{ij}^{\Delta'})=t_{ij}^\Delta$ for all $i,j$. Since $t_{ij}^\Delta\in S_\Delta$ is invertible in the image, $\varphi'_{\Delta,\Delta'}$ canonically extends to a homomorphisms of algebras
$$\psi_{\Delta,\Delta'}:\QQ \TT_{\Delta'}[S_{\Delta'}^{-1}]\to \QQ \TT_\Delta[S_\Delta^{-1}]\ .$$
Switching $\Delta$ and $\Delta'$ we obtain a homomorphism $\psi_{\Delta',\Delta}:\QQ \TT_\Delta[S_\Delta^{-1}]\to \QQ \TT_\Delta[S_{\Delta'}^{-1}]$,
which is, clearly, inverse of $\psi_{\Delta,\Delta'}$.

This proves Theorem  \ref{th:universal localization n-gon} for neighboring triangulations $\Delta,\Delta'$.

Now we prove Theorem  \ref{th:universal localization n-gon}  for any (non-neighboring) triangulations $\Delta,\Delta'$ of $[n]$ as follows. We say that the distance $dist(\Delta,\Delta')$ is the minimal number $d\ge 0$ such that there is a sequence of triangulations
$\Delta=\Delta^{(0)},\Delta^{(1)},\ldots,\Delta^{(d)}=\Delta'$ of $[n]$ such that $\Delta^{(s)},\Delta^{(s+1)}$, $s\in [r-1]$ are neighboring.

We construct appropriate $\varphi_{\Delta,\Delta'}$ by induction in $dist(\Delta,\Delta')$. If $dist(\Delta,\Delta')=1$, then $\Delta$ and $\Delta'$ are neighboring and we have nothing to prove. Suppose that $d=dist(\Delta,\Delta')>1$.
Then there is a triangulation $\Delta''$ of $[n]$ with $dist(\Delta,\Delta'')<d$ and $dist(\Delta'',\Delta')<d$.

By the inductive hypothesis, there are isomorphisms
$$\psi_{\Delta,\Delta''}:\QQ \TT_{\Delta''}[S_{\Delta''}^{-1}]\to \QQ \TT_{\Delta}[S_\Delta^{-1}],~\psi_{\Delta'',\Delta'}:\QQ \TT_{\Delta'}[S_{\Delta'}^{-1}]\to \QQ \TT_{\Delta''}[S_{\Delta''}^{-1}]$$
 such that $\psi_{\Delta,\Delta''}(t_{ij}^{\Delta''})=t_{ij}^\Delta$ and $\psi_{\Delta'',\Delta'}(t_{ij}^{\Delta'})=t_{ij}^{\Delta''}$  for all $i,j$.

Define $\psi_{\Delta,\Delta'}:=\psi_{\Delta,\Delta''}\circ \psi_{\Delta'',\Delta'}$. By definition, $\psi_{\Delta,\Delta'}$ is an isomorphism $\QQ \TT_{\Delta'}[S_{\Delta'}^{-1}]\to \QQ \TT_{\Delta}[S_\Delta^{-1}]$ such that $\psi_{\Delta,\Delta''}(t_{ij}^{\Delta'})=t_{ij}^\Delta$ for all $i,j$. In particular, $\psi_{\Delta,\Delta'}$ does not depend on the choice of $\Delta''$. This finishes the induction.

The transitivity \eqref{eq:universal localization n-gon transitivity} also follows.

Theorem  \ref{th:universal localization n-gon} is proved.
\end{proof}










Furthermore, we need the following result.

\begin{proposition}
\label{pr:canonical isomorphism localization polygon}
In the notation of Theorem \ref{th:universal localization n-gon}, for each triangulation $\Delta$ of $[n]$
the homomorphism $\ii_\Delta:\QQ \TT_\Delta\to \AA_\Delta\subset \AA_n$ given by \eqref{eq:i_Delta} extends to an isomorphism of algebras
$\QQ \TT_{\Delta}[S_\Delta^{-1}]\widetilde \to \AA_n$.

\end{proposition}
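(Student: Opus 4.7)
The plan is to extend $\ii_\Delta$ to the localization by the universal property and then construct an explicit inverse using the canonical isomorphisms $\psi_{\Delta,\Delta'}$ of Theorem \ref{th:universal localization n-gon}. First, Theorem \ref{th:noncomlaurent n-gon} together with \eqref{eq:noncomm schiffler TDelta n-gon} gives $\ii_\Delta(t_{ij}^\Delta)=x_{ij}$ for all distinct $i,j$, and each $x_{ij}$ is invertible in $\AA_n$ by definition. Hence the universal property yields a unique extension $\ii_\Delta[S_\Delta^{-1}]:\QQ\TT_\Delta[S_\Delta^{-1}]\to\AA_n$, which is surjective since the $x_{ij}^{\pm 1}$ generate $\AA_n$ and all $x_{ij}$ (hence also their inverses) lie in the image.

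For injectivity, I build the inverse $\varphi:\AA_n\to\QQ\TT_\Delta[S_\Delta^{-1}]$ by $x_{ij}\mapsto t_{ij}^\Delta$ and check well-definedness using that, under $\psi_{\Delta,\Delta'}$, each $t_{ab}^{\Delta'}$ corresponds to $t_{ab}^\Delta$. Given distinct $i,j,k\in[n]$, I choose a triangulation $\Delta'$ containing the triangle with vertices $i,j,k$, obtained by triangulating each of the three sub-polygons cut off by that triangle. Then the six ordered pairs among $\{i,j,k\}$ all lie in $\Delta'$, so $t_{ab}^{\Delta'}=t_{ab}\in\TT_{\Delta'}$ for each of them, and the defining triangle relation of $\TT_{\Delta'}$ reads $t_{ij}^{\Delta'}(t_{kj}^{\Delta'})^{-1}t_{ki}^{\Delta'}=t_{ik}^{\Delta'}(t_{jk}^{\Delta'})^{-1}t_{ji}^{\Delta'}$. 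Applying $\psi_{\Delta,\Delta'}$ transports this to the identical relation among the $t_{\cdot\cdot}^\Delta$ in $\QQ\TT_\Delta[S_\Delta^{-1}]$, verifying the triangle relation for $\varphi$.

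For the exchange relation at a cyclic $(i,j,k,\ell)$, I choose $\Delta'$ containing the diagonal $(i,k)$ (both orientations) together with the four sides $(i,j),(j,k),(k,\ell),(\ell,i)$ of the quadrilateral and their reverses, extended arbitrarily to a triangulation of $[n]$. Since $(i,k)$ is then the only chord of $\Delta'$ crossing $(j,\ell)$, Definition \ref{def:admissible} forces $Adm_{\Delta'}(j,\ell)=\{(j,i,k,\ell),(j,k,i,\ell)\}$, so by \eqref{eq:noncomm schiffler TDelta n-gon},
$$t_{j\ell}^{\Delta'}=t_{jk}t_{ik}^{-1}t_{i\ell}+t_{ji}t_{ki}^{-1}t_{k\ell}\ ,$$
and $\psi_{\Delta,\Delta'}$ converts this into the desired exchange relation among the $t_{\cdot\cdot}^\Delta$. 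Thus $\varphi$ is a well-defined homomorphism, and the compositions $\varphi\circ\ii_\Delta[S_\Delta^{-1}]$ and $\ii_\Delta[S_\Delta^{-1}]\circ\varphi$ are identities on generators: for $(i,j)\in\Delta$ one has $Adm_\Delta(i,j)=\{(i,j)\}$, so $t_{ij}^\Delta=t_{ij}$, while $\ii_\Delta(t_{ij}^\Delta)=x_{ij}$ for all distinct $i,j$.

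The main obstacle is the well-definedness of $\varphi$, which requires isolating, for each defining relation of $\AA_n$, a ``witnessing'' triangulation $\Delta'$ in which the relation appears either as a defining group relation of $\TT_{\Delta'}$ or as an instance of the Laurent formula \eqref{eq:noncomm schiffler TDelta n-gon}; once such $\Delta'$ are identified, Theorem \ref{th:universal localization n-gon} does the heavy lifting of transporting the relation back to $\Delta$.
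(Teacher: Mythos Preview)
Your proof is correct and follows essentially the same strategy as the paper's: both extend $\ii_\Delta$ via the universal property of localization, construct the inverse by $x_{ij}\mapsto t_{ij}^\Delta$, and verify the triangle and exchange relations by passing to a ``witnessing'' triangulation $\Delta'$ (containing the triangle, resp.\ the diagonal $(i,k)$) and transporting back via $\psi_{\Delta,\Delta'}$. The paper packages the relation-checking step as a separate lemma (Lemma~\ref{le:relations tDelta}) and cites \eqref{eq:tDelta jl} directly for the exchange relation, whereas you rederive that two-term expansion from the admissible-sequence formula; otherwise the arguments coincide.
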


\begin{proof} We need the following result.

\begin{lemma}
\label{le:relations tDelta}
Let $\Delta$ be any triangulation of $[n]$. Then

\noindent (i) For any distinct $i,j,k\in [n]$, the elements $x'_{ab}:=t_{ab}^\Delta$, $\{a,b\}\subset \{i,j,k\}$ satisfy the triangle relations \eqref{eq:triangle relations}.

\noindent (ii) For any cyclic quadruple $(i,j,k,\ell)$ the elements  $x'_{ab}:=t_{ab}^\Delta$, $\{a,b\}\subset \{i,j,k,\ell\}$ satisfy the exchange relations \eqref{eq:exchange relations}.

\end{lemma}

\begin{proof} Indeed, to prove (i) note that for any distinct $i,j,k\in [n]$ there exists a triangulation $\Delta_0$ such that $(i,j,k)$ is a triangle in $\Delta_0$ therefore, the elements $t_{ab}\in \TT_{\Delta'}$, $\{a,b\}\subset \{i,j,k\}$ satisfy \eqref{eq:triangle relations}. Applying the isomorphism
$\psi_{\Delta,\Delta_0}$ given by \eqref{th:universal localization n-gon}, we finish the proof of (i).

To prove (ii) note that for any cyclic $(i,j,k,\ell)$ there exists a triangulation $\Delta_0$ such that both triangles $(i,j,k)$ and $(j,k,\ell)$
belong to $\Delta_0$ (hence $(j,\ell)\notin \Delta_0$). By \eqref{eq:tDelta jl} for $\Delta_0$, we see that $t_{ab}^{\Delta_0}$,   $\{a,b\}\subset \{i,j,k,\ell\}$ satisfy \eqref{eq:exchange relations}.

Thus applying the isomorphism $\psi_{\Delta,\Delta_0}$, we finish the proof of (ii).

The lemma is proved. \end{proof}

By Lemma \ref{le:relations tDelta},  the assignments $x_{pq}\mapsto t_{pq}^\Delta$ for all distinct $p,q\in [n]$ define an epimorphism of algebras
$$\AA_n\twoheadrightarrow \QQ \TT_\Delta[S_\Delta^{-1}]\ .$$
On the other hand,  by (already proved) Theorem \ref{th:noncomlaurent n-gon}, for each triangulation $\Delta$ of $[n]$ and any distinct $i,j\in [n]$ the element $x_{ij}\in \ii_\Delta(\QQ \TT_\Delta)$. Therefore, by the universality of localizations, $\ii_\Delta$ extends to an epimorphism of algebras $\hat \ii_\Delta:\QQ \TT[S_\Delta^{-1}]\twoheadrightarrow \AA_n$. Clearly, these two homomorphisms are mutually inverse.

This finishes the proof of Proposition \ref{pr:canonical isomorphism localization polygon}.  \end{proof}

Furthermore, denote by ${\bf S}$ the submonoid of $\AA_\Delta\setminus \{0\}$ generated by all $x_{ij}$.
Clearly, ${\bf S}=\ii_\Delta(S_\Delta)$ and $\AA_\Delta=\ii_\Delta(\QQ \TT_\Delta)$. Therefore, $\AA_n=\AA_\Delta[{\bf S}^{-1}]$.

This proves Theorem \ref{th:A_Delta}. \endproof

Finally, Theorem \ref{th:A_n divisible} follows from Theorem \ref{th:A_Delta} and that $\AA_n':=\AA_\Delta=\ii_\Delta(\QQ \TT_\Delta)$ is the group algebra of
$\TT_\Delta$, which is a free group in $3n-4$ generators by (already proved) Theorem \ref{th:T_Delta polygon}. \endproof

\subsection{Self-similarity implies injectivity}
\label{subsec:rigidity=>injectivity}

In this section we prove the following result.

\begin{proposition}
\label{pr:A_n divisible conj}
If Conjecture \ref{conj:rigid linear} holds for  $m=3n-4$, $n\ge 4$ and $k=2,\ldots,n-2$, then  for each triangulation $\Delta$ of $[n]$ the  homomorphism of algebras
$$\AA_n\to \FF_{3n-4}\ ,$$
which is the canonical (by Proposition \ref{pr:canonical isomorphism localization polygon} and Lemma \ref{le:universal property of localization}) extension to $\AA_n\cong \QQ\TT_\Delta[S_\Delta^{-1}]$  of the natural inclusion $\QQ \TT_\Delta\hookrightarrow Frac(\QQ \TT_\Delta)\cong \FF_{3n-4}$
is a also a monomorphism (hence $\AA_n$ has no zero divisors).

\end{proposition}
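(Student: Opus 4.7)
The plan is to exploit the universal-localization description $\AA_n \cong \QQ\TT_\Delta[S_\Delta^{-1}]$ established in Proposition \ref{pr:canonical isomorphism localization polygon}, together with the noncommutative Laurent formula of Theorem \ref{th:noncomlaurent n-gon}, to reduce injectivity of the canonical map $\AA_n\to \FF_{3n-4}$ to a sequence of one-element inversions controlled by Conjecture \ref{conj:rigid linear}.

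First, I would fix $\Delta$ and enumerate $S_\Delta$ via a sequence of chords $(p_i,q_i)\notin\Delta$, $1\le i\le N$, yielding a chain
\[
\QQ\TT_\Delta = R_0 \subset R_1\subset \cdots\subset R_N = \AA_n,
\]
in which each transition $R_{i-1} \to R_i = R_{i-1}[s_i^{-1}]$ universally inverts a single Schiffler-type element $s_i = t_{p_iq_i}^\Delta \in R_{i-1}$. By Theorem \ref{th:noncomlaurent n-gon}, every $s_i$ is an explicit sum of monomials in the free generators of $\QQ\TT_\Delta\cong \QQ F_{3n-4}$ indexed by $Adm_\Delta(p_i,q_i)$, and the relevant combinatorial complexity of $s_i$ takes values in $\{2,\ldots,n-2\}$ for $n\ge 4$, matching precisely the hypothesized range of $k$ in Conjecture \ref{conj:rigid linear}.

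The core of the argument is an induction on $i$ showing that the canonical map $R_i\to \FF_{3n-4}$ is injective. The base case $R_0 = \QQ\TT_\Delta\hookrightarrow \FF_{3n-4}$ is the standard Amitsur--Cohn embedding of the free group algebra into its universal skew field. For the inductive step, given $R_{i-1}\hookrightarrow \FF_{3n-4}$, one must verify that universally inverting $s_i$ creates no new nonzero elements of $R_{i-1}[s_i^{-1}]$ that become zero in $\FF_{3n-4}$. This is precisely where Conjecture \ref{conj:rigid linear}, applied with $m = 3n-4$ and the appropriate $k$, enters: its rigidity content forces the subring of $\FF_{3n-4}$ generated by $R_{i-1}$ and the inverse of (the image of) $s_i$ to be canonically isomorphic to the abstract universal localization $R_{i-1}[s_i^{-1}]$. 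Chaining these inductive embeddings produces the desired monomorphism $\AA_n = R_N \hookrightarrow \FF_{3n-4}$, whence $\AA_n$ inherits the absence of zero divisors from the skew field.

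The main obstacle is precisely this inductive step: universal localizations of noncommutative rings at non-central elements can be pathological, as exemplified by Bergman's observation (see the remark following \eqref{eq:phi from A'}) that $\QQ\langle x,y\rangle[(xy)^{-1}]$ already contains a nonzero zero divisor. A priori, nothing prevents $R_{i-1}[s_i^{-1}]$ from harboring hidden elements that collapse to zero in $\FF_{3n-4}$. The purpose of Conjecture \ref{conj:rigid linear} is to rule out such pathologies for the specific sum-of-monomials elements $s_i$ coming from Schiffler expansions, and the requirement that the conjecture cover every $k\in\{2,\ldots,n-2\}$ reflects the fact that $s_i$'s of all intermediate complexities may appear along the chain.
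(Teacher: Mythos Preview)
Your proposal has genuine gaps, and the paper's route is quite different.

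\textbf{First gap: mismatch between Conjecture \ref{conj:rigid linear} and your inductive step.} Conjecture \ref{conj:rigid linear} is a statement about the group algebra $\QQ F_m$: it asserts that the element $\tau_k=t_1+\cdots+t_k$ is a \emph{self-similar prime}, meaning every element similar to $\tau_k$ lies in $\QQ^\times\cdot F_m\cdot\tau_k\cdot F_m$. This is a UFD-theoretic statement about similarity classes, not a statement about injectivity of localizations. You assert that the conjecture ``forces the subring of $\FF_{3n-4}$ generated by $R_{i-1}$ and $s_i^{-1}$ to be canonically isomorphic to $R_{i-1}[s_i^{-1}]$,'' but no mechanism for this is given. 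Worse, after the first inversion your ring $R_1$ is no longer a free group algebra, so the conjecture as stated does not even apply at later stages of your induction.

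\textbf{Second gap: the shape of the $s_i$.} For a general $\Delta$, the Schiffler elements $t_{pq}^\Delta$ are sums of Laurent \emph{monomials} $t_\ii$ of varying length, not sums of free generators. Conjecture \ref{conj:rigid linear} concerns only the very special element $t_1+\cdots+t_k$. There is no reason to expect it to control an arbitrary Schiffler sum.

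\textbf{What the paper actually does.} The paper avoids both issues by (i) choosing the \emph{starlike} triangulation $\Delta_1$ and (ii) passing to the free generating set $\{\tau_j=T_1^{j,j+1}\}\cup\{t_{1,k},t_{k,1}\}$ of $\TT_{\Delta_1}$ (Lemma \ref{le:starlike relation n-gon}). In these coordinates, Corollary \ref{cor:angle is additive} shows that, up to units in $\TT_{\Delta_1}$, every $t_{pq}^{\Delta_1}$ is literally $\tau_i+\cdots+\tau_j$---exactly the form governed by Conjecture \ref{conj:rigid linear}. The conjecture then says each such element is a self-similar prime in $\QQ\TT_{\Delta_1}$, so by Proposition \ref{pr:UFD} the monoid $\hat S_{\Delta_1}$ generated by $\QQ^\times\cdot\TT_{\Delta_1}$ and all $t_{pq}^{\Delta_1}$ is \emph{factor-closed}. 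Cohn's theory for semifirs (Corollary \ref{cor:free divisible}, resting on Theorem \ref{th:semifir}) then gives the injection $\QQ\TT_{\Delta_1}[S_{\Delta_1}^{-1}]\hookrightarrow Frac(\QQ\TT_{\Delta_1})=\FF_{3n-4}$ in one stroke---no inductive chain is needed. The missing ingredients in your outline are precisely: the choice of $\Delta_1$, the change of free generators, and the factor-closed/semifir machinery linking self-similar primes to injectivity of the localization.
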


\begin{proof} it suffices to show that for at lest one triangulation  $\Delta$ of $[n]$ the submonoid
$\hat S_\Delta\subset \QQ \TT_\Delta\setminus \{0\}$
generated by all $t_{ij}^\Delta$ and by $(\QQ  \TT_\Delta)^\times=\QQ^\times\cdot \TT_\Delta$ is   factor-closed  in the sense of
Definition \ref{def:divisible submonoid}.  Since $\TT_\Delta$ is a free group by Theorem \ref{th:T_Delta polygon}, in view of Proposition \ref{pr:UFD},
it suffices to verify that each $t_{ij}^\Delta$, $(i,j)\notin \Delta$ is prime in  $\QQ \TT_\Delta$ and all primes similar to $t_{ij}$ belong to $\hat S_\Delta$.
Now let $\Delta=\Delta_1$ be the starlike triangulation as in \eqref{eq:starlike} with $i=1$.

We need the following obvious fact.

\begin{lemma}
\label{le:starlike relation n-gon}
For all $n\ge 2$ the  group $\TT_{\Delta_1}$ is freely generated by $\tau_j:=T_1^{j,j+1}$, $j=2,\ldots,n-1$, $t_{1,k}$, $t_{k,1}$, $k=2,\ldots,n$.
\end{lemma}

\begin{proof} Clearly, $\TT_{\Delta_1}$ has a presentation
$t_{j,j+1}=t_{j,1}\tau_jt_{1,j+1},~t_{j+1,j}=t_{j+1,1}\tau_jt_{1j}$
for $j=2,\ldots,n-1$.

This proves the lemma.
\end{proof}

Furthermore,  Corollary \ref{cor:angle is additive} implies that the monoid $\hat S_{\Delta_1}$ is generated by $\TT_{\Delta_1}$ and noncommutative angles
$$T_1^{ij}=\tau_i+\ldots +\tau_j$$
for $2\le i<j\le n$. Clearly, each  $T_1^{ij}$, $i<j-1$ is prime in $\QQ \TT_{\Delta_1}$.
Let $P_{ij}:= \QQ^\times \cdot \TT_{\Delta_1}\cdot T_1^{ij}\cdot \TT_{\Delta_1}$ for $2\le i<j\le n$. By Conjecture \ref{conj:rigid linear} with $m=3n-4$, $k=j$,
that the only primes similar to $T_1^{ij}$ are elements of $P_{ij}$. This together with Proposition \ref{pr:UFD} and Remark \ref{rem:free group algebra UFD}
proves that the submonoid $\QQ^\times \cdot \hat S_{\Delta_1}$
of $\QQ \TT_{\Delta_1}\setminus \{0\}$ is factor-closed because it is generated by $\QQ^\times\cdot \TT_{\Delta_1}$ and  $P=\bigcup\limits_{2\le i<j\le n} P_{ij}$. Therefore,
Corollary \ref{cor:free divisible} guarantees that $\QQ \TT_{\Delta_1}[S^{-1}]=\QQ \TT_{\Delta_1}[\QQ^\times \cdot \hat S_{\Delta_1}^{-1}]$ is a subalgebra of
$\FF_{3n-4}=Frac(\QQ \TT_{\Delta_1})$.

Using this and Proposition \ref{pr:canonical isomorphism localization polygon} with $\Delta=\Delta_1$,
we finish the proof of Proposition \ref{pr:A_n divisible conj}. \end{proof}

\section{Noncommutative surfaces}

\label{sect:Noncommutative triangulated surfaces}
In this section we extend all the constructions and results of Section 2 to  {\it marked surfaces}  i.e., (connected compact smooth) surfaces $\Sigma$
possibly with boundary equipped with a non-empty finite set $I=I(\Sigma)=I_b\sqcup I_p$ of marked points with a subset $I_b=I_b(\Sigma)\subset I$ of marked boundary points, the set $I_p=I_p(\Sigma)=I\setminus I_b$ of {\it ordinary punctures} and a set $I_s=I_s(\Sigma)$ of {\it special punctures} (which were called {\it orbifold point of order $2$} in \cite{FST}, however, we will not use this terminology). We also require that each boundary component contains at least one point from $I_b$. We denote by $\underline \Sigma$ the underlying topological space.


\subsection{Multi-groupoid of curves on $\Sigma$}
Given points $p_1,p_2\in I(\Sigma)$, consider
connected  smooth directed curves $C$ in $\underline \Sigma\setminus I_p(\Sigma)$ starting at $p_1$ and terminating at $p_2$.
For a curve $C$ denote by $\overline C$ the same  curve traversed from $p_2$ to $p_1$.
We say that
curves $C$ and $C'$ in $\Sigma$ from $p_1$ to $p_2$ are {\it equivalent} if $C$ and $C'$ are homotopy equivalent as (connected smooth directed) curves in $\underline \Sigma\setminus I_p(\Sigma)$.

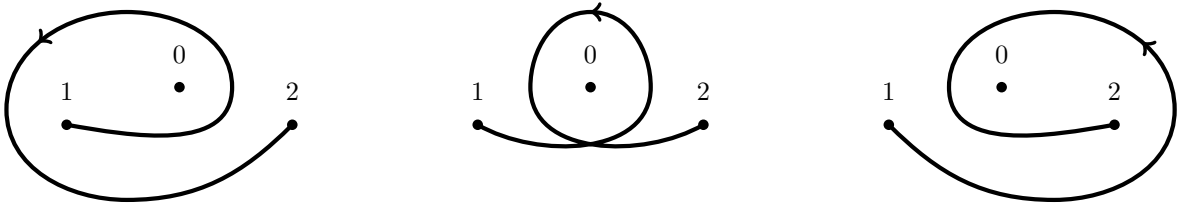
\begin{figure}[h!]
\begin{subfigure}{0.3\linewidth}
\centering
\begin{tikzpicture}
\clip (-2.5,-2) rectangle (2.5,2);
\fill (0,0) circle (0.07);
\fill (-1.5,-0.5) circle (0.07);
\fill (1.5,-0.5) circle (0.07);
\draw (0,0.2) node[above] {$0$};
\draw (-1.5,-0.3) node[above] {$1$};
\draw (1.5,-0.3) node[above] {$2$};
\draw [ultra thick,->-] (-1.5,-0.5) to[out=350,in=270] (0.7,0) to[out=90,in=0] (-0.7,1) to[out=180,in=90] (-2.3,-0.3) to[out=270,in=180] (-0.7,-1.5) to[out=0,in=225] (1.5,-0.5);
\end{tikzpicture}
\end{subfigure}
\quad
\begin{subfigure}{0.3\linewidth}
\centering
\begin{tikzpicture}
\clip (-2.5,-2) rectangle (2.5,2);
\fill (0,0) circle (0.07);
\fill (-1.5,-0.5) circle (0.07);
\fill (1.5,-0.5) circle (0.07);
\draw (0,0.2) node[above] {$0$};
\draw (-1.5,-0.3) node[above] {$1$};
\draw (1.5,-0.3) node[above] {$2$};
\draw [ultra thick,->-] (-1.5,-0.5) to[out=330,in=270] (0.8,0) to[out=90,in=0] (0,1) to[out=180,in=90] (-0.8,0) to[out=270,in=210] (1.5,-0.5);
\end{tikzpicture}
\end{subfigure}
\quad
\begin{subfigure}{0.3\linewidth}
\centering
\begin{tikzpicture}
\clip (-2.5,-2) rectangle (2.5,2);
\fill (0,0) circle (0.07);
\fill (-1.5,-0.5) circle (0.07);
\fill (1.5,-0.5) circle (0.07);
\draw (0,0.2) node[above] {$0$};
\draw (-1.5,-0.3) node[above] {$1$};
\draw (1.5,-0.3) node[above] {$2$};
\draw [ultra thick,->-] (-1.5,-0.5) to[out=315,in=180] (0.7,-1.5) to[out=0,in=270] (2.3,-0.3) to[out=90,in=0] (0.7,1) to[out=180,in=90] (-0.7,0) to[out=270,in=190] (1.5,-0.5);
\end{tikzpicture}
\end{subfigure}
\caption{Pairwise non-equivalent curves from puncture 1 to puncture 2}
\end{figure}


Denote by
$\Gamma_{ij}=\Gamma_{ij}(\Sigma)$ the set of equivalence classes of curves $C$ in $\Sigma$ which originate at $i$ and terminate at $j$
then let $\Gamma=\Gamma(\Sigma):=\bigsqcup\limits_{i,j\in I(\Sigma)}  \Gamma_{ij}$.
For $\gamma\in   \Gamma_{ij}$ we denote by $s(\gamma)\in I(\Sigma)$ (resp. by $t(\gamma)\in I(\Sigma)$) the source $i$ (resp. the target $j$).  

Thus we have a natural involution
$\overline{\cdot}: \Gamma\,\widetilde \rightarrow \,  \Gamma$ ($\gamma\mapsto \overline \gamma$). By definition,
$\overline \Gamma_{ij}= \Gamma_{j,i}$ for all $i,j\in I(\Sigma)$.

\begin{picture}(200,40)
\put(160,10){\includegraphics[scale=0.3]{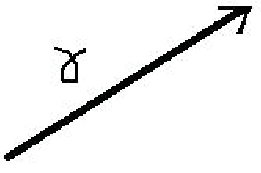}}
\put(270,10){\includegraphics[scale=0.3]{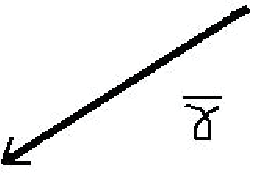}}
\put(190,0){Involution  $\gamma\mapsto \overline \gamma$}
\end{picture}

\medskip

For  $j\in I(\Sigma)$ denote by $id_j$ the {\it trivial loop} at $j$. Clearly, $\gamma=\overline \gamma$ iff
$\gamma$ is trivial.


It is easy to see that $\Gamma(\Sigma)$ is finite iff $\Sigma$ is homeomorphic to an $n$-gon, i.e., a disk with $n\ge 1$ marked  points and no punctures. In that case,  the assignments
$\gamma\mapsto (s(\gamma),t(\gamma))$  define a bijection $\Gamma\widetilde \to \{(i,j)\in [n],i \ne j\}$.

We say that  $\gamma\in \Gamma(\Sigma)$ is {\it simple} if it has a  non-self-intersecting  representative. Denote by $\Gamma^0(\Sigma)$ the set of all simple $\gamma\in \Gamma(\Sigma)$.


%

\begin{definition}
We say that a pair $(\gamma,\gamma')$ in $\Gamma(\Sigma)$  is {\it composable} if $t(\gamma)=s(\gamma')$ and define
the {\it composition} $\gamma''=\gamma\circ \gamma'$ to be the pullback, under the natural projection
$\Gamma(\Sigma)\twoheadrightarrow \Gamma(\Sigma\setminus (I_p(\Sigma) \setminus\{t(\gamma)\}))$ of the concatenation of
$\gamma$ and $\gamma'$.

\end{definition}

Clearly, the multi-composition  $\gamma\circ \gamma'$ is a 1-element set  iff $t(\gamma)=s(\gamma')\in I_b(\Sigma)$.
Otherwise $\gamma\circ \gamma'$ is countable.

\begin{picture}(110,95)

\put(140,10){\includegraphics[scale=0.7]{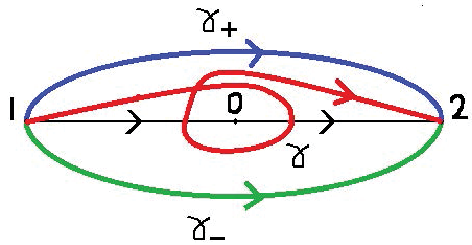}}
\put(50,0){Multi-composition: $\{\gamma_-,\gamma,\gamma_+\}\in (1,0)\circ (0,2)$.}

\end{picture}

%
%
%
%
%
%
%

\medskip

The following  is immediate.

\begin{lemma}
For each marked surface $\Sigma$ the set $\Gamma(\Sigma)$ is a multi-groupoid  with the object set $I(\Sigma)$ and the inverse given by $\gamma^{-1}:=\overline \gamma$.

%

\end{lemma}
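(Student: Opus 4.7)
The plan is to verify the multi-groupoid axioms one by one: well-definedness of $\circ$ on equivalence classes, the left and right identity axioms for each trivial loop $id_j$, the inverse axiom $\gamma^{-1} = \overline{\gamma}$, and associativity. Each is essentially a homotopy-theoretic statement about directed paths in a surface with punctures.

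For well-definedness, let $\gamma \sim \gamma'$ and $\delta \sim \delta'$ be pairs of homotopic representatives with $t(\gamma) = s(\delta)$. Then in the reduced surface $\Sigma' := \Sigma \setminus (I_p(\Sigma) \setminus \{t(\gamma)\})$ the concatenations $\gamma * \delta$ and $\gamma' * \delta'$ are homotopic rel endpoints, so they define the same class in $\Gamma(\Sigma')$, and its pullback under $\Gamma(\Sigma) \twoheadrightarrow \Gamma(\Sigma')$ is a well-defined subset of $\Gamma(\Sigma)$. For the identity axiom, the concatenation $id_{s(\gamma)} * \gamma$ is homotopic to $\gamma$ in $\Sigma'$, so $\gamma \in id_{s(\gamma)} \circ \gamma$; the right identity is symmetric. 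For the inverse axiom, the loop $\gamma * \overline{\gamma}$ is null-homotopic rel endpoints in $\Sigma'$ (contract along $\gamma$), whence $id_{s(\gamma)}$ lies in the pullback $\gamma \circ \overline{\gamma}$; and similarly $id_{t(\gamma)} \in \overline{\gamma} \circ \gamma$.

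The only axiom requiring care is associativity. For a composable triple $(\gamma_1, \gamma_2, \gamma_3)$, I claim that both $(\gamma_1 \circ \gamma_2) \circ \gamma_3$ and $\gamma_1 \circ (\gamma_2 \circ \gamma_3)$ coincide with the pullback, under the projection $\Gamma(\Sigma) \twoheadrightarrow \Gamma(\Sigma \setminus (I_p(\Sigma) \setminus \{t(\gamma_1), t(\gamma_2)\}))$, of the three-fold concatenation $\gamma_1 * \gamma_2 * \gamma_3$. The main obstacle is verifying this equality, since the intermediate surface through which one projects differs according to the order of composition. It reduces to the fact that the puncture-forgetting projections form a commutative diagram
\[
\Gamma(\Sigma) \to \Gamma(\Sigma \setminus (I_p \setminus \{t(\gamma_1)\})) \to \Gamma(\Sigma \setminus (I_p \setminus \{t(\gamma_1), t(\gamma_2)\})),
\]
together with the analogous route through $t(\gamma_2)$ first; both iterated pullbacks enumerate exactly the same joint lifts of the three-fold concatenation, which completes the verification.
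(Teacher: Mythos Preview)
The paper gives no proof of this lemma; it is preceded by ``The following is immediate'' and left at that. Your verification is correct and supplies exactly what the paper omits.

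Your treatment of associativity is the right idea: both bracketings equal the pullback of the triple concatenation $\gamma_1 * \gamma_2 * \gamma_3$ through the surface with both intermediate punctures $t(\gamma_1), t(\gamma_2)$ forgotten. The one step you leave implicit is that the iterated pullback (first through $\Sigma_1$ forgetting $t(\gamma_1)$, then through $\Sigma_2$ forgetting $t(\gamma_2)$) really exhausts the single pullback through $\Sigma_{12}$. The nontrivial direction is: given $\nu$ with $p_{12}(\nu) = [\gamma_1*\gamma_2*\gamma_3]$, produce $\mu \in \Gamma(\Sigma)$ with $p_1(\mu) = [\gamma_1*\gamma_2]$ and $p_2(\nu) = [\mu*\gamma_3]$. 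The class $[\mu]$ in $\Gamma(\Sigma_2)$ is forced (invert $\gamma_3$ there), and one must check it lifts to $\Gamma(\Sigma)$ compatibly with the constraint in $\Gamma(\Sigma_1)$. This holds because the kernel of $\pi_1(\Sigma_2) \to \pi_1(\Sigma_{12})$ is normally generated by a small loop around $t(\gamma_1)$, which lifts to $\pi_1(\Sigma)$ and dies in $\pi_1(\Sigma_1)$; so any discrepancy can be corrected. This is routine for punctured surfaces, and presumably what the paper means by ``immediate,'' but it is the one place where a reader might pause.
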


\begin{remark} A multi-category (e.g., a multi-groupoid) is a natural generalization of a category (e.g., of  a groupoid) where we allow the composition of two morphisms to be a set of  arrow and require the associativity $(\gamma\circ \gamma')\circ \gamma''=\gamma\circ (\gamma'\circ \gamma'')$, which is an equality of sets, see e.g. \cite{comer} (where the term {\it polygroupoid} was introduced).
\end{remark}

\begin{remark} If $I_p(\Sigma)=\emptyset$, then   $\Gamma(\Sigma)$ is an ordinary groupoid
(cf. \cite[Section 2.2]{CCS}).

\end{remark}

%
%
%

\subsection{Category of surfaces and  reduced curves}

\begin{definition}
\label{def:proper map}
Given a  continuous map $f:\underline \Sigma\to \underline \Sigma'$ with discrete fibers,
we say that $f$ is a {\it morphism of marked surfaces}
$\Sigma\to \Sigma'$ if:


$\bullet$
$f^{-1}(I(\Sigma'))= I(\Sigma)$, $f(I_s(\Sigma))\subset  I_s(\Sigma')$
(we abbreviate
$I^f:=f^{-1}(I_s(\Sigma'))\setminus I_s(\Sigma))$.

$\bullet$ For each point $p\in \underline \Sigma\setminus I^f$ there is a neighborhood ${\mathcal O}_p$ of $p$ in $\underline \Sigma$ such that the restriction of $f$ to ${\mathcal O}_p$ is injective (if $p\in \partial \underline \Sigma$ is a boundary point, then ${\mathcal O}_p$ is a ``half-neighborhood").

$\bullet$
For each $p\in I^f$  there is a neighborhood ${\mathcal O}_p$ of $p$ in $\underline \Sigma$ such that the restriction of $f$ to ${\mathcal O}_p$ is a two-fold cover of $f({\mathcal O}_p)$ ramified at $f(p)$.

\end{definition}


\begin{theorem}
\label{th:category surf}
For any morphisms of marked surfaces $f:\Sigma\to \Sigma'$ and $f':\Sigma'\to \Sigma''$ the composition $f'\circ f:\underline \Sigma\to \underline \Sigma''$ is also a morphism of marked surfaces $\Sigma'\to \Sigma''$.


\end{theorem}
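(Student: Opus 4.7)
Throughout, let $f:\Sigma\to\Sigma'$ and $f':\Sigma'\to\Sigma''$ be morphisms of marked surfaces and write $g:=f'\circ f$. (Note the target of $g$ should read $\Sigma\to\Sigma''$, not $\Sigma'\to\Sigma''$.) The plan is to verify the three bullet points of Definition \ref{def:proper map} for $g$.

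The set-theoretic axioms are immediate from functoriality of preimages. First, $g^{-1}(I(\Sigma''))=f^{-1}\bigl((f')^{-1}(I(\Sigma''))\bigr)=f^{-1}(I(\Sigma'))=I(\Sigma)$. Second, $g(I_s(\Sigma))=f'(f(I_s(\Sigma)))\subset f'(I_s(\Sigma'))\subset I_s(\Sigma'')$. Discreteness of fibers of $g$ also follows at once, since for $q\in\underline\Sigma''$ one has $g^{-1}(q)=f^{-1}((f')^{-1}(q))$, a finite/discrete union of discrete sets in $\underline\Sigma$.

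The heart of the argument is the local analysis. Write $I^g:=g^{-1}(I_s(\Sigma''))\setminus I_s(\Sigma)$ and stratify $\underline\Sigma$ by whether $p\in I^f$ and whether $f(p)\in I^{f'}$. The key observation is that these cannot happen simultaneously: if $p\in I^f$ then $f(p)\in I_s(\Sigma')$ by definition of $I^f$, whereas $f(p)\in I^{f'}$ forces $f(p)\notin I_s(\Sigma')$. This leaves three cases to examine:
\begin{enumerate}[(i)]
\item $p\notin I^f$ and $f(p)\notin I^{f'}$: here $f$ is locally injective near $p$ and $f'$ is locally injective near $f(p)$, so the composition $g$ is locally injective near $p$. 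We also need $p\notin I^g$ in this case; if $g(p)\in I_s(\Sigma'')$ then either $f(p)\in I_s(\Sigma')$ (whence $p\in I_s(\Sigma)$ must hold, since $p\notin I^f$ and $f^{-1}(I_s(\Sigma'))=I^f\sqcup I_s(\Sigma)$), or $f(p)\in I^{f'}$, contradicting the hypothesis.
\item $p\in I^f$, $f(p)\notin I^{f'}$: choose neighborhoods so $f$ is a $2$-to-$1$ cover ${\mathcal O}_p\to f({\mathcal O}_p)$ ramified at $p$ and $f'$ is injective on $f({\mathcal O}_p)$ (shrinking if needed). Then $g|_{{\mathcal O}_p}$ is the two-fold ramified cover $f|_{{\mathcal O}_p}$ followed by a homeomorphism onto its image, so it is a two-fold ramified cover onto $g({\mathcal O}_p)$. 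Since $p\in I^f\subset I(\Sigma)\setminus I_s(\Sigma)$, we have $p\in I^g$.
\item $p\notin I^f$, $f(p)\in I^{f'}$: by local injectivity of $f$ at $p$, shrink so that $f|_{{\mathcal O}_p}$ is a homeomorphism onto a neighborhood of $f(p)$ on which $f'$ is a two-fold ramified cover at $f(p)$. Composing, $g|_{{\mathcal O}_p}$ is a two-fold ramified cover at $p$; and $p\in I^g$ because $p\in f^{-1}(I(\Sigma'))=I(\Sigma)$ while $p\notin I_s(\Sigma)$ (otherwise $f(p)\in I_s(\Sigma')$, contradicting $f(p)\in I^{f'}$).
\end{enumerate}
Conversely every $p\in I^g$ falls into case (ii) or (iii), so the local model at each point of $I^g$ is correct, and at every other point $g$ is locally injective. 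This verifies all three bullets of Definition \ref{def:proper map} for $g$.

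The only subtlety, and the step I would double-check most carefully, is the bookkeeping that matches $I^g$ exactly with the disjoint union of the ``bad'' points in cases (ii) and (iii), together with the shrinking of neighborhoods needed in case (ii) so that the image $f({\mathcal O}_p)$ lies inside a neighborhood on which $f'$ is injective. Both are routine given that marked surfaces are Hausdorff and $f$, $f'$ have discrete fibers, but they are the places where one actually uses the topology rather than just set-theoretic axioms.
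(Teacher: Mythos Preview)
Your proof is correct and follows essentially the same approach as the paper's: both verify the set-theoretic axioms first, then establish (in the paper's formulation) that $I^{g}=I^{f}\sqcup f^{-1}(I^{f'})$ is a disjoint union, and finally carry out the same three-case local analysis near a point $p$ according to whether $p\in I^{f}$ or $f(p)\in I^{f'}$. Your explicit observation that these two conditions are mutually exclusive is exactly what underlies the paper's disjointness claim, and your bookkeeping matching $I^{g}$ with cases (ii) and (iii) is the same content as the paper's identity $I^{f'\circ f}=I^{f}\sqcup f^{-1}(I^{f'})$.
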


We prove Theorem \ref{th:category surf} in Section \ref{subsect:canonical covers}.

In what follows, denote by  ${\bf Surf}$ the category whose objects are marked surfaces and arrows are morphisms of marked surfaces.

%
%
%
%

Note that if $f:\Sigma\to \Sigma'$ is a morphism in {\bf Surf} with $I^f=\emptyset$,
then $f$ respects (homotopy) equivalence of curves and, in particular, defines a map $\Gamma(\Sigma)\to \Gamma(\Sigma')$. In general, this is no longer true. To fix it, we define below a stronger equivalence relation than the equivalence for curves in $\Sigma'$.

Indeed, given $i\in I_s(\Sigma)$, we say that a curve $C$ in $\Sigma$ is $i$-{\it reducible} if there is a self-intersection point $p\in C$ such that the loop $C_0\subset C$ defined by $p$ encloses exactly one special puncture $i$; otherwise, $C$ is $i$-{\it reduced}.
Respectively, $\gamma\in \Gamma(\Sigma)$ is $i$-reducible (resp. $i$-reduced) if $\gamma$ has an $i$-reducible (resp. $i$-reduced) representative.
Denote by $[\Gamma(\Sigma)]_i$ the set of all $i$-reduced $\gamma\in \Gamma(\Sigma)$, abbreviate
$[\Gamma(\Sigma)]:=\bigcap\limits_{i\in I_s(\Sigma)} [\Gamma(\Sigma)]_i$ and
refer to elements of $[\Gamma(\Sigma)]$ as {\it reduced}. Clearly, $[\Gamma(\Sigma)]=\Gamma(\Sigma)$ iff $I_s(\Sigma)=\emptyset$. It is also clear that and each $\gamma\in \Gamma^0(\Sigma)$ is reduced.

For each $i$-reducible $\gamma\in \Gamma(\Sigma)$ denote by $[\gamma]_i$  the class in $\Gamma(\Sigma)$ obtained by resolving the self-intersecting simple loop around $i$ in (a generic representative $C$ of) $\gamma$
so that the resulting curve is connected (the ``wrong" crossing resolution would
result in creating two connected components, one of which is a circle around $i$).

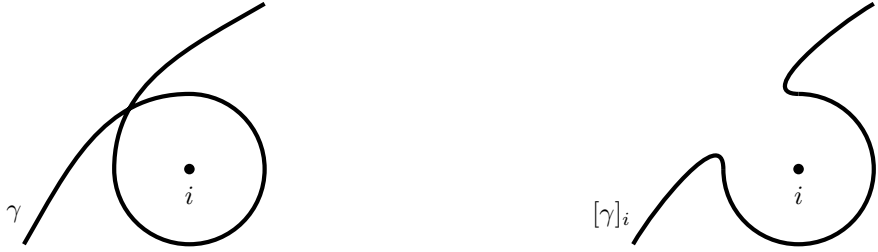
\begin{figure}[h!]
\begin{subfigure}{0.47\linewidth}
\centering
\begin{tikzpicture}
\draw[ultra thick] (-2.2,-1) to[out=60,in=180] (0,1) to[out=0,in=90] (1,0) to[out=270,in=0] (0,-1) to[out=180,in=270] (-1,0) to[out=90,in=210] (1,2.2);
\fill (0,0) circle (0.07);
\draw (0,-0.1) node [thick,below] {$i$};
\draw (-2.1,-0.6) node [thick,left] {$\gamma$};
\end{tikzpicture}
\bigskip
\end{subfigure}
\begin{subfigure}{0.47\linewidth}
\centering
\begin{tikzpicture}
\draw[ultra thick] (-2.2,-1) to[out=60,in=90] (-1,0);
\draw[ultra thick] (0,1) to[out=0,in=90] (1,0) to[out=270,in=0] (0,-1) to[out=180,in=270] (-1,0);
\draw[ultra thick] (0,1) to[out=180,in=210] (1,2.2);
\fill (0,0) circle (0.07);
\draw (0,-0.1) node [thick,below] {$i$};
\draw (-2.1,-0.6) node [thick,left] {$[\gamma]_i$};
\end{tikzpicture}
\bigskip
\end{subfigure}
\caption{Crossing resolution}
\end{figure}

The following is obvious.
\begin{lemma}

(a) The assignments
$\gamma\to \begin{cases} [\gamma]_i & \text{if $\gamma$ is $i$-reducible}\\
\gamma & \text{if $\gamma$ is $i$-reduced}\\
\end{cases}$ define a map $\pi_i:\Gamma(\Sigma)\to \Gamma(\Sigma)$.

(b) $\pi_i\circ \pi_j=\pi_j\circ \pi_i$ for all $i,j\in I_s(\Sigma)$.

(c) The assignments $\gamma\mapsto \pi_i^N(\gamma)$ for sufficiently big $N$ define a projection $\pi_i^\infty:\Gamma(\Sigma)\to [\Gamma(\Sigma)]_i$.

(d) The composition $\pi^\infty:=\prod\limits_{i\in I_s(\Sigma)} \pi_i^\infty$ is a projection $\Gamma(\Sigma)\to [\Gamma(\Sigma)]$.
\end{lemma}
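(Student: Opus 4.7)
The plan is to verify each part by working with generic smooth representatives of homotopy classes and tracking a suitable complexity invariant.

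For part (a), the main issue is well-definedness of $[\gamma]_i$. I would first fix a generic smooth representative $C$ of $\gamma$ whose self-intersections are all transverse double points and whose simple loops around special punctures enclose exactly one puncture at a time. Given an $i$-reducible curve, there is a self-intersection $p$ cutting off a simple loop $C_0\subset C$ around $i$, and among the two resolutions of the crossing at $p$ exactly one keeps the curve connected; the other produces $C\setminus C_0$ together with a free circle around $i$. Thus the connected resolution is locally canonical. To show it descends to homotopy classes, I would observe that two reducible representatives of the same $\gamma$ differ by a finite sequence of Reidemeister-type moves in $\underline\Sigma\setminus I_p(\Sigma)$ (plus isotopies that slide the self-intersection across, merge it with another, or cancel it against its inverse), and in each case both sides of the move admit compatible reductions yielding the same element of $\Gamma(\Sigma)$. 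The ambiguity in the choice of self-intersection $p$ producing a loop around $i$ is handled similarly: any two such $p,p'$ give loops $C_0,C_0'$ around the same puncture $i$, so $C_0\cup C_0'$ bounds a disk containing $i$ in $\underline\Sigma\setminus I_p$, and this disk provides a homotopy between the two resolutions.

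For part (b), pick representatives so that the simple loops around $i$ and around $j$ are chosen disjoint; this is possible because $i\ne j$ are distinct points of $\Sigma$ and the required loops can be taken in disjoint disk neighborhoods of $i$ and $j$ (enlarged along the curve if necessary by isotopy). The resolutions at crossings associated with $i$ and at crossings associated with $j$ are then disjoint local modifications, so they commute.

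For part (c), I will introduce the nonnegative integer invariant $n_i(\gamma)$ equal to the minimal number of self-intersections of a generic representative $C$ of $\gamma$ which cut off a simple loop around $i$; equivalently it can be described in terms of the winding behavior of a lift of $\gamma$ to a suitable branched cover ramified at $i$. The key step is to verify that $n_i(\pi_i(\gamma))<n_i(\gamma)$ whenever $\gamma$ is $i$-reducible: removing the loop $C_0$ strictly reduces the number of such self-intersections, while any new self-intersections introduced lie away from small neighborhoods of $i$ and so do not bound simple loops around $i$. Then $\pi_i^N(\gamma)$ stabilizes once $n_i$ reaches $0$, i.e.\ once $\gamma$ is $i$-reduced, yielding a well-defined projection $\pi_i^\infty\colon\Gamma(\Sigma)\to[\Gamma(\Sigma)]_i$ with $\pi_i^\infty\circ\pi_i^\infty=\pi_i^\infty$.

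Part (d) then follows formally: by (b) the $\pi_i^\infty$ commute, so the finite product $\prod_{i\in I_s(\Sigma)}\pi_i^\infty$ is well defined and idempotent; its image is contained in $\bigcap_i[\Gamma(\Sigma)]_i=[\Gamma(\Sigma)]$, and it fixes $[\Gamma(\Sigma)]$ pointwise, so it is the desired projection. The main obstacle is the verification in (c) that some integer complexity strictly drops under $\pi_i$, since this underpins both termination and the fact that $\pi_i^\infty$ is a genuine projection onto $[\Gamma(\Sigma)]_i$; everything else is topological bookkeeping.
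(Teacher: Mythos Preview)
The paper offers no proof here; it prefaces the lemma with ``The following is obvious.'' Your plan is therefore already more thorough than the paper, and your treatment of (b)--(d) is the natural one: disjoint local surgeries commute, a nonnegative complexity strictly drops under each application of $\pi_i$, and the resulting commuting idempotents compose to the desired projection.

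The disk argument in (a) is where you should be more careful. Two simple sub-loops $C_0,C_0'$ of $C$ that each enclose only $i$ need not have $C_0\cup C_0'$ bounding a disk in the punctured surface --- they cobound an annulus, and the rest of $C$ may pass through it. More concretely, if $\gamma$ is represented by a path of the shape $\alpha\cdot\mu\cdot\beta\cdot\mu'\cdot\delta$ where $\mu,\mu'$ are small loops around $i$ and $\beta$ winds around a different puncture $j$, then resolving at $\mu$ versus at $\mu'$ produces genuinely distinct classes in $\Gamma(\Sigma)$ (they differ by a commutator of a loop around $i$ with $\beta$). So the single-step map $\pi_i$ depends on a choice the paper leaves implicit. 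What you can and should argue instead is local confluence: any two one-step reductions of $\gamma$ admit further reductions to a common class, so that $\pi_i^\infty$ in (c) is independent of all choices. Your strictly-decreasing-complexity argument is exactly the tool for this (termination plus local confluence yields a unique normal form), and once $\pi_i^\infty$ is shown to be well-defined the remaining parts follow. In short: treat (a) as fixing a convention, and relocate the substantive well-definedness claim to (c).
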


This, in particular, defines an equivalence relation on $[\Gamma(\Sigma)]$, namely for $\gamma,\gamma'\in \Gamma(\Sigma)$ we say that any representatives $C\in \gamma$ and $C'\in \gamma'$ are {\it $I_s(\Sigma)$-equivalent} iff $\pi^\infty(\gamma)=\pi^\infty(\gamma')$. We naturally identify  $I_s(\Sigma)$-equivalence classes with elements of $[\Gamma(\Sigma)]$.

%
%

For each $i\in I_s(\Sigma)$ and $j\in I(\Sigma)$ let $\lambda_{ij}$  denote a (unique up to $\overline {\cdot }$) simple loop at $j$ around  $i$ in $[\Gamma(\Sigma)]$.
We refer to such loops as {\it special}.

For $n\ge 1$, $h\ge 0$ denote by $P_n(h)$ the  $n$-gon (i.e., a disk with $n$ marked boundary points) with $h$ special punctures
and abbreviate $P_n:=P_n(0)$.
Clearly, each special loop $\lambda$ determines a (homeomorphic) copy of $P_1(1)$ with the marked point set $\{\underline j\}$ and the special puncture set $\{i\}$.

\begin{lemma} For any marked surface $\Sigma$ the set $[\Gamma(\Sigma)]$ has a natural multi-groupoid structure:
$$[\gamma]\circ [\gamma']:=[\gamma\circ \gamma']$$
for any composable $(\gamma,\gamma')$ in the multi-groupoid $\Gamma(\Sigma)$ with the object set $I(\Sigma)$. Moreover,

(i) the assignments $\gamma\mapsto [\gamma]$ define a surjective homomorphism of multi-groupoids $\Gamma(\Sigma)\twoheadrightarrow [\Gamma(\Sigma)]$.

(ii) For each $i\in I_s(\Sigma)$ and $j\in I(\Sigma)$
each special loop satisfies $\overline \lambda_{ij}=\lambda_{ij}$.

\end{lemma}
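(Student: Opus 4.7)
The plan is to prove the three assertions in sequence, with the well-definedness of $\circ$ on $[\Gamma(\Sigma)]$ doing most of the work and the multi-groupoid axioms then descending automatically from $\Gamma(\Sigma)$.

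First I would establish that $[\gamma]\circ[\gamma']:=[\gamma\circ\gamma']$ is well defined. The point is to show that if $\pi^\infty(\gamma_1)=\pi^\infty(\gamma_2)$ and $\pi^\infty(\gamma_1')=\pi^\infty(\gamma_2')$ with the indicated compositions defined, then $\pi^\infty(\gamma_1\circ\gamma_1')=\pi^\infty(\gamma_2\circ\gamma_2')$ as subsets of $[\Gamma(\Sigma)]$. The key input is that each $\pi_i$ is \emph{local}: by definition it acts by resolving a self-intersection whose inner loop encloses exactly the special puncture $i$, and such a resolution can be performed inside any small disk around $i$ disjoint from everything else. Consequently $\pi_i$ commutes (up to homotopy) with concatenation performed at any marked point $j$, and with any further reduction $\pi_{i'}$ (which was already recorded in the lemma preceding the statement). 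Iterating and taking $\pi^\infty$ yields the required compatibility. One also notes that the multi-valued character of $\gamma\circ\gamma'$ is preserved by $\pi^\infty$ because the winding loops at $t(\gamma)\in I_p\cup I_s$ used in the pullback definition of multi-composition are in different $\pi^\infty$-classes (they wind around ordinary punctures or around several special punctures and so are not reducible).

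Once the composition is well defined, assertion (i) is formal. Surjectivity of $\gamma\mapsto[\gamma]$ holds because every reduced class is the image of any of its representatives. The equality $[\gamma\circ\gamma']=[\gamma]\circ[\gamma']$ is the very definition, so $\pi^\infty$ is a homomorphism of multi-groupoids. The multi-groupoid axioms (associativity of $\circ$, trivial loops as identities, and $\overline{\gamma}$ as an inverse) are inherited from $\Gamma(\Sigma)$ via this homomorphism.

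For (ii), I would argue directly in a small disk $D\subset\underline\Sigma$ containing $i$ as its unique special puncture and with $j$ either on $\partial D$ or connected to $\partial D$ by an auxiliary arc. Pick the simple loop representative $\lambda_{ij}$ inside $D$ and form the concatenation $\lambda_{ij}\circ\lambda_{ij}$, perturbed generically so that it has exactly one self-intersection. The inner loop at this crossing encloses only the puncture $i$, so the curve is $i$-reducible; the unique resolution that keeps the resulting curve connected is the trivial loop $id_j$. Thus $\pi_i(\lambda_{ij}\circ\lambda_{ij})=id_j$ in $[\Gamma(\Sigma)]$, i.e.\ $\lambda_{ij}$ is its own two-sided inverse under $\circ$. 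Since $\overline{\lambda_{ij}}$ is also an inverse and, by the uniqueness of the simple loop at $j$ around $i$ up to $\overline{\cdot}\,$ stated just before the lemma, the set of inverses of $\lambda_{ij}$ contains only $\lambda_{ij}$ and $\overline{\lambda_{ij}}$, we conclude $\overline{\lambda_{ij}}=\lambda_{ij}$ in $[\Gamma(\Sigma)]$.

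The principal technical obstacle will be Step 1: one has to check carefully that the local reductions $\pi_i$ commute with the multi-valued concatenation and with one another on arbitrary (not necessarily generic) representatives. This amounts to a case analysis of where self-intersections can appear under concatenation and to showing that any two generic perturbations of $\gamma\circ\gamma'$ differ by a sequence of Reidemeister-like moves each of which commutes with the crossing resolutions defining $\pi_i^\infty$. The other two steps are essentially formal consequences.
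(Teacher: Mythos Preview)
The paper does not give a proof of this lemma; it is stated without argument, accompanied only by a figure captioned ``Special loops are involutions in $[\Gamma(\Sigma)]$.'' Your write-up supplies exactly what the paper leaves implicit, and your route for part (ii)---showing that $\lambda_{ij}\circ\lambda_{ij}$ has a single $i$-reducible crossing whose connected resolution is $id_j$, so that $\lambda_{ij}$ is its own inverse---is precisely the content of that figure.

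One small refinement for (ii): rather than invoking ``the set of inverses of $\lambda_{ij}$ contains only $\lambda_{ij}$ and $\overline{\lambda_{ij}}$,'' which is awkward to justify in a multi-groupoid, you can argue by cancellation. Having shown $[\lambda_{ij}]\circ[\lambda_{ij}]\ni id_j$ and knowing $[\overline{\lambda_{ij}}]\circ[\lambda_{ij}]\ni id_j$, compose on the left with the inverse $[\overline{\lambda_{ij}}]$ and use associativity plus the identity axiom to obtain $[\overline{\lambda_{ij}}]=[\lambda_{ij}]$ directly. When $j\in I_b(\Sigma)$ the compositions are single-valued and this is literally a groupoid cancellation; for $j\in I_p(\Sigma)$ the computation is local to a disk containing only $i$ and $j$, so the same picture applies.
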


\begin{picture}(200,100)
\put(50,10){\includegraphics[scale=0.3]{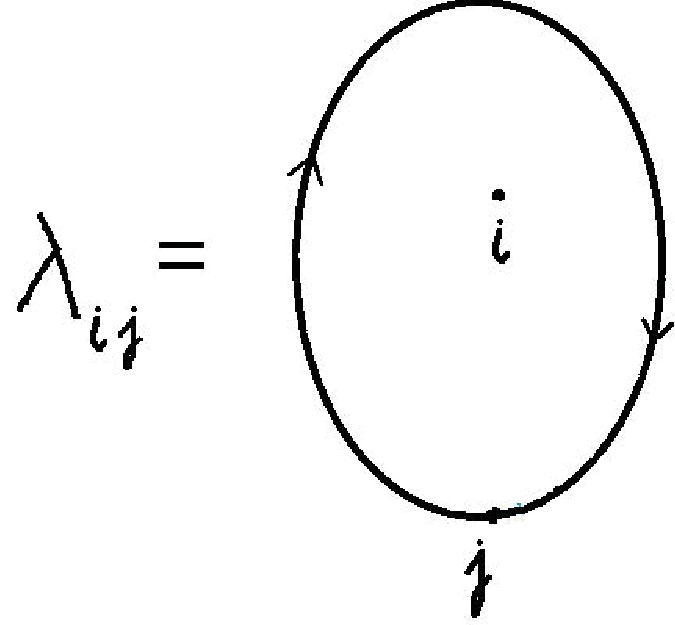}}
\put(170,10){\includegraphics[scale=0.3]{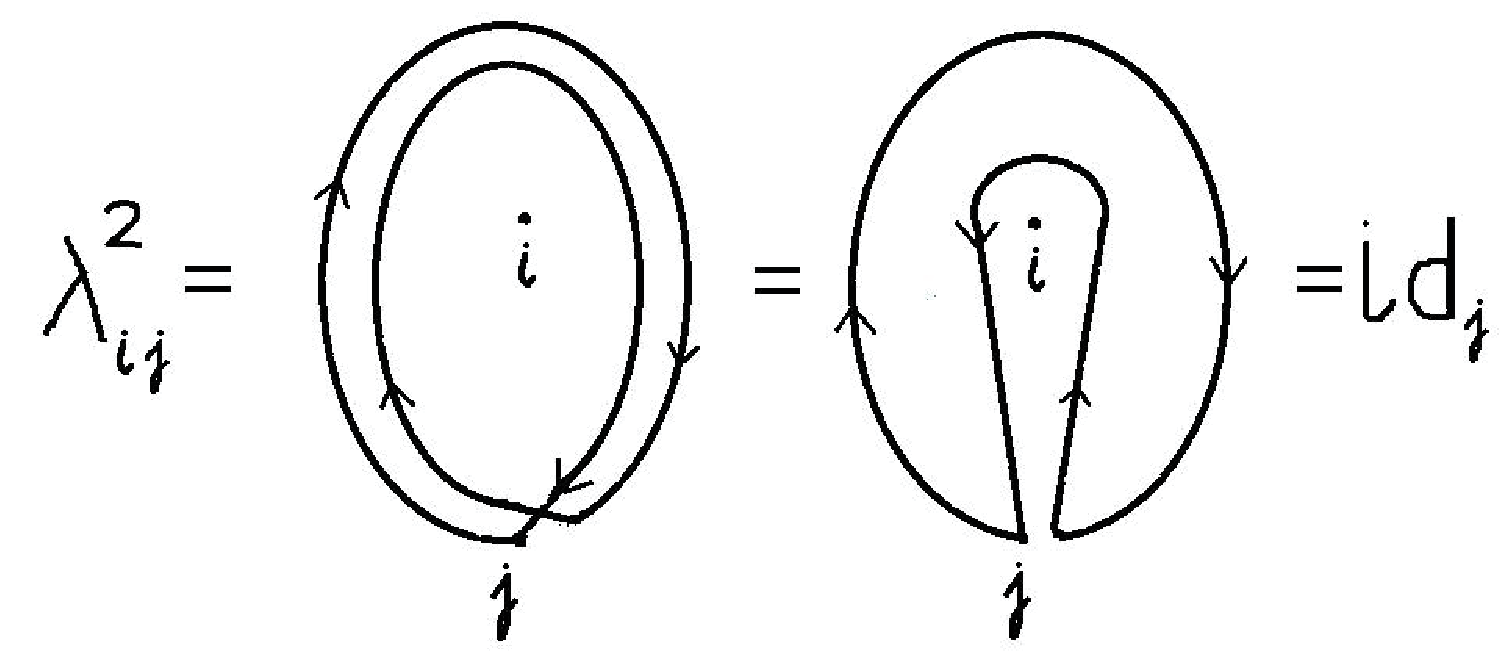}}
\put(140,0){Special loops are involutions in $[\Gamma(\Sigma)]$}

\end{picture}
\medskip

%

\medskip

The following result asserts functoriality  of the multi-groupoid under morphisms of surfaces.

\begin{theorem}
\label{th:I_s-homotopy equivalence}
Let $f$ be any morphism of marked surfaces $\Sigma\to \Sigma'$ and let $\gamma\in [\Gamma(\Sigma)]$.
Then

(a) for any generic representatives $C,C'\in \gamma$, their images $f(C)$ and $f(C')$ are $I_s(\Sigma')$-equivalent.

(b) For each $\gamma\in [\Gamma(\Sigma)]$ there exists a unique $I_s(\Sigma')$-equivalence class
$f(\gamma)\in [\Gamma(\Sigma')]$ such that $f(C)\in f(\gamma)$
for any generic curve $C\in \gamma$.

(c) $f:[\Gamma(\Sigma)]\to [\Gamma(\Sigma')]$ ($\gamma\mapsto f(\gamma)$) is a homomorphism of multi-groupoids.

(d) The assignments $\Sigma\mapsto [\Gamma(\Sigma)]$ define a functor from {\bf Surf} to the category of multi-groupoids.

\end{theorem}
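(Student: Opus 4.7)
The plan is to prove (a) first, as it is the geometric heart of the theorem, and then derive (b)--(d) essentially formally. The local model to keep in mind is the standard $2$-fold ramified cover $z\mapsto z^2$ near each point $p\in I^f$; parts (a) and (b) together express the fact that pushing a curve forward under $f$ commutes with the defining equivalences of $[\Gamma(\Sigma)]$ and $[\Gamma(\Sigma')]$ up to the resolution ambiguity at branch points, and that this resolution ambiguity is itself a simple loop around the branch value, hence trivial in $[\Gamma(\Sigma')]$.

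For (a), I would first observe that for any generic $C\in\gamma$ the image $f(C)$ lies in $\Gamma(\Sigma')$: the identity $f^{-1}(I(\Sigma'))=I(\Sigma)$ together with local injectivity of $f$ off $I^f$ forces the endpoints of $f(C)$ to lie in $I(\Sigma')$ and the interior to avoid $I_p(\Sigma')$. Then pick two generic representatives $C,C'\in\gamma$ and a generic homotopy $H\colon [0,1]^2\to\underline\Sigma\setminus I_p(\Sigma)$ between them. By dimension count one may choose $H$ transverse to the finite set $I^f$, so that $H$ meets $I^f$ in finitely many isolated points $p_1,\ldots,p_N$. Away from these points the composition $f\circ H$ is a bona fide homotopy in $\underline\Sigma'\setminus I_p(\Sigma')$; in a neighborhood of each $p_k$ the local model $z\mapsto z^2$ shows that the two slices of $f\circ H$ immediately on either side of the crossing differ by a single simple loop $\lambda_{f(p_k),\cdot}$ around $f(p_k)\in I_s(\Sigma')$. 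Since $\pi^\infty$ collapses every such loop, $[f(C)]=[f(C')]$ in $[\Gamma(\Sigma')]$.

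Part (b) then assigns to each $\gamma\in[\Gamma(\Sigma)]$ the class $f(\gamma):=[f(C)]\in[\Gamma(\Sigma')]$ for any generic $C\in\gamma$; well-definedness with respect to the $I_s(\Sigma)$-equivalence on the source is immediate since $f$ sends a special loop $\lambda_{i,j}$ with $i\in I_s(\Sigma)$ to a loop around $f(i)\in I_s(\Sigma')$, whose class in $[\Gamma(\Sigma')]$ is trivial by definition. For (c) I would check compatibility with the multi-composition case by case: when $t(\gamma)=s(\gamma')\in I_b(\Sigma)$, both $f(\gamma\circ\gamma')$ and $f(\gamma)\circ f(\gamma')$ are singletons realised by the pushed-forward concatenation; when the common endpoint is an interior puncture $q$, the multi-composition is the full preimage of the naive concatenation under the forgetful projection $\Gamma(\Sigma)\twoheadrightarrow\Gamma(\Sigma\setminus(I_p(\Sigma)\setminus\{q\}))$, and $f$ sends this preimage onto the analogous preimage over $f(q)$ modulo $I_s(\Sigma')$-equivalence, by the same homotopy-lifting argument used in (a) applied to the loops at $q$. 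Finally, (d) follows from (a)--(c) together with Theorem \ref{th:category surf}: both $(f'\circ f)_\ast$ and $f'_\ast\circ f_\ast$ are computed on any generic representative by the pointwise composition $(f'\circ f)(C)=f'(f(C))$, and the identity morphism clearly induces the identity on $[\Gamma(\cdot)]$.

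The main obstacle is the local analysis in (a): one must verify carefully that a generic one-parameter family of curves sweeping once across a ramification point $p\in I^f$ produces on the target exactly one simple loop around $f(p)$ (with a definite orientation), using the $z\mapsto z^2$ model, and that boundary ramification points $p\in I^f\cap I_b(\Sigma)$, where $f$ models a two-fold ramified cover of a disk by a half-disk, contribute the same way. A secondary but unavoidable technicality is the genericity required to make $H$ transverse to $I^f$ and the resulting image $f\circ H$ an immersion in general position off the $p_k$, which is routine but must be set up before the local model can be invoked.
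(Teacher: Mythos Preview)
Your proposal is correct and follows essentially the same route as the paper. Both arguments reduce part~(a) to the local $z\mapsto z^2$ model at a single point of $I^f$: you do this by choosing a generic homotopy transverse to $I^f$ and analysing each crossing, while the paper states directly that it suffices to take $C$ a point and $C'$ a small loop around one $i\in I^f$, then computes that $f(C')$ is a self-intersecting loop around $f(i)$ whose $I_s$-reduction is trivial. Your transversality framing is more explicit, but the geometric content is identical; parts~(b)--(d) are dismissed as formal consequences in both treatments.
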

\begin{picture}(50,120)
\put(80,0){\includegraphics[scale=0.3]{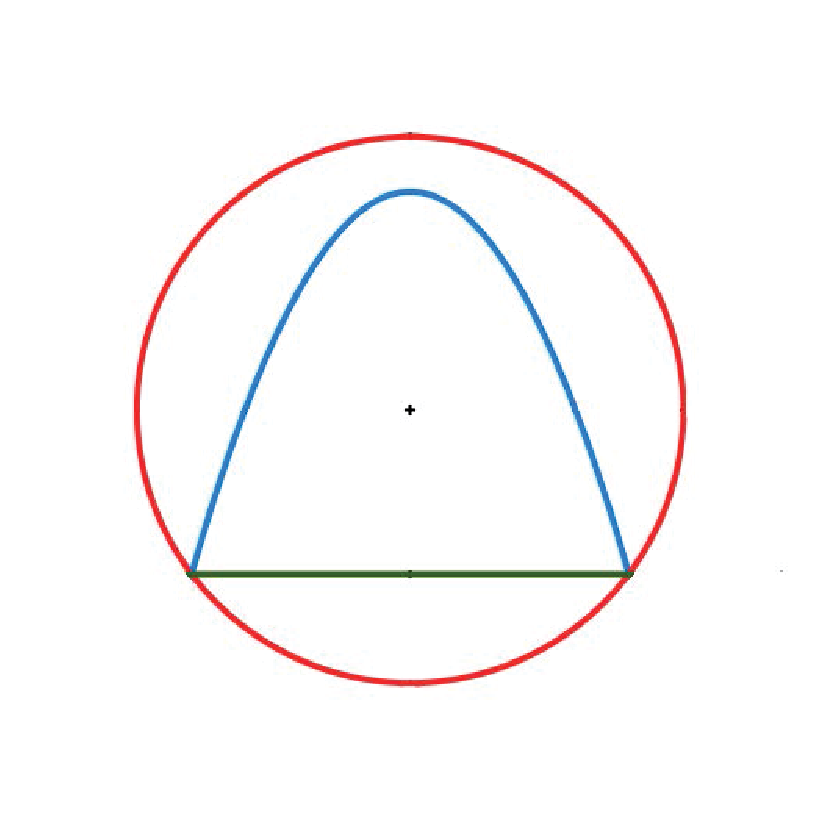}}
\put(220,0){\includegraphics[scale=0.3]{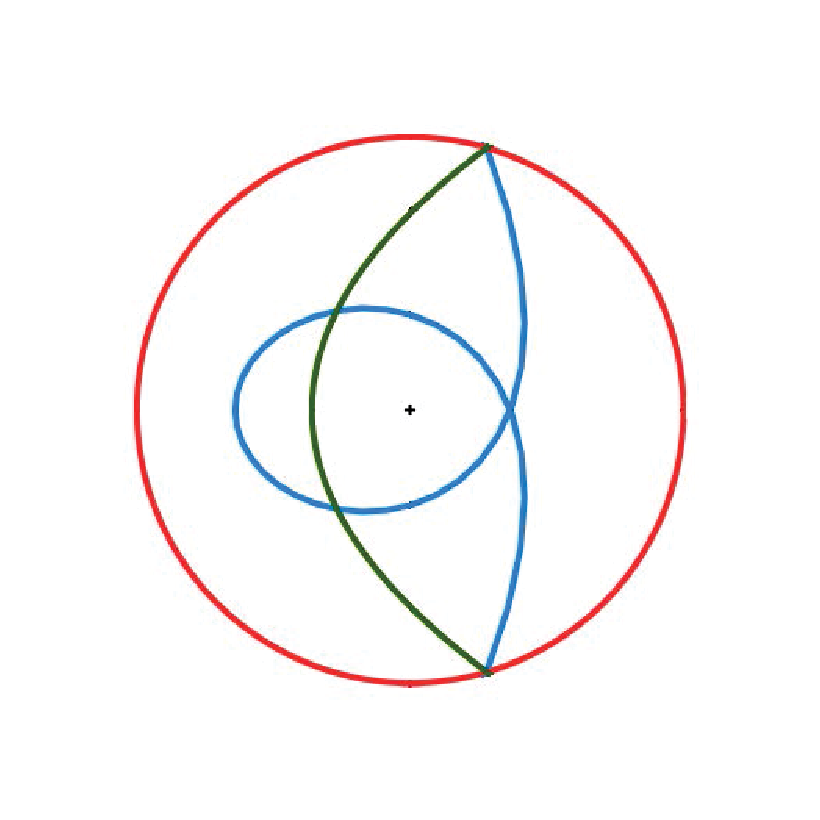}}
\put(20,0){$I_s(\Sigma)$-equivalence of images of curves under the ramified double cover $z\mapsto z^2$ of $\CC$}

\end{picture}

We prove Theorem \ref{th:I_s-homotopy equivalence} in Section \ref{subsect:canonical covers}.

%

It is well-known that marked surfaces can be glued out of polygons, i.e., for any  $\Sigma$ there exists a surjective gluing morphism $f:P_n(h)\twoheadrightarrow \Sigma$ in {\bf Surf} with $h=|I_s(\Sigma)|$, $n\ge 1$ such that all $f(i,i^+)\in \Gamma^0(\Sigma)$ and the restriction of $f$ to the interior of $P_n(h)$ is injective.
For readers' convenience we construct such a gluing morphism $f$ in Lemma \ref{le:n-gon cover of Sigma} for any triangulation of $\Sigma$.

The following fact is obvious.

\begin{lemma}
\label{le:finite Gamma}
Let $\Sigma$ be a marked surface. Then $[\Gamma(\Sigma)]$ is finite if an only if $\Sigma$ is  homeomorphic either a once punctured sphere or to $[n]=P_n$ or to $P_n(1)$ for some $n\ge 1$.  More precisely,  the assignments
$$\gamma\mapsto
\begin{cases}
(s(\gamma),t(\gamma),+) & if \text{the special puncture is to the right of $\gamma$}\\
(s(\gamma),t(\gamma),-) & if \text{the special puncture is to the left of $\gamma$}\\
\end{cases}$$ define a bijection $[\Gamma(P_n(1))]\widetilde \to \{(i,j)\in [n]\}\times \{-,+\}$.


\end{lemma}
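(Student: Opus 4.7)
The strategy is to translate the statement into orbifold-fundamental-groupoid language. Let $\Sigma^o := \underline{\Sigma} \setminus I_p(\Sigma)$ and view it as a $2$-orbifold with $\mathbb{Z}/2$-points at $I_s(\Sigma)$. The construction of $[\Gamma(\Sigma)]$ via the reductions $\pi_i^\infty$ is precisely the quotient of the ordinary homotopy groupoid of $\Sigma^o$ (relative to $I(\Sigma)$) by the normal relations $\lambda_{ij}^2 = id_j$ coming from special loops. Hence $[\Gamma(\Sigma)]_{ij}$ is in bijection with the morphism set $i \to j$ in the orbifold fundamental groupoid $\pi_1^{orb}(\Sigma,I(\Sigma))$. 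Since $I(\Sigma)$ is finite, $[\Gamma(\Sigma)]$ is finite if and only if $\pi_1^{orb}(\Sigma, j)$ is finite for one (equivalently, every) basepoint $j \in I(\Sigma)$.

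For the ``if'' direction I would treat the three listed cases. For a once-punctured sphere $I(\Sigma) = I_p(\Sigma) = \{p\}$, the complement $S^2 \setminus \{p\} \cong \mathbb{R}^2$ is contractible, forcing $[\Gamma(\Sigma)] = \{id_p\}$. For $P_n$, the disk is contractible and has no ordinary punctures, so each $\Gamma_{ij}$ has exactly one element; thus $|[\Gamma(P_n)]| = n^2$. For $P_n(1)$, the complement of the single special puncture in the disk is an annulus with $\pi_1 = \mathbb{Z}\cdot\lambda$; imposing $\lambda^2 = 1$ gives orbifold fundamental group $\mathbb{Z}/2$, so each pair $(i,j)$ yields exactly two reduced classes, distinguished by which side of the puncture a representative passes. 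Exhibiting the two representatives for each $(i,j)$ (``puncture on the right'' vs.\ ``puncture on the left'') and checking that the reduction $\pi_{i_0}^\infty$ collapses everything else to these two classes gives the claimed bijection $[\Gamma(P_n(1))] \widetilde{\to} \{(i,j)\in [n]^2\} \times \{-,+\}$.

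For the ``only if'' direction I would exhibit an infinite subgroup of $\pi_1^{orb}(\Sigma,j)$ in every remaining case. If $g(\Sigma) \geq 1$, any handle yields a free abelian subgroup $\mathbb{Z}^2$ that survives the orbifold quotient since the squaring relations are localized at special punctures away from the handle. If $\Sigma$ is planar with $|I_p(\Sigma)| \geq 2$, or with $|I_p(\Sigma)| \geq 1$ and more than one boundary component, then $\pi_1(\Sigma^o)$ contains a free group of rank $\geq 2$ generated by loops around distinct punctures/holes that are disjoint from small neighborhoods of the special punctures, so the orbifold quotient retains this free subgroup. The delicate remaining case is a planar $\Sigma$ with no ordinary punctures but $|I_s(\Sigma)| \geq 2$ (possibly with marked boundary points): here I would use the universal property of the orbifold group to map $\pi_1^{orb}(\Sigma,j)$ onto the free product $\mathbb{Z}/2 * \mathbb{Z}/2$ (the infinite dihedral group) generated by two special loops $\lambda_{i_1,j}$ and $\lambda_{i_2,j}$, which is infinite.

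The main obstacle is this last case, since one must rule out hidden relations forcing $(\lambda_{i_1}\lambda_{i_2})^k = 1$. I would handle this by constructing an explicit two-fold branched cover of $\Sigma$ ramified at $I_s(\Sigma)$ (the construction developed in Section~\ref{subsect:canonical covers}), which pulls the orbifold group back to an ordinary surface group where such infiniteness statements reduce to the standard fact that $\pi_1$ of a non-simply-connected surface is infinite. Alternatively, one can observe that for a disk with $k \geq 2$ special punctures and no other structure, $\pi_1^{orb}$ is literally $\mathbb{Z}/2 * \cdots * \mathbb{Z}/2$ ($k$ factors) without further relations, and this is infinite for $k \geq 2$.
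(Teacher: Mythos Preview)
The paper offers no proof of this lemma, introducing it with ``The following fact is obvious,'' so there is nothing to compare your approach against. Your translation into orbifold-fundamental-groupoid language is the right framework, and the ``if'' direction is handled correctly.

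The ``only if'' direction has two genuine problems. First, the claim that ``any handle yields a free abelian subgroup $\ZZ^2$'' is false. Since $I(\Sigma)\neq\emptyset$, the space $\Sigma^o$ always has at least one puncture (or boundary component), so for $g\ge 1$ its ordinary fundamental group is \emph{free} of rank $\ge 2$; free groups contain no copy of $\ZZ^2$. The conclusion you want (infiniteness) still follows, but from a single nontrivial loop through the handle, not from a rank-two abelian subgroup.

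Second, your case list is incomplete. A disk with exactly one \emph{ordinary} puncture and no special punctures has $\pi_1=\ZZ$, hence infinite $[\Gamma]$, yet it satisfies none of your four hypotheses (it is planar with $|I_p|=1$, one boundary component, $|I_s|=0$). The same goes for an annulus with no punctures of either kind, and you never address non-orientable $\Sigma$. A cleaner way to organize the argument is via the orbifold Euler characteristic $\chi^{orb}(\Sigma^o)=\chi(\underline\Sigma)-|I_p|-\tfrac{1}{2}|I_s|$: since all cone points have order $2$ and $I(\Sigma)\neq\emptyset$ rules out the bad orbifolds, $\pi_1^{orb}$ is finite exactly when $\chi^{orb}>0$, and one checks directly that this inequality singles out the listed surfaces (up to the sphere with one ordinary and one special puncture, which also has $\chi^{orb}=\tfrac12$ and finite $[\Gamma]$---a borderline case the lemma's phrasing does not obviously capture).
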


\subsection{Polygons in surfaces, noncommutative surfaces and functoriality}

We say that a sequence $P=(\gamma_1,\ldots,\gamma_r)$ of not necessarily distinct $\gamma_i\in [\Gamma(\Sigma)]$, $i\in [r]$, is {\it cyclic} if each pair $(\gamma_i,\gamma_{i^+})$, $i\in [r]$ is composable.
%


%
\begin{definition}
\label{def:ngon}
We say that a sequence $P=(\gamma_1,\ldots,\gamma_n)$ is an $n$-gon in $\Sigma$ if there exists a morphism $f:P_n\to \Sigma$  such that
$f(i,i^+)=\gamma_i$ for $i\in [n]$. We also denote $\gamma_{ij}:=f(i,j)$ for all distinct $i,j\in [n]$ (clearly, $\gamma_{ij}$ is nontrivial for all distinct $i,j\in [n]$).
We will refer to such an $f$ as an {\it accompanying to $P$} morphism.

\end{definition}

Clearly, each $n$-gon $P=(\gamma_1,\ldots,\gamma_n)$ in $\Sigma$ is cyclic and for any $\gamma\in [\Gamma(\Sigma)]$ the pair $(\gamma,\overline \gamma)$ is a $2$-gon in $\Sigma$. It is convenient to define the {\it interior} $P^0$ of an $n$-gon $P=(\gamma_1,\ldots,\gamma_n)$ to be the image of the interior of $P_n$ under an accompanying morphism  (to do so we choose generic representatives $C_i\in \gamma_i$ so that $f(i,i^+)=C_i$
for $i\in [i]$).
It is also clear that
$P^0$ does not depend on the choice of $f$, and different choices of $C_i\in \gamma_i$  result in homotopic to each other morphisms $f:P_n\to \Sigma$.
We say that $P$ is {\it simple} if $P^0$ is homeomerphic to a disk.
%


%
%

%

We will sometimes refer to an $3$-gon in $\Sigma$ respectively as a triangle and to a $4$-gon -- as a quadrilateral.

%

\begin{picture}(110,110)
\put(80,10){\includegraphics[scale=0.33]{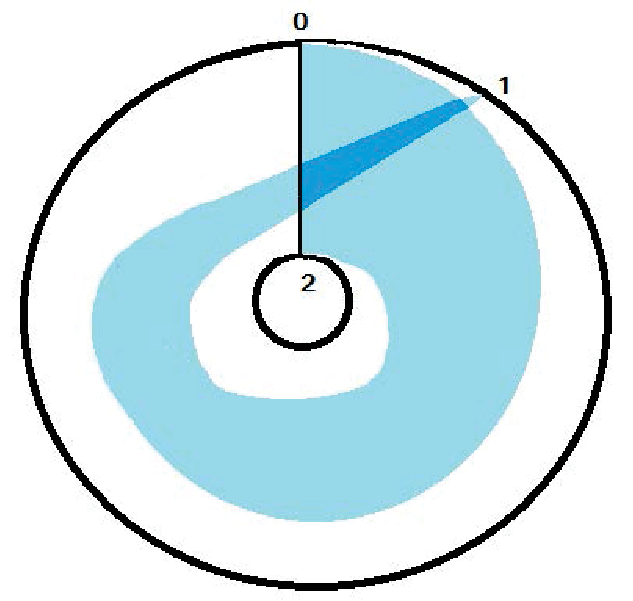}}
\put(230,10){\includegraphics[scale=0.3]{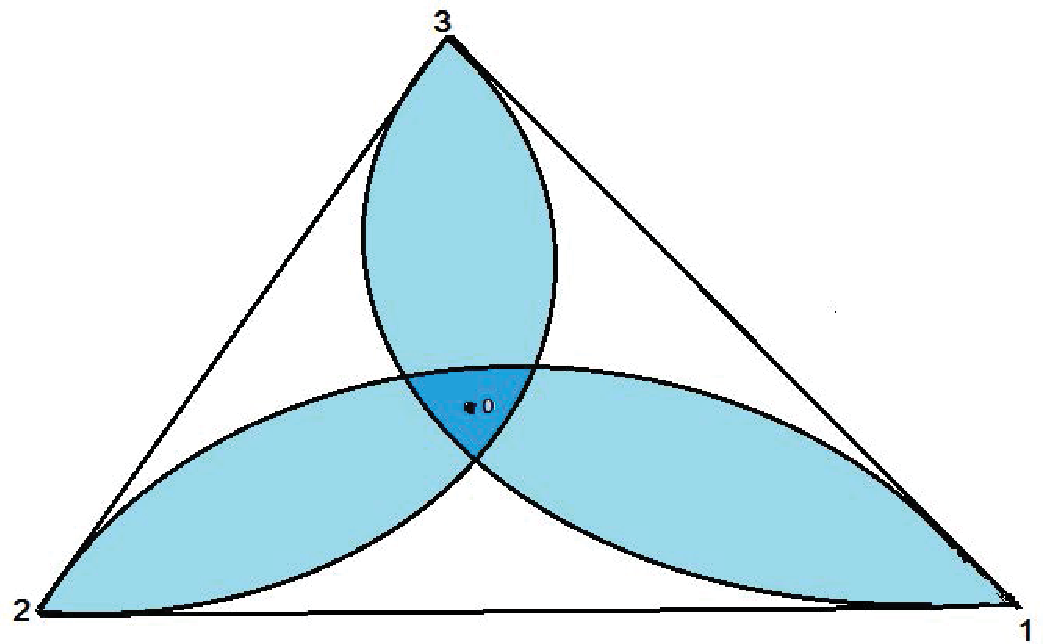}}
\put(130,0){Non-simple triangles in an annulus  and  in $P_3(1)$}

\end{picture}

\begin{definition}
\label{def:Asigma}
For a marked surface $\Sigma$ let  ${\mathcal A}_\Sigma$ be the $\QQ$-algebra generated by all $x_\gamma$, $\gamma\in [\Gamma(\Sigma)]$ subject to

(i) $x_{\gamma}=1$ if $\gamma$ is trivial.



(ii) (triangle relations)  For any triangle $(\gamma_1,\gamma_2,\gamma_3)$ in $\Sigma$ one has
\begin{equation}
\label{eq:triangle relations sigma}
x_{\gamma_1}x_{\overline \gamma_2}^{-1}x_{\gamma_3}=x_{\overline\gamma_3}x_{\gamma_2}^{-1}x_{\overline \gamma_1} \ .
\end{equation}

(iii) (exchange relations) For any quadrilateral $(\gamma_1,\gamma_2,\gamma_3,\gamma_4)$ in $\Sigma$:
\begin{equation}
\label{eq:exchange relations sigma}
x_{\gamma_{24}}=x_{\gamma_{21}}x_{\gamma_{31}}^{-1}x_{\gamma_{34}}+ x_{\gamma_{23}}x_{\gamma_{13}}^{-1}x_{\gamma_{14}} \ .
\end{equation}

Likewise (similarly to Section \ref{subsec:Big noncommutative polygons}), we define the {\it big triangle group}
$\TT_\Sigma$ of $\Sigma$ to be generated by all $t_\gamma$, $\gamma\in [\Gamma(\Sigma)]$ subject to:

$\bullet$  $t_{\gamma}=1$ if $\gamma$ is trivial.


$\bullet$ (triangle relations) $t_{\gamma_1}t_{\overline \gamma_2}^{-1}t_{\gamma_3}=t_{\overline \gamma_3}t_{\gamma_2}^{-1}t_{\overline \gamma_1}$
for all triangles $(\gamma_1,\gamma_2,\gamma_3)$ in $\Sigma$.
\end{definition}




The following fact is obvious.
\begin{lemma} For each marked surface $\Sigma$ the assignments $t_\gamma\mapsto x_\gamma$ define a homomorphism of groups:
\begin{equation}
\label{eq:big triangular homomorphism surface}
\TT_\Sigma\to \AA_\Sigma^\times \ .
\end{equation}

\end{lemma}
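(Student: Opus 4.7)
The plan is to verify directly that the defining relations of the group $\TT_\Sigma$ hold in the unit group $\AA_\Sigma^\times$, so that the universal property of a group presentation yields the required homomorphism. Since the lemma is labelled ``obvious'', the argument should be short — the key is to keep track of which relations are \emph{required} to hold in $\AA_\Sigma^\times$ and to observe that they are \emph{among} the defining relations of $\AA_\Sigma$.

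First, I would check that the assignment $t_\gamma \mapsto x_\gamma$ actually lands in the group $\AA_\Sigma^\times$ of invertible elements. For trivial $\gamma$ this is Definition~\ref{def:Asigma}(i), and for non-trivial $\gamma$ it holds because $\AA_\Sigma$ is, as in the $n$-gon Definition~\ref{def:An}, generated by the symbols $x_\gamma$ together with their formal inverses $x_\gamma^{-1}$ (indeed, inverses $x_{\overline\gamma_2}^{-1}$, $x_{\gamma_{31}}^{-1}$ occur in the very relations that define $\AA_\Sigma$).

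Next, by the universal property of a group defined by generators and relations, the set map $t_\gamma \mapsto x_\gamma$ from the generating set of $\TT_\Sigma$ into $\AA_\Sigma^\times$ extends (uniquely) to a group homomorphism if and only if the images satisfy the defining relations of $\TT_\Sigma$. There are exactly two families to check:
\begin{enumerate}[(1)]
\item For trivial $\gamma$, $x_\gamma = 1$ holds in $\AA_\Sigma$ by Definition~\ref{def:Asigma}(i).
\item For every triangle $(\gamma_1,\gamma_2,\gamma_3)$ in $\Sigma$, the identity
\[
x_{\gamma_1} x_{\overline\gamma_2}^{-1} x_{\gamma_3} \;=\; x_{\overline\gamma_3} x_{\gamma_2}^{-1} x_{\overline\gamma_1}
\]
holds in $\AA_\Sigma$ by the triangle relation \eqref{eq:triangle relations sigma}.
\end{enumerate}
Both relations hold as identities in $\AA_\Sigma$, hence a fortiori in $\AA_\Sigma^\times$.

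Therefore $t_\gamma \mapsto x_\gamma$ extends to a group homomorphism $\TT_\Sigma \to \AA_\Sigma^\times$. There is no real obstacle: the only thing to notice is that the presentation of $\TT_\Sigma$ is strictly \emph{coarser} than that of $\AA_\Sigma$ (the triangle relations of $\TT_\Sigma$ are exactly the triangle relations (ii) of $\AA_\Sigma$, while the exchange relations \eqref{eq:exchange relations sigma} impose no further constraint on the group generated by the $x_\gamma$), so the map is automatically well-defined. Note that no claim of injectivity is being made — that is the content of the conjecture preceding this lemma in the $n$-gon case.
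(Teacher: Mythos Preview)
Your argument is correct and is exactly the verification that the paper omits by declaring the lemma ``obvious'': you check that each $x_\gamma$ lies in $\AA_\Sigma^\times$ and that the two families of defining relations of $\TT_\Sigma$ are already imposed in $\AA_\Sigma$, so the universal property of the presentation applies. One cosmetic remark: the conjecture that \eqref{eq:big triangular homomorphism surface} is an isomorphism is stated \emph{after} this lemma in the paper, not before it.
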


It is natural to conjecture that this homomorphism is an isomorphism.

The following result is also obvious.

\begin{lemma}
\label{le:bar involution}
(a) For each marked surface $\Sigma$ there is a unique involutive anti-automorphism $\bar{\cdot}$ of $\AA_\Sigma$ (resp. of $\TT_\Sigma$) such that
$\overline x_\gamma=x_{\overline \gamma}$ (resp $\overline t_\gamma=t_{\overline \gamma}$) for all $\gamma\in [\Gamma(\Sigma)]$.

(b)
If $\gamma$ is a simple special loop around $i\in I_p(\Sigma)\sqcup I_s(\Sigma)$, then $\overline x_\gamma=x_\gamma$ (resp. $\overline t_\gamma=t_\gamma$).
\end{lemma}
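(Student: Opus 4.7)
The approach is to use the presentation of $\AA_\Sigma$ (and of $\TT_\Sigma$) by generators and relations. For part (a), I would first extend the assignment $x_\gamma\mapsto x_{\overline\gamma}$ to a $\QQ$-algebra anti-endomorphism $\sigma$ of the free algebra $\QQ\langle x_\gamma^{\pm 1}:\gamma\in[\Gamma(\Sigma)]\rangle$. Since $\overline{\overline\gamma}=\gamma$, the square $\sigma^2$ equals the identity on generators and hence on the free algebra, so $\sigma$ is automatically involutive. The substantive task is to verify that $\sigma$ descends to $\AA_\Sigma$, i.e.\ preserves the defining ideal; uniqueness of $\bar{\cdot}$ is then automatic from the prescribed action on generators.

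The relation $x_{id_j}=1$ is preserved because $\overline{id_j}=id_j$. For a triangle relation
$$x_{\gamma_1}x_{\overline\gamma_2}^{-1}x_{\gamma_3}=x_{\overline\gamma_3}x_{\gamma_2}^{-1}x_{\overline\gamma_1},$$
applying $\sigma$ to the left-hand side recovers the right-hand side (and vice versa), so $\sigma$ merely interchanges the two sides. For an exchange relation associated to a quadrilateral $(\gamma_1,\gamma_2,\gamma_3,\gamma_4)$ with accompanying morphism $f:P_4\to\Sigma$,
$$x_{\gamma_{24}}=x_{\gamma_{21}}x_{\gamma_{31}}^{-1}x_{\gamma_{34}}+x_{\gamma_{23}}x_{\gamma_{13}}^{-1}x_{\gamma_{14}},$$
$\sigma$ produces
$$x_{\gamma_{42}}=x_{\gamma_{43}}x_{\gamma_{13}}^{-1}x_{\gamma_{12}}+x_{\gamma_{41}}x_{\gamma_{31}}^{-1}x_{\gamma_{32}},$$
which I would recognize as the exchange relation for the cyclically rotated quadrilateral $(\gamma_3,\gamma_4,\gamma_1,\gamma_2)$, accompanied by $f\circ\rho$ where $\rho:P_4\to P_4$ is the $\pi$-rotation sending $1\mapsto 3$, $2\mapsto 4$, $3\mapsto 1$, $4\mapsto 2$. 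The analogous check, restricted to the triangle relations, gives the anti-involution on $\TT_\Sigma$.

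For part (b) with $i\in I_s(\Sigma)$, the preceding lemma yields $\overline\lambda_{ij}=\lambda_{ij}$ in $[\Gamma(\Sigma)]$, hence $\overline{x_\gamma}=x_{\overline\gamma}=x_\gamma$ (and likewise for $t_\gamma$). For $i\in I_p(\Sigma)$ one argues in parallel, using that a simple loop around an ordinary puncture can be realized via a degenerate triangle at $j$ whose triangle relation collapses to $x_\gamma=x_{\overline\gamma}$. The main technical obstacle is the exchange-relation check in (a): one must carefully bookkeep the indices $\gamma_{ij}=f(i,j)$ under the rotation $\rho$ and confirm that $f\circ\rho$ is itself a valid accompanying morphism for the rotated quadrilateral, which follows from the fact that the defining conditions on accompanying morphisms are preserved by self-homeomorphisms of $P_4$.
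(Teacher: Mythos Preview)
Your proposal is correct and is precisely the natural verification that the paper leaves implicit (the lemma is stated there as ``obvious'' with no proof given). Your check of the exchange relation via the $\pi$-rotation $\rho$ of $P_4$ is exactly right: since $\rho$ is an automorphism of $P_4$ in $\mathbf{Surf}$, the composite $f\circ\rho$ is again an accompanying morphism, and the bookkeeping $\gamma'_{kl}=\gamma_{\rho(k)\rho(l)}$ goes through as you wrote.

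One small caveat on part (b) for $i\in I_p(\Sigma)$: the ``degenerate'' self-folded triangle $(\delta,\overline\delta,\overline\gamma)$ you invoke has an accompanying map $P_3\to\Sigma$ that sends a boundary vertex of $P_3$ to the interior puncture $i$, and at that vertex the map is a $2\pi$-to-angle wrapping rather than literally locally injective in the sense of Definition~\ref{def:proper map}. The paper evidently treats such self-folded configurations as legitimate triangles (compare the relations listed in Example~\ref{ex:tow-gon surface} for $\AA_{\hat\Sigma}$, where $\overline x_1=x_1$ appears among the defining relations), so your argument is consistent with the paper's conventions; but if you want to be fastidious you could note that the accompanying morphism here factors through the quotient of $P_3$ identifying the two edges incident to that vertex, and that the resulting triangle relation $x_\delta x_\delta^{-1}x_{\overline\gamma}=x_\gamma x_{\overline\delta}^{-1}x_{\overline\delta}$ still collapses to $x_{\overline\gamma}=x_\gamma$.
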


\begin{remark} This bar anti-involution is analogous to the one in quantum algebras. Also, Lemma \ref{le:bar involution}(b)
asserts that simple loops around an ordinary and special punctures are ``close relatives."
\end{remark}

The following result, in fact, asserts that the assignments $\Sigma\mapsto \TT_\Sigma$ and $\Sigma\mapsto \AA_\Sigma$ are respectively functors ${\bf Surf}\to {\bf Groups}$ and ${\bf Surf}\to \QQ-{\bf Alg}$.

\begin{theorem}
\label{th:functorial triangular}
For any morphism $f:\Sigma\to \Sigma'$ in {\bf Surf}
the assignments $t_\gamma\mapsto t_{f(\gamma)}$ (resp. $x_\gamma\mapsto x_{f(\gamma)}$) define
a homomorphism of groups $f_\star:\TT_\Sigma\to \TT_{\Sigma'}$ (resp. of algebras $f_*:\AA_\Sigma\to \AA_{\Sigma'}$)
and the following diagram is commutative
\begin{equation}
\label{eq:TA commutative}
\begin{CD}
\TT_\Sigma@>>>\AA_\Sigma\\
@Vf_\star VV@Vf_* VV\\
\TT_{\Sigma'}@>>>\AA_{\Sigma'}
\end{CD}.
\end{equation}
\end{theorem}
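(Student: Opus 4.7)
The plan is to show that the assignments $t_\gamma \mapsto t_{f(\gamma)}$ and $x_\gamma \mapsto x_{f(\gamma)}$ respect the defining relations of $\TT_\Sigma$ and $\AA_\Sigma$ respectively. The main ingredient will be Theorem \ref{th:I_s-homotopy equivalence}(d), which guarantees that $f$ induces a well-defined homomorphism of multi-groupoids $f:[\Gamma(\Sigma)]\to [\Gamma(\Sigma')]$, together with Theorem \ref{th:category surf}, which ensures that the composition of morphisms of marked surfaces is again a morphism.

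First, I would verify the trivial loop relation. If $\gamma$ is the trivial loop at $i \in I(\Sigma)$, then any constant representative lies over $f(i) \in I(\Sigma')$, so $f(\gamma)$ is the trivial loop at $f(i)$ in $[\Gamma(\Sigma')]$; hence $t_{f(\gamma)}=1$ and $x_{f(\gamma)}=1$ as required.

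Next, for the triangle relations, suppose $(\gamma_1,\gamma_2,\gamma_3)$ is a triangle in $\Sigma$ in the sense of Definition \ref{def:ngon}, with accompanying morphism $g:P_3 \to \Sigma$ satisfying $g(i,i^+)=\gamma_i$. By Theorem \ref{th:category surf}, the composition $f\circ g:P_3 \to \Sigma'$ is a morphism in {\bf Surf}, and by functoriality of $\gamma \mapsto f(\gamma)$ (Theorem \ref{th:I_s-homotopy equivalence}(d)) we have $(f\circ g)(i,i^+) = f(\gamma_i)$. Thus $(f(\gamma_1),f(\gamma_2),f(\gamma_3))$ is a triangle in $\Sigma'$, so the relation \eqref{eq:triangle relations sigma} holds for it in $\AA_{\Sigma'}$ (and similarly in $\TT_{\Sigma'}$). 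Here I use that $f$ commutes with the involution $\overline{\cdot}$, which is clear since reversing the direction of a representative curve before or after applying $f$ yields the same result up to $I_s(\Sigma')$-equivalence.

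The exchange relations are handled identically: if $(\gamma_1,\gamma_2,\gamma_3,\gamma_4)$ is a quadrilateral accompanied by $g:P_4 \to \Sigma$, then $f\circ g:P_4\to \Sigma'$ accompanies $(f(\gamma_1),\dots,f(\gamma_4))$, and $f(\gamma_{ij})=(f\circ g)(i,j)$ gives the required matching of diagonals. Hence the relation \eqref{eq:exchange relations sigma} maps to the corresponding relation in $\AA_{\Sigma'}$. This establishes that $f_\star$ and $f_*$ are well-defined homomorphisms, and functoriality $(f'\circ f)_\star = f'_\star\circ f_\star$, $(f'\circ f)_*=f'_*\circ f_*$ follows from the fact that $(f'\circ f)(\gamma)=f'(f(\gamma))$ in $[\Gamma(\Sigma'')]$, again a consequence of Theorem \ref{th:I_s-homotopy equivalence}(d). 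Finally, the commutativity of \eqref{eq:TA commutative} is tautological: both compositions send $t_\gamma$ to $x_{f(\gamma)}$. The only nontrivial step is the preservation of relations, and this is essentially automatic once one has Theorems \ref{th:category surf} and \ref{th:I_s-homotopy equivalence} in hand; the subtlety lying in those prior results is precisely the $I_s$-reduction needed to make $f$ well-defined on homotopy classes at all.
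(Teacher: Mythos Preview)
Your proof is correct and follows essentially the same approach as the paper: both arguments reduce to the observation that an accompanying morphism $g:P_n\to\Sigma$ for a polygon $P$ yields, via Theorem~\ref{th:category surf}, an accompanying morphism $f\circ g:P_n\to\Sigma'$ for $f(P)$, so triangles and quadrilaterals in $\Sigma$ map to triangles and quadrilaterals in $\Sigma'$. The paper packages this as a single lemma (``polygons map to polygons''), while you spell out the triangle and quadrilateral cases separately and add a few extra remarks (trivial loops, compatibility with $\overline{\cdot}$, functoriality of composition), but the content is the same.
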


%

We prove Theorem \ref{th:functorial triangular} in Section \ref{subsect:canonical covers}.




\begin{definition}
\label{def:gluing holes}
For a marked surface  $\Sigma$  denote by $\hat \Sigma$ the marked surface obtained from $\Sigma$ by turning each special puncture into the ordinary one, i.e., $\underline {\hat \Sigma}=\underline \Sigma$, $I(\hat \Sigma)=I(\Sigma)\sqcup I_s(\Sigma)$, $I_s(\hat \Sigma)=\emptyset$.
\end{definition}

Clearly, $[\Gamma(\Sigma)]\subseteq [\Gamma(\hat \Sigma)]= \Gamma(\hat \Sigma)$ and the complement $[\Gamma(\hat \Sigma)]\setminus [\Gamma(\Sigma)]$ consists of classes of curves originating or terminating in formerly special punctures.

\begin{proposition}
\label{pr:gluing holes}
The assignments $t_\gamma\mapsto t_\gamma$ for $\gamma\in [\Gamma(\Sigma)]$ define a homomorphism of groups
\begin{equation}
\label{eq:hole to puncture}
\TT_{\Sigma}\to \TT_{\hat \Sigma} \ ,
\end{equation}
where $\hat \Sigma$ is as in Definition \ref{def:gluing holes}.
\end{proposition}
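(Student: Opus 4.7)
The plan is to verify, relation by relation, that the defining relations of $\TT_\Sigma$ are respected when each generator $t_\gamma$, $\gamma\in[\Gamma(\Sigma)]$, is sent to the generator $t_\gamma$ of $\TT_{\hat\Sigma}$. This makes sense at the level of generators because of the inclusion $[\Gamma(\Sigma)]\subseteq[\Gamma(\hat\Sigma)]=\Gamma(\hat\Sigma)$ noted just before the statement; in particular, a trivial loop in $\Sigma$ is still trivial in $\hat\Sigma$, so the relations $t_\gamma=1$ pose no problem.

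The substance is in the triangle relations. A triangle $(\gamma_1,\gamma_2,\gamma_3)$ in $\Sigma$ comes equipped with an accompanying morphism $f\colon P_3\to\Sigma$ as in Definition \ref{def:ngon}. I split on whether the ramification locus $I^f=f^{-1}(I_s(\Sigma))$ is empty. In the easy case $I^f=\emptyset$, the very same $f$ defines a morphism $P_3\to\hat\Sigma$: since $I_s(\hat\Sigma)=\emptyset$, the ramified-cover clause in Definition \ref{def:proper map} is vacuous, and the equality $f^{-1}(I(\hat\Sigma))=f^{-1}(I(\Sigma)\cup I_s(\Sigma))=I(P_3)\cup\emptyset=I(P_3)$ holds. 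Thus $(\gamma_1,\gamma_2,\gamma_3)$ is a triangle of $\hat\Sigma$, and the required identity is already a defining relation of $\TT_{\hat\Sigma}$.

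In the remaining case $I^f\ne\emptyset$, the triangle engulfs one or more special punctures of $\Sigma$ as branch points of $f$. Here I would refine: adjoin the points of $I^f$ to $P_3$ as interior marked punctures (they now match the newly ordinary punctures of $\hat\Sigma$), and triangulate the resulting marked disk $\tilde P$. Each cell of this triangulation, pushed forward by $f$, becomes an honest triangle in $\hat\Sigma$ (no puncture in its interior), hence contributes a genuine triangle relation of $\TT_{\hat\Sigma}$. I would then combine these smaller relations—using the Ptolemy-type identities of Definition \ref{def:Asigma}/the corresponding group-theoretic identities, together with the fact (Lemma \ref{le:bar involution}(b)) that around each $i\in I_s(\Sigma)$ one has $t_{\lambda_{ij}}=t_{\overline\lambda_{ij}}$—to recover the original relation $t_{\gamma_1}t_{\overline\gamma_2}^{-1}t_{\gamma_3}=t_{\overline\gamma_3}t_{\gamma_2}^{-1}t_{\overline\gamma_1}$.

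The main obstacle will be exactly this combinatorial/algebraic combination in the ramified case: showing that the stack of triangle relations from the refinement of $\tilde P$ really does telescope to the single relation for the unrefined triangle. The delicate point is accounting for the asymmetry that each branch point of $f$ is ``invisible'' in $\Sigma$ (where $\lambda$ and $\overline\lambda$ coincide in $[\Gamma(\Sigma)]$) but records a nontrivial winding in $\hat\Sigma$; an inductive argument on $|I^f|$, peeling off one ramification point at a time and absorbing the corresponding special loop via Lemma \ref{le:bar involution}(b), is the natural way to carry this out.
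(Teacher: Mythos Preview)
The paper states this proposition without proof, so there is nothing to compare against directly; nonetheless your argument has a genuine gap in the ramified case.

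Your Case~1 ($I^f=\emptyset$) is correct: the same map $f\colon P_3\to\underline\Sigma$ is a morphism $P_3\to\hat\Sigma$ in {\bf Surf}, so the triple is again a triangle in $\hat\Sigma$ and the relation is already a defining relation of $\TT_{\hat\Sigma}$.

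In Case~2 your plan breaks down at the step ``combine these smaller relations---using the Ptolemy-type identities of Definition~\ref{def:Asigma}/the corresponding group-theoretic identities.'' The exchange (Ptolemy) relations live only in the \emph{algebra} $\AA_{\hat\Sigma}$; the group $\TT_{\hat\Sigma}$ is presented solely by triangle relations (see Definition~\ref{def:Asigma}, the part defining $\TT_\Sigma$). So you are not permitted to use any additive identity when deriving a relation in $\TT_{\hat\Sigma}$; you must produce the big relation as a \emph{word identity} that is a consequence of triangle relations alone. Your outline does not indicate how to do this, and it is exactly the nontrivial content of the proposition. Concretely, for the triangle $(\gamma_{ij}^-,\gamma_{jk}^-,\gamma_{ki}^-)$ in $P_n(1)$ of Remark~\ref{rem:intersecting triangle}, you would need to exhibit
\[
t_{\gamma_{ij}^-}\,t_{\gamma_{kj}^+}^{-1}\,t_{\gamma_{ki}^-}
\;=\;
t_{\gamma_{ik}^+}\,t_{\gamma_{jk}^-}^{-1}\,t_{\gamma_{ji}^+}
\]
in $\TT_{\widehat{P_n(1)}}$ as a formal consequence of the triangle relations there, without ever adding. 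A refinement of $\tilde P$ gives you many small triangle relations, but ``telescoping'' them multiplicatively to the displayed identity requires a specific word manipulation that you have not supplied (and invoking Lemma~\ref{le:bar involution}(b) for the loop around the new ordinary puncture is necessary but not sufficient). Until that multiplicative derivation is written out, Case~2 remains open.
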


\begin{remark}
\label{rem:improper inclusion}
It is natural to conjecture that \eqref{eq:hole to puncture} is injective.
Note, however, that the natural identification $Id:\Sigma\hookrightarrow \hat \Sigma$ is not a morphism in {\bf Surf} since it takes
$I_s(\Sigma)$ to $I_p(\hat \Sigma)$, so we expect that there is {\bf no} homomorphisms $\AA_\Sigma\to \AA_{\hat \Sigma}$, which together with \eqref{eq:hole to puncture} would make the diagram \eqref{eq:TA commutative} commutative, and illustrate this with the following example.

\end{remark}

\begin{example}
\label{ex:tow-gon surface}  Let $\Sigma=P_2(1)$ with the vertex set $I=\{1,2\}$ and a single special puncture $0$. For $i\in I$ denote by $\gamma_i$ the clockwise loop at $i$ around $0$ inside $\Sigma$.
For $i,j\in I$, $i\ne j$ denote by $\gamma_{ij}^+$ (resp. $\gamma_{ij}^-$) the boundary curve from $i$ to $j$ so that $0$ is to the right (resp. to the left).

\begin{picture}(110,110)
\put(170,10){\includegraphics[scale=0.3]{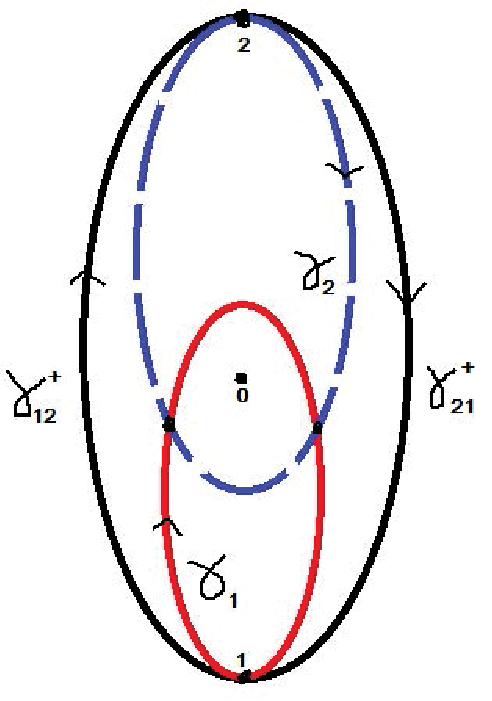}}
\put(135,0){A quadrilateral in $P_2(1)$}
\end{picture}

\medskip

We abbreviate $x_i:=x_{\gamma_i}$, $\overline x_i:=x_{\overline \gamma_i}$,  $x_{ij}^+:=x_{\gamma_{ij}^+}$, $x_{ij}^-:=x_{\gamma_{ij}^-}$ for the corresponding generators of $\AA_\Sigma$.

Then, according to Definition \ref{def:Asigma}, $\AA_\Sigma$ has a presentation:
$$
\overline x_1=x_1,~\overline x_2=x_2,~x_{21}^+x_1^{-1}x_{12}^+=x_{21}^-x_1^{-1}x_{12}^-,\
x_{12}^+x_2^{-1}x_{21}^+=x_{12}^-x_2^{-1}x_{21}^-\ ,
$$
$$
x_2=x_{21}^+x_1^{-1}x_{12}^- + x_{21}^-x_1^{-1}x_{12}^+, \
x_1=x_{12}^+x_2^{-1}x_{21}^- + x_{12}^-x_2^{-1}x_{21}^+\ .
$$

Let $\hat \Sigma$ be obtained from $\Sigma$ by converting all special punctures into ordinary ones (as in Definition \ref{def:gluing holes}).
Therefore, curves on $\hat \Sigma$ are those on $\Sigma$ plus four additional ones: directed intervals $\gamma_{0,i}$ from $0$ to each $i$ and
$\gamma_{i,0}:=\gamma_{0,i}^{-1}$.
We abbreviate the generators of  $\AA_{\hat \Sigma}$ same way as in $\AA_\Sigma$ and $x_{0,i}:=x_{\gamma_{0,i}}$, $x_{i,0}:=x_{\gamma_{i,0}}$.

Then, according to Definition \ref{def:Asigma}, $\AA_{\hat \Sigma}$ has a presentation:
$$
\overline x_1=x_1,~\overline x_2=x_2,~x_{21}^+x_1^{-1}x_{12}^+=x_{21}^-x_1^{-1}x_{12}^-,~x_{12}^+x_2^{-1}x_{21}^+=x_{12}^-x_2^{-1}x_{21}^-,~
x_{01}(x_{21}^\pm )^{-1}x_{20}=x_{02}(x_{12}^\mp)^{-1}x_{10}\ ,
$$
$$
x_1=x_{10}x_{20}^{-1}(x_{21}^+ + x_{21}^-),~x_2=x_{20}x_{10}^{-1}(x_{12}^+ + x_{12}^-) \ .
$$
In particular,
$$
x_2=x_{21}^-x_1^{-1}x_{12}^- + x_{21}^+x_1^{-1}x_{12}^+ +x_{21}^-x_1^{-1}x_{12}^+ + x_{21}^+x_1^{-1}x_{12}^-,~
x_1=x_{12}^-x_2^{-1}x_{21}^- + x_{12}^+x_2^{-1}x_{21}^+ +x_{12}^-x_2^{-1}x_{21}^+ + x_{12}^+x_2^{-1}x_{21}^-\ .
$$

Therefore,  there is no homomorphism $\AA_\Sigma\to \AA_{\hat \Sigma}$ or $\AA_{\hat \Sigma}\to \AA_\Sigma$ which would send $x_i\mapsto x_i$, $x_{ij}^\pm\mapsto x_{ij}^\pm$ (which justifies Remark \ref{rem:improper inclusion}).
\end{example}



\subsection{Triangulations of marked surfaces}
\label{subsec:triangulations}

Let $\Sigma$ be a marked surface, given distinct $\gamma,\gamma'\in [\Gamma(\Sigma)]$, define their {\it intersection number} $n_{\gamma, \gamma'}\in \ZZ_{\ge 0}$
to be the number of intersection points in the interiors of their generic representatives minus the endpoints of $\gamma$ and $\gamma'$.
Clearly, $n_{\gamma, \gamma'}$ is well-defined, i.e., does not depend on the choice of representatives. By definition, $n_{\gamma,\gamma'}=n_{\gamma',\gamma}=n_{\overline \gamma,\gamma'}$ for all $\gamma,\gamma'$. Note that $n_{\gamma,\gamma'}=0$ iff
$\gamma$ and $\gamma'$ do not intersect (and may have only endpoints in common).

Given a marked surface $\Sigma$, we say that a subset  $\Gamma'\subset \Gamma^0(\Sigma)$ is {\it non-crossing}  if $n_{\gamma,\gamma'}=0$ for all distinct $\gamma,\gamma'\in \Gamma'$, i.e.,  one can simultaneously choose generic representatives of classes in  $\Gamma'$ such that they pairwise do not intersect in $\Sigma$ and do not self-intersect (i.e., may have only endpoints in common).
Furthermore, we say that $\Delta$ is a {\it triangulation} of $\Sigma$ if $\Delta$ is a maximal non-crossing subset of $\Gamma^0(\Sigma)$ such that $\overline \Delta=\Delta$.

Clearly, if $I_s(\Sigma)\ne \emptyset$, then any triangulation $\Delta$
of $\Sigma$ has a special loop $\lambda_{ij}$ at some $j\in I_s(\Sigma)$ around each $i\in I_s(\Sigma)$, i.e., $\lambda_{ij}$ defines a
a $2$-gon $(\lambda_{ij},\lambda_{ij})$ in $\Delta$ homeomorphic to $P_1(1)$.
It is customary to fix a generic representative of each $\gamma^0\in \Delta$ so that $\Sigma$ is literally cut into triangles and $P_1(1)$'s.

It is well-known  that all triangulations of $\Sigma$ are finite of same cardinality. Moreover, any triangulation $\Delta'$ can be obtained from a given triangulation
$\Delta$ by a sequence of flips of diagonals in quadrilaterals in $\Delta$ (see e.g., \cite[Proposition 7.10]{FSTh} and \cite[Theorem 4.2]{FST}).

%
%

Given an $r$-gon $Q=(\gamma_1,\ldots,\gamma_r)$ in $\Sigma$ and a triangulation $\Delta$ of $\Sigma$, we say that $\gamma^0\in \Delta$ is {\it attracted} to $Q$ if either
$\gamma^0$ intersects $Q$ or there is a triangle $\tau=(\gamma^-,\gamma^0,\gamma^+)$ in $\Delta$ such that $\gamma^-$ intersects $Q$; denote by $\Delta_0=\Delta_0(Q,\Delta)$  the set of all $\gamma^0\in \Delta$ attracted to $Q$.

The following is immediate.

\begin{theorem}
\label{th:polygonal cover}
Let $\Delta$ be a triangulation of $\Sigma$.
Then for each  $r$-gon $Q=(\gamma'_1,\ldots,\gamma'_r)$ in $\Sigma$ there exists an $n$-gon $P=(\gamma_1,\ldots,\gamma_n)\in (\Delta_0(Q,\Delta))^n$ for some $n\ge r$, a triangulation $\Delta^0$ of $[n]$,
and an order-preserving embedding $\iota:[r]\hookrightarrow [n]$ such that:

(a) $\gamma_{ij}\in \Delta_0(Q,\Delta)$ iff $(i,j)\in \Delta^0$.

(b) $\gamma'_k=\gamma_{\iota(k),\iota(k^+)}$ for all $k\in [r]$ (i.e., $Q$ is a ``sub-polygon" of $P$).

\end{theorem}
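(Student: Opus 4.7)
The plan is to build $(P_n,\Delta^0)$ together with the accompanying morphism $f:P_n\to\Sigma$ by developing into a planar disk the collection of triangles of $\Delta$ attracted to $Q$. First, fix generic representatives of the sides $\gamma'_k$ of $Q$ and of all arcs $\gamma^0\in\Delta$ so that all intersections are transverse, and fix an accompanying morphism $f_0:P_r\to\Sigma$ for $Q$. Let $\mathcal{T}$ denote the collection of triangles of $\Delta$ (together with the $2$-gons $(\lambda_{ij},\lambda_{ij})$ around special punctures) whose three sides all lie in $\Delta_0(Q,\Delta)$; equivalently, $\mathcal{T}$ consists of the components of $\Sigma\setminus\Delta$ that either meet $Q$ or share an edge in $\Delta$ with a component that does.

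Build $P_n$ by unfolding $\mathcal{T}$ along a spanning tree of its dual graph, rooted at a triangle $\tau_0\in\mathcal{T}$ containing a sub-arc of $\gamma'_1$ emanating from vertex $1$ of $Q$. Concretely, take an abstract copy of each $\tau\in\mathcal{T}$ and glue adjacent copies along those shared $\Delta_0$-edges that lie in the spanning tree; the result is a triangulated disk $(P_n,\Delta^0)$ together with a piecewise map $f:P_n\to\Sigma$ sending each abstract triangle to its counterpart in $\mathcal{T}\subset\Sigma$. Define $\iota:[r]\hookrightarrow [n]$ by letting $\iota(k)$ be the vertex of $P_n$ at which the development of $\gamma'_k$ begins. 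Then by construction the chord $(\iota(k),\iota(k^+))$ of $P_n$ is mapped by $f$ to a representative of $\gamma'_k$, giving (b), while (a) is tautological since the edges of $\Delta^0$ are in natural bijection with the sides of triangles in $\mathcal{T}$, i.e., with $\Delta_0(Q,\Delta)$. That $f$ is a morphism in \textbf{Surf}---in particular that $I^f$ consists exactly of the interior preimages of special punctures, at which $f$ is a ramified double cover---follows from the corresponding property of each elementary triangle map together with the fact that all gluings are along shared one-dimensional faces.

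The main obstacle is ensuring that the unfolding actually yields a \emph{disk} whose boundary meets the vertices $\iota(1),\iota(2),\dots,\iota(r)$ in the prescribed cyclic order, rather than a higher-genus surface or a disk with interior punctures. This rests on two observations: first, $\mathcal{T}$ is a regular neighbourhood of $Q^\circ$ in $\Sigma$, so any two triangles in $\mathcal{T}$ are joined by a chain of triangles meeting $Q^\circ$, and $Q^\circ$ is itself the image of the contractible disk $P_r$ under $f_0$; second, the third-side triangles in $\mathcal{T}$ (those not crossing $Q$ but adjacent in $\Delta$ to a triangle that does) are attached along a single edge and contribute no non-trivial identifications. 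Together these show that the spanning-tree unfolding produces no non-trivial gluings on $\partial P_n$, hence a disk; the cyclic order of the $\iota(k)$ on $\partial P_n$ is then read off by tracing $\partial P_r$ through the development in the order $\gamma'_1,\gamma'_2,\dots,\gamma'_r$. Ramified vertices of $f_0$ above special punctures are automatically accommodated by Definition~\ref{def:proper map}, and the special-loop triangles appear in $P_n$ as $P_1(1)$-shaped flanges attached along a single edge.
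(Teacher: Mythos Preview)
The paper actually declares this theorem ``immediate'' and gives no formal proof; it only carries out the construction explicitly in the special case $Q=(\gamma,\overline\gamma)$ via the $\Delta$-factorization $F_\Delta(\gamma)$ and the canonical polygon $P_\Delta(\gamma)$ (Lemma~\ref{le:first canonical triangle} through Lemma~\ref{le:canonical polygon}). The idea there is to follow $\gamma$ through the triangulation and record the \emph{sequence} of triangles it enters, with multiplicity.

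Your proposal has a genuine gap precisely on this point. You build $P_n$ by taking one abstract copy of each triangle in the \emph{set} $\mathcal{T}\subset\Sigma$ and gluing along a spanning tree of its dual graph. But the sides of $Q$ may cross the same triangle of $\Delta$ many times. For instance, on the annulus $\Sigma_{1,1}$ with one marked point on each boundary circle there are only two triangles in $\Delta$, yet a curve $\gamma$ winding $k$ times around the core crosses each of them roughly $k$ times, and its canonical polygon $P_\Delta(\gamma)$ has on the order of $2k$ triangles (compare the figure captioned ``Canonical polygon, no special punctures, $\gamma$ self-intersects''). Your $P_n$ would have only two triangles regardless of $k$, and the winding curve cannot be realized as a single chord of a quadrilateral. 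The correct construction unfolds the \emph{pullback} of $\Delta$ under the accompanying morphism $f_0:P_r\to\Sigma$: one copy of a triangle for each visit, in the order dictated by traversing $\partial P_r$. This is exactly what the paper's inductive definition of $F_\Delta(\gamma)$ does when $r=2$.

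A related issue: your assertion that ``$\mathcal{T}$ is a regular neighbourhood of $Q^\circ$'' is not well-posed, since $Q^\circ$ is only the \emph{image} of the open disk under $f_0$, which need not be embedded and can self-overlap arbitrarily; the notion of regular neighbourhood does not apply, and contractibility of $P_r$ does not transfer to $\mathcal{T}$.
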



In fact, if $Q=(\gamma,\overline \gamma)$, $\gamma\in [\Gamma(\Sigma)]$, we will construct a {\it canonical} polygon $P_\Delta(\gamma)$ as follows.

We need the following obvious fact.

\begin{lemma}
\label{le:first canonical triangle}
Let $\Delta$ be a triangulation of $\Sigma$ and let $\gamma\in [\Gamma(\Sigma)]\setminus \Delta$. Then there exists a unique (up to relabeling) triangle $\tau_1=(\gamma_1,\gamma_-,\gamma_+)\in \Delta^3$ such that
$n_{\gamma,\gamma_-}>0$ and the closest to $s(\gamma)$ intersection point of $\gamma$ with $\Delta$ is the intersection point of $\gamma$ and $\gamma_-$.

\end{lemma}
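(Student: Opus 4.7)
The plan is to read off $\tau_1$ from the initial tangent direction of a generic minimal representative of $\gamma$, and to rule out "accidental" earlier crossings by a bigon-removal argument.

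First, fix a generic realisation of $\Delta$ in which all pairs of distinct edges meet only at marked points, so that the complement of $\bigcup_{\gamma^0\in \Delta}\gamma^0$ in $\underline\Sigma$ is a disjoint union of open triangles (with self-folded triangles $P_1(1)$ around any special punctures). Choose a smooth representative $C\in \gamma$ that is transverse to every edge of $\Delta$ and that achieves the minimal number of intersections $|C\cap\bigcup\Delta|$ within its homotopy class rel $I(\Sigma)\cup I_p(\Sigma)$; the existence of such a minimiser follows from the standard bigon-removal procedure, together with the $I_s(\Sigma)$-reduction of Theorem \ref{th:I_s-homotopy equivalence} which ensures that no bigon encloses a special puncture.

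Next, observe that the edges of $\Delta$ incident to $s(\gamma)$ partition a small (half-)disk neighbourhood of $s(\gamma)$ in $\underline\Sigma$ into finitely many open angular sectors, one for each triangle of $\Delta$ incident to $s(\gamma)$ (an edge of $\Delta$ that is a loop at $s(\gamma)$ contributes two ends there, and a self-folded triangle is counted with the appropriate multiplicity). Since $C$ is transverse to $\Delta$ and $\gamma\notin \Delta$, the initial tangent vector of $C$ at $s(\gamma)$ lies in exactly one such open sector, which selects a unique triangle $\tau_1\in \Delta^3$ of $\Delta$ together with a distinguished role for each of its edges: two edges $\gamma_1,\gamma_+$ bound the chosen sector at $s(\gamma)$, and the third edge $\gamma_-$ is opposite to $s(\gamma)$ in $\tau_1$.

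Now follow $C$ into the interior of $\tau_1$. Since $\gamma\notin \Delta$ and $C$ is not an edge of $\tau_1$, $C$ must exit $\tau_1$ across some edge. If the first exit occurred on $\gamma_1$ or $\gamma_+$, then the initial arc of $C$ together with a subarc of that edge back to $s(\gamma)$ would bound an embedded bigon inside $\tau_1$; pushing $C$ across this bigon would reduce $|C\cap \bigcup\Delta|$, contradicting minimality. Hence the first exit is across $\gamma_-$, which gives $n_{\gamma,\gamma_-}>0$ and exhibits the closest-to-$s(\gamma)$ intersection of $\gamma$ with $\Delta$ as lying on $\gamma_-$. For uniqueness, any two minimal transverse representatives of $\gamma$ are related by an ambient isotopy of $\Sigma$ fixing $I(\Sigma)$ and preserving $\Delta$ setwise, so they pick out the same initial sector and hence the same triangle $\tau_1$ and the same edge $\gamma_-$; the remaining ambiguity is only the transposition $\gamma_1\leftrightarrow \gamma_+$, which is exactly the ``up to relabeling'' of the statement.

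The main obstacle is handling self-folded triangles in $\Delta$ (and, more generally, triangles where two or three vertices coincide in $\Sigma$): there the ``opposite'' edge $\gamma_-$ and one of the ``incident'' edges $\gamma_\pm$ may be realised by the same underlying curve in $\Sigma$, and the sector partition at $s(\gamma)$ is only visible after unfolding. The remedy is to pass to the ramified double cover at the special puncture enclosed by the self-folded triangle, as supplied by the definition of morphism in \textbf{Surf} (Definition \ref{def:proper map}) and exploited in Theorem \ref{th:I_s-homotopy equivalence}: there the self-folded triangle lifts to an honest disk triangle, the sector/bigon argument above applies verbatim on the cover, and projecting back delivers the lemma in full generality.
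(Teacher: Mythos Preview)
The paper does not prove this lemma at all: it is introduced with the phrase ``We need the following obvious fact'' and left without argument. Your proposal therefore supplies what the paper omits, and the approach you take---choose a minimal transverse representative of $\gamma$, read off the initial triangle from the angular sector at $s(\gamma)$ into which $C$ departs, and use a bigon argument to force the first crossing onto the opposite edge $\gamma_-$---is the standard and correct way to make this precise.

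A couple of small remarks. Your uniqueness step invokes that any two minimal transverse representatives differ by an ambient isotopy preserving $\Delta$; this is correct (it is the bigon criterion for arcs on surfaces), but for the purpose at hand you do not actually need the full isotopy statement: it suffices that the initial sector at $s(\gamma)$ is determined by the homotopy class rel endpoints, which is immediate once bigons at $s(\gamma)$ have been removed. Your handling of self-folded triangles via the ramified double cover is more machinery than strictly necessary---one can instead argue directly on the half-edges at $s(\gamma)$, which distinguishes the two ends of a loop edge---but your route is perfectly valid and ties in nicely with the paper's framework.
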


\begin{picture}(200,75)
\put(170,10){\includegraphics[scale=0.3]{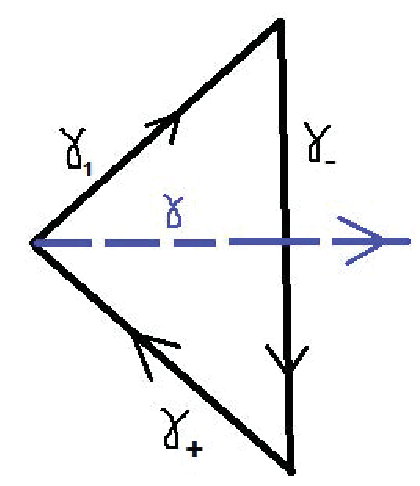}}
\put(150,0){The initial triangle for $\gamma$}
\end{picture}

\medskip

We refer to such a triangle as {\it initial} for $\gamma$.
Fix the initial triangle $\tau$ as in Lemma \ref{le:first canonical triangle} and denote by $\gamma^{(1)}$ the unique (class of) curve which starts as $\gamma_-$, follows this ``route" until the first intersection point of $\gamma_-$ and $\gamma$ and then ``becomes" $\gamma$.
Repeating this process, we obtain a new initial triangle $\tau_s=(\gamma_s,\gamma^{(s)}_-,\gamma^{(s)}_+)$ for $\gamma^{(s)}$, $s=1,\ldots,j-1$, where $j\ge 2$ is unique with $\gamma^{(j)}=\gamma_j\in \Delta$.
This process converges by induction in $n_{\gamma,\Delta}:=\sum\limits_{\gamma_0\in \Delta} n_{\gamma,\gamma_0}$ because $n_{\gamma,\Delta}>n_{\gamma^{(1)},\Delta}>\cdots >n_{\gamma^{(j)},\Delta}=0$. Denote $F_\Delta(\gamma):=(\gamma_1,\ldots,\gamma_j)\in \Delta^j$
and refer to this sequence as a {\it $\Delta$-factorization} of $\gamma$.
By definition,
$\gamma\in \gamma_1\circ \cdots \circ \gamma_j$ in the multi-groupoid $[\Gamma(\Sigma)]$,
which justifies the terminology.

%

Finally, we set $P_\Delta(\gamma):=(F_\Delta(\gamma),F_\Delta(\overline \gamma))$ and refer to it as the {\it canonical polygon} of
$\gamma$ in $\Delta$ due to the following obvious result.

\begin{lemma}
\label{le:canonical polygon}
Each $P_\Delta(\gamma)=(\gamma_1,\ldots,\gamma_n)$ is an $n$-gon in $\Delta$.

\end{lemma}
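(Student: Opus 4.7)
The plan is to exhibit an accompanying morphism $f : P_n \to \Sigma$ directly from the geometric strip of triangles that $\gamma$ traverses. Fix a generic representative $C \in \gamma$ transverse to $\Delta$, let $T_1, \ldots, T_r$ be the triangles of $\Delta$ that $C$ enters in order (where $r = n_{\gamma, \Delta} + 1$), and let $e_s \in \Delta$ be the edge separating $T_s$ and $T_{s+1}$. I will take disjoint abstract copies $\tilde T_1, \ldots, \tilde T_r$ (each a $P_3$, with the same interior special punctures as $T_s$) and form $\tilde S$ by gluing $\tilde T_s$ to $\tilde T_{s+1}$ along the preimage of $e_s$, consistently with the embedding in $\Sigma$.

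First I would verify that $\tilde S$ is homeomorphic to a disk with $n = r + 2$ marked boundary points, by induction on $r$: a single triangle is already $P_3$, and each subsequent attachment along one edge of a disk removes one boundary edge and adds two new ones, keeping the result a disk. Next I would define the natural quotient map $f : \tilde S \to \Sigma$ sending each $\tilde T_s$ homeomorphically onto $T_s$, and check the conditions of Definition \ref{def:proper map}: declaring the special punctures of each $\tilde T_s$ to be special punctures of $\tilde S$ ensures $I^f = \emptyset$; continuity and discreteness of fibers are clear; and local injectivity across interiors, across internal edges $\tilde e_s$, and at vertices holds because only the specific ``wedge'' of triangles belonging to the strip is lifted to $\tilde S$.

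The key remaining step is matching the cyclic sequence of boundary edges of $\tilde S$ with $P_\Delta(\gamma) = (F_\Delta(\gamma), F_\Delta(\overline{\gamma}))$. The boundary of $\tilde S$ meets $s(\gamma)$ and $t(\gamma)$, decomposing into two directed arcs, one along each side of the strip. I will prove by induction on $r$ that one arc (read from $s(\gamma)$ to $t(\gamma)$) equals $F_\Delta(\gamma)$ while the other (read from $t(\gamma)$ to $s(\gamma)$) equals $F_\Delta(\overline{\gamma})$. The inductive step is driven by Lemma \ref{le:first canonical triangle}: the initial triangle $\tau_1 = (\gamma_1, \gamma_-, \gamma_+)$ picks out $T_1$ and identifies $\gamma_1$ as the first non-shared boundary edge of $\tilde T_1$ on the appropriate side, and the passage $\gamma \mapsto \gamma^{(1)}$ deletes $\tilde T_1$ from the strip and reduces to the case of $r-1$ triangles.

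The main obstacle will be the boundary-identification step, which requires a consistent orientation convention and careful treatment of degenerate cases --- for instance, when several consecutive triangles share a common vertex of $\Delta$, one side of the strip collapses so that the factorization produces what amount to trivial-loop entries, reflected as repeated vertices (but still distinct directed edges) of $P_\Delta(\gamma)$.
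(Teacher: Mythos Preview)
The paper gives no proof of this lemma; it is introduced with the phrase ``due to the following obvious result'' and left at that. Your strip-of-triangles construction is exactly the natural way to make the statement precise, and the inductive matching of the two boundary arcs of $\tilde S$ with $F_\Delta(\gamma)$ and $F_\Delta(\overline\gamma)$ via Lemma~\ref{le:first canonical triangle} is the right mechanism. So your approach is correct and simply supplies what the paper omits.

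Two small corrections to your write-up. First, the triangles of $\Delta$ never contain interior special punctures: each special puncture is cut off by a special loop into a $P_1(1)$ piece, which is not a triangle. Hence as long as $\gamma$ crosses only genuine triangles, $\tilde S$ is automatically a clean $P_n$ and there is nothing to ``declare''. If $\gamma$ does cross a special loop and enters a $P_1(1)$, you cannot put a special puncture into $\tilde S$ (the domain of an accompanying morphism must be $P_n=P_n(0)$); instead that piece must be lifted via the ramified double cover $P_2\to P_1(1)$, so that the corresponding point lands in $I^f$ rather than in $I_s(\tilde S)$. This is what the paper's figure ``Canonical polygon, one special puncture'' depicts.

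Second, your closing worry about ``trivial-loop entries'' is unfounded. Each $\gamma_s$ in $F_\Delta(\gamma)$ is by construction an edge of an initial triangle in $\Delta$, hence a nontrivial element of $\Delta$. When several consecutive triangles of the strip fan around a common vertex, the corresponding boundary arc of $P_n$ has several distinct abstract vertices whose images in $\Sigma$ coincide; the edges $f(i,i^+)$ between them are still honest edges of $\Delta$, not trivial loops.
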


\vskip.3cm

\begin{picture}(10,90)
\put(0,10){\includegraphics[scale=0.25]{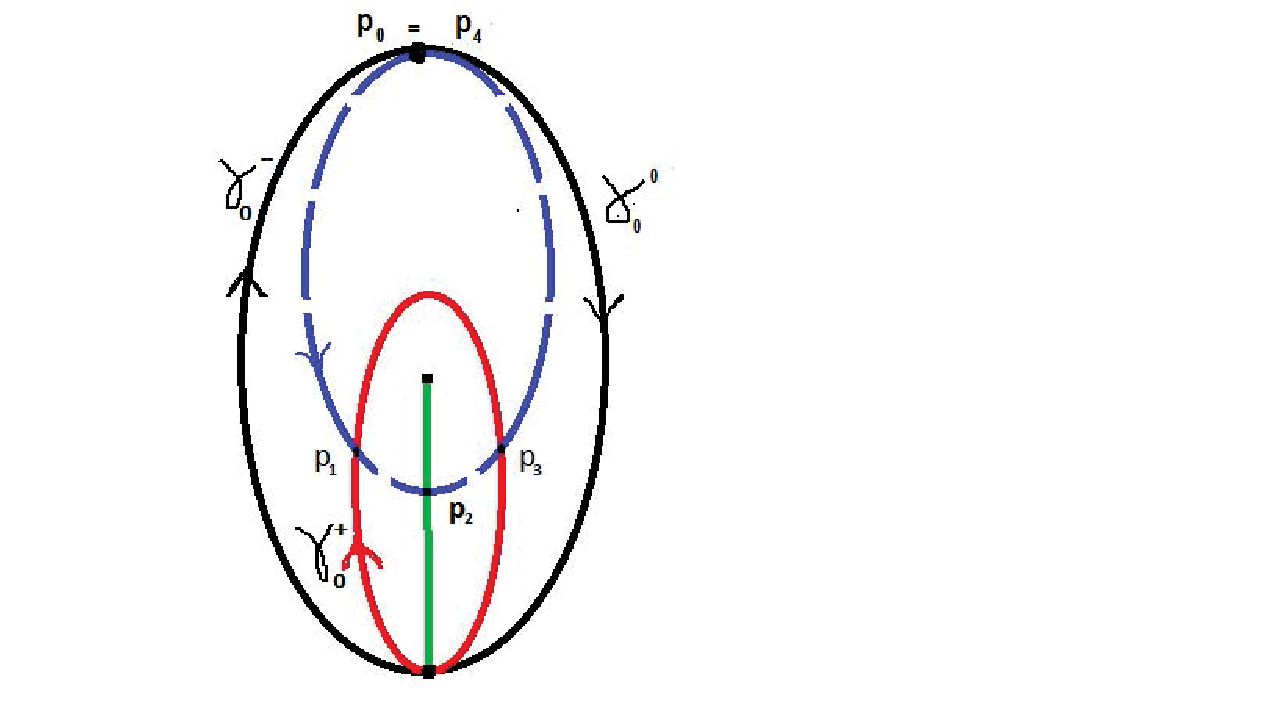}}
\put(100,10){\includegraphics[scale=0.4]{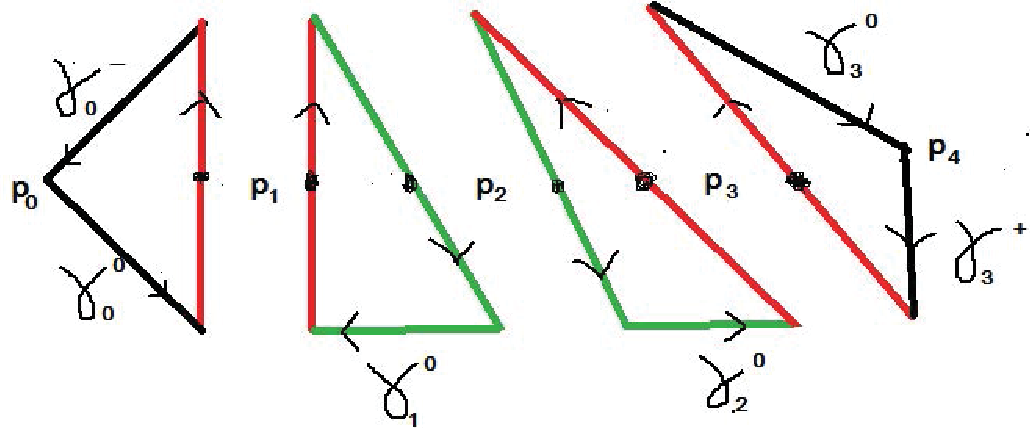}}
\put(310,10){\includegraphics[scale=0.45]{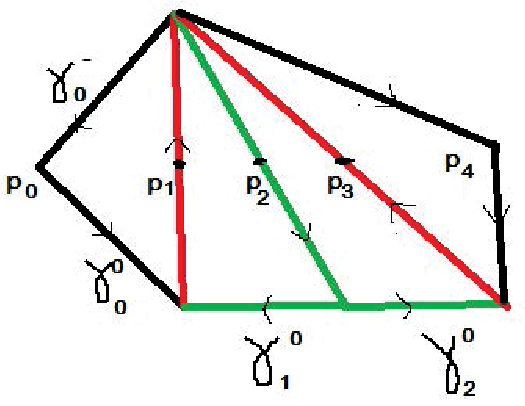}}
\put(100,0){Canonical polygon, no special punctures}
\end{picture}

\begin{picture}(110,130)
\put(12,30){\includegraphics[scale=0.35]{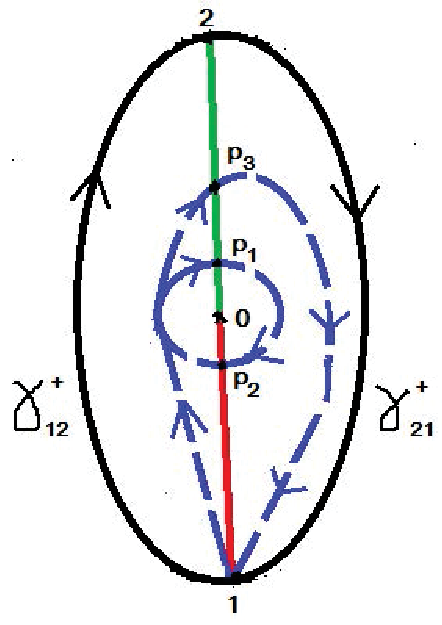}}
\put(100,40){\includegraphics[scale=0.35]{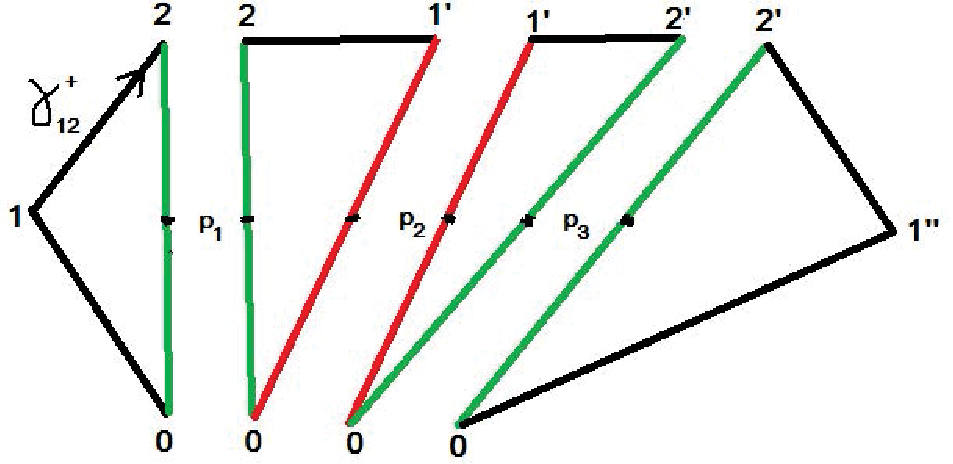}}
\put(270,40){\includegraphics[scale=0.35]{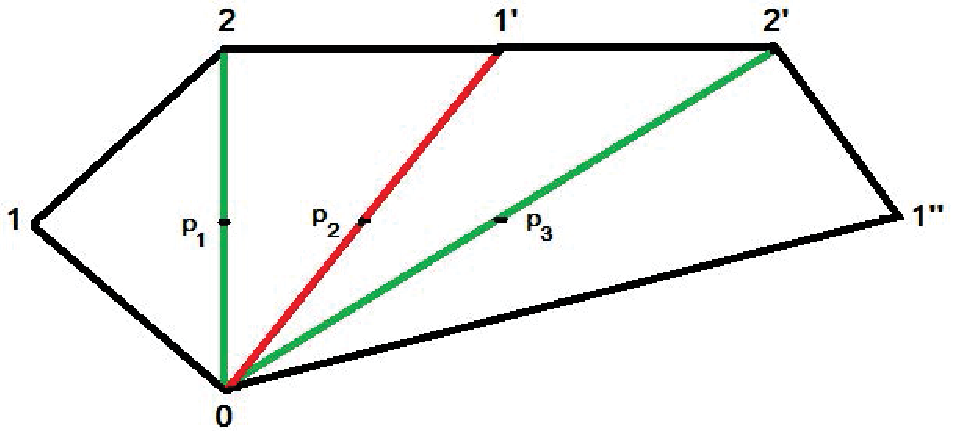}}
\put(80,20){Canonical polygon, no special punctures, $\gamma$ self-intersects}
\end{picture}

\begin{picture}(110,100)
\put(10,20){\includegraphics[scale=0.3]{080915-3.eps}}
\put(110,20){\includegraphics[scale=0.3]{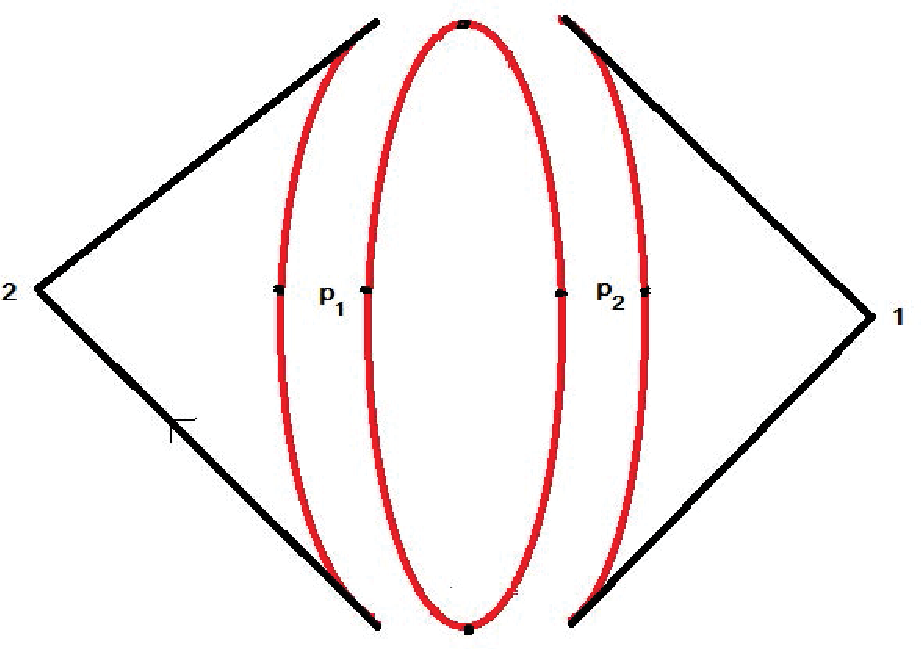}}
\put(260,20){\includegraphics[scale=0.32]{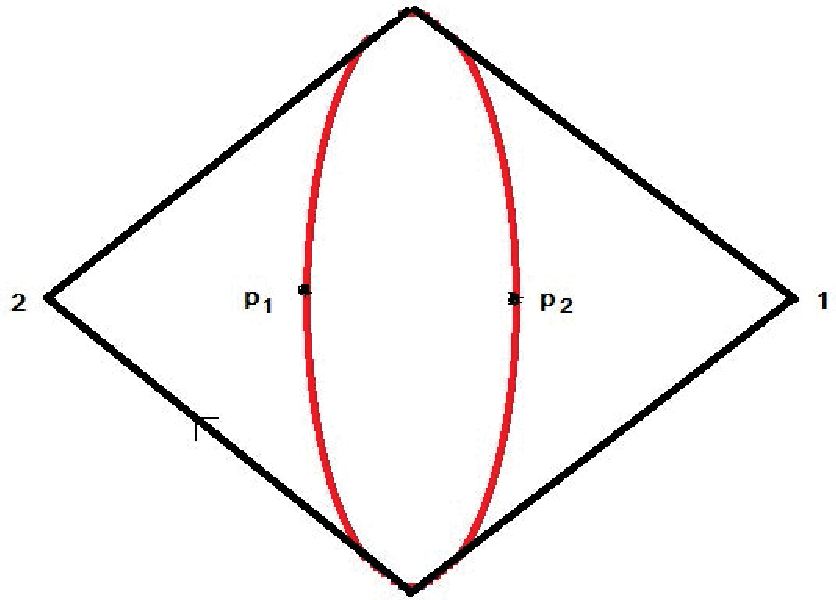}}
\put(100,0){Canonical polygon, one special puncture}
\end{picture}

\subsection{Triangle groups and their topological invariance}
\label{subsec:triangular groups}

For each triangulation  $\Delta$ of $\Sigma$ we define the {\it triangle group} $\TT_\Delta=\TT_\Delta(\Sigma)$
to be generated by all $t_\gamma^{\pm 1}$,  $\gamma\in \Delta$ subject to (same relations as in $\TT_\Sigma$):

$\bullet$  $t_{\gamma}=1$ if $\gamma$ is trivial.


$\bullet$ $t_{\gamma_1}t_{\overline \gamma_2}^{-1}t_{\gamma_3}=t_{\overline \gamma_3}t_{\gamma_2}^{-1}t_{\overline \gamma_1}$
for any triangle $T=(\gamma_1,\gamma_2,\gamma_3)$ in $\Delta$.

\medskip

Also, for each triangulation  $\Delta$ of $\Sigma$ denote by $\YY_\Delta$ the subgroup of $\TT_\Delta$ generated by:
$$y_{\gamma,\gamma'}:=t_{\overline \gamma}^{-1}t_{\gamma'}$$
for all $\gamma,\gamma'\in \Delta$  such that $(\gamma,\gamma',\gamma'')$ is a triangle in $\Delta$ for some
$\gamma''\in \Delta$.

\begin{theorem}
\label{th:TDeltaDelta' surface} For any two triangulations $\Delta$ and $\Delta'$ of a marked surface $\Sigma$ there exists a group isomorphism:
$$f_{\Delta,\Delta'}:\TT_\Delta\cong \TT_{\Delta'} $$
such that $f_{\Delta,\Delta'}(\YY_\Delta)=\YY_{\Delta'}$.
\end{theorem}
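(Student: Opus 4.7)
The plan is a standard reduction-to-flips argument, with the key novelty being that a ``flip'' in a general marked surface takes place in a possibly non-simple quadrilateral, which must first be lifted to an honest convex quadrilateral inside an ambient polygon of $\Sigma$. By the connectivity of the flip graph of triangulations of marked surfaces (see \cite{FST,FSTh}), any two triangulations $\Delta,\Delta'$ are linked by a finite chain of adjacent triangulations, so it suffices to build a coherent family of isomorphisms for adjacent pairs and then compose along such a chain.

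For adjacent $\Delta,\Delta'$ with $\Delta\setminus\Delta'=\{\gamma_0,\overline{\gamma_0}\}$ and $\Delta'\setminus\Delta=\{\gamma_0^*,\overline{\gamma_0^*}\}$, the two triangles of $\Delta$ containing $\gamma_0$ form a (possibly non-simple) quadrilateral $Q=(\gamma_1,\gamma_2,\gamma_3,\gamma_4)$ in $\Sigma$. I would apply Theorem~\ref{th:polygonal cover} to $Q$ to embed it as a sub-polygon of some $n$-gon $P$ in $\Sigma$ with accompanying morphism $f\colon P_n\to\Sigma$, so that the pullback of $\Delta\cap P$ is a triangulation $\Delta^0$ of $[n]$ and the flip $\Delta\to\Delta'$ lifts to a flip $\Delta^0\to(\Delta^0)^*$ inside the convex quadrilateral in $P_n$ covering $Q$. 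The polygon version of the theorem (the corollary following Theorem~\ref{th:piDelta}) supplies an isomorphism $\tau_{\Delta^0,(\Delta^0)^*}\colon\TT_{(\Delta^0)^*}\,\widetilde\to\,\TT_{\Delta^0}$ which is the identity on common generators. Now define $f_{\Delta,\Delta'}\colon\TT_{\Delta'}\to\TT_\Delta$ by the identity on $t_\gamma$ for $\gamma\in\Delta\cap\Delta'$ and by $t_{\gamma_0^*}\mapsto f_\star\bigl(\tau_{\Delta^0,(\Delta^0)^*}(t_{\gamma_0^*})\bigr)$, using the functoriality homomorphism $f_\star$ of Theorem~\ref{th:functorial triangular}. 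The triangle relations of $\TT_{\Delta'}$ involving $\gamma_0^*$ are $f_\star$-images of the corresponding relations in $\TT_{(\Delta^0)^*}$, so $f_{\Delta,\Delta'}$ is a well-defined group homomorphism; swapping $\Delta$ and $\Delta'$ gives its inverse. Preservation of the distinguished subgroup, $f_{\Delta,\Delta'}(\YY_\Delta)=\YY_{\Delta'}$, is then automatic, because each generator $y_{\gamma,\gamma'}=t_{\overline\gamma}^{-1}t_{\gamma'}$ either survives a flip verbatim or is attached to a triangle inside $Q$, and for the latter its image lies in $\YY_{\Delta'}$ by the polygon identity underlying Corollary~\ref{cor:A_Delta Q} (equivalently, the free-product factorization of Proposition~\ref{pr:T_Delta polygon U}).

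To pass from adjacent pairs to the general case, I would show that composing flip isomorphisms along any chain connecting $\Delta$ to $\Delta'$ gives the same result; by a standard argument this reduces to two families of local relations in the flip graph of $\Sigma$ --- ``square'' relations from simultaneously flipping two disjoint edges and ``pentagon'' relations from the five triangulations of an ordinary pentagon --- both of which can be embedded via Theorem~\ref{th:polygonal cover} into a single convex polygon, where the transitivity \eqref{eq:universal localization n-gon transitivity} of the polygon isomorphisms $\psi_{\Delta,\Delta'}$ forces the desired equality at the group level. The main obstacle will be precisely this coherence check: ruling out monodromy around longer cycles in the flip graph when $\Sigma$ carries special punctures (so that $P_1(1)$-regions appear inside a triangulation and the ``local'' polygon embedding is more delicate), and simultaneously verifying that the various choices of ambient polygon $P$ in the single-flip construction all deliver the same value of $f_{\Delta,\Delta'}(t_{\gamma_0^*})$; the canonical polygon construction $P_\Delta(\gamma)$ from Lemma~\ref{le:canonical polygon} is the natural tool to make these checks uniform.
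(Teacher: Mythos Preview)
Your approach is valid in spirit but substantially more elaborate than what the theorem actually requires, and the paper's own proof is far more direct. The paper simply writes down an explicit formula for the flip isomorphism: for adjacent $\Delta,\Delta'$ with $\Delta\setminus\Delta'=\{\gamma_{13},\gamma_{31}\}$ and $\Delta'\setminus\Delta=\{\gamma_{24},\gamma_{42}\}$ coming from a quadrilateral $(\gamma_1,\gamma_2,\gamma_3,\gamma_4)$, one sends
\[
t_{\gamma_{13}}\mapsto t_{\gamma_{12}}t_{\gamma_{42}}^{-1}t_{\gamma_{43}},\qquad
t_{\gamma_{31}}\mapsto t_{\gamma_{34}}t_{\gamma_{24}}^{-1}t_{\gamma_{21}},
\]
and $t_\gamma\mapsto t_\gamma$ for all other $\gamma\in\Delta$. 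One checks by hand that the four triangle relations in $\Delta$ involving $\gamma_{13}$ map to (consequences of) the four triangle relations in $\Delta'$ involving $\gamma_{24}$, so this is a well-defined homomorphism; the symmetric assignment gives the inverse. The statement about $\YY_\Delta$ is then a two-line computation showing each $y_{\gamma,\gamma'}$ maps to a product of at most two $y$'s in $\YY_{\Delta'}$. There is no need to lift to an ambient polygon, invoke Theorem~\ref{th:polygonal cover}, or use the polygon isomorphisms $\tau_{\Delta^0,(\Delta^0)^*}$; the quadrilateral in $\Sigma$ already carries enough structure to write the formula directly.

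More importantly, your entire coherence discussion (square and pentagon relations, monodromy, independence of the choice of ambient polygon) is unnecessary for the theorem as stated. The claim is only that \emph{some} isomorphism $f_{\Delta,\Delta'}$ exists, not that there is a canonical family satisfying $f_{\Delta,\Delta''}=f_{\Delta',\Delta''}\circ f_{\Delta,\Delta'}$. Indeed, the paper's flip isomorphism is explicitly labeled ``noncanonical'' (it depends on a choice of orientation of the quadrilateral), and no transitivity is asserted or proved. So once you have an isomorphism for each adjacent pair, you simply compose along \emph{any} flip path from $\Delta$ to $\Delta'$; different paths may give different isomorphisms, but that is irrelevant here. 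Your worry about monodromy would only matter if you were trying to prove something stronger, such as a functorial assignment $\Delta\mapsto\TT_\Delta$ on the flip groupoid.
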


We prove Theorem \ref{th:TDeltaDelta' surface} in Section \ref{subsect:canonical covers}.


\begin{remark} Theorem \ref{th:TDeltaDelta' surface} implies that isomorphism classes of groups $\TT_\Delta$ and $\YY_\Sigma$ are topological  invariants of surfaces. However,  by contrast with Theorem \ref{th:functorial triangular}, we do not expect  
the assignments $\Sigma\mapsto \TT_\Delta$ to be functorial.

\end{remark}

%
%


Our next result is classification of  triangle groups of marked surfaces.

\begin{theorem}
\label{th:delta 1-relator} Let $\Sigma$ be a marked surface with the Euler characteristic $\chi(\Sigma)$, the set $I=I(\Sigma)\ne \emptyset$ of marked points,  the set $I_b\subseteq I$  of marked boundary points, and $h=|I_s|$ special punctures. Then for any triangulation $\Delta$ of $\Sigma$ one has:

\noindent (a) If $\Sigma$ has a boundary or special punctures, then $\TT_\Delta$ is a free group in:

$\bullet$ $|I|+1$ generators if $\Sigma$ is a disk with $|I|+|I_b|=2$, $h=0$.



$\bullet$ $2h+3|I|-4$ generators if $\Sigma$ is a disk with $|I|+|I_b|=2$, $h>0$.


$\bullet$ $2h+4(|I|-\chi(\Sigma))-|I_b|$ generators otherwise.

\noindent (b) If $\Sigma$ is a closed surface without special punctures, then $\TT_\Delta$ is:

$\bullet$    Trivial if $\Sigma$ is the sphere with $|I|=1$.

$\bullet$    A free group in $3|I|-4$ generators if $\Sigma$ is the sphere with $|I|\in \{2,3\}$.

$\bullet$    A free group in $2$ generators if $\Sigma$ is the real projective plane with $|I|=1$.

$\bullet$ A $1$-relator torsion free group (in the sense of Definition \ref{def:1-relator})  in $4(|I|-\chi(\Sigma))+1$ generators otherwise.

\end{theorem}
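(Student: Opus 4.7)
My plan is to exploit the topological invariance of $\TT_\Delta$ provided by Theorem \ref{th:TDeltaDelta' surface} to pick one convenient triangulation in each homeomorphism class of $\Sigma$, read off a presentation from $\Delta$, and simplify it by Tietze transformations. For the counting: let $V=|I|$, $V_b=|I_b|$, $h=|I_s|$; Euler's formula $V-E+(F+h)=\chi(\Sigma)$ combined with the edge--face incidence $2E=3F+V_b+h$ (each boundary arc and each special loop lies in only one honest triangle of $\Delta$) gives $F=2(V-\chi)-V_b+h$ triangle relations. Each non-loop arc of $\Delta$ contributes two generators $t_\gamma,t_{\overline\gamma}$ while each special loop contributes only one by Lemma \ref{le:bar involution}(b), yielding $2E-h$ generators altogether. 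The resulting deficiency $2h+4(V-\chi)-V_b$ matches the predicted free rank in the boundary case.

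For $\Sigma$ with boundary or special punctures, I would run an induction on $F$ modeled on the proof of Theorem \ref{th:T_Delta polygon} in Section \ref{subsec:proof TDelta is free}. The base case is the single triangle $P_3$, which is free of rank $5$ by that theorem. For the inductive step, using Theorem \ref{th:TDeltaDelta' surface}, I choose a $\Delta$ containing either a triangle with a boundary edge or a $P_1(1)$-region, and peel it off to obtain a simpler marked surface $\Sigma'$ with triangulation $\Delta'$. The single defining relation of the peeled piece expresses one directed-edge generator in terms of the others, so it can be used to eliminate that generator. A splitting retraction $\TT_\Delta\to\TT_{\Delta'}$ sending the newly introduced directed edges to $1$ or to the explicit expressions forced by the triangle relation, analogous to Lemma \ref{le:iota injective}, exhibits $\TT_\Delta$ as a free product of the free group $\TT_{\Delta'}$ (free by induction) with a free group on the remaining new generators, proving freeness of the predicted rank. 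The degenerate disk cases $V+V_b=2$ are handled by a short direct computation on the few possible triangulations.

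For $\Sigma$ closed without special punctures, no boundary triangles are available for peeling, so I would cut $\Sigma$ along a non-separating simple arc $\alpha\in\Delta$ to open $\Sigma$ into a surface-with-boundary $\Sigma^\alpha$ with triangulation $\Delta^\alpha$; by the boundary case just proved, $\TT_{\Delta^\alpha}$ is free of the predicted rank. Reassembling $\Sigma$ amounts to identifying the two boundary copies $\alpha_1,\alpha_2$ of $\alpha$ and adjoining the two triangle relations of the triangles of $\Delta$ incident to $\alpha$; using the identifications and one of these triangle relations to eliminate three generators and one relation leaves a presentation of $\TT_\Delta$ on exactly $4(|I|-\chi(\Sigma))+1$ generators with a single surviving relator $w$. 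Torsion-freeness then follows from Magnus's one-relator theorem once $w$ is shown not to be a proper power; the cleanest verification is via the Kapovich identification hinted at in the footnote, which realizes $\TT_\Delta$ as $\pi_1$ of a ramified two-fold cover of $\Sigma$ and presents $w$ as a standard primitive surface relator.

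The main obstacle is this last closed case: isolating precisely one surviving relation after re-gluing, and verifying that it is not a proper power. The cleanest route to both should be the Kapovich identification combined with a Riemann--Hurwitz computation of the Euler characteristic of the double cover, which simultaneously matches the claimed generator count $4(|I|-\chi(\Sigma))+1$. The exceptional small closed cases (sphere with $|I|\in\{1,2,3\}$ and $\RR P^2$ with $|I|=1$) are dispatched by direct computation on their minimal triangulations, where the would-be surface relator degenerates to a trivial word and the presentation collapses to the explicitly stated free group of small rank.
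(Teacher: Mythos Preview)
Your boundary case is sound in outline and your deficiency count is correct, but the overall architecture differs substantially from the paper's. Rather than peeling triangles inductively, the paper glues $\Sigma$ from a single polygon $P_n(h)$ (Lemma \ref{le:n-gon cover of Sigma}) and works with the starlike triangulation $\tilde\Delta_1$. Lemma \ref{le:starlike relation} gives an explicit presentation of $\TT_{\tilde\Delta_1}$ in which all triangle relations but one have already been used to eliminate generators, leaving the single relation \eqref{eq:1-relator T_n}. The gluing data $(\sigma,\varepsilon)$ then merely substitutes some $c_{\sigma(i)},\overline c_{\sigma(i)}$ for $c_i,\overline c_i$; when $\Sigma$ has boundary one of the $c_k$'s on one side of \eqref{eq:1-relator T_n} survives as a free generator and the relation eliminates it, giving freeness at once. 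Your induction should also work, but you would need to argue that at each step a boundary triangle or $P_1(1)$ exists whose removal still leaves a legal marked surface (not disconnected, no unmarked boundary component), which is a bit of bookkeeping the polygonal model sidesteps.

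Your closed case has a genuine confusion. The two triangles incident to $\alpha$ already lie in $\Delta^\alpha$, so their triangle relations are already absorbed into the free presentation of $\TT_{\Delta^\alpha}$; you cannot ``adjoin'' them again. What re-gluing actually imposes is only the pair $t_{\alpha_1}=t_{\alpha_2}$, $t_{\overline\alpha_1}=t_{\overline\alpha_2}$. For your count to come out right you must show that exactly one of these can be used as a Tietze elimination (because, say, $t_{\alpha_2}$ lies in a free basis of $\TT_{\Delta^\alpha}$) while the other, after substitution, becomes the surviving relator. This is true---for instance on the once-punctured torus one checks it by hand---but it requires knowing how the boundary-edge generators sit inside the free basis produced by your induction, and that is precisely the step you wave past. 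The paper avoids this entirely: in the starlike model the single relation \eqref{eq:1-relator T_n} is already there before gluing, and when $S\cup\sigma(S)=[n]$ every $c_k$ appearing in it gets identified with some $c_i,\overline c_i$ for $i\in S$, so the relation cannot be used for elimination and survives as the unique relator. That the relator is not a proper power is then read off from its explicit form (each $t_j$, $j=3,\dots,n-1$, occurs exactly once on each side), so the appeal to the ramified double cover is unnecessary.
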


We prove Theorem \ref{th:delta 1-relator} in Section \ref{subsec:proof of theorem th:delta 1-relator} by choosing an appropriate triangulation of $\Sigma$.

\begin{remark} If $\Sigma$ has $r$ boundary components, then it is homotopy equivalent to a bunch of $g\ge r$ circles and $\chi(\Sigma)=1-g$.
If $\Sigma$ is a closed orientable (resp. non-orientable) surface, then it is  homeomorphic the connected sum of $g$ copies of the torus (resp. of the real projective plane) and $\chi(\Sigma)=2-2g$ (resp. $\chi(\Sigma)=2-g$).

\end{remark}

\begin{example} [See Example \ref{ex:closed surfaces detaled} below for details]
If $\Delta$ is a triangulation of the torus, the Klein bottle, the real projective plane respectively with one, one, two
(ordinary) punctures, then $\TT_\Delta$ is generated by variables $a,b,c,d,e$ subject to, respectively the following relations:

(i) for the torus with one puncture: $abcde=cbeda$;


(ii) for the Klein bottle with one puncture: $abcdc=ebeda$;



(iii) for the real projective plane with two punctures: $abcbc=ededa$.

\end{example}

\begin{example} If $\Delta$ is a triangulation of the sphere with $m+1$ punctures, we can view it as glued from a regular $2m$-gon with
$S=[m]\subset [2m]$, $\sigma(k)=m+1-k$, $\varepsilon(k)=+$ for $k\in [m]$ (with some notation from the proof of Theorem \ref{th:delta 1-relator} in Section \ref{subsec:proof of theorem th:delta 1-relator}).
Then $\TT_\Delta$ is generated by $c_1,\ldots,c_{2m}$, $t_3,\ldots,t_{2m-1}$
 subject to the relation
$t_3c_3t_4\cdots c_{2m-2}t_{2m-1}= t_{2m-1}c_3 t_{2m-2} \cdots  c_{2m-2}t_3$.
\end{example}

\subsection{Noncommutative Laurent Phenomenon for surfaces}
\label{subsect:Noncommutative Laurent Phenomenon for surfaces}

The following result
extends Noncommutative Laurent Phenomenon for $n$-gons (Theorem \ref{th:noncomlaurent n-gon}) to all marked surfaces.

\begin{theorem} (Noncommutative Laurent Phenomenon for surfaces)
\label{th:noncomlaurent surface}
Let $\Sigma$ be a marked surface and let $\Delta$ be a triangulation of  $\Sigma$.
Then for each  $\gamma\in [\Gamma(\Sigma)]$  the element $x_\gamma$ of ${\mathcal A}_\Sigma$ belongs to the subalgebra of $\AA_\Sigma$
generated by $x_{\gamma_0}^{\pm 1}$, $\gamma_0\in \Delta$.  More precisely, in the notation of Theorem \ref{th:noncomlaurent n-gon}, one has
\begin{equation}
\label{eq:noncomm schiffler surf}
x_\gamma=\sum_{\ii\in Adm_{\Delta^0}(1,j)} x_\ii \ ,
\end{equation}
where $\Delta^0$ is the  triangulation of $[n]$  assigned
(as in Theorem \ref{th:polygonal cover}(a))
to the canonical polygon $P_\Delta(\gamma)=(\gamma_1,\ldots,\gamma_n)$ in $\Delta$ with
$\gamma=\gamma_{1,j}$, and
we abbreviated
$$x_\ii:=x_{\gamma_{i_1,i_2}}x_{\gamma_{i_3,i_2}}^{-1}x_{\gamma_{i_3,i_4}}\cdots x_{\gamma_{i_{2m-1},i_{2m-2}}}^{-1}
x_{\gamma_{i_{2m-1},i_{2m}}}$$
for any sequence $\ii=(i_1,\ldots,i_{2m})\in [n]^{2m}$, $m\ge 1$.

%
%
\end{theorem}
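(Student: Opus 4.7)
The plan is to reduce to the already-established Noncommutative Laurent Phenomenon for polygons (Theorem~\ref{th:noncomlaurent n-gon}) by combining the canonical polygon construction $P_\Delta(\gamma)$ of Section~\ref{subsec:triangulations} with the functoriality of $\Sigma\mapsto\AA_\Sigma$.

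If $\gamma\in\Delta$ the assertion is trivial. Otherwise, Lemma~\ref{le:canonical polygon} guarantees that the canonical polygon $P_\Delta(\gamma)=(\gamma_1,\ldots,\gamma_n)$ is an $n$-gon in $\Sigma$ whose edges all lie in $\Delta$, and by construction of the $\Delta$-factorization one has $\gamma=\gamma_{1,j}$ in the notation of Definition~\ref{def:ngon}, where $j-1=|F_\Delta(\gamma)|$.  I would then apply Theorem~\ref{th:polygonal cover} to the $2$-gon $Q=(\gamma,\overline{\gamma})$ to obtain simultaneously an accompanying morphism $f:P_n\to\Sigma$ in {\bf Surf} (Definition~\ref{def:ngon}) satisfying $f(i,i^+)=\gamma_i$ and $f(1,j)=\gamma$, together with a triangulation $\Delta^0$ of $[n]$ characterised by the property that $(i,k)\in\Delta^0$ if and only if $\gamma_{i,k}:=f(i,k)$ lies in $\Delta_0(Q,\Delta)\subseteq\Delta$.

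The next step is to invoke functoriality (Theorem~\ref{th:functorial triangular}): the morphism $f$ induces an algebra homomorphism $f_*:\AA_n\to\AA_\Sigma$ with $f_*(x_{(i,k)})=x_{\gamma_{i,k}}$ for all distinct $i,k\in[n]$.  For every $(i,k)\in\Delta^0$ one has $\gamma_{i,k}\in\Delta$, so $f_*(x_{(i,k)}^{\pm1})=x_{\gamma_{i,k}}^{\pm1}$ is one of the distinguished invertible generators of $\AA_\Sigma$.  Applying $f_*$ to the polygonal identity
$$x_{(1,j)}=\sum_{\ii\in Adm_{\Delta^0}(1,j)}x_\ii\qquad\text{in }\AA_n$$
provided by Theorem~\ref{th:noncomlaurent n-gon} then yields exactly \eqref{eq:noncomm schiffler surf}, since each monomial $x_\ii$ in the polygon algebra involves only chords $(i_s,i_{s+1})\in\Delta^0$ and consequently pushes forward to a monomial of the same shape in the generators $x_{\gamma_0}^{\pm 1}$, $\gamma_0\in\Delta$.

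The substantive content has been absorbed into the two upstream ingredients Theorems~\ref{th:polygonal cover} and~\ref{th:functorial triangular}, so I do not expect any fresh obstacle at this stage.  The only point that will require a sanity check is that the accompanying morphism $f$ is a genuine morphism in {\bf Surf} even when the canonical polygon is non-simple (e.g.\ when $\gamma$ self-intersects or encircles punctures), but this is built into Definition~\ref{def:ngon}; the main technical difficulty of the theorem therefore lies in the geometric construction underlying Theorem~\ref{th:polygonal cover} and in verifying Theorem~\ref{th:functorial triangular}, both of which are handled elsewhere.
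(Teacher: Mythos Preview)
Your argument is correct and matches the paper's own proof essentially line for line: the paper also takes an accompanying morphism $f:P_n\to\Sigma$ for the canonical polygon $P_\Delta(\gamma)$, invokes Theorem~\ref{th:functorial triangular} to obtain $f_*:\AA_n\to\AA_\Sigma$, and then applies $f_*$ to the polygonal identity \eqref{eq:noncomlaurent n-gon} with $i=1$. Your additional remarks about Theorem~\ref{th:polygonal cover} and the non-simple case are reasonable elaborations but do not change the structure of the argument.
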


We prove Theorem \ref{th:noncomlaurent surface} in Section  \ref{subsect:canonical covers}.

\begin{remark} Theorem \ref{th:noncomlaurent surface} is a noncommutative generalization of \cite[Theorem 6.1]{musiker-schiffler}.

\end{remark}

\begin{example}
\label{ex:Schiffler surface}  Let $\Sigma$ be a regular triangle with the clockwise vertex set $I=\{1,2,3\}$ and a special puncture $0$ in the center. For $i\in I$ denote by $\lambda_i$ the the special loop at $i$ around $0$.
As in Example \ref{ex:noncommutative (n,1) gon}, for $i,j\in I$, $i\ne j$ denote by $\gamma_{ij}^+$ (resp. $\gamma_{ij}^-$) the  curve  from $i$ to $j$ so that $0$ is to the right (resp. to the left) of the curve and abbreviate $x_i:=x_{\lambda_i}$,
$x_{ij}^\pm:=x_{\gamma_{ij}^\pm}$ for the corresponding generators of $\AA_\Sigma$.

Clearly, every triangulation of $\Sigma$ contains  $\gamma_{12}^+,\gamma_{21}^-,\gamma_{23}^+,\gamma_{32}^-,\gamma_{31}^+,\gamma_{13}^-$. Let $\Delta$ be the triangulation  of $\Sigma$ containing also $\gamma_1$  and $\gamma_{12}^-$.
Then \eqref{eq:noncomm schiffler surf}
reads:
%
%
$$
x_2=x^+_{21}x_1^{-1}x^-_{12} + x^-_{21}x_1^{-1}x^+_{12}, \ \
x^-_{23}=x^-_{21}(x^+_{21})^{-1}x^+_{23} + x^+_{21}x_1^{-1}x^-_{13}
+ x^-_{21}x_1^{-1}x^+_{12}(x^-_{12})^{-1}x^-_{13}\ ,
$$
$$
x_3=x^+_{31}x_1^{-1}x^-_{13} + x^+_{31}(x^+_{21})^{-1}x^-_{21}(x^+_{21})^{-1}x^+_{23} +
x^-_{32}(x^-_{12})^{-1}x_1(x^+_{21})^{-1}x^+_{23}$$
$$ + x^-_{32}(x^-_{12})^{-1}x^+_{12}(x^-_{12})^{-1}x^-_{13} +
x^+_{31}(x^+_{21})^{-1}x^-_{21}x_1^{-1}x^+_{12}(x^-_{12})^{-1}x^-_{13}\ .
$$

Let  $\hat \Sigma$ be as in Definition \ref{def:gluing holes}.
Therefore, simple curves on $\hat \Sigma$ are those on $\Sigma$ plus six additional ones: directed intervals $\gamma_{0,i}$ from $0$ to each $i$ and
$\gamma_{i,0}:=\overline \gamma_{0,i}$.
We abbreviate the generators of  $\AA_{\hat \Sigma}$ same way as in $\AA_\Sigma$ and $x_{0,i}:=x_{\gamma_{0,i}}$, $x_{i,0}:=x_{\gamma_{i,0}}$.

Let $\hat \Delta$  be the triangulation of $\hat \Sigma$ obtained from $\Delta$ by adding the intervals $\gamma_{0,1}$ and $\gamma_{1,0}$.
Then \eqref{eq:noncomm schiffler surf} reads:
%
%
$$
x_3=x^+_{31}x_1^{-1}x^-_{13} + x^+_{31}(x^+_{21})^{-1}x^-_{21}(x^+_{21})^{-1}x^+_{23} +
x^-_{32}(x^-_{12})^{-1}x_1(x^+_{21})^{-1}x^+_{23}$$
$$ + x^-_{32}(x^-_{12})^{-1}x^+_{12}(x^-_{12})^{-1}x^-_{13} +
x^+_{31}(x^+_{21})^{-1}x^-_{21}x_1^{-1}x^+_{12}(x^-_{12})^{-1}x^-_{13} +
$$
$$
+ x^+_{31}(x^+_{21})^{-1}x^+_{23} + x^-_{32}(x^-_{12})^{-1}x^-_{13} + x^+_{31}x_1^{-1}x^+_{12}(x^-_{12})^{-1}x^-_{13} +
x^+_{31}(x^+_{21})^{-1}x^-_{21}x_1^{-1}x^-_{13}\ .
$$

\end{example}

\subsection{Noncommutative $(n,1)$-gon}
\label{ex:noncommutative (n,1) gon}
In this section we consider   the $(n,1)$-gon $\Sigma=P_n(1)$  (with the clockwise ordering of the set $[n]=I_b(P(n,1))$). We abbreviate $\AA_{n,1}:={\mathcal A}_\Sigma$ and refer to it as the {\it noncommutative $(n,1)$-gon}. Clearly,
 ${\mathcal A}_{n,1}$ is  generated by
$x_{ij}^{\pm}:=x_{\gamma_{ij}^\pm}$ and $(x_{ij}^{\pm})^{-1}$, $i,j\in [n]$, where $\gamma_{ij}^\pm$ is the curve corresponding to $(i,j,\pm)$ under the bijection in Lemma  \ref{le:finite Gamma} where $x_{ii}^+=x_{ii}^-$ for $i\in [n]$ (we abbreviate $x_i:=x_{ii}^+=x_{ii}^-$). The following is immediate.

\begin{lemma}
\label{le:C_n} The algebra ${\mathcal A}_{n,1}$ is  generated by
$(x_{ij}^{\pm})^{\pm 1}$, $i,j\in [n]$ subject to:


(i) (triangle relations) For any distinct  $i,j,k\in [n]$:
$$x_{ij}^+(x_{kj}^+)^{-1}x_{ki}^+=x_{ik}^-(x_{jk}^-)^{-1}x_{ji}^-, x_{ij}^+(x_{kj}^-)^{-1}x_{ki}^+=x_{ik}^-(x_{jk}^+)^{-1}x_{ji}^-\ .$$


(ii) ($2$-gon exchange relations) For any distinct $i,j\in [n]$:
$$x_j=x_{ji}^+x_i^{-1}x_{ij}^-+x_{ji}^-x_i^{-1}x_{ij}^+\ .$$

(iii) ($4$-gon exchange relations) For any cyclic $(i,j,k,\ell)$ in $[n]$ and $\varepsilon \in \{-,+\}$:
$$x_{j\ell}^+=x_{jk}^+(x_{ik}^\varepsilon)^{-1} x_{i\ell}^\varepsilon+ x_{ji}^{-\varepsilon}(x_{ki}^{-\varepsilon})^{-1}x_{k\ell}^+\ ,
x_{i\ell}^+=x_{ik}^\varepsilon(x_{jk}^{-\varepsilon})^{-1} x_{j\ell}^{-\varepsilon}+ x_{ij}^{-\varepsilon}(x_{ki}^\varepsilon)^{-1}x_{k\ell}^\varepsilon$$
$$x_{j\ell}^-=x_{jk}^\varepsilon(x_{ik}^\varepsilon)^{-1} x_{i\ell}^-+ x_{ji}^-(x_{kj}^{-\varepsilon})^{-1}x_{k\ell}^{-\varepsilon}\ ,
x_{\ell i}^-=x_{\ell j}^{\varepsilon} (x_{kj}^\varepsilon)^{-1}x_{ki}^{-\varepsilon} + x_{\ell k}^{-\varepsilon}(x_{jk}^{-\varepsilon})^{-1}x_{ji}^\varepsilon$$


%
\end{lemma}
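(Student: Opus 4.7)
The plan is to specialize Definition~\ref{def:Asigma} to the case $\Sigma=P_n(1)$ and check that the abstract triangle and quadrilateral relations there reduce to the finite list (i)–(iii) of Lemma~\ref{le:C_n}. First, Lemma~\ref{le:finite Gamma} identifies $[\Gamma(P_n(1))]$ with $[n]\times[n]\times\{-,+\}$, where the sign records whether the special puncture lies to the right ($+$) or to the left ($-$) of the oriented arc; Lemma~\ref{le:bar involution}(b) forces $x_{ii}^{+}=x_{ii}^{-}$, so that the unique simple loop $\lambda_i$ around the puncture gives a single generator $x_i$. Hence the generating sets match.

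Next I would classify the $3$-gons $(\gamma_1,\gamma_2,\gamma_3)$ in $P_n(1)$ according to whether the special puncture lies outside or inside the triangular region spanned by generic representatives. On three distinct vertices $i,j,k$ both possibilities are realized, and rewriting $\overline{\gamma_{ij}^{\pm}}=\gamma_{ji}^{\mp}$ in the triangle relation \eqref{eq:triangle relations sigma} produces the two identities of (i): the first from the exterior-puncture triangle and the second from the interior-puncture triangle. No further $3$-gons exist, because an accompanying morphism $f\colon P_3\to\Sigma$ with discrete fibres cannot collapse two vertices of $P_3$ onto the same point of $\Sigma$ without violating the local injectivity clause of Definition~\ref{def:proper map}.

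The $2$-gon exchange relation (ii) then arises from the unique (up to relabelling and the bar involution) quadrilateral in $\Sigma$ with vertex pattern $(i,j,i,j)$ whose accompanying morphism $f\colon P_4\to\Sigma$ is a two-fold cover of a disk neighbourhood of the puncture ramified at that puncture. In this configuration the two diagonals of $P_4$ map to the special loops, $f(\gamma_{13})=\lambda_i$ and $f(\gamma_{24})=\lambda_j$; substituting $x_{\gamma_{13}}=x_i$ and $x_{\gamma_{24}}=x_j$ in \eqref{eq:exchange relations sigma} and expanding the boundary arcs using $\overline{\gamma_{ij}^{\pm}}=\gamma_{ji}^{\mp}$ gives exactly (ii). The $4$-gon exchange relations (iii) are obtained analogously from quadrilaterals on four distinct cyclically ordered vertices $(i,j,k,\ell)$: the puncture lies either outside the quadrilateral or in one of the four triangular cells cut by the two diagonals, which forces the sign patterns of the four boundary arcs and of the diagonals $\gamma_{ik}^{?}$, $\gamma_{j\ell}^{?}$. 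Writing \eqref{eq:exchange relations sigma} for each realizable configuration, parametrized by the remaining binary choice $\varepsilon\in\{-,+\}$, yields the eight identities of (iii).

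Completeness follows because every $3$-gon or $4$-gon in $P_n(1)$, possibly after applying the bar anti-automorphism of Lemma~\ref{le:bar involution}, falls into one of the above cases: the only combinatorial data beyond the vertex tuple in $[n]$ is, for each arc and diagonal, the side of the single special puncture, and (i)–(iii) record all of these. The main obstacle is the bookkeeping in the $4$-gon step: one must carefully enumerate the admissible puncture locations (exterior of the quadrilateral, or one of the four diagonal-cells), verify that the two equations printed per value of $\varepsilon$ in (iii) cover all realizable configurations without omission or redundancy, and check that no additional $4$-gon is produced by a non-trivial ramified cover of $\Sigma$ on four vertices, which amounts to ruling out $P_4\to\Sigma$ ramified at the puncture in the four-distinct-vertex case by a dimension count on the boundary data.
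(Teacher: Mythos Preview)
The paper states Lemma~\ref{le:C_n} as ``immediate'' and gives no proof, so your strategy of specializing Definition~\ref{def:Asigma} to $P_n(1)$ via the explicit description of $[\Gamma(P_n(1))]$ in Lemma~\ref{le:finite Gamma} is exactly the intended elaboration.

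There is, however, a concrete error in your completeness argument. Your claim that ``no further $3$-gons exist'' because local injectivity in Definition~\ref{def:proper map} prevents two vertices of $P_3$ from mapping to the same marked point is incorrect: local injectivity is required only in a neighborhood of each point and does not forbid identifying two distinct boundary vertices of $P_3$, which lie in disjoint neighborhoods. Triangles with a repeated vertex \emph{do} occur in $P_n(1)$: for instance $(\gamma_{ji}^+,\lambda_i,\gamma_{ij}^+)$, with the special loop $\lambda_i$ as one side, is a genuine triangle---it is the image under the double cover $f_n\colon P_{2n}\to P_n(1)$ of an ordinary triangle in $P_{2n}$ on vertices $\{j,i,i+n\}$, and such triangles appear explicitly in the presentation of $\AA_{2,1}$ in Example~\ref{ex:tow-gon surface} as well as in any triangulation of $P_n(1)$ containing $\lambda_i$. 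The resulting relation $x_{ji}^+x_i^{-1}x_{ij}^+=x_{ji}^-x_i^{-1}x_{ij}^-$ is not literally listed in (i)--(iii), so to finish you must either derive it from (i)--(iii) or observe that the lemma tacitly absorbs these degenerate-triangle relations. The same caution applies to quadrilaterals with fewer than four distinct vertices (compare Example~\ref{ex:tow-gon surface}) and to the ``intersecting'' triangles of Remark~\ref{rem:intersecting triangle}, which you should verify are already instances of (i) under relabeling rather than new relations.
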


Clearly, the assignments $x_{ij}^\pm\mapsto x_{ji}^{\mp}$ define an involutive anti-automorphism of ${\mathcal A}_{n,1}$.
One can easily show

For each $n\ge 1$ define a map $\pi:[2n] \to [n]$ by
$\pi(i)=\begin{cases}
i &  \text{if $i\in [n]$}  \\
i-n &  \text{if $i\notin [n]$}
\end{cases}
$.

Also for distinct $i,j\in [2n]$  define the sign $\varepsilon_{ij}\in \{-,+\}$ by setting $\varepsilon_{ij}:=+$ if the clockwise arc from $i$ to $j$ is shorter than the clockwise arc from $i$ to $i+n$ and $\varepsilon_{ij}:=-$ otherwise.

Note that the restriction of the function  $\hat f:\CC\to \CC$ given by $z\mapsto z^2$ to the unit disk $D\subset \CC$ centered at $0$ is a map $f:D\to D$ hence for each $n\ge 1$ it
is a morphism $f_n:P_{2n}\to P_n(1)$ in {\bf Surf} for all $n\ge 1$ (where the marked boundary points are appropriate roots of unity and the special puncture in $P_n(1)$ is the center $0$ of $D$). The following is immediate corollary of Theorems \ref{th:I_s-homotopy equivalence} and \ref{th:functorial triangular}.

\begin{corollary} For each $n\ge 1$ one has:

$\bullet$  The morphism $f_n$ in {\bf Surf} defines a surjective map $\Gamma(P_{2n})=[2n]\times [2n]\twoheadrightarrow [n]\times [n]\times \{-,+\}=[\Gamma(P(n,1))]$ given by $(ij)\mapsto \gamma_{\pi(i),\pi(j)}^{\varepsilon_{ij}}$ for all distinct $i,j\in [2n]$.

$\bullet$ The assignments
$x_{ij}\mapsto 
x_{\pi(i),\pi(j)}^{\varepsilon_{ij}}
$ for all distinct $i,j\in [2n]$, define an epimorphism of algebras $(f_n)_*:\AA_{2n}\twoheadrightarrow {\mathcal A}_{n,1}$.

\end{corollary}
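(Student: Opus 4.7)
The plan is to deduce both assertions from Theorems \ref{th:I_s-homotopy equivalence} and \ref{th:functorial triangular}; the work is in checking that $f_n$ is a morphism in $\mathbf{Surf}$ and in computing the induced map on chords.

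First I would verify that $f_n$ satisfies Definition \ref{def:proper map}. The squaring map $z\mapsto z^2$ on the closed unit disk is a local diffeomorphism off $0$ and a two-fold cover ramified at $0$; the $2n$-th roots of unity go in pairs to the $n$-th roots, and $0\in P_{2n}$ maps to the special puncture $0\in P_n(1)$. Adjoining $0$ to $I(P_{2n})$ as an ordinary puncture (so that $I^{f_n}=\{0\}$) makes all the hypotheses of Definition \ref{def:proper map} hold, and the chord generators $x_{ij}$ of $\AA_{2n}$ sit inside the algebra of this augmented surface as a distinguished subfamily.

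Next I would identify $f_n$ on chords. Represent $(i,j)\in[2n]\times[2n]$, $i\ne j$, by the straight segment $C_{ij}$ from the $i$-th to the $j$-th $2n$-th root of unity. If $j\ne i+n$, then $C_{ij}$ avoids $0$, so $f_n(C_{ij})$ is a simple curve in $P_n(1)\setminus\{0\}$ from $\pi(i)$ to $\pi(j)$, which by Lemma \ref{le:finite Gamma} equals $\gamma^{\varepsilon}_{\pi(i),\pi(j)}$ for some $\varepsilon\in\{-,+\}$. A direct geometric check (using the clockwise labeling) shows $0$ lies in the disk region bounded by $C_{ij}$ and the longer arc from $j$ back to $i$; hence $0$ is on the right of $C_{ij}$ (traversed from $i$ to $j$) iff the clockwise arc from $i$ to $j$ has length less than $n$, which is precisely $\varepsilon_{ij}=+$. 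Because $z\mapsto z^2$ is orientation-preserving away from $0$, the same right/left assignment transports to $f_n(C_{ij})$, giving $f_n((i,j))=\gamma^{\varepsilon_{ij}}_{\pi(i),\pi(j)}$. In the degenerate case $j=i+n$, the chord is a diameter through $0$; its image is an $I_s$-reducible loop at $\pi(i)=\pi(j)$ that reduces under $\pi_0^\infty$ to the special loop $\lambda_{0,\pi(i)}$, in agreement with the formula (the two signs coincide here by Lemma \ref{le:bar involution}(b)).

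Finally, shifting $i\mapsto i+n$ or $j\mapsto j+n$ flips $\varepsilon_{ij}$, so among the four preimages $\{a,a+n\}\times\{b,b+n\}$ of any $(a,b)$ exactly two realize each sign. This proves the first bullet, and the second then follows from Theorem \ref{th:functorial triangular}, which delivers the surjective algebra homomorphism $(f_n)_*\colon\AA_{2n}\twoheadrightarrow\AA_{n,1}$ with the stated formula on generators. The main obstacle is the sign matching in the second paragraph: one must carefully track orientations under the branched double cover to confirm that the combinatorial $\varepsilon_{ij}$ (defined via arc-lengths on $P_{2n}$) coincides with the geometric side of the special puncture in the image, and that the degenerate diameter case is correctly absorbed by the $I_s$-reduction $\pi_0^\infty$.
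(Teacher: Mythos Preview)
Your approach is essentially the paper's own: both bullets follow immediately from Theorems \ref{th:I_s-homotopy equivalence} and \ref{th:functorial triangular} once one checks that $f_n$ is a morphism in ${\bf Surf}$ and identifies its effect on chords, and your sign computation via the clockwise arc length is correct.

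One correction, though: do \emph{not} adjoin $0$ to $I(P_{2n})$. The set $I^f$ in Definition \ref{def:proper map} is a subset of the underlying space $\underline\Sigma$, not of $I(\Sigma)$; with $P_{2n}$ taken as the plain $2n$-gon (no punctures at all) one already has $I^{f_n}=f_n^{-1}(\{0\})\setminus I_s(P_{2n})=\{0\}$, and all three conditions of Definition \ref{def:proper map} are satisfied. If you actually made $0$ an ordinary puncture of $P_{2n}$, then $I(P_{2n})=[2n]\cup\{0\}$ while $f_n^{-1}(I(P_n(1)))=f_n^{-1}([n])=[2n]$, violating the first requirement $f^{-1}(I(\Sigma'))=I(\Sigma)$. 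So simply delete that sentence and work directly with $P_{2n}$; the rest of your argument is unaffected. (A minor terminological point: in the diameter case the image of a slightly perturbed chord is already the simple special loop $\lambda_{0,\pi(i)}$ itself, not an $I_s$-reducible curve needing $\pi_0^\infty$; but your conclusion there is still correct.)
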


\begin{remark}
\label{rem:intersecting triangle}
For any $1\le i<j<k\le n$, the triple $(\gamma_{ij}^-,\gamma_{jk}^-,\gamma_{ki}^-)$ is a triangle in $\Sigma=P(n,1)$ because it is the image of the triangle $(i,j+n,k)$ in $[2n]$ under the above morphism $f_n:P_{2n}\to P_n(1)$. Note, however, that all intersections $\gamma_{ij}^-\cap \gamma_{jk}^-$, $\gamma_{ij}^-\cap \gamma_{ki}^-$, $\gamma_{jk}^-\cap \gamma_{ki}^-$ are non-empty.

\end{remark}

\subsection{Universal localizations of noncommutative surfaces}
\label{subsec:niversal localizations of noncommutative surfaces}
Generalizing \eqref{eq:i_Delta}, for any triangulation $\Delta$ of any marked surface $\Sigma$ let $\AA_\Delta$ be the subalgebra of $\AA_\Sigma$ generated by all $x_\gamma$, $\gamma\in [\Gamma(\Sigma)]$ and all $x_{\gamma_0}^{-1}$, $\gamma_0\in \Delta$.

Clearly, the assignments $t_\gamma\mapsto x_\gamma$,  $\gamma\in \Delta$ define a homomorphisms of algebras:
\begin{equation}
\label{eq:i_Delta surface}
\ii_\Delta:\QQ\TT_\Delta \to  \AA_\Delta \ .
\end{equation}

The following result is a generalization of Theorem \ref{th:A_Delta} to all marked surfaces.

\begin{theorem}
\label{th:A_Delta sigma}
For each triangulation $\Delta$ of $\Sigma$ one has:

(a) The homomorphism $\ii_\Delta$ given by \eqref{eq:i_Delta surface} is an isomorphism of algebras.

(b)   $\AA_\Sigma=\AA_\Delta[{\bf S}^{-1}]$, where ${\bf S}$ is the submonoid of $\AA_\Delta\setminus \{0\}$ generated by all $x_\gamma$, $\gamma\in [\Gamma(\Sigma)]$.

\end{theorem}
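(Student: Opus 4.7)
The plan is to imitate the proof of Theorem \ref{th:A_Delta} for polygons, with Theorem \ref{th:polygonal cover} and Theorem \ref{th:noncomlaurent surface} serving as the bridges from the surface setting back to the polygon setting. For part (b), observe that once (a) is established, Theorem \ref{th:noncomlaurent surface} expresses every $x_\gamma$ (for $\gamma \in [\Gamma(\Sigma)]$) as a noncommutative polynomial in $x_{\gamma_0}^{\pm 1}$, $\gamma_0 \in \Delta$, so $x_\gamma \in \AA_\Delta$. Consequently $\AA_\Sigma$ is obtained from $\AA_\Delta$ simply by inverting all $x_\gamma$, i.e.\ by inverting the monoid ${\bf S}$; universality of localization gives $\AA_\Sigma = \AA_\Delta[{\bf S}^{-1}]$. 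So the crux is (a).

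To prove (a), I will construct an inverse $\ii_\Delta^{-1}:\AA_\Delta\to \QQ\TT_\Delta$. For each $\gamma\in [\Gamma(\Sigma)]$, use the canonical polygon $P_\Delta(\gamma)=(\gamma_1,\ldots,\gamma_n)$ of Lemma \ref{le:canonical polygon} together with the accompanying morphism $f_\gamma:P_n\to \Sigma$ and triangulation $\Delta^0$ of $[n]$ from Theorem \ref{th:polygonal cover}. In $\QQ\TT_{\Delta^0}$ we have the distinguished element $t_{1,j}^{\Delta^0}$ of \eqref{eq:noncomm schiffler TDelta n-gon} (where $j$ is chosen so that $\gamma=\gamma_{1,j}$); set
\[
t_\gamma^\Delta := (f_\gamma)_\star\bigl(t_{1,j}^{\Delta^0}\bigr)\in \QQ \TT_\Delta,
\]
using the natural map $(f_\gamma)_\star:\TT_{\Delta^0}\to \TT_\Delta$ sending $t_{k\ell}\mapsto t_{\gamma_{k\ell}}$ (well-defined on the triangle relations since $f_\gamma$ is a morphism of surfaces). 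For $\gamma_0\in \Delta$ one checks $t_{\gamma_0}^\Delta=t_{\gamma_0}$. That this definition does not depend on auxiliary choices follows from Theorem \ref{th:universal localization n-gon}: any two polygon covers of a configuration produce Schiffler elements related by the isomorphism $\psi_{\Delta_1^0,\Delta_2^0}$, which becomes the identity after $(f_\gamma)_\star$. Next, for any triangle $T=(\gamma_1,\gamma_2,\gamma_3)$ or quadrilateral $Q=(\gamma_1,\ldots,\gamma_4)$ in $\Sigma$, Theorem \ref{th:polygonal cover} embeds $T$ (resp.\ $Q$) into a polygon morphism $f:P_n\to \Sigma$; pushing forward Lemma \ref{le:relations tDelta} shows that the elements $t_\gamma^\Delta$ satisfy the triangle relations \eqref{eq:triangle relations sigma} and exchange relations \eqref{eq:exchange relations sigma}. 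Therefore the assignments $x_\gamma\mapsto t_\gamma^\Delta$ and $x_{\gamma_0}^{-1}\mapsto t_{\gamma_0}^{-1}$ define a homomorphism $\AA_\Delta\to \QQ\TT_\Delta$, which is inverse to $\ii_\Delta$ on generators by construction, and inverse globally by Theorem \ref{th:noncomlaurent surface}.

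The main obstacle lies in the combinatorial consistency of $t_\gamma^\Delta$ and in verifying the triangle and exchange relations when the ambient triangle or quadrilateral is not simple (cf.\ Remark \ref{rem:intersecting triangle} and the non-simple triangles in annuli and in $P_3(1)$ depicted earlier). In those cases the polygon cover produced by Theorem \ref{th:polygonal cover} is not unique, and distinct covers generally produce distinct canonical polygons $P_\Delta(\gamma)$; one needs the compatibility $\psi_{\Delta,\Delta''}\circ \psi_{\Delta'',\Delta'}=\psi_{\Delta,\Delta'}$ from Theorem \ref{th:universal localization n-gon} to conclude that the two resulting elements of $\QQ\TT_\Delta$ coincide. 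A secondary technical point, only arising when $I_s(\Sigma)\ne \emptyset$, is that the canonical polygon construction must be carried out inside $[\Gamma(\Sigma)]$ (i.e.\ modulo $I_s$-reduction), so one has to verify that the Schiffler expansion descends correctly under the projection $\pi^\infty$; this is handled by working in the ambient $\hat\Sigma$ of Definition \ref{def:gluing holes} and using Proposition \ref{pr:gluing holes} together with Theorem \ref{th:TDeltaDelta' surface}.
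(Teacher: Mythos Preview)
Your overall strategy mirrors the paper's, but there is a genuine gap in your construction of the inverse for part (a). You claim that verifying the triangle and exchange relations for the elements $t_\gamma^\Delta$ yields a homomorphism $\AA_\Delta\to\QQ\TT_\Delta$. This does not follow. First, $\AA_\Delta$ is defined as a \emph{subalgebra} of $\AA_\Sigma$, not by a presentation, so one cannot define a homomorphism out of it by checking that proposed images of generators satisfy the defining relations of $\AA_\Sigma$; there may be further relations among the $x_\gamma$ and $x_{\gamma_0}^{-1}$ inside $\AA_\Delta$ that are consequences of inverting all $x_\gamma$ in $\AA_\Sigma$. Second, the relations \eqref{eq:triangle relations sigma} and \eqref{eq:exchange relations sigma} themselves involve $x_\gamma^{-1}$ for $\gamma\notin\Delta$; the corresponding $t_\gamma^\Delta$ are genuine sums of group elements and hence not units in $\QQ\TT_\Delta$, so these relations cannot even be formulated there. (Your appeal to Lemma \ref{le:relations tDelta} does not help: that lemma verifies the relations in $\QQ\TT_{\Delta^0}[S_{\Delta^0}^{-1}]$, and pushing forward lands in $\QQ\TT_\Delta[{\bf S}_\Delta^{-1}]$, not $\QQ\TT_\Delta$.)

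The paper fixes this by passing to the localization from the outset: one constructs a homomorphism $\AA_\Sigma\to\QQ\TT_\Delta[{\bf S}_\Delta^{-1}]$ (Proposition \ref{pr:universal localization Sigma}), where ${\bf S}_\Delta$ is generated by all $t_\gamma^\Delta$, using the presentation of $\AA_\Sigma$; this is inverse to the canonical extension of $\ii_\Delta$, giving $\AA_\Sigma\cong\QQ\TT_\Delta[{\bf S}_\Delta^{-1}]$, and then one restricts. Part (b) follows directly, and part (a) follows once one knows the localization map $\QQ\TT_\Delta\to\QQ\TT_\Delta[{\bf S}_\Delta^{-1}]$ is injective, which comes from Theorems \ref{th:delta 1-relator} and \ref{th:invertible almost free}. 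A separate minor issue: your proposed treatment of special punctures via $\hat\Sigma$ and Proposition \ref{pr:gluing holes} does not work, since by Remark \ref{rem:improper inclusion} and Example \ref{ex:tow-gon surface} there is no algebra homomorphism $\AA_\Sigma\to\AA_{\hat\Sigma}$ compatible with the group map; the paper handles $I_s(\Sigma)\ne\emptyset$ by working directly in $[\Gamma(\Sigma)]$ via the canonical polygon.
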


%
 %
%

We prove Theorem \ref{th:A_Delta sigma} in Section \ref{subsec:proof Laurent}.

Theorems \ref{th:delta 1-relator}, \ref{th:A_Delta sigma}, and \ref{th:invertible almost free} imply the following.

\begin{corollary}
\label{cor:injective TDelta surface}
For each triangulation $\Delta$ of $\Sigma$ the homomorphism \eqref{eq:i_Delta surface} is injective.

\end{corollary}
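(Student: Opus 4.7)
The plan is to derive injectivity of $\ii_\Delta$ as a direct assembly of the three cited results. First, by Theorem~\ref{th:delta 1-relator}, the triangle group $\TT_\Delta$ is in every case either a free group or a torsion-free $1$-relator group. Both of these classes enjoy the crucial property that their rational group algebras are domains admitting faithful embeddings into skew fields of fractions: for free groups via the Amitsur--Cohn construction, and for torsion-free $1$-relator groups via classical Hughes-freeness results of Lewin--Lewin and Lichtman. Theorem~\ref{th:invertible almost free} packages this into the statement that $\QQ \TT_\Delta$ embeds into a skew field $\mathcal{F}_\Delta$ in such a way that, for each $\gamma \in [\Gamma(\Sigma)]$, the noncommutative Laurent expression representing $x_\gamma$ through the Noncommutative Laurent Phenomenon (Theorem~\ref{th:noncomlaurent surface}) is mapped to an invertible element of $\mathcal{F}_\Delta$.

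Second, Theorem~\ref{th:A_Delta sigma}(b) realizes $\AA_\Sigma$ as the universal localization $\AA_\Delta[\mathbf{S}^{-1}]$, where $\mathbf{S}$ is the multiplicative submonoid of $\AA_\Delta$ generated by all $x_\gamma$, $\gamma \in [\Gamma(\Sigma)]$. By the universal property of the universal localization, the embedding from the first paragraph extends uniquely through $\AA_\Delta$ and $\AA_\Sigma$ to give a commutative diagram
$$
\QQ \TT_\Delta \;\stackrel{\ii_\Delta}{\longrightarrow}\; \AA_\Delta \;\hookrightarrow\; \AA_\Sigma \;\longrightarrow\; \mathcal{F}_\Delta,
$$
whose composition $\QQ \TT_\Delta \to \mathcal{F}_\Delta$ coincides with the embedding from the first paragraph. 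Since the composition is injective, the first factor $\ii_\Delta$ must be injective as well.

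The main obstacle is the correct identification and application of Theorem~\ref{th:invertible almost free}: constructing a faithful embedding of $\QQ \TT_\Delta$ into a skew field with sufficient control over the invertibility of the specific Laurent-type elements is a nontrivial task for arbitrary groups, and it relies crucially on the classification provided by Theorem~\ref{th:delta 1-relator} (compare the analogous polygon discussion in Section~\ref{subsec:rigidity=>injectivity}, which hinges on the factor-closedness studied in Conjecture~\ref{conj:rigid linear}). Once the embedding is secured, the proof reduces to a routine application of the universal property of universal localization and contains no further technical difficulties.
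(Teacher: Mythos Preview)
Your argument, as written, is circular. In the second paragraph you invoke Theorem~\ref{th:A_Delta sigma}(b), which presents $\AA_\Sigma$ as the universal localization $\AA_\Delta[\mathbf S^{-1}]$ of $\AA_\Delta$. The universal property of \emph{that} localization takes as input a ring homomorphism \emph{out of} $\AA_\Delta$ and extends it by inverting $\mathbf S$. But you do not have a homomorphism $\AA_\Delta\to\mathcal F_\Delta$ to feed in; you only have the embedding $\QQ\TT_\Delta\hookrightarrow\mathcal F_\Delta$. Since $\ii_\Delta$ is (a priori only known to be) surjective, a factorization $\QQ\TT_\Delta\to\AA_\Delta\to\mathcal F_\Delta$ of an \emph{injective} map would force $\ker\ii_\Delta=0$, which is precisely what you are trying to prove. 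So the commutative diagram you write down cannot be constructed from the ingredients you cite.

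The fix is to localize on the other side. The proof of Theorem~\ref{th:A_Delta sigma} (specifically Proposition~\ref{pr:universal localization Sigma} together with its inverse \eqref{eq:i_Delta surface localized}) produces an isomorphism $\AA_\Sigma\cong\QQ\TT_\Delta[\mathbf S_\Delta^{-1}]$, where $\mathbf S_\Delta\subset\QQ\TT_\Delta$ is generated by the elements $t_\gamma^\Delta$, and under this identification the map $\ii_\Delta$ (followed by the inclusion $\AA_\Delta\subset\AA_\Sigma$) is nothing but the canonical localization map $\QQ\TT_\Delta\to\QQ\TT_\Delta[\mathbf S_\Delta^{-1}]$. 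Now Theorem~\ref{th:delta 1-relator} says $\TT_\Delta$ is free or torsion-free one-relator, and the last sentence of Theorem~\ref{th:invertible almost free} says directly that for such groups the canonical map $\QQ\TT_\Delta\to\QQ\TT_\Delta[S^{-1}]$ is injective for \emph{every} submonoid $S$. This is exactly the paper's intended argument; note that you never need to name the skew field $\mathcal F_\Delta$ or discuss invertibility of individual Laurent expressions.
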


Theorem \ref{th:A_Delta sigma} implies that for each $\Sigma$ the natural homomorphism $\QQ\TT_\Delta\hookrightarrow Frac(\QQ\TT_\Delta)$ defines a homomorphism of algebras:
\begin{equation}
\label{eqLASigma to fractions}
\AA_\Sigma\to Frac(\QQ\TT_\Delta)\ .
\end{equation}

In view of Theorem \ref{th:invertible almost free}, we propose the following conjecture.

\begin{conjecture}
\label{conj:injective skew field surf}
For each $\Sigma$ the homomorphism \eqref{eqLASigma to fractions} is injective, e.g., the submonoid $S_\Delta$ of $\QQ \TT_\Delta\setminus \{0\}$ is divisible in the sense of Definition \ref{def:divisible submonoid}.

\end{conjecture}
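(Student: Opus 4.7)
The plan is to proceed in parallel with the polygon case of Proposition \ref{pr:A_n divisible conj}. By Theorem \ref{th:A_Delta sigma}(a), $\ii_\Delta:\QQ\TT_\Delta\widetilde\to \AA_\Delta$ is an isomorphism, and part (b) identifies $\AA_\Sigma$ with the universal localization $\QQ\TT_\Delta[S_\Delta^{-1}]$, where $S_\Delta$ is the submonoid of $\QQ\TT_\Delta$ generated by the noncommutative Schiffler-type elements $t_\gamma^\Delta$ for $\gamma\in[\Gamma(\Sigma)]$. The conjecture then reads: the canonical map $\QQ\TT_\Delta[S_\Delta^{-1}]\to Frac(\QQ\TT_\Delta)$ is injective. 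Using the appendix machinery (Proposition \ref{pr:UFD} together with Corollary \ref{cor:free divisible}), I would reduce this to showing that the saturation $\hat S_\Delta:=\QQ^\times\cdot \TT_\Delta\cdot S_\Delta\cdot \TT_\Delta$ is factor-closed in $\QQ\TT_\Delta\setminus\{0\}$.

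Next, I would split into cases according to Theorem \ref{th:delta 1-relator}. If $\TT_\Delta$ is free, then $\QQ\TT_\Delta$ is a Cohn-style unique factorization domain (Remark \ref{rem:free group algebra UFD}), so factor-closedness of $\hat S_\Delta$ follows once one verifies that (i) each generator $t_\gamma^\Delta$ with $\gamma\notin\Delta$ is prime in $\QQ\TT_\Delta$, and (ii) every prime similar to such a $t_\gamma^\Delta$ again lies in $\hat S_\Delta$. To handle (i) and (ii), I would mimic the polygon argument: select a \emph{starlike} triangulation so that, by the analog of Lemma \ref{le:starlike relation n-gon}, $\TT_\Delta$ acquires a free generating set containing an independent family of noncommutative angles $T_p^{\alpha,\beta}$; Corollary \ref{cor:angle is additive} then expands $t_\gamma^\Delta$ as a sum of such angles times elements of $\TT_\Delta$, and primeness plus similarity-class rigidity are extracted from the appropriate case of Conjecture \ref{conj:rigid linear} in rank equal to $\operatorname{rank}\TT_\Delta$, just as in Proposition \ref{pr:A_n divisible conj}.

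For closed surfaces where $\TT_\Delta$ is a $1$-relator torsion-free group, I would attempt two complementary strategies. First, transfer along Magnus's theorem: take a finite-index free subgroup $F\subset\TT_\Delta$, realize $\QQ\TT_\Delta$ as a crossed product over $\QQ F$, and -- after arranging (by refining $\Delta$) that the $t_\gamma^\Delta$ can be conjugated into $F$ -- deduce factor-closedness of $\hat S_\Delta$ from that of its intersection with $\QQ F$, which lies in the free case already handled. Second, use the gluing morphism $f:P_n(h)\twoheadrightarrow \Sigma$ and functoriality (Theorem \ref{th:functorial triangular}) to push the polygon result into $\AA_\Sigma$, controlling the image via the noncommutative Laurent expansion \eqref{eq:noncomm schiffler surf}.

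The main obstacle I foresee is precisely the $1$-relator case: even the Cohn UFD property for the group algebra of a one-relator torsion-free group is not established in the generality we need, and the enumeration of similarity classes of primes -- the decisive ingredient in Proposition \ref{pr:A_n divisible conj} -- requires a strictly stronger form of Conjecture \ref{conj:rigid linear} than currently stated. Overcoming it will likely demand either a tailored rigidity principle for the explicit $1$-relator presentations produced in the proof of Theorem \ref{th:delta 1-relator}, or a careful descent along a Magnus subgroup that keeps the generators of $S_\Delta$ intact; these are the two routes I would pursue in earnest.
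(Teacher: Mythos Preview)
The statement you are addressing is a \emph{conjecture}: the paper does not prove it and offers no proof to compare against. What the paper does provide is a brief remark immediately following the conjecture, noting that for non-closed surfaces (where $\TT_\Delta$ is free by Theorem \ref{th:delta 1-relator}) it would suffice to show that $S_\Delta$ is generated by $\QQ^\times\cdot\TT_\Delta$ together with primes in $\QQ\TT_\Delta$. Your proposed reduction in the free case is exactly in this spirit and is the natural extension of the polygon argument in Proposition \ref{pr:A_n divisible conj}; you are right that it hinges on an instance of Conjecture \ref{conj:rigid linear}, which remains open.

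There is, however, a concrete error in your first strategy for the closed-surface (1-relator) case. You propose to ``take a finite-index free subgroup $F\subset\TT_\Delta$'' via Magnus's theorem, but such subgroups need not exist: for a closed orientable surface of genus $g\ge 2$, every finite-index subgroup of $\pi_1$ is again a closed surface group, never free. Magnus's Freiheitssatz gives free subgroups generated by proper subsets of the generators, but these have infinite index, so the crossed-product transfer you describe does not go through. Your second strategy (pushing forward from a polygon via the gluing morphism) is more promising in principle, but the functorial map $f_*:\AA_{P_n(h)}\to\AA_\Sigma$ goes the wrong direction for controlling injectivity of $\AA_\Sigma\to Frac(\QQ\TT_\Delta)$. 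In short: your outline correctly identifies the shape of the problem and its main obstacle, but neither of your two routes for the 1-relator case is viable as stated, which is consistent with the paper leaving this as an open conjecture.
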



\begin{remark} Conjecture \ref{conj:injective skew field surf} generalizes the expected  injectivity of \eqref{eq:phi from A'}. To prove Conjecture  \ref{conj:injective skew field surf} for non-closed surfaces (i.e., with free $\TT_\Delta$ according to Theorem \ref{th:delta 1-relator}) it would suffice to show that the monoid $S_\Delta$ is generated by $\QQ^\times\cdot \TT_\Delta$ and a subset of prime elements in $\QQ \TT_\Delta$.

\end{remark}

\subsection{Noncommutative angles and regular elements in noncommutative surfaces}
\label{subsec:regular surface}
Similarly to Section \ref{subsec:regular polygon}, for each triangle $(\gamma_1,\gamma_2,\gamma_3)$ denote
by $T_{\gamma_1,\gamma_2,\gamma_3}$ the element of $\AA_\Sigma$ given by:
\begin{equation}
\label{eq:general angle}
T_{\gamma_1,\gamma_2,\gamma_3}=x_{\overline \gamma_1}^{-1}x_{\gamma_2} x_{\overline \gamma_3}^{-1}
\end{equation}
and refer to it as a {\it noncommutative angle} of $(\gamma_1,\gamma_2,\gamma_3)$ at $s(\gamma_1)=t(\gamma_3)$.

Given a triangulation $\Delta$ of $\Sigma$, for any $i\in I$ define the {\it total angle} $T_i^\Delta$ at $i\in I$ by:
\begin{equation}
\label{eq:total angle}
T_i^\Delta:=\sum T_{\gamma_1,\gamma_2,\gamma_3} \ ,
\end{equation}
where the summation is over all clockwise triangles $(\gamma_1,\gamma_2,\gamma_3)$ in $\Delta$ such that $s(\gamma_1)=i$.

\begin{theorem}
\label{th:total anglle surface}
For any triangulations $\Delta$, $\Delta'$ of $\Sigma$ and $i\in I$ one has:
$$T_i^\Delta=T_i^{\Delta'}\ .$$

\end{theorem}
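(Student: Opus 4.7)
The strategy is to reduce the assertion to a single elementary flip and then check that, at each marked point, the total angle is preserved by a flip. By the standard flip connectivity of triangulations recalled in Section \ref{subsec:triangulations}, any two triangulations of $\Sigma$ are related by a finite sequence of diagonal flips, so it suffices by induction on flip distance to treat the case where $\Delta'$ is obtained from $\Delta$ by replacing a single diagonal $\delta\in\Delta$ by a diagonal $\delta'\in\Delta'$. In this case the two triangles of $\Delta$ meeting along $\delta$ together form a quadrilateral $Q=(\gamma_1,\gamma_2,\gamma_3,\gamma_4)$ in $\Sigma$ with accompanying morphism $f:P_4\to\Sigma$, and in the notation of Definition \ref{def:ngon} we have $\delta=\gamma_{13}$ and $\delta'=\gamma_{24}$. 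Explicitly, $\Delta$ contains the triangles $(\gamma_{12},\gamma_{23},\overline{\gamma_{13}})$ and $(\gamma_{13},\gamma_{34},\gamma_{41})$, while $\Delta'$ contains $(\gamma_{12},\gamma_{24},\gamma_{41})$ and $(\overline{\gamma_{24}},\gamma_{23},\gamma_{34})$. All other triangles are common to $\Delta$ and $\Delta'$, and contribute equally to $T_i^\Delta$ and $T_i^{\Delta'}$ for any $i$.

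Fix $i\in I(\Sigma)$. The proof then reduces to showing that at each corner $k\in\{1,2,3,4\}$ of $Q$ with $f(k)=i$, the local contribution to $T_i^\Delta$ from the two triangles of $\Delta$ listed above equals the local contribution to $T_i^{\Delta'}$ from the two listed triangles of $\Delta'$. Using the angle formula \eqref{eq:general angle} and the shorthand $T_k^{k',k''}$ for the corresponding angle at corner $k$ of $Q$ (as in Section \ref{subsect:Noncommutative angles}), a direct computation shows: at $k=1$, $\Delta$ contributes $T_1^{2,3}+T_1^{3,4}$ and $\Delta'$ contributes $T_1^{2,4}$; at $k=2$, $\Delta$ contributes $T_2^{1,3}$ and $\Delta'$ contributes $T_2^{1,4}+T_2^{3,4}$; at $k=3$, $\Delta$ contributes $T_3^{1,2}+T_3^{1,4}$ and $\Delta'$ contributes $T_3^{2,4}$; and at $k=4$, $\Delta$ contributes $T_4^{1,3}$ and $\Delta'$ contributes $T_4^{1,2}+T_4^{2,3}$. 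Each of these four equalities is an instance of additivity of noncommutative angles in a cyclic quadruple (Corollary \ref{cor:angle is additive}), which in turn is obtained by applying the exchange relations \eqref{eq:exchange relations sigma} for the quadrilateral $Q$ in $\AA_\Sigma$ exactly as in the polygon computation following \eqref{eq:exchange relations}.

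Summing the matching contributions over all $k\in\{1,2,3,4\}$ with $f(k)=i$ then yields $T_i^\Delta=T_i^{\Delta'}$, completing the induction step. The main subtlety is that the corners of $Q$ need not be distinct in $\Sigma$: the accompanying morphism $f$ can identify several of them with the same marked point $i$ (and $Q$ itself may be non-simple, e.g.\ when $\Sigma$ has small genus or a special puncture forces identifications). However, because our analysis is carried out independently at each corner $k$ and then summed, such identifications cause no trouble, and the argument applies uniformly to all marked surfaces.
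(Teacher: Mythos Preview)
Your proof is correct and follows the natural approach: reduce to a single flip via flip connectivity (as recalled in Section~\ref{subsec:triangulations}), then verify corner-by-corner that the angle contributions inside the flipped quadrilateral agree, using the additivity of noncommutative angles, which is precisely the modified exchange relation of Proposition~\ref{pr:sector plus angle}(ii) transported to $\AA_\Sigma$ via the accompanying morphism $f:P_4\to\Sigma$ and Theorem~\ref{th:functorial triangular}. The paper does not spell out a proof of this theorem, but the implicit argument is exactly the one you give---it is the surface analogue of Corollary~\ref{cor:total angle}, and your careful handling of possibly coinciding corners of $Q$ (summing the identities over all $k$ with $f(k)=i$) is the right way to make this rigorous on a general marked surface.
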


Therefore, in what follows, we simply denote $T_i:=T_i^\Delta$ for any triangulation $\Delta$ of $\Sigma$.

Furthermore, denote by $\cU_\Sigma$ the subalgebra of $\AA_\Sigma$ generated by all $x_\gamma$, $\gamma\in [\Gamma(\Sigma)]$,
$x_{\gamma_0}^{-1}$, $\gamma_0\in \partial \Gamma(\Sigma)$ and all total angles $T_i$.

In particular, the algebra $\cU_n$ from \ref{subsec:regular polygon} is naturally isomorphic to $\cU_{P_n}$.
The following is an analogue of Lemma \ref{le:upper bound type A}.

\begin{lemma}
\label{le:upper bound surfaces} The algebra $\cU_\Sigma$ satisfies the following relations:

(a) (reduced triangle relations) for all triangles $(\gamma_1,\gamma_2,\gamma_3)$ in $[\Gamma(\Sigma)]$ such that $\gamma_2$ is a boundary curve:
\begin{equation}
\label{eq:triangle relations upper surface}
x_{\gamma_1}x_{\overline \gamma_2}^{-1}x_{\gamma_3}=x_{\overline \gamma_3}x_{\gamma_2}^{-1}x_{\overline \gamma_1} \ .
\end{equation}

(b) (reduced exchange relations) for all quadrilaterals $(\gamma_1,\gamma_2,\gamma_3,\gamma_4)$ in $\Sigma$ such that $\gamma_2,\gamma_3$ are boundary curves:
\begin{equation}
\label{eq:exchange relations upper surface}
x_{\gamma_{13}}x_{23}^{-1}x_{\gamma_{24}}=
x_{\gamma_{14}}+x_{\gamma_{12}}x_{\gamma_{32}}^{-1}x_{\gamma_{34}}
\end{equation}



\end{lemma}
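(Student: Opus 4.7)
Part (a) will be essentially tautological: the triangle relation \eqref{eq:triangle relations sigma} is a defining identity of $\AA_\Sigma$, and the only inverses appearing in it are $x_{\overline\gamma_2}^{\,-1}$ and $x_{\gamma_2}^{-1}$. When $\gamma_2$ is a boundary curve, both $\gamma_2$ and $\overline\gamma_2$ are boundary curves, so $x_{\gamma_2}^{-1}$ and $x_{\overline\gamma_2}^{\,-1}$ lie in $\cU_\Sigma$ by definition. Every other element appearing in \eqref{eq:triangle relations upper surface} is among the generators $x_\gamma$. Hence the relation, which holds in $\AA_\Sigma$, restricts to a relation in $\cU_\Sigma$.

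For part (b) the plan is to start with the full exchange relation \eqref{eq:exchange relations sigma} for the quadrilateral $(\gamma_1,\gamma_2,\gamma_3,\gamma_4)$,
\[
x_{\gamma_{24}}=x_{\gamma_{21}}x_{\gamma_{31}}^{-1}x_{\gamma_{34}}+x_{\gamma_{23}}x_{\gamma_{13}}^{-1}x_{\gamma_{14}},
\]
multiply on the left by $x_{\gamma_{13}}x_{\gamma_{23}}^{-1}$ (both factors lie in $\cU_\Sigma$ because $\gamma_{23}$ is a boundary curve), so that the second summand collapses to $x_{\gamma_{14}}$. To rewrite the first summand, apply the triangle relation \eqref{eq:triangle relations sigma} to the triangle $(\gamma_{13},\gamma_{32},\gamma_{21})$, obtaining
\[
x_{\gamma_{13}}x_{\gamma_{23}}^{-1}x_{\gamma_{21}}=x_{\gamma_{12}}x_{\gamma_{32}}^{-1}x_{\gamma_{31}}.
\]
Right-multiplying by $x_{\gamma_{31}}^{-1}x_{\gamma_{34}}$ cancels $x_{\gamma_{31}}$ and yields $x_{\gamma_{12}}x_{\gamma_{32}}^{-1}x_{\gamma_{34}}$, which proves \eqref{eq:exchange relations upper surface}. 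Although the intermediate computation uses $x_{\gamma_{31}}^{-1}$, which is an inverse of a diagonal and need not belong to $\cU_\Sigma$, this is harmless: the derivation is carried out in the ambient algebra $\AA_\Sigma$, and both sides of the final identity lie in $\cU_\Sigma$ since the only inverses remaining are $x_{\gamma_{23}}^{-1}$ and $x_{\gamma_{32}}^{-1}=x_{\overline{\gamma_{23}}}^{\,-1}$, which are inverses of boundary curves.

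There is no real obstacle here: the statement is a routine bookkeeping check that the defining relations of $\AA_\Sigma$, after a mild multiplicative rearrangement using a triangle relation, restrict to identities between elements of $\cU_\Sigma$. The only point requiring attention is making sure each invariant of the rewriting is indeed in $\cU_\Sigma$, i.e.\ that the inverses that survive on both sides of the final equations are inverses of boundary curves, which follows directly from the hypothesis that $\gamma_2$ (respectively $\gamma_2$ and $\gamma_3$) are boundary curves.
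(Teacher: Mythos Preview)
Your proof is correct. The paper does not give an explicit proof of this lemma (it is stated as the surface analogue of Lemma~\ref{le:upper bound type A}, which is itself labeled ``obvious''), and your argument is exactly the routine verification the paper leaves implicit: part (a) is the triangle relation restricted to $\cU_\Sigma$, and part (b) is the exchange relation left-multiplied by $x_{\gamma_{13}}x_{\gamma_{23}}^{-1}$ and simplified via the triangle relation for $(\gamma_{12},\gamma_{23},\gamma_{31})$, with the observation that all surviving inverses are of boundary curves.
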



\begin{remark} It is natural to conjecture that the relations  \eqref{eq:triangle relations upper surface} and \eqref{eq:exchange relations upper surface} are defining for ${\mathcal U}_\Sigma$.

\end{remark}

Noncommutative Laurent phenomenon \eqref{eq:noncomm schiffler surf} guarantees that  $\cU_\Sigma$ belongs to each subalgebra $\AA_\Delta\subset \AA_\Sigma$.

The following is an analogue of Conjecture \ref{conj:upper bound type A}.

\begin{conjecture} For each $n\ge 2$ one has:
\label{conj:upper bound surface}
\begin{equation}
\label{eq:ij_delta cup sharp surface} \cU_\Sigma= \bigcap_{\Delta}{\mathcal A}_\Delta \ ,
\end{equation}
where the intersection is over all triangulations $\Delta$ of $\Sigma$.

\end{conjecture}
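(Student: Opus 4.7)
The easy inclusion $\cU_\Sigma \subseteq \bigcap_\Delta \AA_\Delta$ follows immediately from three inputs already available: Theorem \ref{th:noncomlaurent surface} places every $x_\gamma$ in each $\AA_\Delta$; Theorem \ref{th:total anglle surface} makes every total angle $T_i$ triangulation-independent and hence a member of every $\AA_\Delta$; and the boundary edges of $\Sigma$ lie in every triangulation, so their inverses are universally available. The content of the conjecture is therefore the reverse inclusion — that every regular element admits an expression in edges, inverse boundary edges, and total angles.

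My plan for the hard direction is to first reduce to the polygon case (Conjecture \ref{conj:upper bound type A}) and then prove the polygon case by a pole-cancellation argument across flips. For the reduction, note that any marked surface $\Sigma$ with triangulation $\Delta$ is the image of a gluing morphism $f: P_n(h) \twoheadrightarrow \Sigma$; by Theorem \ref{th:functorial triangular} this induces $f_*:\AA_{P_n(h)} \to \AA_\Sigma$ carrying $\AA_{\tilde\Delta}$ onto $\AA_\Delta$ for the pullback triangulation $\tilde\Delta$. Using Theorem \ref{th:A_Delta sigma} to identify each $\AA_\Delta$ with $\QQ\TT_\Delta$ after localization, a regular element on $\Sigma$ lifts to a regular element upstairs; the image of $\cU_{P_n(h)}$ under $f_*$ clearly lands in $\cU_\Sigma$, so conditional on the polygon conjecture the surface case would follow after identifying $\ker f_*$ as being generated by relations that are themselves expressible in $\cU_\Sigma$.

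For the polygon case, I would fix the starlike triangulation $\Delta_1$ of \eqref{eq:starlike}. By Lemma \ref{le:starlike relation n-gon}, $\TT_{\Delta_1}$ is freely generated by the angles $\tau_j := T_1^{j,j+1}$ together with the boundary generators $t_{1,k},t_{k,1}$. An element $f \in \bigcap_\Delta \AA_\Delta$ lives in $\QQ\TT_{\Delta_1}$; I want to show $f$ is a noncommutative polynomial in the $\tau_j$ and all boundary edges. For each interior edge $(1,k)\in \Delta_1$, consider the flip $\Delta_1 \rightsquigarrow \Delta_1'$. The exchange relation \eqref{eq:exchange relations} expresses the flipped variable as $x_{1,k}^{-1}$ times a boundary combination, so membership in both $\AA_{\Delta_1}$ and $\AA_{\Delta_1'}$ forces the $x_{1,k}$-denominator of $f$ to cancel. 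Running over all $k$ and iterating gives that $f$ is a noncommutative polynomial in $x_{1,k}, x_{k,1}$ and boundary edges. The noncommutative angle relation $x_{1,k}^{-1}x_{1,k+1} = (\text{sum in }\tau_j\text{'s})$ from Corollary \ref{cor:angle is additive}, interpreted via the free factorization of Proposition \ref{pr:free action factorization}, converts remaining occurrences of the $x_{1,k}$ into expressions in $\tau_j$ and boundary edges, placing $f$ in $\cU_n$.

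The main obstacle is the pole-cancellation step. In the commutative setting one uses a $\ZZ$-valued degree in each cluster variable; but elements of $\QQ\TT_{\Delta_1}$ have many representations as sums of reduced words in a free group, and there is no canonical "degree in $x_{1,k}$." The key technical task is to construct a compatible noncommutative filtration in which cancellation of the flipped variable's denominator is a decidable condition; the natural candidate is the filtration by reduced-word length in the free product decomposition of Proposition \ref{pr:free action factorization} iterated along the spokes of $\Delta_1$. Even granting such a filtration, the cancellation argument appears to require the rigidity/UFD-type property of free group algebras discussed in Section \ref{subsec:rigidity=>injectivity} (i.e., Conjecture \ref{conj:rigid linear}), so I would expect the conjecture to be provable unconditionally only in tandem with that ambient ring-theoretic question; otherwise the argument yields the conjecture conditional on the same hypothesis already needed for Proposition \ref{pr:A_n divisible conj}.
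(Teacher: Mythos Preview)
This statement is a \emph{conjecture} in the paper, not a theorem; the paper offers no proof, only the easy inclusion $\cU_\Sigma\subseteq\bigcap_\Delta\AA_\Delta$ (which you correctly derive from Theorems \ref{th:noncomlaurent surface} and \ref{th:total anglle surface}). So there is no paper argument to compare against, and your proposal should be assessed as an attack on an open problem.

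On that score, the proposal has two genuine gaps beyond the ones you already flag. First, the reduction to polygons via a gluing morphism $f:P_n(h)\twoheadrightarrow\Sigma$ does not work as stated: the triangulations of $P_n(h)$ that arise as pullbacks $\tilde\Delta$ of triangulations of $\Sigma$ form a \emph{proper} subset of all triangulations of $P_n(h)$ (any triangulation of the polygon that separates two boundary edges destined to be glued has no counterpart downstairs). Hence a regular element of $\AA_\Sigma$ lifts only to something lying in $\bigcap_{\tilde\Delta}\AA_{\tilde\Delta}$ over that restricted family, which is in general strictly larger than $\bigcap_{\text{all }\Delta^0}\AA_{\Delta^0}$, and the polygon conjecture gives you nothing about this larger intersection. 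You would also need to control $\ker f_*$, which you mention but do not address; since $f_*$ identifies distinct free generators, this kernel is substantial.

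Second, even for the polygon case your pole-cancellation step is not an argument but a hope. You correctly observe that in $\QQ\TT_{\Delta_1}$ there is no canonical degree in an interior generator $t_{1,k}$; the free-product filtration you propose via Proposition \ref{pr:free action factorization} separates the $c_i$'s from ${\mathcal Q}_n$, not the individual spokes from one another, so it does not isolate a single $x_{1,k}$. Your own conclusion---that the argument is at best conditional on Conjecture \ref{conj:rigid linear}---is accurate, and in fact the dependence may be worse: self-similarity of the primes $T_1^{ij}$ controls injectivity of \eqref{eq:phi from A'}, but it is not clear it suffices to pin down which elements of $\QQ\TT_{\Delta_1}$ survive into every $\AA_\Delta$. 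What you have written is a reasonable research outline, not a proof.
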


We say that an element of $\AA_\Sigma$ is {\it regular} if it belongs to each subalgebra $\AA_\Delta$ as $\Delta$ runs over all triangulations  of $\Sigma$. Thus,
similarly to Section \ref{subsec:regular polygon}, Conjecture \ref{conj:upper bound surface}  asserts that regular elements of ${\mathcal A}_\Sigma$ belong to $\cU_\Sigma$.


%
%
%
%
%
%
%
%

\subsection{Noncommutative cohomology of  surfaces}

Given a surface $\Sigma$, for each triangle $(\gamma_1,\gamma_2,\gamma_3)$ in $\Sigma$ we define the element $\tau_{\gamma_1,\gamma_2,\gamma_3}\in \AA_\Sigma$ (in notation \eqref{eq:general angle}) by:
$$\tau_{\gamma_1,\gamma_2,\gamma_3}=T_{\gamma_1,\gamma_2,\gamma_3}+T_{\gamma_2,\gamma_3,\gamma_1}+T_{\gamma_3,\gamma_1,\gamma_2} \ .$$
That is, $\tau_{\gamma_1,\gamma_2,\gamma_3}$ is the sum of all noncommutative angles of the triangle $(\gamma_1,\gamma_2,\gamma_3)$.

Then define the algebra ${\mathcal H}(\Sigma)$ to be the quotient of $\AA_\Sigma$ by the ideal generated by all $\tau_{(\gamma_1,\gamma_2,\gamma_3)}-\tau_{(\gamma'_1,\gamma'_2,\gamma'_3)}$ as $(\gamma_1,\gamma_2,\gamma_3)$ and $(\gamma'_1,\gamma'_2,\gamma'_3)$ run independently over all triangles of $\Sigma$.
We refer to ${\mathcal H}(\Sigma)$ as the {\it noncommutative cohomology} of $\Sigma$.

This notation is justified by the following construction.

Fix a triangulation $\Delta$ of $\Sigma$. For each loop $\theta$ in $\Sigma$  which does not pass through marked points, define the element
$[\theta]'_\Delta \in \AA_\Delta$ by:
$$[\theta]'_\Delta=\sum \varepsilon_{\gamma_1,\gamma_2,\gamma_3}(\theta) \cdot T_{\gamma_1,\gamma_2,\gamma_3}\ ,$$
the summation is over all clockwise triangles $(\gamma_1,\gamma_2,\gamma_3)$ in $\Delta$ such that $\theta$ intersects $\gamma_1$ and $\gamma_2$ (but not $\gamma_3$) and
$\varepsilon_{\gamma_1,\gamma_2,\gamma_3}(\theta):=\begin{cases}
1 & \text{if $\gamma_3$ is to the right of $\theta$}\\
-1 & \text{if $\gamma_3$ is to the left of $\theta$}\\
\end{cases}$.

Note that if $\theta=\theta_i$ is a (small) clockwise loop around a puncture $i\in I$, then $[\theta]'_\Delta=T_i^\Delta$, the total angle at $i$ (defined in \eqref{eq:total angle}).

Furthermore, define $[\theta]_\Delta\in {\mathcal H}(\Sigma)$ by
$$[\theta]_\Delta:= \pi(\ii_\Delta([\theta]'_\Delta))\ ,$$
where $\ii_\Delta$ is the homomorphism $\QQ \TT_\Delta\rightarrow \AA_\Sigma$ given by \eqref{eq:i_Delta surface} and $\pi:\AA_\Sigma\to {\mathcal H}(\Sigma)$ is the canonical epimorphism.

The following immediate result is an analogue of Theorem \ref{th:total anglle surface}.

\begin{theorem}
Given a loop on $\Sigma$ not passing through marked points, then for any triangulations $\Delta$ and $\Delta'$ of $\Sigma$ one has:
$$[\theta]_{\Delta'}=[\theta]_\Delta \ .$$

\end{theorem}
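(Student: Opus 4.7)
The plan is to reduce the statement to invariance under a single flip of a triangulation diagonal. Since any two triangulations of $\Sigma$ are related by a finite sequence of such flips (a fact already used in the proof of Theorem \ref{th:A_Delta sigma}), it suffices to show that $[\theta]_\Delta = [\theta]_{\Delta'}$ when $\Delta$ and $\Delta'$ differ exactly by the flip of one diagonal of a quadrilateral $Q=(v_1,v_2,v_3,v_4)$ in $\Sigma$, say from $\gamma_{13}$ in $\Delta$ to $\gamma_{24}$ in $\Delta'$, all other curves coinciding. The difference $[\theta]'_\Delta - [\theta]'_{\Delta'}$ is then localized on the four triangles incident to the flipped diagonal: $(v_1,v_2,v_3)$, $(v_1,v_3,v_4)$ on the $\Delta$-side, and $(v_1,v_2,v_4)$, $(v_2,v_3,v_4)$ on the $\Delta'$-side.

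Next, I would perform a case analysis on how $\theta$ interacts with $Q$: if $\theta$ misses $Q$ entirely there is nothing to prove; otherwise each traversal of $\theta$ through $Q$ enters and exits through either two adjacent sides of $Q$, two opposite sides, or a side together with the (old or new) diagonal. In each sub-case I would expand the two local contributions using \eqref{eq:general angle} and then apply the additivity of noncommutative angles
\[
T_i^{j\ell} = T_i^{jk} + T_i^{k\ell}
\]
for the cyclic quadruple $(v_1,v_2,v_3,v_4)$; this additivity, proved for polygons in Proposition \ref{pr:sector plus angle}(ii) and recorded in Corollary \ref{cor:angle is additive}, extends verbatim to any quadrilateral in $\Sigma$ by combining the exchange relations \eqref{eq:exchange relations sigma} with the triangle relations \eqref{eq:triangle relations sigma}. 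After applying additivity at the relevant vertices of $Q$, the difference can be written as an integer linear combination of triangle sums
\[
\tau_{\gamma_1,\gamma_2,\gamma_3}=T_{\gamma_1,\gamma_2,\gamma_3}+T_{\gamma_2,\gamma_3,\gamma_1}+T_{\gamma_3,\gamma_1,\gamma_2}
\]
attached to the four triangles above, minus similar copies of $\tau$ attached to triangles of $\Delta'$. By the very definition of $\mathcal{H}(\Sigma)$ all such $\tau$'s are identified under the canonical projection $\pi:\AA_\Sigma\twoheadrightarrow \mathcal{H}(\Sigma)$, so the difference lies in $\ker \pi$. This yields $[\theta]_\Delta=[\theta]_{\Delta'}$ for a single flip, and iterating along any flip sequence proves the general claim.

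The main technical obstacle is the sign bookkeeping: correctly tracking the factors $\varepsilon_{\gamma_1,\gamma_2,\gamma_3}(\theta)\in \{\pm 1\}$ (which record on which side of $\theta$ the uncrossed edge $\gamma_3$ lies) and the clockwise orderings $(\gamma_1,\gamma_2,\gamma_3)$ on each triangle as the triangulation is altered; the geometry of each sub-case must be drawn carefully to assign the correct signs. A secondary subtlety is that $\theta$ may traverse $Q$ multiple times, but since $[\theta]'_\Delta$ is a sum of local contributions from individual triangles, distinct traversals are independent and each falls under the same local analysis. As a consistency check, the special case in which $\theta$ is a small loop around a puncture $i$ reduces to $[\theta]_\Delta = T_i^\Delta$, whose triangulation-independence already holds in $\AA_\Sigma$ by Theorem \ref{th:total anglle surface}; the present theorem is the analogous invariance for arbitrary loops, holding in the quotient $\mathcal{H}(\Sigma)$.
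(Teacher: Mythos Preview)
The paper declares this result ``immediate'' and gives no argument; your flip-reduction followed by a local case analysis on the flipped quadrilateral, using angle additivity together with the defining relations of $\mathcal{H}(\Sigma)$, is exactly the verification it leaves implicit, and it is correct. For the record, when $\theta$ traverses the quadrilateral through two adjacent sides the difference already vanishes in $\AA_\Sigma$ by additivity alone, while a traversal through opposite sides produces a difference equal to a single $\tau_{T}-\tau_{T'}$ (one $\Delta$-triangle minus one $\Delta'$-triangle), so your claim that the discrepancy is an integer combination of $\tau$-differences and hence lies in $\ker\pi$ goes through.
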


This allows us to define a noncommutative loop $[\theta]\in {\mathcal H}(\Sigma)$ by $[\theta]:=[\theta]_\Delta$ for any triangulation $\Delta$ of $\Sigma$.

\subsection{Proof of Theorems \ref{th:category surf}, \ref{th:I_s-homotopy equivalence}, \ref{th:functorial triangular}, \ref{th:TDeltaDelta' surface}, and \ref{th:noncomlaurent surface}}
\label{subsect:canonical covers}
${~}$

\noindent {\bf Proof of Theorem \ref{th:category surf}}.
Clearly, the composition $f'\circ f:\underline \Sigma\to \underline \Sigma''$ is a continuous map with finite fibers. Also,
$$(f'\circ f)^{-1}(I(\Sigma''))=f^{-1}({f'}^{-1}(I(\Sigma'')))=f^{-1}(I(\Sigma''))=I(\Sigma)\ ,$$
$$(f'\circ f)(I_s(\Sigma))=f'(f(I_s(\Sigma))\subset  f'(I_s(\Sigma'))\subset I_s(\Sigma'') \ .$$
This verifies the first requirement of Definition \ref{def:proper map} for $f'\circ f$.

Furthermore, prove that $I^{f'\circ f}=I^f\sqcup f^{-1}(I^{f'})$. Indeed,
$$I^{f'\circ f}=(f'\circ f)^{-1}(I_s(\Sigma''))\setminus I_s(\Sigma)=f^{-1}({f'}^{-1}(I_s(\Sigma''))\setminus I_s(\Sigma)$$
$$=
f^{-1}(I_s(\Sigma')\sqcup I^{f'})\setminus I_s(\Sigma)=(f^{-1}(I_s(\Sigma'))\sqcup f^{-1}(I^{f'}))\setminus I_s(\Sigma)
=I^f\sqcup f^{-1}(I^{f'})$$
since ${f'}^{-1}(I_s(\Sigma''))=I_s(\Sigma')\sqcup I^{f'}$, $f^{-1}(I_s(\Sigma'))= f^{-1}(I_s(\Sigma'))\setminus I^f$,
$f^{-1}(I^{f'})\cap I_s(\Sigma)=\emptyset$, and $f^{-1}(A\sqcup B)=f^{-1}(A)\sqcup f^{-1}(B)$ for any disjoint subsets $A$ and $B$ of
$\underline \Sigma'$.

Let now $p\in \underline \Sigma \setminus I^{f'\circ f}$. By above, this is equivalent to that
$p\in \underline \Sigma \setminus I^f$ and $f(p)\in \underline \Sigma' \setminus I^{f'}$.
Hence there is a neighborhood ${\mathcal O}_p$ of $p$ in $\underline \Sigma$ (${\mathcal O}_p$ is a half-neighborhood if $p\in \partial \underline \Sigma$) such that the restriction of
$f$ to ${\mathcal O}_p$ is injective and a (half-)neighborhood ${\mathcal O}_{f(p)}$ of $f(p)$ in $\underline \Sigma'$ such that the restriction of
$f'$ to ${\mathcal O}_{f(p)}$ is injective.  In particular, ${\mathcal O}_p':=f^{-1}({\mathcal O}_{f(p)})$ is a neighborhood of $p$ in $\underline \Sigma$ and the restriction of $f'\circ f$ to ${\mathcal O}_p'$ is injective. This verifies the second requirement of Definition \ref{def:proper map} for $f'\circ f$.

Let now $p\in I^{f'\circ f}$. By above, this is equivalent to that either $p\in I^f$ or $f(p)\in I^{f'}$.

In the first case, clearly, $f(p)\in \underline \Sigma'\setminus I^{f'}$, therefore
there is a neighborhood ${\mathcal O}_{f(p)}$ of $f(p)$ in $\underline \Sigma'$ such that the restriction
of $f'$ to ${\mathcal O}_{f(p)}$ is injective and
a neighborhood ${\mathcal U}_p$ of $p$ in $\underline \Sigma$ such that the restriction of $f$ to ${\mathcal U}_p$ is
a two-fold cover of the neighborhood ${\mathcal O}'_p=f({\mathcal U}_p)$ ramified at $f(p)$. Therefore, the restriction of $f$ to
the neighborhood ${\mathcal U}'_p=f^{-1}({\mathcal O}_p\cap {\mathcal O}'_p)$  is a two-fold cover of
${\mathcal O}_p\cap {\mathcal O}'_p$ ramified at $f(p)$ and the restriction of $f'$ to ${\mathcal O}_p\cap {\mathcal O}'_p$ is injective.
Thus, the restriction of $f'\circ f$ to  ${\mathcal U}'_p$  is a two-fold cover of  $f({\mathcal O}_p\cap {\mathcal O}'_p)$ ramified at $(f'\circ f)(p)$.

In the second case, clearly, $p\in \underline \Sigma \setminus I^f$, therefore
there is a neighborhood ${\mathcal O}_{p}$ of $p$ in $\underline \Sigma$ such that the restriction
of $f$ to ${\mathcal O}_p$ is injective and
a neighborhood ${\mathcal U}_{f(p)}$ of $f(p)$ in $\underline \Sigma'$ such that the restriction of $f'$ to ${\mathcal U}_{f(p)}$ is
a two-fold cover of the neighborhood ${\mathcal O}_{f'(f(p))}=f({\mathcal U}_p)$ ramified at $f(f'(p))$. Therefore, the restriction of $f'$ to
the neighborhood ${\mathcal U}'_{f(p)}=f({\mathcal O}_p)\cap {\mathcal U}_{f(p)}$  is a two-fold cover of
$f'({\mathcal U}'_{f(p)})$ ramified at $f'(f(p))$ and the restriction of $f$ to ${\mathcal O}'_p=f^{-1}({\mathcal U}'_{f(p)})$ is injective.
Thus, the restriction of $f'\circ f$ to  ${\mathcal O}'_p$  is a two-fold cover of  $f'({\mathcal U}'_{f(p)})$ ramified at $(f'\circ f)(p)$.

This verifies the last requirement of Definition \ref{def:proper map} for $f'\circ f$.

The theorem is proved. \endproof

\noindent {\bf Proof of Theorem \ref{th:I_s-homotopy equivalence}}. Without loss of generality, it suffices to prove the first assertion in the case when $C\subset C'$ and
$C'\setminus C$ is a single loop around $i\in I^f$ not enclosing any points  $I(\Sigma)\cup I_s(\Sigma)\cup I^f\setminus\{i\}$ (where we regard $C$ and $C'$ as subsets of $\underline \Sigma$).  Moreover, it suffices to  take
$C=\{p\}$ for some $p\in \underline \Sigma$, $p\ne i$, so that $C'$ is a simple loop at $p$ around $i$
(e.g., $C'$ is contractible to $p$ in $\underline \Sigma\setminus I(\Sigma)$).

By definition, there is a neighborhood ${\mathcal U}_p$ of $p$ such that the restriction of $f$ to ${\mathcal U}_p$ is a two-fold cover of
$f({\mathcal U}_p)$ ramified at $f(p)$. Once again, without loss of generality, we may assume that $C'$ intersects ${\mathcal U}_p$ and there exist exactly two distinct points $p',p''\in C$ such that $f(p')=f(p'')$. This implies that $f(C')\subset \underline \Sigma'$
is a (self-intersecting) loop at $f(p)$ with a single self-intersection point $f(p')=f(p'')$.
If we denote by $\gamma'$ the equivalence class of $f(C')$ in $\underline \Sigma'\setminus (I(\Sigma)\cup I_s(\Sigma))\cup\{f(p)\}$,
then, clearly, $[\gamma']_i$ is trivial.

This proves (a).

Parts (b), (c) and (d) follow.

The theorem is proved.
\endproof

\noindent{\bf Proof of Theorem \ref{th:functorial triangular}}.
We need the following fact.

\begin{lemma} In the notation of Theorem \ref{th:I_s-homotopy equivalence}, for any polygon  $P=(\gamma_1,\ldots,\gamma_n)$ in $\Sigma$ the tuple $f(P)=(f(\gamma_1),\ldots,f(\gamma_n))$
is a polygon in $\Sigma'$.

\end{lemma}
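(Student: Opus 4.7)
The plan is to reduce the statement to functoriality of composition in the category {\bf Surf} together with functoriality of the action of morphisms on multi-groupoids of curves. Given the polygon $P=(\gamma_1,\ldots,\gamma_n)$ in $\Sigma$, Definition \ref{def:ngon} provides an accompanying morphism $g:P_n\to \Sigma$ in {\bf Surf} such that $g(i,i^+)=\gamma_i$ for all $i\in[n]$.

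The first step is to form the composition $f\circ g:P_n\to \Sigma'$ and observe that by Theorem \ref{th:category surf} (closure of {\bf Surf} under composition) this is again a morphism of marked surfaces. The second step is to compute $(f\circ g)(i,i^+)$ using Theorem \ref{th:I_s-homotopy equivalence}(d), which asserts that the assignment $\Sigma\mapsto [\Gamma(\Sigma)]$ is a functor into multi-groupoids; in particular the induced maps on curve classes compose correctly, giving
\[
(f\circ g)(i,i^+)=f(g(i,i^+))=f(\gamma_i)
\]
for each $i\in[n]$. Thus $f\circ g$ is an accompanying morphism in the sense of Definition \ref{def:ngon} for the tuple $f(P)=(f(\gamma_1),\ldots,f(\gamma_n))$, which is therefore an $n$-gon in $\Sigma'$.

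There is essentially no obstacle here: the content of the lemma is packaged into the two auxiliary statements already established, namely Theorem \ref{th:category surf} (composition of morphisms of marked surfaces is a morphism, which in particular handles the behavior at special punctures in $I^{f\circ g}=I^g\sqcup g^{-1}(I^f)$) and Theorem \ref{th:I_s-homotopy equivalence} (functoriality of $[\Gamma(-)]$, which ensures that the values $f(\gamma_i)$ are well-defined classes in $[\Gamma(\Sigma')]$ and that they arise from the composite morphism on the nose). The only point requiring minor care is that the accompanying morphism in Definition \ref{def:ngon} is determined only up to a choice of generic representatives $C_i\in\gamma_i$; but since $f\circ g$ already yields a specific morphism realizing the tuple $f(P)$, no additional homotopy argument is needed.
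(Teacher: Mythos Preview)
Your proposal is correct and follows essentially the same approach as the paper: both take an accompanying morphism $g:P_n\to\Sigma$ for $P$, form the composite $f\circ g$, observe it is a morphism in {\bf Surf}, and check that $(f\circ g)(i,i^+)=f(\gamma_i)$, so $f\circ g$ accompanies $f(P)$. The paper's version is more terse, but the argument is identical.
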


\begin{proof} Indeed, let $P=(\gamma_1,\ldots,\gamma_n)$ be a polygon in $\Sigma$ and let $g:P_n\to \Sigma$ be an accompanying morpism. Then $g'=f\circ g$ is a morphism $P_n\to
\Sigma'$ in {\bf Surf} such that $g'(i,i^+)=f(\gamma_i)$ for $i\in [n]$, i.e., $f(P)$ is an $n$-gon in $\Sigma')$.
%

The lemma is proved.
\end{proof}



Thus the  triangle relations in $\TT_\Sigma$ are carried by $f_\star$ to those in $\TT_{\Sigma'}$. This proves the assertion for groups.

Likewise, the triangle and exchange relations in $\AA_\Sigma$ are carried by $f_*$ to those in $\AA_{\Sigma'}$.
This proves the assertion for algebras. The commutativity of the diagram \eqref{eq:TA commutative} follows.

The theorem is proved.
\endproof

\noindent {\bf Proof of Theorem \ref{th:TDeltaDelta' surface}}. It suffices to prove the assertion only for neighboring triangulations $\Delta$ and $\Delta'$, i.e., for a quadrilateral
$(\gamma_1,\gamma_2,\gamma_3,\gamma_4)$ in $\Delta$ such that
$\Delta\setminus \Delta'=\{\gamma_{13},\gamma_{31}\}$ and $\Delta'\setminus \Delta=\{\gamma_{24},\gamma_{42}\}$.

The following result is obvious.

\begin{lemma}
\label{le:triangular noncanonical isomorphisms1}
In the notation as above, the assignment
$t_{\gamma}\mapsto
\begin{cases}
t_{\gamma_{12}}t_{\gamma_{42}}^{-1} t_{\gamma_{43}} & \text{if $\gamma=\gamma_{13}$}\\
t_{\gamma_{34}}t_{\gamma_{24}}^{-1} t_{\gamma_{21}} & \text{if $\gamma=\gamma_{31}$}\\
t_\gamma & \text{otherwise}
\end{cases}
$
for $i,j\in [4]$, $i\ne j$, defines an isomorphism $\varphi_{\Delta,\Delta'}:\TT_{\Delta}\widetilde \to \TT_{\Delta'}$.
\end{lemma}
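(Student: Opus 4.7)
The plan is to mimic the proof strategy of Lemma \ref{le:neighboring homomorphism} in the polygonal setting, now at the group level rather than the algebra level. Concretely, I want to verify that the assignment on generators respects the defining triangle relations of $\TT_\Delta$, then exhibit an inverse by the symmetric construction.

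\medskip

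\noindent\textbf{Step 1: Well-definedness.} Let $\AA_\Delta$ denote the free group on the generators $\{t_\gamma : \gamma \in \Delta\}$ modulo the triangle relations of $\Delta$. The only defining relations of $\TT_\Delta$ that involve the diagonals $\gamma_{13}$ or $\gamma_{31}$ are the triangle relations coming from the two triangles of $\Delta$ supported on the quadrilateral, namely $(\gamma_{12}, \gamma_{23}, \gamma_{31})$ and $(\gamma_{34}, \gamma_{41}, \gamma_{13})$. Triangle relations coming from triangles lying entirely in $\Delta \cap \Delta'$ are preserved on the nose since all their generators are fixed. So the verification reduces to showing that, after substituting $t_{\gamma_{13}} \mapsto t_{\gamma_{12}}t_{\gamma_{42}}^{-1}t_{\gamma_{43}}$ and $t_{\gamma_{31}} \mapsto t_{\gamma_{34}}t_{\gamma_{24}}^{-1}t_{\gamma_{21}}$, the two remaining triangle identities become consequences of the triangle relations in $\TT_{\Delta'}$ coming from the new triangles $(\gamma_{12}, \gamma_{24}, \gamma_{41})$ and $(\gamma_{23}, \gamma_{34}, \gamma_{42})$ of $\Delta'$. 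This is a direct rewriting exercise: each of the two relations in $\TT_\Delta$ becomes an identity expressible using only these two new triangle relations in $\TT_{\Delta'}$ together with unchanged generators.

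\medskip

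\noindent\textbf{Step 2: Construction of the inverse.} By symmetry (swap the roles of $\Delta$ and $\Delta'$, and of the diagonal pairs $(\gamma_{13},\gamma_{31})$ and $(\gamma_{24},\gamma_{42})$), the same construction defines a candidate homomorphism
$$\varphi_{\Delta',\Delta}: \TT_{\Delta'} \to \TT_\Delta, \qquad t_{\gamma_{24}} \mapsto t_{\gamma_{23}} t_{\gamma_{13}}^{-1} t_{\gamma_{14}}, \quad t_{\gamma_{42}} \mapsto t_{\gamma_{41}} t_{\gamma_{31}}^{-1} t_{\gamma_{32}},$$
and fixing the remaining generators. Well-definedness is proved identically to Step 1.

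\medskip

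\noindent\textbf{Step 3: Mutual inverses and preservation of $\YY$.} To see that $\varphi_{\Delta',\Delta} \circ \varphi_{\Delta,\Delta'} = \mathrm{Id}_{\TT_\Delta}$, it suffices to check the composition on $t_{\gamma_{13}}$ and $t_{\gamma_{31}}$. For $t_{\gamma_{13}}$, the composition produces
$$t_{\gamma_{12}} (t_{\gamma_{41}} t_{\gamma_{31}}^{-1} t_{\gamma_{32}})^{-1} t_{\gamma_{43}} = t_{\gamma_{12}} t_{\gamma_{32}}^{-1} t_{\gamma_{31}} t_{\gamma_{41}}^{-1} t_{\gamma_{43}},$$
and applying the triangle relations of $\TT_\Delta$ for $(\gamma_{12},\gamma_{23},\gamma_{31})$ and $(\gamma_{34},\gamma_{41},\gamma_{13})$ collapses this to $t_{\gamma_{13}}$; the case of $t_{\gamma_{31}}$ is analogous. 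Finally, preservation of $\YY$ is immediate from the definition $y_{\gamma,\gamma'} = t_{\overline\gamma}^{-1} t_{\gamma'}$: the substitution produces products of generators of $\YY_{\Delta'}$, and by symmetry the inverse does the same.

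\medskip

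The main obstacle is the telescoping calculation in Step 1, where the triangle relation for, say, $(\gamma_{12},\gamma_{23},\gamma_{31})$ in $\TT_\Delta$ must be deduced from the triangle relations of the two $\Delta'$-triangles on the quadrilateral; one has to keep careful track of orientations (the $\overline{\cdot}$ involution) since now every generator distinguishes a direction, unlike the symmetric $x_{ij}=x_{ji}$ commutative setting. The same calculation, viewed appropriately, is exactly what underlies \eqref{eq:TDelta equal} in the polygon proof, so it does go through.
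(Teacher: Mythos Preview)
Your overall strategy is sound and matches what the paper implicitly has in mind (the paper simply declares the lemma ``obvious''), but your Step~3 contains a genuine error: the map $\varphi_{\Delta',\Delta}$ you write down is \emph{not} the inverse of $\varphi_{\Delta,\Delta'}$.

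Concretely, with your choice $t_{\gamma_{42}}\mapsto t_{\gamma_{41}}t_{\gamma_{31}}^{-1}t_{\gamma_{32}}$, the composition applied to $t_{\gamma_{13}}$ yields
\[
t_{\gamma_{12}}t_{\gamma_{32}}^{-1}t_{\gamma_{31}}t_{\gamma_{41}}^{-1}t_{\gamma_{43}}.
\]
Using the triangle relation for $(1,2,3)$ in the form $t_{\gamma_{12}}t_{\gamma_{32}}^{-1}t_{\gamma_{31}}=t_{\gamma_{13}}t_{\gamma_{23}}^{-1}t_{\gamma_{21}}$, this becomes $t_{\gamma_{13}}t_{\gamma_{23}}^{-1}t_{\gamma_{21}}t_{\gamma_{41}}^{-1}t_{\gamma_{43}}$, and now you would need $t_{\gamma_{23}}^{-1}t_{\gamma_{21}}t_{\gamma_{41}}^{-1}t_{\gamma_{43}}=1$, i.e.\ a relation among the four \emph{boundary} edges of the quadrilateral with no diagonal present. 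No such relation holds in $\TT_\Delta$. Your ``by symmetry'' heuristic (cyclically shifting the labels $1\to2\to3\to4\to1$) produces a perfectly good homomorphism $\TT_{\Delta'}\to\TT_\Delta$, but a different one from the inverse.

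The fix is to use the \emph{other} Ptolemy monomial for the inverse:
\[
\varphi_{\Delta',\Delta}:\quad t_{\gamma_{24}}\mapsto t_{\gamma_{21}}t_{\gamma_{31}}^{-1}t_{\gamma_{34}},\qquad t_{\gamma_{42}}\mapsto t_{\gamma_{43}}t_{\gamma_{13}}^{-1}t_{\gamma_{12}}.
\]
Then $\varphi_{\Delta',\Delta}(t_{\gamma_{12}}t_{\gamma_{42}}^{-1}t_{\gamma_{43}})=t_{\gamma_{12}}\cdot t_{\gamma_{12}}^{-1}t_{\gamma_{13}}t_{\gamma_{43}}^{-1}\cdot t_{\gamma_{43}}=t_{\gamma_{13}}$ on the nose, with no triangle relations needed. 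Well-definedness of this corrected $\varphi_{\Delta',\Delta}$ is checked exactly as in your Step~1 (the images of the two $\Delta'$-triangle relations telescope to the two $\TT_\Delta$-triangle relations), and the other composition is equally immediate. With this correction your argument is complete. (Your remark on $\YY_\Delta$ belongs to the proof of Theorem~\ref{th:TDeltaDelta' surface} rather than to the lemma itself, but it is correct.)
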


%
%

The second assertion follows immediately because one has for $\gamma,\gamma'\in \Delta$, $\gamma'\notin \{\gamma_{13},\gamma_{31}\}$:
$$f_{\Delta,\Delta'}(y_{\gamma,\gamma'})=\begin{cases}
t_{\gamma_{21}}^{-1}t_{\gamma_{24}} t_{\gamma_{34}}^{-1}t_{\gamma'} & \text{if $\gamma=\gamma_{13}$}\\
t_{\gamma_{43}}^{-1}t_{\gamma_{42}} t_{\gamma_{12}}^{-1}t_{\gamma'} & \text{if $\gamma=\gamma_{31}$}\\
t_\gamma^{-1} t_{\gamma'} & \text{otherwise}
\end{cases}=\begin{cases}
y_{\gamma_{12},\gamma_{24}}y_{\gamma_{43},\gamma'} & \text{if $\gamma=\gamma_{13}$}\\
y_{\gamma_{34},\gamma_{42}}y_{\gamma_{21},\gamma'} & \text{if $\gamma=\gamma_{31}$}\\
y_{\gamma,\gamma'} & \text{otherwise}
\end{cases}\in \YY_{\Delta'}\ .$$

This proves the theorem.
\endproof

\noindent {\bf Proof of Theorem \ref{th:noncomlaurent surface}}. Indeed, let $f:P_n\to \Sigma$ be an accompanying map for the canonical polygon $P_\Delta(\gamma)=(\gamma_1,\ldots,\gamma_n)$. Then, by Theorem \ref{th:functorial triangular}, the assignments $x_{ij}\mapsto x_{\gamma_{ij}}$
define an algebra homomorphism $f_*:\AA_n\to \AA_\Sigma$, where $\AA_n=\AA_{P_n}$ is the noncommutative $n$-gon as in Section \ref{subsect:Noncommutative Laurent Phenomenon}. Applying $f_\ast$ to \eqref{eq:noncomlaurent n-gon} with $i=1$ yields \eqref{eq:noncomm schiffler surf}.

The theorem is proved. \endproof

\subsection{Noncommutative triangle groups and proof of Theorem \ref{th:delta 1-relator}}

\label{subsec:proof of theorem th:delta 1-relator}
We need the following immediate result.

\begin{lemma}
\label{le:n-gon cover of Sigma}
For any marked surface $\Sigma$ there is $n\ge 1$, a subset $S\subset  [n]$, an injective map $\sigma:S\to [n]\setminus S$, and a function $\varepsilon:S\to \{-,+\}$ such that   $\Sigma$ is obtained from  $P_n(h)$, $h=|I_s(\Sigma)|$ by gluing the chord $(i,i^+)$ to the chord
$\begin{cases}
 (\sigma(i)^+,\sigma(i))  & \text{if $\varepsilon(i)=+$}\\
 (\sigma(i),\sigma(i)^+)  & \text{if $\varepsilon(i)=-$}\\
\end{cases}
$ for all $i\in S$.

\end{lemma}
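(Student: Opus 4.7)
The plan is to construct the polygonal cover directly from a triangulation by cutting along a spanning tree of the dual graph. First, I would pick any triangulation $\Delta$ of $\Sigma$, which exists and is finite by the discussion in Section \ref{subsec:triangulations}; by convention $\Delta$ contains, for each special puncture, a special loop $\lambda_{ij}$ enclosing it, so that the pieces cut out of $\Sigma$ by $\Delta$ are exactly triangles together with $h = |I_s(\Sigma)|$ copies of $P_1(1)$ (one around each special puncture). Form the dual graph $G(\Delta)$: its vertices are these pieces, its edges are the internal (non-boundary-of-$\Sigma$) edges of $\Delta$.

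Next, I would choose a spanning tree $T \subset G(\Delta)$ and cut $\Sigma$ open along every internal edge of $\Delta$ whose dual edge does \emph{not} lie in $T$. Because $T$ spans all pieces, the resulting surface $\widetilde{\Sigma}$ is connected; because $T$ is a tree, no nontrivial loop in $\widetilde{\Sigma}$ survives, so $\widetilde{\Sigma}$ is a disk. The $h$ special punctures were interior to the $P_1(1)$-pieces, which were not themselves cut apart, so $\widetilde{\Sigma}$ naturally carries $h$ special punctures in its interior; traversing $\partial \widetilde{\Sigma}$ clockwise and labeling the vertices $1, 2, \ldots, n$ in order identifies $\widetilde{\Sigma}$ with $P_n(h)$.

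Then, I would read off the gluing data. Each cut edge of $\Delta$ contributes exactly two boundary arcs of $\widetilde{\Sigma}$, giving a pairing of a subset of the chords $(i, i^+)$. Let $S \subset [n]$ index the \emph{first} occurrence of each such paired arc during the clockwise traversal, and let $\sigma(i) \in [n] \setminus S$ index its partner. The sign $\varepsilon(i)$ records the relative orientation with which the two copies reappear on $\partial \widetilde{\Sigma}$: we set $\varepsilon(i)=+$ when the identification in $\Sigma$ reverses the clockwise direction around $\partial \widetilde{\Sigma}$ (so $(i,i^+)$ is glued to $(\sigma(i)^+, \sigma(i))$) and $\varepsilon(i)=-$ in the opposite case. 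Unpaired indices correspond either to arcs of $\partial \Sigma$ or to trivial loop traversals, and do not enter $S$ or $\sigma(S)$.

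The main obstacle I anticipate is purely bookkeeping rather than conceptual: one must verify that the polygon's vertices labeled by $[n]$ faithfully realize the endpoints of the cut edges (so that the endpoints of $(i,i^+)$ and its partner really match up in $\Sigma$), and that each marked point of $\Sigma$ is correctly recovered after gluing, possibly as the identification of several vertices of $P_n(h)$. Orientability of $\Sigma$ dictates whether one can take all $\varepsilon(i) = +$ (orientable case) or whether some must equal $-$ to account for orientation-reversing identifications; either way, the construction above produces the required data $(n, S, \sigma, \varepsilon)$.
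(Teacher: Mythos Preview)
Your argument is correct and is the standard way to prove this classical fact. The paper itself offers no proof at all: it introduces the lemma as ``the following immediate result'' and moves on, treating the polygonal presentation of a marked surface as well-known. Your spanning-tree-of-the-dual-graph construction is exactly the right way to make this precise, and your observations that (i) each $P_1(1)$ piece has degree~$1$ in the dual graph and hence is never detached, and (ii) the edges around any ordinary puncture form a dual cycle so at least one of them is cut, are the two points that genuinely need checking. One small clean-up: the phrase ``or to trivial loop traversals'' is unnecessary---every unpaired boundary arc of $\widetilde{\Sigma}$ is simply an arc of $\partial\Sigma$, so $[n]\setminus(S\cup\sigma(S))$ indexes exactly the boundary edges of $\Sigma$.
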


%

\begin{remark} Clearly, for any $n\ge 2$ and any pair $(\sigma,\varepsilon)$ as in Lemma \ref{le:n-gon cover of Sigma}, there is a marked surface $\Sigma_{\sigma,\varepsilon}$ obtained from $P_n(h)$ by such a gluing procedure.

\end{remark}

%

%

The following is an obvious version of Theorem \ref{th:functorial triangular}.

\begin{lemma}
\label{le:functorial triangular}
Let $f:\Sigma\to \Sigma'$ be as in Theorem \ref{th:functorial triangular} and let $\Delta$ and $\Delta'$ be triangulations of $\Sigma$ and $\Sigma'$ respectively such that $f(\Delta)\subset \Delta'$.
Then the assignments $t_{\gamma}\mapsto t_{f(\gamma)}$ for $\gamma\in \Delta$ define homomorphism of groups
$f_\star:\TT_{\Delta}\to \TT_{\Delta'}$.
\end{lemma}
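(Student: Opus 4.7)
The plan is to verify that the assignment $t_\gamma \mapsto t_{f(\gamma)}$ respects the defining triangle relations of $\TT_\Delta$, which are precisely those triangle relations in $\TT_\Sigma$ indexed by triangles $(\gamma_1,\gamma_2,\gamma_3)$ with all $\gamma_i \in \Delta$. First, I would observe that for any such triangle in $\Delta$, if $g:P_3\to \Sigma$ is an accompanying morphism in ${\bf Surf}$, then $f\circ g: P_3\to \Sigma'$ is again a morphism in ${\bf Surf}$ (by Theorem \ref{th:category surf}), and hence $(f(\gamma_1),f(\gamma_2),f(\gamma_3))$ is a triangle in $\Sigma'$. Since the hypothesis $f(\Delta)\subset \Delta'$ forces each $f(\gamma_i)\in \Delta'$, this triangle is in fact a triangle in $\Delta'$, so the corresponding relation
\[
t_{f(\gamma_1)}t_{\overline{f(\gamma_2)}}^{-1}t_{f(\gamma_3)} = t_{\overline{f(\gamma_3)}}t_{f(\gamma_2)}^{-1}t_{\overline{f(\gamma_1)}}
\]
holds in $\TT_{\Delta'}$ (using that $\overline{f(\gamma)} = f(\overline \gamma)$, which follows from $f$ being orientation-preserving on curves by Theorem \ref{th:I_s-homotopy equivalence}).

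Second, I would conclude that the assignments $t_\gamma\mapsto t_{f(\gamma)}$, $\gamma\in \Delta$, extend to a well-defined group homomorphism $f_\star:\TT_\Delta\to \TT_{\Delta'}$. Alternatively, and perhaps more cleanly, one can factor this through the already-established functoriality: by Theorem \ref{th:functorial triangular} there is a group homomorphism $f_\star:\TT_\Sigma\to \TT_{\Sigma'}$ sending $t_\gamma$ to $t_{f(\gamma)}$, and composing with the natural map $\TT_\Delta\to \TT_\Sigma$ (coming from the inclusion $\Delta\hookrightarrow [\Gamma(\Sigma)]$) followed by restriction to the subgroup generated by $\{t_{f(\gamma)}:\gamma\in \Delta\}\subset \TT_{\Sigma'}$ yields the desired map. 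One must then check that this composition lands in $\TT_{\Delta'}$, which is immediate because $f(\Delta)\subset \Delta'$.

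The only genuinely non-trivial point — and hence the main obstacle — is confirming that triangles in $\Delta$ are mapped to triangles in $\Delta'$ (as opposed to mere triangles in $\Sigma'$ whose sides happen to lie in $\Delta'$). This is essentially automatic from the definitions, but requires the observation that being a triangle in a triangulation depends only on the three edges being simple, pairwise non-crossing elements of the triangulation bounding a disk, all of which is preserved by the composition $f\circ g$ once we know $f(\gamma_i)\in \Delta'$. Given this, well-definedness of $f_\star$ is immediate and the lemma follows.
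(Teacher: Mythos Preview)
Your proposal is correct and matches the paper's approach: the paper states this lemma as ``an obvious version of Theorem \ref{th:functorial triangular}'' and gives no proof, and your argument---composing an accompanying morphism $g:P_3\to\Sigma$ with $f$ to see that triangles in $\Delta$ go to triangles in $\Sigma'$ with edges in $\Delta'$---is precisely the content of the lemma used in the proof of Theorem \ref{th:functorial triangular} (that polygons map to polygons), specialized to the triangulated setting. Your worry about ``triangle in $\Delta'$'' versus ``triangle in $\Sigma'$ with edges in $\Delta'$'' is more careful than the paper itself, but under the paper's Definition \ref{def:ngon} of $n$-gon these coincide for the purposes of the defining relations of $\TT_{\Delta'}$, so no genuine obstacle arises.
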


Combining Lemmas \ref{le:n-gon cover of Sigma} and \ref{le:functorial triangular}  and taking into account that under the gluing  map $f:P_n(h)\to \Sigma$, the image $f(\Delta)$ of any triangulation  $\Delta$ of $P_n(h)$ is a triangulation of $\Sigma=\Sigma_{\sigma,\varepsilon}$,
we see that the quotient group of $\TT_{\Delta}$ by the relations
\begin{equation}
\label{eq:gluing relations}
t_{i,i^+}=\begin{cases}
 t_{\sigma(i)^+,\sigma(i)}  & \text{if $\varepsilon(i)=+$}\\
 t_{\sigma(i),\sigma(i)^+}  & \text{if $\varepsilon(i)=-$}\\
\end{cases}, t_{i^+,i}=\begin{cases}
 t_{\sigma(i),\sigma(i)^+}  & \text{if $\varepsilon(i)=+$}\\
 t_{\sigma(i)^+,\sigma(i)}  & \text{if $\varepsilon(i)=-$}\\
\end{cases},
\end{equation}
$i\in S$, is naturally isomorphic to $\TT_{f(\Delta)}$ (of course, $\TT_{f(\Delta)}\cong \TT_{\Delta''}$ for any triangulation $\Delta''$ of $\Sigma$ by Theorem \ref{th:TDeltaDelta' surface}).

We will use this observation with the appropriately modified starlike triangulation $\Delta=\tilde \Delta_1$ of $P_n(h)$, where $\Delta_1$ is the starlike triangulation of $[n]$ as in \eqref{eq:starlike} with $i=1$.

Namely, for all $n\ge 2$, $\tilde \Delta_1$ is obtained from $\Delta_1$  by adding $h$ curves $\gamma_{12}^{(s)}$, $s\in [h]$ from the vertex $1$ to the vertex $2$ {\it outside} of
$\Delta_1$ so that each $2$-gon $(({\gamma_{12}^{(s)}})^{-1},\gamma_{12}^{(s-1)})$, $s\in [h]$ contains exactly one special puncture (here, with a slight abuse of notation,
$\gamma_{12}^{(0)}$ is the chord $(1,2)$ in $[n]$) and a clockwise loop $\gamma_1^{(s)}$ around each special puncture inside
$((\gamma_{12}^{(s)})^{-1},\gamma_{12}^{(s-1)})$, $s\in [h]$.

\begin{lemma}
\label{le:starlike relation}
Suppose that $n\ge 2$. Then the  group $\TT_{\tilde \Delta_1}$ is generated by $t_j=T_j^{1,j^+}$, $j=3,\ldots,n-1$, $c_k=t_{k,k^+}$, $\overline c_k=t_{k^+,k}$, $k\in [n]$, $y_s=t_{\gamma_{12}^{(s)}}$, $z_s=t_{\gamma_1^{(s)}}$, $s\in [h]$, and $\overline y_h=t_{(\gamma_{12}^{(h)})^{-1}}$,
subject to (if $n\ge 4$):
\begin{equation}
\label{eq:1-relator T_n}
c_2t_3c_3\cdots t_{n-1}c_{n-1}\overline c_n^{\,-1}c_1= \overline c_1c_n^{-1}\overline c_{n-1} t_{n-1} \overline c_{n-2} \cdots  t_3\overline c_2
\end{equation}
and (if $h>0$):
\begin{equation}
\label{eq:1-relator y_h}
\overline y_h=\overline c_1 (z_1^{-1}y_1 c_1^{-1}z_1)(z_2^{-1}y_2 y_1^{-1}z_2)\cdots (z_h^{-1}y_h y_{h-1}^{-1}z_h)\ .
\end{equation}

\end{lemma}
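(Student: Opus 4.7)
The plan is to work directly from the defining presentation of $\TT_{\tilde\Delta_1}$ (one generator $t_\gamma$ per directed $\gamma\in\tilde\Delta_1$, one triangle relation per triangle in $\tilde\Delta_1$) and then introduce the claimed generators one family at a time, using all but the two displayed relations to eliminate auxiliary generators. The triangle set of $\tilde\Delta_1$ splits as an ``$n$-gon part'' consisting of the $n-2$ inner triangles $(1,j,j^+)$ inherited from $\Delta_1$, and a ``bubble part'' consisting of the $h$ triangles $(\gamma_{12}^{(s-1)},(\gamma_{12}^{(s)})^{-1},\gamma_1^{(s)})$ for $s\in[h]$, with the convention $\gamma_{12}^{(0)}=(1,2)$.

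For the $n$-gon part I would start from Lemma \ref{le:starlike relation n-gon}, which realizes the corresponding sub-presentation as a free group on $\tau_j=T_1^{j,j+1}$ ($j=2,\ldots,n-1$) together with the $2(n-1)$ spokes $t_{1,k},t_{k,1}$. The change to the new generators is encoded by the substitutions $c_k=t_{k,1}\tau_k t_{1,k^+}$ and $\overline c_k=t_{k^+,1}\tau_k t_{1,k}$ for $k=2,\ldots,n-1$, the tautological $c_1=t_{1,2}$, $\overline c_1=t_{2,1}$, $c_n=t_{n,1}$, $\overline c_n=t_{1,n}$, and $t_j=t_{1,j}^{-1}\tau_j^{-1}t_{j,1}^{-1}$ for $j=3,\ldots,n-1$. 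Eliminating $\tau_j$ via the last formula turns the defining equations for $c_k,\overline c_k$ with $k\ge 3$ into the telescoping recursion $t_{1,k}=t_{1,k+1}c_k^{-1}t_k^{-1}$ and $t_{k,1}=t_k^{-1}\overline c_k^{-1}t_{k+1,1}$, which, seeded by $t_{1,n}=\overline c_n$ and $t_{n,1}=c_n$, inductively solves for all remaining spokes and hence for $\tau_j$ ($j\ge 3$) in terms of the new generators.

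The obstruction is the remaining angle $\tau_2$, for which the $c_2$ and $\overline c_2$ equations give two distinct expressions, one involving $t_{1,3}$ and the other $t_{3,1}$. Setting these equal and substituting the closed-form expressions for $t_{1,3}$ and $t_{3,1}$ obtained by iterating the recursion down from $k=n$ yields, after a direct cancellation, exactly the one-relator equation \eqref{eq:1-relator T_n}. For the bubble part, each triangle $(\gamma_{12}^{(s-1)},(\gamma_{12}^{(s)})^{-1},\gamma_1^{(s)})$ contributes a single relation which, with the abbreviations $y_0:=c_1$ and $\overline y_0:=\overline c_1$, rearranges to $\overline y_s=\overline y_{s-1}\,z_s^{-1}y_sy_{s-1}^{-1}z_s$. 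These successive formulas eliminate $\overline y_1,\ldots,\overline y_{h-1}$ in terms of the retained generators, and the single telescoping identity obtained by substituting them all into the expression for $\overline y_h$ is precisely \eqref{eq:1-relator y_h}.

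The main difficulty I anticipate is purely combinatorial: verifying that the two expressions for $\tau_2$, after substitution of the recursively computed spokes, reassemble into exactly the word appearing on each side of \eqref{eq:1-relator T_n}, and similarly that the bubble iteration produces \eqref{eq:1-relator y_h} with the correct factor ordering. As a sanity check, the final presentation has $(n-3)+2n+2h+1=3n+2h-2$ generators subject to at most $2$ relations, giving free rank $3n+2h-4$; this matches the rank predicted by Theorem \ref{th:delta 1-relator} for $\Sigma=P_n(h)$, confirming that no genuine relation has been missed.
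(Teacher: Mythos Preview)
Your argument is correct and follows essentially the same Tietze-transformation strategy as the paper. The only difference is cosmetic: the paper runs the telescoping recursion forward, writing $t_{1,j}=c_1t_2c_2\cdots t_{j-1}c_{j-1}$ and $t_{j,1}=\overline c_{j-1}t_{j-1}\cdots\overline c_2t_2\overline c_1$, and then eliminates the extra angle $t_2$ from the two resulting expressions for $\overline c_n$ and $c_n$; you run the same recursion backward from $t_{1,n}=\overline c_n$, $t_{n,1}=c_n$ and eliminate $\tau_2$ from the two expressions coming from $c_2$ and $\overline c_2$. Both routes collapse to exactly \eqref{eq:1-relator T_n}, and your treatment of the bubble part and the rank count match the paper's verbatim.
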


\begin{proof} Clearly,
$t_{1,j}=c_1t_2c_2\cdots t_{j-1}c_{j-1},~t_{j,1}=\overline c_{j-1} t_{j-1}   \cdots  \overline c_2 t_2 \overline c_1$
in $\TT_{\Delta_1}$ for $j=1,\ldots,n$. Thus, $\TT_{\Delta_1}$  is generated by $t_2,\ldots,t_{n-1}$, $c_k$, $\overline c_k$, $k=1,\ldots,n$ subject to the relations:
$$\overline c_n=c_1t_2c_2\cdots c_{n-2}t_{n-1}c_{n-1}, c_n=\overline c_{n-1} t_{n-1}  \cdots  \overline c_2 t_2 \overline c_1\ .$$
By eliminating $t_2$, we see that $\TT_{\Delta_1}$ is subject to the relation \eqref{eq:1-relator T_n}. Furthermore,
the $1$-gon relations in the $1$-gons $(\gamma_1^{(s)})$ and triangle relations in the triangles
$((\gamma_{12}^{(s)})^{-1},\gamma_1^{(s)},\gamma_{12}^{(s-1)})$ for the remaining generators
$y_s=t_{\gamma_{12}^{(s)}}$, $\overline y_s=t_{(\gamma_{12}^{(s)})^{-1}}$ $z_s=t_{\gamma_1^{(s)}}$,
$\overline z_s=t_{(\gamma_1^{(s)})^{-1}}$, $s\in \{0\}\sqcup [h]$ of $\TT_{\tilde \Delta_1}$ read:
$$\overline z_s=z_s,~\overline y_s\overline z_s^{\,-1}y_{s-1}=\overline y_{s-1}z_s^{-1}y_s$$
for $s\in [h]$ (here  $y_0=c_1$, $\overline y_0=\overline c_1$). That is, one can eliminate all $\overline z_s$,
$s\in [h]$ and one can solve recursively for all $\overline y_s$, $s\in [h]$:
$$\overline y_s=\overline c_1 (z_1^{-1}y_1 y_0^{-1}z_1)(z_2^{-1}y_2 y_1^{-1}z_2)\cdots (z_s^{-1}y_s y_{s-1}^{-1}z_s)\ ,$$
so that the remaining generators $z_s$ and $y_s$, $s\in [h]$ are free.

The lemma is proved.
\end{proof}

Combining  Lemmas \ref{le:n-gon cover of Sigma} and \ref{le:starlike relation}, we see that
for $n\ge 3$ the group $\TT_{f(\Delta)}$ is generated by $t_j$, $j=3,\ldots,n-1$, $c_k$, $\overline c_k$, $k=1,\ldots,n$,  $y_s$, $z_s$, $s\in [h]$, and $\overline y_h$
subject to \eqref{eq:1-relator T_n} and the following relations for all $i\in S$:
$$c_{\sigma(i)}=\begin{cases}
\overline c_i' & \text{if $\varepsilon(i)=+$}\\
c_i' & \text{if $\varepsilon(i)=-$}\\
\end{cases},
\overline c_{\sigma(i)}=\begin{cases}
c'_i & \text{if $\varepsilon(i)=+$}\\
\overline c'_i & \text{if $\varepsilon(i)=-$}\\
\end{cases},\text{where}~ c'_i:=\begin{cases}
c_i & \text{if $i\ne 1$}\\
y_h & \text{if $i=1$}\\
\end{cases}, \overline c'_i:=\begin{cases}
\overline c_i & \text{if $i\ne 1$}\\
\overline y_h & \text{if $i=1$}\\
\end{cases}
 \ .$$

Thus, if $n\ge 3$, then the group  $\TT_{\tilde \Delta_1}$ has $(n-3)+2(n-|S|)+2h=3n-3-2|S|+2h$ generators $t_j$, $j=3,\ldots,n-1$, $c_k$, $\overline c_k$, $k\in [n]\setminus \sigma(S)$, $y_s$, $z_s$, $s\in [h]$
and exactly one relation \eqref{eq:1-relator T_n}. Now compute the Euler characteristic of $\Sigma$ using the triangulation $\Delta''$ of $\Sigma$ obtained by removing all $h$ loops around special punctures from $f(\Delta)$. By definition,
$$\chi(\Sigma)=|I|-E+T\ ,$$
where $E$ is the number of edges and $T$ is the number of triangles in $\Delta''$. Clearly, $T=n-2$ and $E=(n-3)+(n-|S|)$, therefore,
$$\chi(\Sigma)=|I|-((n-3)+(n-|S|))+n-2=|I|+1-n+|S|=|I|+1-\frac{n}{2}-\frac{|I_b|}{2}$$
because $n-2|S|=|I_b|$. Therefore, the number of generators of $\TT_{f(\Delta)}$ is:
$$3n-3-2|S|+2h=2n-3+|I_b|+2h=4(|I|+1-\chi(\Sigma)-\frac{|I_b|}{2})-3+|I_b|+2h=4(|I|-\chi(\Sigma))+1-|I_b|+2h\ .$$

We now consider several cases.

\noindent {\bf Case 1}.  $n\ge 3$ and either $\Sigma$ has boundary, i.e., $S\cup \sigma(S)\ne [n]$ or $h>0$.
The above implies that $\TT_{f(\Delta)}$ is free in $4(|I|-\chi(\Sigma))-|I_b|+2h$ generators.

\noindent {\bf Case 2}.  $n=2$ (hence $h>0$). Then, clearly, $\TT_{\tilde \Delta_1}$ is a free group in $2h+2$ generators. Therefore:

$\bullet$ If $n=2$, $h\ge 2$, then $\TT_{f(\Delta)}$ is free in $2h+2-2|S|$ generators, where $|S|\in \{0,1\}$.

$\bullet$ If $n=2$, $h=1$, then $\TT_{f(\Delta)}$ is free in $4-|S|$ generators, where $|S|\in \{0,1\}$.

\noindent {\bf Case 3}.  $n=1$, then $f$ is the identity map and $\Sigma$ is a disk with $|I|=|I_b|=1$ and $h$ special punctures.
If $h=0$, then, clearly, $\TT_\Delta$ is free in two generators $t_\gamma$ and $t_{\overline \gamma}$,
where $\gamma$ is the clockwise loop.
Suppose that $h>0$. Then one can choose a triangulation $\Delta$
of $\Sigma$ in such a way that, in addition to $\gamma$ it consists of a special loop $\lambda_s$,
$s\in [h]$ around each special puncture and a clockwise loop $\gamma_s$ enclosing first $s$ special punctures (from the left to the right),
$s=2,\ldots,h$ (so that $\lambda_1=\gamma_1$ and $\gamma_h=\gamma$). Then $\TT_\Delta$ is generated  by $z_s=t_{\lambda_s}$,
$y_s=t_{\gamma_s}$, $\overline y_s=t_{\overline \gamma_s}$, $s\in [h]$ subject to
the following triangle relations in the $h-1$ triangles
$(\gamma_{s-1},\lambda_s,\overline \gamma_s)$, $s=2,\ldots,h$:
$$
y_{s-1} z_s^{-1}\overline y_s=y_sz_s^{-1}\overline y_{s-1}$$
for $s=2,\ldots,h$ if $h\ge 2$. That is,
similarly to the equations \eqref{eq:1-relator y_h}, one can solve recursively for all $\overline y_s$, $s=2,\ldots,h$:
$$\overline y_s=(z_s y_{s-1} ^{-1}y_s) \cdots (z_3y_2^{-1}z_3 y_2)\cdot (z_2z_1^{-1}z_2 z_1)$$
(since $y_1=\overline y_1=z_1$)
so that $\TT_\Delta$ is freely generated by $z_s$, $s\in [h]$ and  $y_s$, $s=2,\ldots,h$.

This finishes the proof of Theorem \ref{th:delta 1-relator}(a).

\medskip

\noindent {\bf Case 4}. $n\ge 3$ and  $\Sigma$ has no boundary, i.e., $|I_b|=0$ hence $S\cup \sigma(S)=[n]$ and $h=0$. Then $n=2|S|$ is even and $\TT_{f(\Delta)}$ is a  $1$-relator torsion-free group in $4(|I|-\chi(\Sigma))+1$ generators $t_k$, $k=3,\ldots,n-1$, $c_k,\overline c_k$, $k\in S$.  Suppose that $\Sigma$ is a sphere with $|I|\le 3$ punctures. Then $\TT_{f(\Delta)}$ trivial for $|I|=1$ because all loops are contractible, is free in $2$ generators $t_{\gamma}$ and $t_{\gamma^{-1}}$ if $|I|=2$, where $\gamma$ is an arc between these two punctures, and if $|I|=3$, it is free in $5$ generators, because we can take $S=\{1,3\}\subset [4]$,  $\sigma(1)=2$, $\sigma(3)=4$, $\varepsilon(1)=\varepsilon(3)=+$  so that $\TT_{f(\Delta)}$ is freely generated by $c_1,\overline c_1,c_3,\overline c_3,t_1$.
Otherwise, it is, clearly, non-free.
This finishes the proof of Theorem \ref{th:delta 1-relator}(b).

The theorem is proved.
\endproof

\begin{example}
\label{ex:closed surfaces detaled} If $\Delta$ is a triangulation of the torus, the Klein bottle, the real projective plane respectively with one, one, two
(ordinary) punctures, then $\TT_\Delta$ is generated by $c_1,c_2,\overline c_1,\overline c_2,t_3$ subject to, respectively (with some notation from the proof of Theorem \ref{th:delta 1-relator} in Section \ref{subsec:proof of theorem th:delta 1-relator}):

(i) for the torus with one puncture: $c_2t_3\overline c_1c_2^{\,-1}c_1=\overline c_1\overline c_2^{\,-1} c_1t_3\overline c_2$, because $\Delta$ is glued from a square with diagonal $(1,3)$, where: $S=\{1,2\}\subset [4]$, $\sigma(1)=3$, $\sigma(2)=4$, $\varepsilon(1)=\varepsilon(2)=+$ (equivalently,
$a            b  c          d           e         =c                  b             e         d  a$
after substitution $a=t_3$, $b=\overline c_1$, $c=c_2^{-1}$, $d=c_1$, $e= \overline c_2^{\,-1}$).

(ii) for the Klein bottle with one puncture: $c_2t_3\overline c_1\overline c_2^{\,-1}c_1=\overline c_1c_2^{-1}c_1t_3\overline c_2$, because $\Delta$ is glued from a square with diagonal $(1,3)$, where: $S=\{1,2\}\subset [4]$, $\sigma(1)=3$, $\sigma(2)=4$,
$\varepsilon(1)=+$, $\varepsilon(2)=-$ (equivalently,
$a             b            c          d           c        =e                  b  e       d  a$
after substitution $a=t_3$, $b=\overline c_1$, $c=\overline c_2^{\,-1}$, $d=c_1$, $e= c_2^{-1}$).


(iii) for the real projective plane with two punctures: $c_2t_3c_1\overline c_2^{\,-1}c_1=\overline c_1c_2^{-1}\overline c_1t_3\overline c_2$, because $\Delta$ is glued from a square with diagonal $(1,3)$, where: $S=\{1,2\}\subset [4]$, $\sigma(1)=3$, $\sigma(2)=4$, $\varepsilon(1)=\varepsilon(2)=-$ (equivalently,
$a    b            c         b           c      =e                  d  e                 d  a$
after substitution $a=t_3$, $b=c_1$, $c=\overline c_2^{\,-1}$, $d=\overline  c_1$, $e= c_2^{-1}$).
\end{example}

\subsection{Noncommutative curves and proof of Theorem \ref{th:A_Delta sigma}}
\label{subsec:proof Laurent}


%
%

For each $\gamma\in [\Gamma(\Sigma)]$, a triangulation $\Delta$ of $\Sigma$
define the elements $t_{\gamma,\Delta}\in \QQ \TT_\Delta$ same way as in Theorem \ref{th:noncomlaurent n-gon}:
\begin{equation}
\label{eq:noncomm schiffler surf group TDelta}
t_\gamma^\Delta:=\sum_{\ii\in Adm_{\Delta^0}(1,j)} t_\ii \ ,
\end{equation}
where $\Delta^0$ is the  triangulation of $[n]$  assigned
(as in Theorem \ref{th:polygonal cover}(a))
to $\Delta$ and the canonical polygon $P_\Delta(\gamma)=(\gamma_1,\ldots,\gamma_n)$ with
$\gamma=\gamma_{1,j}$ and
we abbreviated
$$t_\ii:=t_{\gamma_{i_1,i_2}}t_{\gamma_{i_3,i_2}}^{-1}t_{\gamma_{i_3,i_4}}\cdots t_{\gamma_{i_{2m-1},i_{2m-2}}}^{-1}
t_{\gamma_{i_{2m-1},i_{2m}}}$$
for any sequence $\ii=(i_1,\ldots,i_{2m})\in [n]^{2m}$, $m\ge 1$.

%
%

%
We refer each $t_\gamma^\Delta$ as  it as a {\it noncommutative triangulated curve}.

Clearly, if  $\Sigma=P_n(0)$ is an $n$-gon (i.e., a disk with $I(\Sigma)=I_b(\Sigma)=[n]$) so that $\gamma=(p,q)\in [n]\times [n]$, then
$t_\gamma^\Delta=t_{pq}^\Delta$ is as in \eqref{eq:noncomm schiffler TDelta n-gon}.

To finish the proof of Theorem \ref{th:A_Delta sigma}, we need the following result.

\begin{proposition}
\label{pr:universal localization Sigma}
The
assignments $x_\gamma\mapsto t_\gamma^\Delta$ for $\gamma\in [\Gamma(\Sigma)]$ define an epimorphism of algebras
\begin{equation}
\label{eq:universal localization homomorphism}
\AA_\Sigma\to \QQ \TT_\Delta[{\bf S}_\Delta^{-1}] \ ,
\end{equation}
where ${\bf S}_\Delta$ is the sub-monoid of $\QQ \TT_\Delta$ generated by all $t_\gamma^\Delta$.
\end{proposition}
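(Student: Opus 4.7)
The plan is to verify that the assignments $x_\gamma \mapsto t_\gamma^\Delta$ respect all defining relations of $\AA_\Sigma$, and then to check surjectivity. The idea is to reduce the triangle and exchange relations in $\AA_\Sigma$ to their already-proved polygonal analogues via the ``polygonal cover'' construction from Theorem \ref{th:polygonal cover}, using functoriality (Theorem \ref{th:functorial triangular}).

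First I would record some easy base cases. If $\gamma$ is trivial, its canonical polygon is trivial, so $t_\gamma^\Delta = 1$. If $\gamma = \gamma_0 \in \Delta$, then $P_\Delta(\gamma_0) = (\gamma_0, \overline{\gamma_0})$ is a $2$-gon with $n = 2$, the only admissible sequence is $(1,2)$, and hence $t_{\gamma_0}^\Delta = t_{\gamma_0}$, as expected. In particular the image of the proposed homomorphism contains all generators of $\TT_\Delta$, and all elements $t_\gamma^\Delta$ lie in ${\bf S}_\Delta$, so once the map is well-defined it is automatically surjective onto $\QQ\TT_\Delta[{\bf S}_\Delta^{-1}]$.

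The main content is then to verify the triangle relations \eqref{eq:triangle relations sigma} and the exchange relations \eqref{eq:exchange relations sigma} for the elements $t_\gamma^\Delta$. Fix a triangle (resp.\ quadrilateral) $Q = (\gamma_1, \ldots, \gamma_r) $ in $\Sigma$ with $r = 3$ (resp.\ $r = 4$). By Theorem \ref{th:polygonal cover} there exist $n \ge r$, an $n$-gon $P = (\gamma'_1,\ldots,\gamma'_n)$ in $\Sigma$ with an accompanying morphism $f : P_n \to \Sigma$, a triangulation $\Delta^0$ of $[n]$, and an order-preserving embedding $\iota : [r] \hookrightarrow [n]$, so that the chords in $\Delta^0$ map to curves in $\Delta$ and $Q$ is a sub-polygon of $P$. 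By Lemma \ref{le:functorial triangular} the morphism $f$ induces a group homomorphism $f_\star : \TT_{\Delta^0} \to \TT_\Delta$ sending $t_{ij} \mapsto t_{f(i,j)}$, which extends to an algebra map $\QQ \TT_{\Delta^0}[S_{\Delta^0}^{-1}] \to \QQ \TT_\Delta[{\bf S}_\Delta^{-1}]$. The key compatibility is that
\[
f_\star(t_{\iota(p),\iota(q)}^{\Delta^0}) = t_{\gamma_{pq}}^\Delta
\]
where $\gamma_{pq}$ is the curve in $Q$ from vertex $p$ to vertex $q$. This follows by comparing the definition \eqref{eq:noncomm schiffler TDelta n-gon} of $t_{ij}^{\Delta^0}$ with \eqref{eq:noncomm schiffler surf group TDelta}: the canonical polygon $P_\Delta(\gamma_{pq})$ coincides with the image under $f$ of the $(\iota(p),\iota(q))$-version of its analogue inside $P_n$ (both are built from the same ``fan'' of crossings along the chord), so the admissible sequences biject and the monomials $t_\ii$ correspond. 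Given this compatibility, Lemma \ref{le:relations tDelta} (which asserts that the polygonal elements $t_{ij}^{\Delta^0}$ satisfy the triangle and exchange relations of Definition \ref{def:An}) pushes forward under $f_\star$ to the triangle and exchange relations of Definition \ref{def:Asigma} among the $t_{\gamma_i}^\Delta$. This gives a well-defined algebra homomorphism $\AA_\Sigma \to \QQ \TT_\Delta[{\bf S}_\Delta^{-1}]$, and surjectivity follows from the base case.

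The main obstacle I expect is the compatibility identity $f_\star(t_{\iota(p),\iota(q)}^{\Delta^0}) = t_{\gamma_{pq}}^\Delta$, since it requires comparing two recipes for expanding a curve along crossings: one at the polygon level using $\Delta^0$, and one at the surface level using $\Delta$. The delicate point is that the canonical polygon $P_\Delta(\gamma_{pq})$ is built by the successive-triangle procedure inside $\Sigma$, which may look different from the admissible-sequence sum over $Adm_{\Delta^0}(\iota(p),\iota(q))$ unless one checks that $f$ maps the latter bijectively onto the former. The cleanest way is to verify inductively on $n_{\gamma,\Delta}$ that $P_\Delta(\gamma_{pq})$ is precisely the image of $P_{\Delta^0}(\iota(p), \iota(q))$ under $f$, so that the two admissible-sequence sets coincide and the monomials match term by term.
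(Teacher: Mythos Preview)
Your approach is essentially identical to the paper's: both use Theorem \ref{th:polygonal cover} to embed an arbitrary triangle or quadrilateral $Q$ of $\Sigma$ into a large $n$-gon $P$ with triangulation $\Delta^0$ mapping into $\Delta$, then transport the already-established polygonal relations (Lemma \ref{le:relations tDelta}, equivalently the isomorphism $\AA_n\cong\QQ\TT_{\Delta^0}[S_{\Delta^0}^{-1}]$ of Theorem \ref{th:A_Delta}) along the induced map $f_\Delta:\QQ\TT_{\Delta^0}\to\QQ\TT_\Delta$ to conclude that the $t_\gamma^\Delta$ satisfy the defining relations of $\AA_\Sigma$. The paper packages this as ``the assignments $x_{ij}\mapsto t_{\gamma_{ij}}^\Delta$ define a homomorphism $\AA_r\to\QQ\TT_\Delta[{\bf S}_\Delta^{-1}]$'' and then takes $r=3,4$.

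The one point you flag as delicate---the compatibility $f_\star(t_{ij}^{\Delta^0})=t_{\gamma_{ij}}^\Delta$---is precisely the step the paper asserts in a single clause without further argument. Your instinct to verify it by comparing the canonical polygon $P_\Delta(\gamma_{ij})$ with the image under $f$ of the relevant sub-polygon of $P_n$ (via induction on $n_{\gamma,\Delta}$) is the right way to make this rigorous; the crossings of $\gamma_{ij}$ with $\Delta$ in $\Sigma$ correspond bijectively to the crossings of $(i,j)$ with $\Delta^0$ in $P_n$ because $f$ is locally injective away from special punctures and sends $\Delta^0$ into $\Delta$, so the initial-triangle procedure runs identically on both sides.
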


\begin{proof} It suffices to show that the elements $t_{\gamma}^\Delta$ satisfy the defining relations of $\AA_\Sigma$ from Definition \ref{def:Asigma}.

We need the following  result.

\begin{lemma} Let $Q=(\gamma'_1,\ldots,\gamma'_r)$ be an $n$-gon in $\Sigma$ and let $\Delta$ be any  triangulation of $\Sigma$. Then the assignments $x_{ij}\mapsto x_{\gamma_{ij}}$, $i,j\in [r]$ define a homomorphism of algebras
\begin{equation}
\label{eq:Ar to TDelta localized}
\AA_r\to \QQ \TT_\Delta[{\bf S}_\Delta^{-1}] \ .
\end{equation}

\end{lemma}

\begin{proof} Let $P=(\gamma_1,\ldots,\gamma_n)$,  $\Delta^0$, and $\iota:[r]\hookrightarrow [n]$ be as in Theorem \ref{th:polygonal cover}. Then, in view of Theorem, for any accompanying morphism $f:P_n\to \Sigma$ Therefore, the assignments $(i,j)\mapsto f(i,j)=\gamma_{ij}$
restricted to $\Delta^0$ define a homomorphism of algebras $f_\Delta:\QQ \TT_{\Delta^0}\to \QQ\TT_\Delta$ such that $f_\Delta(t_{ij}^{\Delta^0})=t_{\gamma_{ij}}^\Delta$ for $i,j\in [n]$.  Since $f_\Delta({\bf S}_{\Delta^0})\subset {\bf S_\Delta}$,
then passing to the universal localizations, this gives an algebra homomorphism
$$\QQ \TT_{{\Delta^0}}[{\bf S}_{\Delta^0}^{-1}]\to \QQ \TT_\Delta[{\bf S}_\Delta^{-1}] \ .$$
Composing it with the isomorphism  $\AA_n\cong \QQ \TT_{{\Delta^0}}[{\bf S}_{\Delta^0}^{-1}]$ given by Theorem \ref{th:A_Delta}(b) and the homomorphism $\AA_r\to \AA_n$ given by $x_{k,\ell}\mapsto x_{\iota(k),\iota(\ell)}$ give the
desired homomorphism \eqref{eq:Ar to TDelta localized}.
The lemma is proved.
\end{proof}

Using the Lemma with $r=3,4$, we finish the proof of the proposition.
\end{proof}

Since each $x_\gamma$, $\gamma\in [\Gamma(\Sigma)]$ is invertible in $\AA_\Sigma$, the universality of localization $\QQ \TT_\Delta[{\bf S}_\Delta^{-1}]$ implies that \eqref{eq:i_Delta surface} extends to a homomorphism of algebras
\begin{equation}
\label{eq:i_Delta surface localized}
\QQ \TT_\Delta[{\bf S}_\Delta^{-1}] \to  \AA_\Sigma \ .
\end{equation}

By the construction and Theorem \ref{th:noncomlaurent surface}, \eqref{eq:i_Delta surface} takes each $t_\gamma^\Delta$ to $x_\Delta$ and therefore is an epimorphism $\QQ[\TT_\Delta]\twoheadrightarrow \AA_\Delta$.
In turn, \eqref{eq:i_Delta surface localized} is an epimorphism as well.

Thus, we obtained two mutually inverse epimorphisms \eqref{eq:Ar to TDelta localized} and \eqref{eq:i_Delta surface localized}, which implies that they are isomorphisms of algebras.

Therefore, \eqref{eq:universal localization homomorphism} is an isomorphism, which proves Theorem \ref{th:A_Delta sigma}(b). Theorem \ref{th:A_Delta sigma}(a) also follows because \eqref{eq:i_Delta surface} is a restriction to $\QQ \TT_\Delta$ of the isomorphism \eqref{eq:i_Delta surface localized} and $\ii_\Delta(\QQ \TT_\Delta)= \AA_\Delta$.

Theorem \ref{th:A_Delta sigma} is proved. \endproof

%
%
%
%
%
%
%
%
%
%
%
%

\section{Noncommutative discrete integrable systems}
\label{sect:kontsevich}

\subsection{An integrable system on a cylinder}

Denote by $\Sigma_{1,r}$, an annulus (i.e., a cylinder) with no punctures, one marked point $p$ on the outer circle and $r$ marked points $p_1,\ldots,p_r$ on the inner circle (listed clockwise).

It is easy to see that equivalence classes of curves from $p$ to $\{p_1,\ldots,p_r\}$ in $\Sigma_{1,r}$ are in a natural bijection with $\ZZ$: the $n$-th curve $\gamma_n$ goes (without self-intersections) from $p$ to $p_s$ where $s\equiv n \mod r$ and $\gamma_n$ has the winding number $q$ such that $n=rq+s$ (so that the arc is winding clockwise if $q\ge 0$ and counterclockwise if $q<0$).

We also denote  $\gamma_i^-$ (resp. $\overline \gamma_i^{\,-}$)  the short counterclockwise boundary arc in the inner circle from $p_i$ to the previous point $p_{i^-}$
(resp. from  $p_{i^-}$ to  $p_i$), $i\in [r]$; and by $\gamma^+$ (resp. $\overline \gamma^{\,+}$) the clockwise (resp. counterclockwise) loop in the outer circle.

We abbreviate in the algebra ${\mathcal A}_{\Sigma_{1,r}}$:
$x_n:=x_{\gamma_n},~\overline x_n:=x_{\overline \gamma_n},~c_n:=x_{\gamma^-_n}, \overline c_n:=x_{\overline \gamma^{\,-}_n},~d:=x_{\gamma^+},~\overline d:=x_{\overline \gamma^{\,+}}$
for $n\in \ZZ$ (where we extend $\gamma^-_n$ periodically so that $\gamma^-_{n+r}=\gamma^-_r$ for all $n\in \ZZ$).

Since  $(\overline\gamma_{n-1},\gamma^+,\gamma_{n-r},\gamma_n^-)$ is a $4$-gon in  $\Gamma(\Sigma_{1,r})=[\Gamma(\Sigma_{1,r})]$ containing triangles $(\gamma_{n-1},\overline\gamma_{n-1}^{\,-},\overline\gamma_n)$ and  $(\overline \gamma_n,\gamma^+,\gamma_{n-r})$ for all $n\in \ZZ$, the following fact is immediate from Definition \ref{def:Asigma}.

\begin{lemma} For each $r\ge 1$ one has in ${\mathcal A}_{\Sigma_{1,r}}$:

(i) (triangle relations)
\begin{equation}
\label{eq:triangle relations sigma1r}
x_{n-1}\overline c_n^{\,-1}\overline x_n=x_n c_n^{-1} \overline x_{n-1},~\overline x_n\overline d^{\,-1} x_{n-r}=\overline x_{n-r} d^{-1} x_n \ .
\end{equation}

(ii) (exchange relations) For each $n\in \ZZ$:
\begin{equation}
\label{eq:exchange relations sigma1r}
\overline x_{n-r-1}d^{-1}x_n=c_n+ \overline x_{n-1}\overline d^{\,-1}x_{n-r},
~\overline x_n\overline d^{\,-1} x_{n-r-1}=\overline c_n+ \overline x_{n-r} d^{\,-1} x_{n-1} \ .
\end{equation}


\end{lemma}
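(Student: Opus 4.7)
The plan is to recognize each of the four identities as a direct instance of the defining relations of $\AA_\Sigma$ from Definition~\ref{def:Asigma}, applied to explicit triangles and one quadrilateral in $\Sigma_{1,r}$ that are already pointed out by the author. Because the bar anti-involution of Lemma~\ref{le:bar involution} sends each of the two identities in (i) to itself and swaps the two identities in (ii), it suffices to establish the first identity of (i), the second identity of (i), and the first identity of (ii); the remaining identity of (ii) then follows on applying $\overline{\,\cdot\,}$.

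The first identity of (i) is obtained from the cyclic triangle whose sides are $\gamma_{n-1}$, the short inner boundary arc joining the endpoints of $\gamma_{n-1}$ and $\gamma_n$, and $\overline\gamma_n$. These three curves bound a small embedded disk of $\Sigma_{1,r}$ lying against the inner circle, so an accompanying morphism $P_3\to \Sigma_{1,r}$ in the sense of Definition~\ref{def:ngon} is immediate. Substituting the three sides into the triangle relation~\eqref{eq:triangle relations sigma} of Definition~\ref{def:Asigma}(ii) yields the first identity. The second identity of (i) comes from the cyclic triangle $(\overline\gamma_n,\gamma^+,\gamma_{n-r})$: here $\gamma_n$ and $\gamma_{n-r}$ both terminate at the same inner point $p_s$ (where $n\equiv s\bmod r$) and their windings differ by exactly one, so the outer loop $\gamma^+$ closes the triangle and one again builds an accompanying $P_3\to\Sigma_{1,r}$ onto the annular region swept out between the two winding levels. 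Applying Definition~\ref{def:Asigma}(ii) to this triangle gives the second identity of (i).

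The first identity of (ii) follows from the quadrilateral $Q=(\overline\gamma_{n-1},\gamma^+,\gamma_{n-r},\gamma_n^-)$ indicated before the lemma. Cyclically its vertices are $p_{s^-},\,p,\,p,\,p_s$, and $Q$ is realized as a polygon in the sense of Definition~\ref{def:ngon} by gluing the two triangles above along their common side $\overline\gamma_n$. Its two diagonals, drawn inside $Q$, are $\gamma_n$ joining the first copy of $p$ to $p_s$ at winding level $q$, and $\overline\gamma_{n-r-1}$ joining $p_{s^-}$ to the second copy of $p$ across the loop $\gamma^+$, which adds exactly one extra winding. Reading off the sides and diagonals of $Q$ and substituting them into the exchange relation~\eqref{eq:exchange relations sigma} of Definition~\ref{def:Asigma}(iii) produces the first identity of (ii) verbatim; the bar anti-involution then delivers the second identity.

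The only real obstacle is bookkeeping the winding numbers: one has to verify that the diagonal of $Q$ crossing $\gamma^+$ is precisely $\overline\gamma_{n-r-1}$ (explaining the shift to $n-r-1$) while the other diagonal contributes no offset and is exactly $\gamma_n$, and similarly that the sides carry no hidden offsets. A picture in the universal cover of the cylinder, where the $r$ inner marked points lift to a periodic family and $\gamma^+$ becomes a deck transformation, makes all these identifications immediate and completes the proof.
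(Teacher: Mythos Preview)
Your approach is the same as the paper's: the paper simply notes, in the sentence preceding the lemma, that $(\overline\gamma_{n-1},\gamma^+,\gamma_{n-r},\gamma_n^-)$ is a $4$-gon containing the two indicated triangles, and declares the lemma ``immediate from Definition~\ref{def:Asigma}.'' Your elaboration (including the use of the bar anti-involution from Lemma~\ref{le:bar involution} to halve the work) is a correct fleshing-out of exactly this; the one imprecision is the word ``verbatim'' for part~(ii), since the exchange relation~\eqref{eq:exchange relations sigma} is stated in the form $x_{\gamma_{24}}=\cdots$ rather than the Ptolemy form~\eqref{eq:noncomptolemey}, so a single triangle relation (or the equivalence noted after Definition~\ref{def:An}) is needed to pass between them.
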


%
%
%
Note that for each $m\in \ZZ$ the annulus $\Sigma_{1,r}$ has a triangulation
$$\Delta_m:=\{\gamma^+,\overline \gamma^{\,+};\gamma^-_1,\overline \gamma^-_1\ldots,\gamma^-_r,\overline \gamma^{\,-}_r;\gamma_m,\overline \gamma_m,\ldots,\gamma_{m+r},\overline \gamma_{m+r}\}\ .$$

\begin{picture}(10,100)
\put(180,15){\includegraphics[scale=0.4]{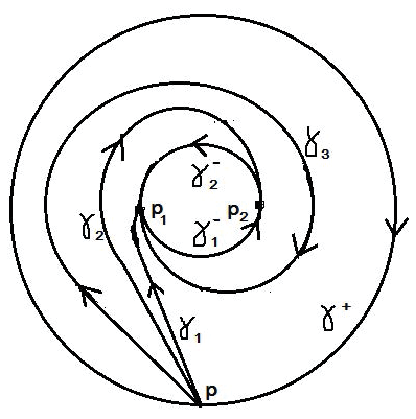}}
\put(130,5){Triangulation $\Delta_1$ of the cylinder $\Sigma_{1,2}$}
\end{picture}

Hence the group $\TT_r$ generated by $x_n,\overline x_n$,  $n=1,\ldots,r+1$,
$c_i,\overline c_i$, $i=1,\ldots,r$, $d,\overline d$  subject to the triangle relations
\begin{equation}
\label{eq:triangle relations sigma1r Delta0}
\overline x_{r+1}\overline d^{\,-1} x_1=\overline x_1 d^{-1} x_{r+1},~x_{s-1}\overline c_s^{\,-1}\overline x_s=x_s c_s^{-1} \overline x_{s-1},~
\end{equation}
$s=2,\ldots,r+1$ (with the convention $p_{r+n}=p_r$ hence $c_{r+n}=c_r$, $\overline c_{r+n}=\overline c_n$ for $n\in \ZZ$) is naturally isomorphic to the triangle group $\TT_{\Delta_1}$.
Moreover, in the notation of Section \ref{subsec:niversal localizations of noncommutative surfaces}, the subalgebra $\AA_{\Delta_1}$ of $\AA_{\Sigma}$
(generated by all $x_\gamma$, $\gamma\in \Gamma(\Sigma_{1,r})$ and all $x_{\gamma_0}^{-1}$, $\gamma_0\in \Delta_1$)
is the group algebra $\ZZ \TT_r$ by Theorem \ref{th:A_Delta sigma}(a).

\begin{proposition}
\label{pr:Sigma1r}
For each $r\ge 1$ we have:

(a)  Each $x_n,\overline x_n$, $n\in \ZZ$ is sum of elements of $\TT_r$ in $\ZZ \TT_r$.

(b) The total  angle $T_p\in \ZZ \TT_r$ at $p$
is given by
$T_p=\overline d^{-1}x_{n-r}x_n^{-1}+d^{-1}x_{n+r}x_n^{-1}=\overline x_n^{-1}\overline x_{n-r}d^{-1}+\overline x_n^{\,-1}\overline x_{n+r}\overline d^{\,-1}$

for each $n\in \ZZ$.
\end{proposition}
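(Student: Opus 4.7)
My plan is to prove parts (a) and (b) as follows.

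For part (a), directly apply the noncommutative Laurent Phenomenon for surfaces (Theorem \ref{th:noncomlaurent surface}) to the triangulation $\Delta_1$. By Theorem \ref{th:A_Delta sigma}(a), the subalgebra $\AA_{\Delta_1}\subset\AA_{\Sigma_{1,r}}$ is canonically isomorphic to the group algebra $\QQ\TT_{\Delta_1}=\QQ\TT_r$, and the Laurent formula \eqref{eq:noncomm schiffler surf} writes every $x_\gamma$ as a sum over admissible sequences of monomials in $t_{\gamma_0}^{\pm 1}$, $\gamma_0\in\Delta_1$; each such monomial is by construction an element of $\TT_r$. Specializing to $\gamma\in\{\gamma_n,\overline\gamma_n\}$ proves (a).

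For part (b), the strategy is to reduce to a polygon computation via Corollary \ref{cor:total angle}. Cutting $\Sigma_{1,r}$ along the rung $\gamma_n$ yields, via Lemma \ref{le:n-gon cover of Sigma}, a gluing morphism $f\colon P_{r+3}\to\Sigma_{1,r}$ in \textbf{Surf} that identifies two sides of $P_{r+3}$ (both labeled $\gamma_n$). Labeling the vertices of $P_{r+3}$ in cyclic order as $1,\ldots,r+3$, the map $f$ sends $f(1)=f(2)=p$, $f(3)=f(r+3)=p_i$ (with $i\equiv n\bmod r$), and $f(4),\ldots,f(r+2)$ to the remaining inner vertices in order, with boundary edges $f(1,2)=\overline\gamma^+$, $f(2,3)=\gamma_n$, $f(r+3,1)=\overline\gamma_n$, and $f(3,4),\ldots,f(r+2,r+3)$ the inner arcs. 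A short check of winding numbers across the cut identifies the two key diagonals as
\[
f(1,3)=\gamma_{n-r},\qquad f(r+3,2)=\overline\gamma_{n+r}.
\]
By Theorem \ref{th:functorial triangular}, $f$ induces $f_*\colon\AA_{P_{r+3}}\to\AA_{\Sigma_{1,r}}$. Since $\{1,2\}=f^{-1}(p)$, combining Corollary \ref{cor:total angle} for $P_{r+3}$ with Theorem \ref{th:total anglle surface} (invariance of $T_p$ under triangulation) and pushing forward through $f_*$ gives
\[
T_p=f_*(T_1^{r+3,2})+f_*(T_2^{1,3})=\overline x_n^{-1}\overline x_{n+r}\overline d^{-1}+\overline d^{-1}x_{n-r}x_n^{-1}.
\]
The triangle relation $\overline x_{n+r}\overline d^{-1}x_n=\overline x_n d^{-1}x_{n+r}$ from \eqref{eq:triangle relations sigma1r} rewrites the first summand as $d^{-1}x_{n+r}x_n^{-1}$, giving the first form in the proposition; the equivalent form $T_p=\overline x_n^{-1}\overline x_{n-r}d^{-1}+\overline x_n^{-1}\overline x_{n+r}\overline d^{-1}$ follows by applying the analogous triangle relation $\overline x_n\overline d^{-1}x_{n-r}=\overline x_{n-r}d^{-1}x_n$ to the other summand.

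The hard part will be the winding-number identification $f(1,3)=\gamma_{n-r}$ and $f(r+3,2)=\overline\gamma_{n+r}$. Although geometrically plausible---each diagonal lies on the side of the cut opposite to its corresponding boundary rung, and so picks up one unit of winding---this requires careful bookkeeping of orientations and a consistent choice of cyclic ordering on $P_{r+3}$ vis-\`a-vis the outer-loop direction $\overline\gamma^+=f(1,2)$. The cleanest resolution is to lift $f$ to the universal cover $\widetilde\Sigma_{1,r}=\RR\times[0,1]\twoheadrightarrow\Sigma_{1,r}$: $P_{r+3}$ maps to a fundamental domain bounded by two consecutive lifts of $\widetilde\gamma_n$, and the two diagonals become straight segments whose $\ZZ$-deck-translation classes immediately pin down the windings.
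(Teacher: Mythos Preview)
Your treatment of part (a) coincides with the paper's: it too simply invokes the noncommutative Laurent Phenomenon (Theorem \ref{th:noncomlaurent surface}) together with the identification $\AA_{\Delta_1}\cong\QQ\TT_r$ from Theorem \ref{th:A_Delta sigma}(a) (and Corollary \ref{cor:injective TDelta surface}).

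For part (b) you take a genuinely different route. The paper stays inside $\Sigma_{1,r}$: it names two triangles, one lying in $\Delta_{n-r}$ and the other in $\Delta_n$, and asserts as an ``immediate corollary of Theorem \ref{th:total anglle surface}'' that $T_p$ is the sum of their two angles at $p$, then evaluates each via \eqref{eq:general angle}. You instead cut the cylinder along $\gamma_n$, realize the result as a gluing morphism $f\colon P_{r+3}\to\Sigma_{1,r}$, and push forward the polygon identity $T_v^{\Delta^0}=T_v^{v^-,v^+}$ (Corollary \ref{cor:total angle}) through $f_*$, summing over $f^{-1}(p)=\{1,2\}$. Both arguments land on exactly the same two summands; yours supplies an explicit mechanism for the step the paper leaves to the reader, at the cost of the edge--labelling bookkeeping. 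One remark: the winding-number identification you flag as ``the hard part'' is easier than you suggest. Since $P_{r+3}$ has no interior marked points, the chord $(1,3)$ is homotopic rel endpoints to the boundary path $(1,2)\circ(2,3)$; applying $f$ and using that composition at the boundary point $p$ is single-valued in $\Gamma(\Sigma_{1,r})$ gives $f(1,3)=\overline\gamma^{\,+}\circ\gamma_n=\gamma_{n-r}$ directly, and likewise $f(r+3,2)=\overline\gamma_n\circ\overline\gamma^{\,+}=\overline{\gamma^+\circ\gamma_n}=\overline\gamma_{n+r}$. No lift to the universal cover is needed.
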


\begin{proof}Part (a) follows directly from Theorem \ref{th:noncomlaurent surface} and Corollary \ref{cor:injective TDelta surface}.

Prove (b). Consider a triangle  $(\gamma^+,\gamma_n,\overline \gamma_{n-r})$ in $\Delta_{n-r}$ and $(\gamma^+,\gamma_n,\overline \gamma_{n+r})$ in $\Delta_n$.

The following is an immediate corollary of Theorem \ref{th:total anglle surface}.

\begin{lemma} $T_p=T_{\gamma^+,\gamma_n,\overline \gamma_{n-r}}+T_{\gamma^+,\gamma_n,\overline \gamma_{n+r}}$.
\end{lemma}

Using this and taking into account that
$$T_{\gamma^+,\gamma_n,\overline \gamma_{n-r}}=\overline d^{-1}x_{n-r}x_n^{-1}=\overline x_n^{-1}\overline x_{n-r}d^{-1},~T_{\gamma^+,\gamma_n,\overline \gamma_{n+r}}=\overline d^{-1}x_{n+r}x_n^{-1}=\overline x_n^{-1}\overline x_{n+r}d^{-1}$$
in the notation \eqref{eq:general angle},
we obtain (b).

The proposition is proved.
\end{proof}
\begin{remark}
\label{re:Tp triangular}
Using the triangulation $\Delta_n$, it is easy see that
$$T_p=d^{-1}x_nx_{n-r}^{-1}+\overline d^{\,-1}x_{n-r} x_n^{-1}+\sum_{m=n+1-r}^n \overline x_{m-1}^{\,-1}c_mx_m^{-1}$$
for all $n\in \ZZ$.
\end{remark}

Clearly, by Theorem \ref{th:total anglle surface}, $T_p$ does not depend on $n$.

If $r$ is even, we can refine these observations and thus recover the recursion \eqref{eq:exchange relations sigma1k U}.

Indeed, set $U_n:=
\begin{cases} x_n & \text {if $n$ is even} \\
\overline x_n & \text {if $n$ is odd} \\
\end{cases},
C_n:=
\begin{cases} c_n & \text {if $n$ is even} \\
\overline c_n & \text {if $n$ is odd} \\
\end{cases},
$
and $D:=d^{-1}, \overline D:=\overline d^{\,-1}$.

By definition, $\TT_r$
is freely generated by $D$, $\overline D$ and  $C_i$, $i\in [r]$, $U_j$, $j\in [r+1]$ and, by  Proposition \ref{pr:Sigma1r}, $U_n\in \QQ \TT_r$ is a sum of elements of $\TT_r$. This and Proposition \ref{pr:Sigma1r} imply the following result.



\begin{theorem}
\label{th:discrete Kontsevich}
Let $r\ge 1$ be even. Then each element $U_n\in \ZZ\TT_r$, $n\in \ZZ$ satisfies  the recursion:
\begin{equation}
\label{eq:exchange relations sigma1r U}
\begin{cases} U_{n-r-1}DU_n=C_n+ U_{n-1}\overline D U_{n-r} & \text{if $n$ is even}\\
U_n\overline D U_{n-r-1}=C_n+ U_{n-r} D U_{n-1} &\text {if $n$ is odd} \\
\end{cases}.
\end{equation}
(with the convention $C_{n+r}=C_r$). Furthermore, the element $H_n\in Frac(\ZZ \TT_r)$, $n\in \ZZ$, given by
\begin{equation}
\label{eq:Hnprime}
H_n:= \begin{cases}
\overline D U_{n-r}U_n^{-1}+D U_{n+r}U_n^{-1}  & \text{if $n$ is even}\\
U_n^{-1} U_{n-r}D+ U_n^{-1}U_{n+r}\overline D & \text{if $n$ is odd}\\
\end{cases}
\end{equation}
does not depend on $n$ and belongs to $\ZZ \TT_r$.



\end{theorem}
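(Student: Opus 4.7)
My plan is to deduce Theorem \ref{th:discrete Kontsevich} as a direct consequence of Proposition \ref{pr:Sigma1r} together with the exchange relations \eqref{eq:exchange relations sigma1r} in $\AA_{\Sigma_{1,r}}$, all transported to $\ZZ\TT_r\cong \AA_{\Delta_1}$ via the isomorphism of Theorem \ref{th:A_Delta sigma}(a) applied to the triangulation $\Delta_1$ of the cylinder. Under this identification $\TT_r$ is freely generated by $D,\overline D,C_1,\ldots,C_r,U_1,\ldots,U_{r+1}$, and each $U_n$ ($n\in\ZZ$) is the element $x_n$ or $\overline x_n$ (depending on the parity of $n$); Proposition \ref{pr:Sigma1r}(a) guarantees that each $U_n$ is a sum of group elements of $\TT_r$, which already gives the membership $U_n\in \ZZ\TT_r$ part of the theorem.

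For the recursion \eqref{eq:exchange relations sigma1r U}, I would simply substitute into \eqref{eq:exchange relations sigma1r}, exploiting the fact that $r$ is even so that $n,n-r$ share parity while $n-1,n-r-1$ share the opposite parity. For even $n$ one has $U_n=x_n$, $U_{n-r}=x_{n-r}$, $U_{n-1}=\overline x_{n-1}$, $U_{n-r-1}=\overline x_{n-r-1}$, and $C_n=c_n$; the first exchange relation then reads $U_{n-r-1}DU_n=C_n+U_{n-1}\overline D U_{n-r}$, matching the even line of \eqref{eq:exchange relations sigma1r U}. The odd case is analogous, using the second exchange relation together with $C_n=\overline c_n$, and yields the odd line. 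So the recursion is just a relabeled form of \eqref{eq:exchange relations sigma1r}.

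For the conservation of $H_n$ and its membership in $\ZZ\TT_r$, I would appeal to Proposition \ref{pr:Sigma1r}(b), which gives two equivalent expressions for the total angle at the outer marked point:
\[
T_p=\overline d^{\,-1}x_{n-r}x_n^{-1}+d^{-1}x_{n+r}x_n^{-1}=\overline x_n^{-1}\overline x_{n-r}d^{-1}+\overline x_n^{-1}\overline x_{n+r}\overline d^{\,-1}.
\]
For even $n$, the first of these rewrites as $\overline D U_{n-r}U_n^{-1}+D U_{n+r}U_n^{-1}$, which is exactly $H_n$; for odd $n$, the second rewrites as $U_n^{-1}U_{n-r}D+U_n^{-1}U_{n+r}\overline D$, again exactly $H_n$. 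Hence $H_n=T_p$ for every $n$, so $H_n$ is independent of $n$ by Theorem \ref{th:total anglle surface}, and lies in $\ZZ\TT_r$ because $T_p\in \AA_{\Delta_1}\cong\ZZ\TT_r$ (for instance, via the explicit formula of Remark \ref{re:Tp triangular} with $n=r+1$, which expresses $T_p$ as a polynomial in the generators $D,\overline D,C_i,U_j$ and their inverses that lie in $\TT_r$). There is no serious obstacle here: the whole proof amounts to a careful substitution of variables, and the only point deserving attention is the parity bookkeeping in matching indices when passing from the $x,\overline x,c,\overline c,d,\overline d$ alphabet to the $U,C,D,\overline D$ alphabet.
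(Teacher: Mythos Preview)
Your proposal is correct and follows exactly the route the paper indicates: the paper states just before the theorem that ``This and Proposition \ref{pr:Sigma1r} imply the following result,'' and you have unpacked precisely that implication --- substituting the $U,C,D,\overline D$ notation into the exchange relations \eqref{eq:exchange relations sigma1r} for the recursion, and identifying $H_n$ with the total angle $T_p$ via Proposition \ref{pr:Sigma1r}(b) for the conservation and membership claims. Your parity bookkeeping is accurate.
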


The recursion \eqref{eq:exchange relations sigma1r U} clearly coincides with the recursion \eqref{eq:exchange relations sigma1k U} with $k=r+1$ and the element $H_n$ given by \eqref{eq:Hnprime} coincides with the element given by \eqref{eq:Hn}.

\begin{remark} In fact, Remark \ref{re:Tp triangular} implies that the ``conserved quantity" $H=H_n$ is equal (for any $n\in \ZZ$) to
$$\begin{cases}
DU_nU_{n-r}^{-1}+\overline D U_{n-r} U_n^{-1}+\sum\limits_{m=(n+2-r)/2}^{n/2} U_{2m-1}^{-1}C_{2m}U_{2m}^{-1}+ U_{2m-1}^{-1}C_{2m-1} U_{2m-2}^{-1}  & \text{if $n$ is even}\\
U_n^{-1}U_{n-r}D+U_{n-r}^{-1}U_n \overline D +\sum\limits_{m=(n+1-r)/2}^{(n-1)/2} U_{2m-1}^{\,-1}C_{2m} U_{2m}^{-1} + U_{2m+1}^{\,-1}C_{2m+1}U_{2m}^{-1} & \text{if $n$ is odd}\\
\end{cases}.
$$
\end{remark}

\subsection{An integrable system on an infinite strip}

In this section we establish Laurentness of another noncommutative recursion (which specializes to the discrete integrable system recently studied by P. Di Francesco in \cite{DF}). Indeed,
let $\Sigma_\infty$ be a horizontal strip with marked boundary points
$I=I_-\sqcup I_+$, where $I_+=\{i_+,i\in \ZZ\}$ (resp. $I_-=\{i_-,i\in \ZZ\}$) is the marked point set on the left (resp on the right) boundary line. Then, clearly,
$\Gamma(\Sigma_\infty)=[\Gamma(\Sigma_\infty)]=\{(i_\varepsilon,j_{\varepsilon'}):i,j\in \ZZ,~\varepsilon,\varepsilon'\in \{-,+\},i\ne j~\text{if $\varepsilon=\varepsilon'$}\}$.
Clearly,
$\Sigma_\infty=\bigcup\limits_{m^-,m^+\in \ZZ,n\in \ZZ_{>0}} \Sigma_{m^-,m^+}^n$, 
where $\Sigma_{m^-,m^+}^n\subset \Sigma$ is the convex hull of the real intervals $[m^-+1,m^-+n]_-\subset I_-$,
$[m^++1,m^++n]_+\subset I_+$. Clearly, it is an $2n$-gon embedded (as a parallelogram) into $\Sigma$, where we identify its vertex set $[2n]$ with $\{(m^-+1)_-\ldots,(m^-+n)_-\}\sqcup \{(m^++1)_+\ldots,(m^++n)_+\}$ via
$$k\mapsto
\begin{cases}(m^-+k)_- & \text{if $k\le n$}\\
(m^++2n+1-k)_+ & \text{if $k> n$}\\
\end{cases}.$$

We denote by $\AA_{\Sigma_{m^-,m^+}^n}$ a copy of $\AA_{2n}$ under the above identification of
the vertex set $[2n]$.

Then the natural inclusions $\Sigma_{m^-,m^+}^n\subset \Sigma_{{m'}^-,{m'}^+}^{n'}$ for ${m'}^-\le m^-$, ${m'}^+\le m^+$, ${m'}^-+n'\ge m^-+n$, ${m'}^++n'\ge m^++n$ are  morphisms in {\bf Surf} so they define (by Theorem \ref{th:functorial triangular}) the appropriate homomorphisms of algebras $\AA_{\Sigma_{m^-,m^+}^n}\to \AA_{\Sigma_{{m'}^-,{m'}^+}^{n'}}$, so we denote by $\AA_{\Sigma}$ the direct limit $\buildrel \longrightarrow \over {\lim} \AA_{\Sigma_{m^-,m^+}^n}$ under these homomorphisms.

Clearly, the following  noncommutative Ptolemy relations (in the form \eqref{eq:exchange relations upper}) hold in $\AA_{\Sigma_\infty}$:
\begin{equation}
\label{eq:difrancesco1prim}
x_{(i+1)_\pm,j_\mp}x_{(j+1)_\mp,j_\mp}^{-1} x_{(j+1)_\mp,i_\pm}=x_{(i+1)_\pm,i_\mp}+x_{(i+1)_\pm,(j+1)_\mp} x_{j_\mp,(j+1)_\mp}^{-1} x_{j_\mp,i_\pm}
\end{equation}
together with the triangle relations:
\begin{equation}
\label{eq:difrancesco2prim}
x_{i_\pm,j_\mp}x_{(j+1)_\mp,j_\mp}^{-1} x_{(j+1)_\mp,i_\pm}= x_{i_\pm,(j+1)_\mp}x_{j_\mp,(j+1)_\mp}^{-1} x_{j_\mp,i_\pm}
\end{equation}
for all $i,j\in \ZZ$.



\begin{remark} It is natural to conjecture that the relations \eqref{eq:difrancesco1prim} and  \eqref{eq:difrancesco2prim} are defining for $\AA_{\Sigma_\infty}$ and (in view of Remark \ref{re:injective A to A'} that) all natural homomorphisms $\AA_{\Sigma_{m^-,m^+}^n}\hookrightarrow \AA_{\Sigma_\infty}$ are injective.

\end{remark}

Note that $\Sigma_\infty$ has a triangulation
$$\Delta_\infty=\{(i_{\pm},(i+1)_\pm),((i+1)_\pm,i_\pm);(i_-,i_+),(i_+,i_-), (i_-,(i+1)_+),((i+1)_+,i_-):i\in \ZZ\}\ .$$

\begin{picture}(10,85)
\put(100,15){\includegraphics[scale=0.5]{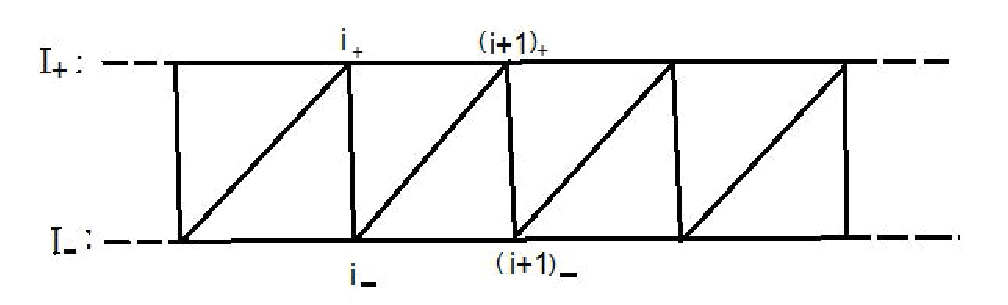}}
\put(165,5){Triangulation $\Delta_\infty$ of  $\Sigma_\infty$}
\end{picture}

Hence the group $\TT_\infty$ generated by $d_{i,\pm}:=x_{i_{\pm},(i+1)_\pm}$, $\overline d_{i,\pm}:=x_{i_{\pm},(i+1)_\pm}$,
$x_i:=x_{i_-,i_+}$, $\overline x_i:=x_{i_+,i_-}$, $y_i=x_{i_-,(i+1)_+}$, $\overline y_i=x_{i_-,(i+1)_+}$, $i\in \ZZ$   subject to the triangle relations
\begin{equation}
\label{eq:triangle relations sigmainfty Deltainfty}
x_i\overline d_{i,+}^{\,-1} \overline y_i=y_i d_{i,+}^{-1} \overline x_i,~\overline y_i\overline d_{i,-}^{\,-1} x_{i+1}=\overline x_{i+1}d_{i,-}^{\,-1}y_i
\end{equation}
for $i\in \ZZ$ is naturally isomorphic to the triangle group $\TT_{\Delta_\infty}$.
Corollary \ref{cor:injective TDelta surface} implies that
the subalgebra $\AA_{\Delta_\infty}$ of $\AA_{\Sigma_\infty}$
(generated by all $x_\gamma$, $\gamma\in \Gamma(\Sigma_\infty)$ and all $x_{\gamma_0}^{-1}$, $\gamma_0\in \Delta_\infty$)
is the group algebra $\ZZ \TT_\infty$.

\begin{proposition}
\label{pr:Sigmainfty}
In $\AA_{\Sigma_\infty}$ we have:

(a)  Each $x_{i_\pm, j_\mp}$, $i,j\in \ZZ$ is sum of elements of $\TT_\infty$ in $\ZZ \TT_\infty$.

(b) The total  angle $T_{i_\pm}\in \ZZ \TT_r$ at $i_\pm$
is given by
$$T_{i_\pm}=x_{j_\mp,i_\pm}^{-1}(x_{j_\mp,(i-1)_\pm}x_{i_\pm,(i-1)_\pm}^{-1}+x_{j_\mp,(i+1)_\pm}x_{i_\pm,(i+1)_\pm}^{-1})=(x_{(i-1)_\pm,i_\pm}^{-1}  x_{(i-1)_\pm,j_\mp}+x_{(i+1)_\pm,i_\pm}^{-1}  x_{(i-1)_\pm,j_\mp})x_{i_\pm,j_\mp}^{-1}$$


for each $j\in \ZZ$.
\end{proposition}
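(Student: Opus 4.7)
The plan is to prove both parts by closely paralleling the argument of Proposition \ref{pr:Sigma1r}, with the triangulation $\Delta_\infty$ of the infinite strip playing the role of $\Delta_1$ in the annular case.

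For part (a), I would invoke the Noncommutative Laurent Phenomenon (Theorem \ref{th:noncomlaurent surface}) applied to the curve $\gamma=\gamma_{i_\pm,j_\mp}$. Since $\gamma$ and its canonical polygon $P_{\Delta_\infty}(\gamma)$ are contained in some finite sub-parallelogram $\Sigma^n_{m^-,m^+}$, the formula \eqref{eq:noncomm schiffler surf} expresses $x_{i_\pm,j_\mp}$ as a finite sum $\sum x_\ii$ of monomials in the generators $x_{\gamma_0}^{\pm 1}$, $\gamma_0\in\Delta_\infty$. Under the isomorphism $\ii_{\Delta_\infty}:\QQ\TT_\infty\widetilde{\to}\AA_{\Delta_\infty}$ afforded by Theorem \ref{th:A_Delta sigma}(a) and Corollary \ref{cor:injective TDelta surface}, each monomial $x_\ii$ corresponds to a single group element $t_\ii\in\TT_\infty$, and the resulting sum has nonnegative integer coefficients, proving (a).

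For part (b), I would exhibit, for each $j$, a triangulation $\Delta^{(j)}$ of $\Sigma_\infty$ in which $i_\pm$ is incident to exactly three edges: the two boundary arcs to $(i-1)_\pm$ and $(i+1)_\pm$, together with the single interior diagonal $(i_\pm,j_\mp)$. Such a $\Delta^{(j)}$ exists because the three chords $(k_\pm,j_\mp)$ for $k\in\{i-1,i,i+1\}$ all emanate from the single vertex $j_\mp$ and hence are pairwise non-crossing; this local fan configuration can be extended to a full triangulation of the non-compact strip by combining it with any compatible triangulation away from $i_\pm$ (e.g.\ by exhausting $\Sigma_\infty$ by finite sub-parallelograms and triangulating each in turn). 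In $\Delta^{(j)}$ the only two triangles containing $i_\pm$ are $\tau_-=(i_\pm,j_\mp,(i-1)_\pm)$ and $\tau_+=(i_\pm,(i+1)_\pm,j_\mp)$ (with an orientation depending on the sign). Applying the definition \eqref{eq:total angle} of the total angle and the angle formula \eqref{eq:general angle} to $\tau_-$ and $\tau_+$, I compute $T_{i_\pm}^{\Delta^{(j)}}$ as the sum of two terms of the form $x_{\overline{\gamma}_1}^{-1}x_{\gamma_2}x_{\overline{\gamma}_3}^{-1}$, which unpack to the first expression in the statement. The second expression is obtained by applying the triangle relation \eqref{eq:triangle relations sigma} inside each summand, which converts a factorization with inverses collected on the left into one with inverses collected on the right. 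Finally, Theorem \ref{th:total anglle surface} guarantees that $T_{i_\pm}^{\Delta^{(j)}}$ equals $T_{i_\pm}$, independent of $j$.

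The principal obstacle I expect is the topological bookkeeping needed to verify that $\Delta^{(j)}$ can indeed be realized on the non-compact strip with exactly the prescribed local structure at $i_\pm$, and that the two triangles $\tau_\pm$ have the correct (clockwise) orientation required by \eqref{eq:total angle}; this is elementary but must be done carefully because $\Sigma_\infty$ has infinitely many marked points. A secondary, minor issue is the sign choice in $i_\pm$: for one of the two signs the triangles $\tau_\pm$ are naturally oriented counterclockwise rather than clockwise, which merely permutes the roles of $\gamma_1$ and $\gamma_3$ in \eqref{eq:general angle}, and the two resulting expressions agree by the triangle relations.
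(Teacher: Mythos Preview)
Your proposal is correct and follows essentially the same route as the paper. For (a) the paper simply invokes Theorem~\ref{th:noncomlaurent surface}, and for (b) it states without elaboration that the decomposition of $T_{i_\pm}$ into the two angles at the triangles $(i_\pm,j_\mp,(i\pm 1)_\pm)$ is ``an immediate corollary of Theorem~\ref{th:total anglle surface}''; your explicit construction of the auxiliary triangulation $\Delta^{(j)}$ with exactly two triangles at $i_\pm$ is precisely what justifies that sentence, and your derivation of the second displayed expression via the triangle relations matches the paper's computation.
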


\begin{proof} Part (a) follows directly from Theorem \ref{th:noncomlaurent surface}.

Prove (b). Consider  triangles in the vertices $(i_\pm,j_\mp,(i-1)_\pm)$ and $(i_\pm,j_\mp,(i+1)_\pm)$ in $\Sigma_\infty$.

The following is an immediate corollary of Theorem \ref{th:total anglle surface}.

\begin{lemma} $T_{i_\pm}=T_{(i_\pm,j_\mp),(j_\mp,(i-1)_\pm),((i-1)_\pm,i_\pm)}+T_{(i_\pm,j_\mp),(j_\mp,(i+1)_\pm),((i+1)_\pm,i_\pm)}$.
\end{lemma}

Using this and taking into account that
$$T_{(i_\pm,j_\mp),(j_\mp,(i-1)_\pm),((i-1)_\pm,i_\pm)}=x_{j_\mp,i_\pm}^{-1}x_{j_\mp,(i-1)_\pm}x_{i_\pm,(i-1)_\pm}^{-1}=
x_{(i-1)_\pm,i_\pm}^{-1}  x_{(i-1)_\pm,j_\mp}x_{i_\pm,j_\mp}^{-1}\ ,$$
$$T_{(i_\pm,j_\mp),(j_\mp,(i+1)_\pm),((i+1)_\pm,i_\pm)}=x_{j_\mp,i_\pm}^{-1}x_{j_\mp,(i+1)_\pm}x_{i_\pm,(i+1)_\pm}^{-1}=
x_{(i+1)_\pm,i_\pm}^{-1}  x_{(i-1)_\pm,j_\mp}x_{i_\pm,j_\mp}^{-1}$$
in the notation \eqref{eq:general angle},
we obtain (b).

The proposition is proved.
\end{proof}

\begin{remark}
\label{re:Tinfty triangular}
Using the triangulation $\Delta_\infty$, it is easy see that
$$T_{i_-}=d_{i-1,-}^{-1}y_{i-1}x_i^{-1}+\overline x_i^{\,-1}d_{i,+}y_i^{-1}+ \overline d_{i+1,-}^{\,-1}x_{i+1}y_i^{-1},
~T_{i_+}=y_i^{-1}x_{i-1}\overline d_{i-1,+}^{-1}+y_i^{-1}d_{i-1}x_i^{-1}+x_i^{-1}y_id_{i,+}^{-1} \ .$$

\end{remark}


We can refine these observations and thus recover the recursions \eqref{eq:difrancesco1}, \eqref{eq:difrancesco2}.
Indeed, set
$$U_{ij}=x_{i_-,j_+}, ~V_{ij}:=x_{i_+,j_-},
~A_j:=x_{(j+1)_+,j_+}^{-1},~\overline A_j= x_{j_+,(j+1)_+}^{-1},~B_j:=x_{(j+1)_-,j_-}^{-1},~\overline B_j= x_{j_-,(j+1)_-}^{-1}\ .$$

By definition, $\TT_\infty$
is freely generated by $A_i,\overline A_i$, $B_i,\overline B_i$, $U_{i,i}$, $V_{i,i}$,  $U_{i,i+1}$, $i\in \ZZ$  and,
by  Proposition \ref{pr:Sigmainfty}, each $U_{ij}^\pm\in \QQ \TT_\infty$ is a sum of elements of $\TT_r$.
This and Proposition \ref{pr:Sigmainfty} imply the following result.

\begin{theorem}
\label{th:infinite strip}
The elements $U_{ij},V_{ij}\in \ZZ\TT_\infty$  $i,j\in \ZZ$ satisfy  \eqref{eq:difrancesco1}, \eqref{eq:difrancesco2}.
Furthermore, the elements $H_{ij}^\pm\in Frac(\ZZ \TT_\infty)$, $i\in \ZZ$, given by
\eqref{eq:Hnprimeinfty}
do not depend on $j$ and belong to $\ZZ \TT_\infty$.



\end{theorem}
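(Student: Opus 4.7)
The plan is to deduce both halves of the theorem by direct translation of earlier geometric results about $\Sigma_\infty$ into the notation $U_{ij},V_{ij},A_j,\overline A_j,B_j,\overline B_j$. First, I would verify the recursions \eqref{eq:difrancesco1} and \eqref{eq:difrancesco2} by literal substitution. Recall
$$U_{ij}=x_{i_-,j_+},\quad V_{ij}=x_{i_+,j_-},\quad A_j=x_{(j+1)_+,j_+}^{-1},\quad \overline A_j=x_{j_+,(j+1)_+}^{-1},\quad B_j=x_{(j+1)_-,j_-}^{-1},\quad \overline B_j=x_{j_-,(j+1)_-}^{-1}\ .$$
Plugging these into the noncommutative Ptolemy relation \eqref{eq:difrancesco1prim} for the quadrilateral $(i_-,j_+,(j+1)_+,(i+1)_-)$ gives the first identity in \eqref{eq:difrancesco1} (and cyclically, the second). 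Likewise, plugging them into the noncommutative triangle relation \eqref{eq:difrancesco2prim} for the triangle $(i_-,j_+,(j+1)_+)$ produces the first identity in \eqref{eq:difrancesco2} (and cyclically, the second). This part is pure bookkeeping.

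Second, I would identify $H_{ij}^\pm$ with total noncommutative angles. Substituting the definitions into
$$H_{ij}^+ = U_{ji}^{-1}\bigl(U_{j,i-1}A_{i-1}+U_{j,i+1}\overline A_i\bigr)$$
yields
$$H_{ij}^+ = x_{j_-,i_+}^{-1}\bigl(x_{j_-,(i-1)_+}\,x_{i_+,(i-1)_+}^{-1}+x_{j_-,(i+1)_+}\,x_{i_+,(i+1)_+}^{-1}\bigr),$$
which is exactly the right-hand side of the formula for $T_{i_+}$ from Proposition \ref{pr:Sigmainfty}(b) with $\mp$ specialized to $-$. The analogous computation identifies $H_{ij}^-$ with $T_{i_-}$ (after correcting the evident misprint in the exponent on $\overline B_i$). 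By Theorem \ref{th:total anglle surface} the total angle $T_{i_\pm}$ does not depend on the triangulation used to compute it — in particular, not on the auxiliary index $j$ — which gives independence of $j$.

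Third, to conclude $H_{ij}^\pm\in \ZZ\TT_\infty$ (rather than merely in $Frac(\ZZ\TT_\infty)$), I would invoke Remark \ref{re:Tinfty triangular}, which expresses $T_{i_\pm}$ as an explicit sum of monomials in the generators $d_{i,\pm},\overline d_{i,\pm},x_i,\overline x_i, y_i,\overline y_i$ of $\TT_{\Delta_\infty}$. Combined with the identification $\AA_{\Delta_\infty}=\ZZ\TT_\infty$ coming from Theorem \ref{th:A_Delta sigma}(a) and Corollary \ref{cor:injective TDelta surface} (applied to $\Sigma_\infty$ via its exhaustion by the parallelograms $\Sigma_{m^-,m^+}^n$), this shows $H_{ij}^\pm\in \ZZ\TT_\infty$. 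The fact that each $U_{ij}$ and $V_{ij}$ is a sum of elements of $\TT_\infty$ in $\ZZ\TT_\infty$ — asserted in the discussion preceding the theorem — is Proposition \ref{pr:Sigmainfty}(a), which in turn is an instance of the Noncommutative Laurent Phenomenon (Theorem \ref{th:noncomlaurent surface}) applied to $\Delta_\infty$.

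No step presents a genuine obstacle: the recursions are the exchange/triangle relations in disguise, and the conservation is the triangulation-independence of total angles. The only mildly delicate point is the rigorous passage to the infinite strip $\Sigma_\infty$ as a direct limit of parallelograms, which is already taken care of by the construction preceding \eqref{eq:difrancesco1prim}, so the argument reduces entirely to invoking the results of Section \ref{sect:Noncommutative triangulated surfaces}.
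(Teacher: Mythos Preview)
Your proposal is correct and follows essentially the same approach as the paper, which simply states that the theorem follows from Proposition~\ref{pr:Sigmainfty} together with the preceding discussion. You have merely spelled out in detail what the paper leaves implicit: the substitution identifying \eqref{eq:difrancesco1}, \eqref{eq:difrancesco2} with \eqref{eq:difrancesco1prim}, \eqref{eq:difrancesco2prim}, the identification $H_{ij}^+=T_{i_+}$ and $H_{ij}^-=T_{i_-}$ via Proposition~\ref{pr:Sigmainfty}(b), and the appeal to Remark~\ref{re:Tinfty triangular} for membership in $\ZZ\TT_\infty$.
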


\section{Appendix: Noncommutative localizations}
\label{sect:appendix}

Recall that for a multiplicative monoid $S$
its {\it linearization} $\ZZ S$  is the ring  $\ZZ S=\bigoplus_{s\in S} \ZZ \cdot [s]$ with the natural extension of multiplication on $S$.

Given a multiplicative submonoid $S$ of a unital ring $R$, define the {\it universal localization} $R[S^{-1}]$ of $R$ by $S$ to be quotient of the free product $R*(\ZZ S^{op})$ by the ideal generated by all elements of the form $s*[s]-1$, $[s]*s-1$ for any $s\in S$.

By definition, one has a canonical ring homomorphism
\begin{equation}
\label{eq:canonical embedding localization}
R\to R[S^{-1}]\ .
\end{equation}

In other words, $R[S^{-1}]$ is the unital ring $R'$ with the universal property that one has a ring homomorphism $R\to R'$ under which the image of each element of $S$ in invertible.

Note that \eqref{eq:canonical embedding localization} in not always injective. For each unital ring $R$ denote by $R^\times$ the set of all units (i.e., invertible elements) in $R$.

The following fact is obvious.

\begin{lemma}
\label{le:universal property of localization}
For any ring homomorphism $\varphi: R\to R'$ and any submonoid $S\subset R\setminus\{0\}$ such that $\varphi(S)\subset (R')^\times$ there is a unique ring homomorphism $\varphi_S:R[S^{-1}]\to R'$ such that the composition $R\to R[S^{-1}]\to R'$ is $\varphi$.

\end{lemma}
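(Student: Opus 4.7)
The plan is to read off the universal property directly from the defining presentation of $R[S^{-1}]$ as a quotient of $R*(\ZZ S^{op})$. First I would recall that by the universal property of free products of rings, giving a ring homomorphism $R*(\ZZ S^{op})\to R'$ is the same as giving a pair of ring homomorphisms $\varphi_1:R\to R'$ and $\varphi_2:\ZZ S^{op}\to R'$. I take $\varphi_1:=\varphi$, and I define $\varphi_2$ on the basis $S$ of $\ZZ S^{op}$ by $\varphi_2([s]):=\varphi(s)^{-1}$, which makes sense since by hypothesis each $\varphi(s)$ lies in $(R')^\times$. A direct check that $\varphi_2$ respects the (opposite) multiplication — namely $\varphi_2([s]\cdot [t])=\varphi_2([ts])=\varphi(ts)^{-1}=\varphi(t)^{-1}\varphi(s)^{-1}=\varphi_2([s])\varphi_2([t])$ — shows that $\varphi_2$ is a ring homomorphism, and hence $\varphi_1$ and $\varphi_2$ combine into a ring homomorphism $\Phi:R*(\ZZ S^{op})\to R'$.

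Next I would verify that the two-sided ideal $J\subset R*(\ZZ S^{op})$ generated by the elements $s*[s]-1$ and $[s]*s-1$, $s\in S$, lies in $\ker \Phi$. This is immediate: $\Phi(s*[s]-1)=\varphi(s)\varphi(s)^{-1}-1=0$ and $\Phi([s]*s-1)=\varphi(s)^{-1}\varphi(s)-1=0$. Consequently $\Phi$ descends to a ring homomorphism $\varphi_S:R[S^{-1}]=R*(\ZZ S^{op})/J\to R'$. By construction, the composite $R\to R[S^{-1}]\xrightarrow{\varphi_S} R'$ sends $r\mapsto \Phi(r)=\varphi(r)$, which is exactly the factorization required.

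For uniqueness, I would observe that $R[S^{-1}]$ is generated, as a ring, by the images of $R$ and the classes of $[s]$, $s\in S$. Any ring homomorphism $\psi:R[S^{-1}]\to R'$ making the triangle commute is forced to agree with $\varphi$ on (the image of) $R$; moreover, since $[s]\cdot s$ and $s\cdot [s]$ both become $1$ in $R[S^{-1}]$, applying $\psi$ gives $\psi([s])\varphi(s)=1=\varphi(s)\psi([s])$ in $R'$, so $\psi([s])$ must equal $\varphi(s)^{-1}=\varphi_S([s])$. Hence $\psi=\varphi_S$ on a generating set, and therefore everywhere.

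There is essentially no hard step here — the only mild subtlety is keeping track of the ``op'' in $\ZZ S^{op}$ so that the correspondence $[s]\mapsto \varphi(s)^{-1}$ is a genuine ring homomorphism rather than an anti-homomorphism; everything else is a formal consequence of the presentation of $R[S^{-1}]$ as a quotient of a free product.
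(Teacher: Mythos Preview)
Your argument is correct and is exactly the standard verification one would give; the paper itself does not supply a proof, declaring the lemma ``obvious'' and moving on. One tiny slip: in your check that $\varphi_2$ is multiplicative you wrote $\varphi(ts)^{-1}=\varphi(t)^{-1}\varphi(s)^{-1}$, whereas $(\varphi(t)\varphi(s))^{-1}=\varphi(s)^{-1}\varphi(t)^{-1}$ --- the final equality $=\varphi_2([s])\varphi_2([t])$ is nonetheless correct, so this is just a typo in the intermediate step.
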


For each submonoid $S\subset R\setminus \{0\}$ define its {\it saturation} $\hat S$ to be the set of all $r\in R$ such that the image of $r$ in $R[S^{-1}]$ is invertible. Clearly, $\hat S$ is a submonoid of $R\setminus \{0\}$ containing $S$. We say that $S$ is saturated if $\hat S=S$. The following obvious fact justifies this definition.

\begin{lemma} For any submonoid $S\subset R\setminus \{0\}$ one has
$R[S^{-1}]=R[\hat S^{\,-1}]$.
Moreover, $\hat S$ is the largest submonoid of $R\setminus \{0\}$ with this property.

\end{lemma}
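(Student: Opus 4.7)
The plan is to prove both assertions purely formally from the universal property of universal localization (Lemma \ref{le:universal property of localization}) together with the definition of $\hat S$ as the preimage in $R$ of the units of $R[S^{-1}]$.

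For the equality $R[S^{-1}] = R[\hat S^{\,-1}]$, I would construct mutually inverse ring homomorphisms between the two localizations. In one direction, since $S \subset \hat S$, the canonical homomorphism $R \to R[\hat S^{\,-1}]$ sends every element of $S$ to a unit, so by Lemma \ref{le:universal property of localization} it factors uniquely through a homomorphism $\varphi : R[S^{-1}] \to R[\hat S^{\,-1}]$. In the other direction, by the very definition of $\hat S$, the canonical homomorphism $R \to R[S^{-1}]$ sends every element of $\hat S$ to a unit, so again by Lemma \ref{le:universal property of localization} it factors uniquely through a homomorphism $\psi : R[\hat S^{\,-1}] \to R[S^{-1}]$. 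Both compositions $\psi\circ\varphi$ and $\varphi\circ\psi$ restrict to the canonical map $R \to R[S^{-1}]$ (resp. $R\to R[\hat S^{\,-1}]$) and hence by the uniqueness clause of Lemma \ref{le:universal property of localization} must equal the respective identity maps.

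For the maximality, suppose $S'\subset R\setminus\{0\}$ is any submonoid with $S\subset S'$ satisfying $R[S^{-1}] = R[{S'}^{-1}]$ (with the canonical map being the comparison isomorphism). Then for any $s\in S'$, its image in $R[{S'}^{-1}]$ is by construction a unit, and therefore its image in $R[S^{-1}]$ is a unit as well; by the definition of $\hat S$ this places $s\in \hat S$. Hence $S'\subset \hat S$, proving maximality.

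There is no real obstacle here: the argument is a clean two-step diagram chase using only the universal property already recorded in Lemma \ref{le:universal property of localization} and the tautological definition of $\hat S$. The only mild subtlety is phrasing precisely what "largest submonoid with this property" means — I interpret it as the largest $S'\supseteq S$ in $R\setminus\{0\}$ for which the canonical map $R[S^{-1}]\to R[{S'}^{-1}]$ (well-defined since $S\subset S'$ so every element of $S$ is invertible in $R[{S'}^{-1}]$) is an isomorphism — and with this reading the argument above goes through verbatim.
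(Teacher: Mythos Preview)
Your argument is correct and is exactly the standard verification the paper has in mind; the paper itself labels this lemma as ``obvious'' and omits the proof. One minor remark: for the maximality clause you need not assume $S\subset S'$ --- if $R[S^{-1}]$ and $R[{S'}^{-1}]$ are isomorphic as $R$-rings, every $s\in S'$ already maps to a unit in $R[S^{-1}]$, forcing $S'\subset \hat S$ --- but your reading is the natural one and the conclusion is the same.
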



Following Malcev and Cohn, we say that a unital ring is of class ${\mathcal E}$  if it can be embedded into a skew-field.


\begin{lemma}
\label{le:free localization general}
Let $R$ be any ring of class ${\mathcal E}$. Then for any multiplicative submonoid $S$ of $R\setminus \{0\}$ the canonical homomorphism \eqref{eq:canonical embedding localization} is injective.

\end{lemma}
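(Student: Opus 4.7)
\smallskip\noindent\textbf{Proof plan.} The plan is to exploit the universal property of the localization (Lemma~\ref{le:universal property of localization}) together with the hypothesis that $R$ embeds in a skew-field. First, since $R$ is of class $\mathcal{E}$, fix an injective ring homomorphism $\iota\colon R\hookrightarrow F$ where $F$ is a skew-field. Because $S\subset R\setminus\{0\}$ and $\iota$ is injective, the image $\iota(S)$ lies in $F\setminus\{0\}=F^\times$, so every element of $\iota(S)$ is invertible in $F$.

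Next, apply Lemma~\ref{le:universal property of localization} with $R'=F$ and $\varphi=\iota$: it produces a unique ring homomorphism $\iota_S\colon R[S^{-1}]\to F$ whose composition with the canonical map $\pi\colon R\to R[S^{-1}]$ equals $\iota$, i.e.\ $\iota_S\circ \pi=\iota$. Since $\iota$ is injective, $\pi$ must also be injective. This is the entirety of the argument.

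There is essentially no obstacle here: the lemma is a one-line consequence of the universal property plus the existence of a skew-field envelope, and no nontrivial combinatorics or normal-form analysis is required. The only thing worth noting is that one does \emph{not} need $S$ to be saturated or factor-closed — any multiplicative submonoid of $R\setminus\{0\}$ will do, because invertibility in $F$ is automatic for all nonzero elements. In particular, the same proof shows that $\pi$ remains injective after replacing $S$ by its saturation $\hat S$, consistent with the preceding lemma.
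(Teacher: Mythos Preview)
Your proof is correct and follows essentially the same approach as the paper: fix an embedding of $R$ into a skew-field, use the universal property of localization to factor it through $R[S^{-1}]$, and conclude that the canonical map must be injective since the composite is.
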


\begin{proof} Indeed, let $\FF$ be a skew field and $\varphi:R\to \FF$ be a monomorphism. By definition, for any submonoid $S$ of $R\setminus \{0\}$, $\varphi$ factors as $\varphi=g\circ f$, where $f:R\to R[S^{-1}]$ and $g:R[S^{-1}]\to \FF$ are canonical homomorphisms. Since $\varphi$ is a monomorphism, then $f$ is also a monomorphism.
\end{proof}

\begin{definition}
\label{def:divisible submonoid}
For a ring $R$ of class ${\mathcal E}$ we say that a submonoid $S$ of $R\setminus \{0\}$ is {\it divisible} if $R[S^{-1}]$ is also of class ${\mathcal E}$.
\end{definition}

Following Cohn, we say that a submonoid $S$ of $R\setminus \{0\}$ is {\it factor-closed} if for any $a,b\in R\setminus\{0\}$, $ab\in S$ implies that $a,b\in S$.

\begin{proposition} Let $R$ be of class ${\mathcal E}$ and $S$ be a divisible submonoid of $R\setminus \{0\}$. Then the saturation $\hat S$ of $S$ is a factor-closed submonoid of $R\setminus \{0\}$.
\end{proposition}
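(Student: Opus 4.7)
The plan is to exploit the divisibility hypothesis directly: by definition, it supplies an embedding $\iota \colon R[S^{-1}] \hookrightarrow \FF$ into a skew field $\FF$, and my strategy is to verify invertibility of $f(a)$ and $f(b)$ in $R[S^{-1}]$ (where $f \colon R \to R[S^{-1}]$ is the canonical homomorphism) by first exhibiting one-sided inverses inside $R[S^{-1}]$ and then upgrading them to two-sided inverses using that $\FF$ is a skew field, together with the injectivity of $\iota$.

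First I would set up the data. Assume $a, b \in R \setminus \{0\}$ with $ab \in \hat S$, and let $u \in R[S^{-1}]$ denote the inverse of $f(ab) = f(a) f(b)$. The equation $u f(a) f(b) = 1$ says that $u f(a)$ is a left inverse of $f(b)$ in $R[S^{-1}]$, while $f(a) f(b) u = 1$ says that $f(b) u$ is a right inverse of $f(a)$ in $R[S^{-1}]$. Inside $\FF$, however, the element $\iota(f(a))$ is nonzero (see the remark on nonvanishing below), hence invertible, so $\iota(f(b) u) = \iota(f(a))^{-1}$ in $\FF$, and therefore $\iota\bigl((f(b)u) f(a)\bigr) = 1$. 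Injectivity of $\iota$ then yields $(f(b) u) f(a) = 1$ already in $R[S^{-1}]$; the analogous argument shows $f(b) (u f(a)) = 1$. Hence both $f(a)$ and $f(b)$ are invertible in $R[S^{-1}]$, i.e., $a, b \in \hat S$.

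The only delicate point in the plan is to ensure that $f(a)$ and $f(b)$ are nonzero in $R[S^{-1}]$, so that the identities above are not vacuous. This is where the class $\mathcal{E}$ hypothesis on $R$ enters: by Lemma \ref{le:free localization general}, the canonical map $R \to R[S^{-1}]$ is injective, so nonzero elements of $R$ remain nonzero after localization. The remaining submonoid axioms for $\hat S$ (containing $1$, closed under multiplication) are automatic because products of units in any ring are units, and thus require no further argument. Overall the proposition reduces to a clean piece of bookkeeping with the embedding $R[S^{-1}] \hookrightarrow \FF$, with no genuine obstacle beyond correctly tracking where each intermediate identity lives.
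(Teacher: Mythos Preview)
Your proof is correct and follows essentially the same route as the paper's. Both arguments hinge on the divisibility of $S$ to upgrade one-sided inverses of $f(a)$ and $f(b)$ in $R[S^{-1}]$ to genuine two-sided inverses: you do this by passing through the embedding $\iota:R[S^{-1}]\hookrightarrow\FF$ and pulling back the identity, while the paper instead notes that $R[S^{-1}]$ has no zero divisors (a consequence of the same embedding) and argues directly that $x(yzx-1)=0$ forces $yzx=1$. These are interchangeable formulations of the same idea.
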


\begin{proof} Since $S$ is divisible, in particular, the canonical homomorphism $R\to R'=R[S^{-1}]=R[\hat S^{-1}]$ is injective.   It suffices to prove that
if  $x,y\in R$ such that $xy\in \hat S$, then $x\in \hat S$, $y\in \hat S$.
Indeed, let $z:=(xy)^{-1}$ and $t:=yzx-1$ in $R'$. By definition, $xyz=1=zxy$. This implies that $xt=xyzx-x=1\cdot x-x=0$. Since $R'$ has no zero divisors and $x\ne 0$, then $t=0$, i.e., $(yz)x=1$. Since $x(yz)=1$, we see that $x$ is invertible in $R'$ hence $x\in \hat S$. Similarly, $y\in \hat S$ as well.

The proposition is proved.
\end{proof}

Below we provide a sufficient criterion for a group algebra of a group to belong to class ${\mathcal E}$ and for divisibility of some of its submonoids.

\begin{definition}
\label{def:1-relator}
A group $G$ is called {\it $1$-relator torsion-free} if $G$ is isomorphic to $F/\langle x\rangle$ where $F$ is a finitely generated free group, $x\in F\setminus \{1\}$ is not a proper power in $F$, and $\langle x\rangle$ denotes the normal subgroup of $F$ generated by $x$.
\end{definition}

Results of  Malcev, Newman, J.~Lewin and T.~Lewin  (see e.g., \cite[Section 8.7]{Cohn}, \cite{lewin}) imply the following.

\begin{theorem}
\label{th:invertible almost free} Let  $G$ be any finitely generated free group or any $1$-relator torsion free group. Then the group algebra $R=\QQ G$ is of class ${\mathcal E}$. In particular, for any submonoid $S\subset \QQ G\setminus \{0\}$ the canonical homomorphism \eqref{eq:canonical embedding localization}
is injective.

\end{theorem}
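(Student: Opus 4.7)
My plan is to split the theorem into the two cases (free and $1$-relator torsion-free) and in each case exhibit an embedding of $\QQ G$ into a skew field $\FF$, since once such an embedding exists the second assertion follows immediately from Lemma \ref{le:free localization general}.

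For the free group case I would use the Malcev--Neumann construction. The key point is that every finitely generated free group $F$ admits a two-sided invariant linear order $<$ (this goes back to Magnus; alternatively one may use the Magnus embedding $F \hookrightarrow \ZZ\langle\!\langle x_1,\ldots,x_m\rangle\!\rangle^\times$ into units of the power series ring and pull back the natural order on leading terms). Given such an order, the ring $\QQ((F))$ of formal sums $\sum_{g \in F} a_g g$ whose supports are well-ordered subsets of $F$ is a skew field under the usual (Malcev--Neumann) convolution product, and the canonical inclusion $\QQ F \hookrightarrow \QQ((F))$ is an embedding of rings. This establishes that $\QQ F$ is of class $\mathcal E$.

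For the $1$-relator torsion-free case I would cite (rather than reprove) the theorem of J.~Lewin and T.~Lewin \cite{lewin}: if $G = F/\langle\!\langle x\rangle\!\rangle$ with $x$ not a proper power, then $\QQ G$ embeds into a skew field. Their construction proceeds by a Magnus-style hierarchy expressing $G$ as an HNN extension of a subgroup of lower complexity, and by induction on the length of the Magnus hierarchy one embeds $\QQ G$ into a skew field built as a skew Laurent series ring over the skew field associated to the lower piece. Rather than reproduce this inductive argument, I would state the Lewin--Lewin theorem and quote it verbatim, noting that torsion-freeness of $G$ is precisely the hypothesis needed to run the induction (since the base of the hierarchy is then a free group, which is handled by the previous paragraph).

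With the embedding $\varphi: \QQ G \hookrightarrow \FF$ in hand, the second statement is a direct application of Lemma \ref{le:free localization general}: for any submonoid $S \subset \QQ G \setminus\{0\}$, the composition $\QQ G \to \QQ G[S^{-1}] \to \FF$ (where the second arrow is given by the universal property of the localization, Lemma \ref{le:universal property of localization}, using that $\varphi(S)$ consists of nonzero elements of the skew field $\FF$, hence units) equals $\varphi$; since $\varphi$ is injective, the first arrow must also be injective.

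The main obstacle is not the free case, which is classical Malcev--Neumann, but the fact that the Lewin--Lewin theorem is deep and not something I would reprove here; so the honest plan is to frame Theorem \ref{th:invertible almost free} as a direct consequence of (i) the existence of a bi-invariant order on free groups plus the Malcev--Neumann construction, and (ii) the Lewin--Lewin embedding theorem, glued together with Lemma \ref{le:free localization general}.
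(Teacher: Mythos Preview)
Your proposal is correct and matches the paper's own treatment: the paper does not prove this theorem either but simply attributes it to Malcev, Neumann, and J.~and T.~Lewin (citing \cite[Section 8.7]{Cohn} and \cite{lewin}), then invokes Lemma~\ref{le:free localization general} for the injectivity of the localization map. Your write-up is in fact slightly more informative than the paper's, since you spell out the Malcev--Neumann construction for the free case rather than leaving it as a bare citation.
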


We will need the following result, which is a particular case of \cite[Theorem 10.11]{Schofield} (here $\FF_\ell$ denotes a free skew field freely generated by $\ell$ elements).

\begin{proposition}  \label{pr:Schofield}
Let $\ell\ge 1$ and assume that $\ell$ elements $t_1,\ldots,t_\ell$ of $\FF_\ell$ generate $\FF_\ell$. Then $t_1,\ldots,t_\ell$ are free generators.
In particular, the assignments $c_i\mapsto t_i$ for $i=1,\ldots,\ell$ define an injective homomorphism of algebras
$\QQ F_\ell\hookrightarrow \FF_\ell$.
\end{proposition}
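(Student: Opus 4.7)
The plan is to invoke Schofield's rank-theoretic machinery for universal skew fields of fractions to establish the first assertion (freeness of the generating tuple), and then derive the second as a direct corollary. Let $A_\ell=\QQ\langle x_1,\ldots,x_\ell\rangle$ denote the free $\QQ$-algebra, and recall that $\FF_\ell$ is its universal skew field of fractions in the sense of Cohn, so the $x_i$ form a distinguished free generating set. Given generators $t_1,\ldots,t_\ell$ of $\FF_\ell$, first define a $\QQ$-algebra homomorphism $\varphi\colon A_\ell\to\FF_\ell$ by $x_i\mapsto t_i$; the task is to show that $\varphi$ is injective and extends to an automorphism of $\FF_\ell$.

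The crux is injectivity of $\varphi$. Any $\QQ$-algebra map from $A_\ell$ to a skew field pulls back a Sylvester matrix rank function from the target to $A_\ell$, and since $A_\ell$ is a semifir, it carries a unique faithful Sylvester rank function, namely the inner rank. The substantive content of Schofield's Theorem 10.11 is that, under the hypothesis that the images $t_i$ generate the target skew field and the source has matching ``generator count'' $\ell$, the pulled-back rank must agree with the inner rank on $A_\ell$; in particular, the pullback is faithful, so $\ker\varphi=0$. By the universal property of $\FF_\ell$ as an epic skew field of fractions of $A_\ell$, this injective $\varphi$ extends to a $\QQ$-algebra homomorphism $\Phi\colon\FF_\ell\to\FF_\ell$ with $\Phi(x_i)=t_i$. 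The image of $\Phi$ is a skew subfield containing all $t_i$, hence equals $\FF_\ell$; a surjective endomorphism of $\FF_\ell$ is in fact an isomorphism (again by the same rank invariance), so the $t_i$ are free generators. The main obstacle is precisely this rank-function invariance: it requires the machinery of firs, full matrices, and Cohn's construction of the universal skew field, which is the substantive input borrowed from Schofield's monograph.

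For the ``in particular'' clause, note that $\QQ F_\ell$ embeds canonically into $\FF_\ell$ via $c_i\mapsto x_i$: this follows either from Theorem~\ref{th:invertible almost free}, or directly because $\QQ F_\ell$ is the universal localization of $A_\ell$ at the multiplicative submonoid generated by the $x_i$, and this submonoid lies in $\FF_\ell^\times$ (so Lemma~\ref{le:universal property of localization} applies). Composing this canonical embedding with the automorphism $\Phi\colon\FF_\ell\widetilde\to\FF_\ell$ constructed above yields the claimed injective $\QQ$-algebra homomorphism $\QQ F_\ell\hookrightarrow\FF_\ell$ sending $c_i\mapsto t_i$.
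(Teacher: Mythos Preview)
The paper does not prove this proposition: it simply records it as ``a particular case of \cite[Theorem 10.11]{Schofield}'' and uses it as a black box. Your proposal goes further by sketching the mechanism behind Schofield's theorem (Sylvester rank functions, universality of $\FF_\ell$ as an epic skew field of fractions of the free algebra, and the rank-invariance forcing injectivity), which is indeed the correct outline of Schofield's argument; your derivation of the ``in particular'' clause by composing the canonical embedding $\QQ F_\ell\hookrightarrow\FF_\ell$ with the automorphism $\Phi$ is also correct. So your approach is consistent with the paper's, just more explicit about what the cited theorem actually says; the only caveat is that your sketch is still not self-contained, since the rank-invariance step (that the pulled-back Sylvester rank must coincide with the inner rank when the images generate) is precisely the nontrivial content of Schofield's Theorem~10.11 and you are, as you acknowledge, importing it wholesale.
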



%
%
%

Following Cohn, we say that a ring $R$ is  a left (resp. right) {\it semifir} if each finitely generated left (resp. right) ideal $J$ is isomorphic to $R^n$ for a unique $n=n_J$.  $R$ is called a semifir if it is both left and right semifir.  We use below the standard definition of a universal $R$-field, see \cite[Section 7.2]{Cohn}.

\begin{theorem}
\label{th:semifir}
Let $R$ be a semifir. Then:

(a) There exists a universal skew field $Frac(R)$ containing  $R$ as a subalgebra and generated by $R$.

(b) For  any factor-closed submonoid $S$ of $R\setminus \{0\}$ the canonical homomorphism $R_S\to Frac(R)$ is injective.

\end{theorem}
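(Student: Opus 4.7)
The statement is essentially a specialization of Cohn's theory of universal fields of fractions for semifirs, so the plan is to invoke the key structural facts about semifirs and then verify that the submonoid $S$ fits into Cohn's framework. First I would establish part (a) by recalling that in any semifir $R$ every finitely generated projective module is free of unique rank, so there is a well-defined Sylvester matrix rank function $\rho$ on square matrices over $R$, namely $\rho(A)$ is the rank (in the above sense) of the image map $R^n \to R^n$ determined by $A$. A square matrix is called \emph{full} if $\rho(A)=n$; the set $\Phi$ of full matrices over $R$ is closed under matrix multiplication (when the product is again square) and contains all $1\times 1$ nonzero entries. Then $Frac(R):= R[\Phi^{-1}]$, the universal localization inverting every full matrix, is a skew field containing $R$ and is universal in Cohn's sense: any homomorphism of $R$ into a skew field $K$ that is honest (sends full matrices to invertible matrices) factors uniquely through $Frac(R)$. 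That $R \hookrightarrow Frac(R)$ is injective follows from faithfulness of $\rho$.

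For part (b) I would argue by universality together with the factor-closed hypothesis. Since $S \subset R \setminus\{0\}$ and $R$ embeds in $Frac(R)$, each element of $S$ is a nonzero element of the skew field $Frac(R)$ and hence invertible there. By the universal property of the localization $R[S^{-1}]$ (Lemma \ref{le:universal property of localization}), the inclusion $R \hookrightarrow Frac(R)$ extends to a canonical ring homomorphism $\iota_S: R[S^{-1}] \to Frac(R)$. The point is to show $\iota_S$ is injective. Here is where the factor-closed hypothesis enters: Cohn proves that when $S$ is factor-closed in a semifir, the localization $R[S^{-1}]$ is itself again a semifir (or at least a subring of a semifir with compatible rank function), and its own universal field of fractions coincides with $Frac(R)$. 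Equivalently, one exhibits a rank function $\rho_S$ on $R[S^{-1}]$ extending $\rho$, under which the canonical map is rank-preserving; injectivity follows since a nonzero element has rank $1$, which cannot be sent to $0$.

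The key step I would develop in detail is the extension-of-rank argument: given $a/s \in R[S^{-1}]$ with $a \ne 0$ and $s \in S$, one must show its image in $Frac(R)$ is nonzero. Suppose for contradiction that $\iota_S(a s^{-1}) = 0$ in $Frac(R)$; then $a = 0 \cdot s = 0$ in $Frac(R)$, and by injectivity of $R \hookrightarrow Frac(R)$ already established, $a = 0$ in $R$, a contradiction. More generally, any element of $R[S^{-1}]$ can be brought into a normal form using the factor-closed property (this is the main technical ingredient, carried out in Cohn \cite[Ch.~7]{Cohn}), reducing the question to the one-generator case above.

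The main obstacle is really the machinery behind part (a): verifying that the class of full matrices over a semifir is closed under products of the appropriate shape and that the universal localization at $\Phi$ is a skew field requires the nontrivial matrix-theoretic analysis of semifirs. In this paper, however, this is treated as a black-box citation of Cohn, so the real content of our proof proposal is the short deduction in part (b), where the factor-closed hypothesis is precisely what allows us to extend the Sylvester rank function from $R$ to $R[S^{-1}]$ and conclude injectivity of $\iota_S$.
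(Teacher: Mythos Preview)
Your treatment of part (a) matches the paper's: both invoke Cohn's construction of $Frac(R)=R_\Phi$ as a black box. The divergence, and the genuine gap, is in part (b).

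Your argument for injectivity of $\iota_S$ reduces to the ``one-generator case'' $as^{-1}$ and then asserts that every element of $R[S^{-1}]$ can be brought to such a normal form via factor-closedness. But this is precisely what fails for noncommutative universal localization: $R[S^{-1}]$ is defined (as in the paper) as a quotient of $R * \ZZ S^{op}$, and its elements are in general arbitrary finite sums of alternating words in elements of $R$ and inverses $[s]$. Without an Ore condition there is no reason such an expression collapses to a single fraction $as^{-1}$, and factor-closedness of $S$ does not supply one. So the ``reduction to the one-generator case'' is unsubstantiated, and the sentence ``Cohn proves that when $S$ is factor-closed in a semifir, $R[S^{-1}]$ is itself again a semifir'' is doing all the work without being argued.

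The paper's route avoids normal forms entirely by passing to matrix localizations. It replaces $S$ by the set $\Sigma_S$ of matrices $PMQ$ with $P,Q$ invertible and $M$ upper-triangular with diagonal in $S$, observes $R[S^{-1}]=R_{\Sigma_S}$, and then cites two concrete results from Cohn: first that $\Sigma_S$ is factor-closed and multiplicative as a set of \emph{matrices} whenever $S$ is factor-closed in $R$ (\cite[Section 7.5, Lemma 10.1]{Cohn}), and second that for any honest homomorphism $f:R\to R'$ and any factor-closed multiplicative set $\Sigma$ of matrices, the induced $f_\Sigma:R_\Sigma\to R'$ is injective (\cite[Section 7.5, Proposition 5.7(ii)]{Cohn}). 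Taking $f$ to be the honest embedding $R\hookrightarrow Frac(R)$ from part (a) finishes. This matrix-level lift is the missing idea in your proposal.
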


\begin{proof} Recall from \cite{Cohn} that:

$\bullet$  an $n\times n$ matrix $A$ over a unital ring $R$ is {\it full} if for any factorization $A=BC$ for some  $n\times p$ matrix $B$ and  a $p\times n$ matrix $C$ one has  $p\ge n$;

$\bullet$ A homomorphism $f:R\to R'$ is {\it honest} if the image of each full matrix is full.

$\bullet$ A set $\Sigma$ of square matrices over a unital ring $R$ is  {\it multiplicative} if any upper block-triangular matrix with diagonal in $\Sigma$ also belongs to $\Sigma$ and $\Sigma$ is closed under simultaneous permutation of rows and columns;

 $\bullet$ A set $\Sigma$ of  matrices over a unital ring $R$ is called
{\it factor-closed} if $AB\in \Sigma$ for some $n\times n$ matrices $A$ and  $B$ over $R$ implies that $A,B\in \Sigma$.

$\bullet$ For any set $\Sigma$ of square matrices over a unital ring $R$, $R_\Sigma$ denotes the {\it universal localization} (\cite[Theorem 2.1]{Cohn}) so that the image of each element of $\Sigma$ under the canonical homomorphism $R\to R_\Sigma$ is an invertible matrix (e.g., $R_S=R[S^{-1}]$ in the notation as above);

Then Theorem \ref{th:semifir}(a) immediately follows from the following result.

\begin{theorem} \cite[Section 7.5, Corollary 5.11]{Cohn})
\label{th:cohn honest}
For each semifir $R$ the universal localization $Frac(R):=R_\Phi$, where $\Phi$ is the set of full matrices over $R$, is a skew field and the canonical homomorphism $R\to Frac(R)$ is honest (hence injective).

\end{theorem}

To prove (b) we need following results from \cite{Cohn}.

\begin{proposition} (\cite[Section 7.5, Proposition 5.7(ii)]{Cohn})
\label{pr:cohn honest2}
Given unital rings $R$ and $R'$ and a honest homomorphism $f:R\rightarrow R'$, then for any  factor-closed multiplicative
set  $\Sigma$ of square matrices over $R$, the canonical homomorphism
$f_\Sigma:R_{\Sigma}\to R'$ is injective.

\end{proposition}

For any $S\subset R$ denote by $\Sigma_S$ the set
of all matrices over $R$ of the form $PMQ$ where $P$ and $Q$ are invertible matrices
over $R$ and $M$ is an upper triangular matrix over $R$ with diagonal entries in $S$.
\begin{lemma}  (\cite[Section 7.5, Lemma 10.1]{Cohn})
\label{le:cohn closed}
Let $R$ be a semifir. Then for any  factor-closed submonoid $S$ of $R\setminus \{0\}$ the set $\Sigma_S$ is factor-closed and multiplicative.
\end{lemma}

Finally, letting $R$ be a semifir and $R'=Frac(R)$ in Proposition \ref{pr:cohn honest2},  $\Sigma=\Sigma_S$ as in Lemma \ref{le:cohn closed} and taking into account that $R[S^{-1}]=R_S=R_{\Sigma_S}$, we finish the proof of part (b).

Theorem \ref{th:semifir} is proved.
\end{proof}

It is well-known (see e.g., \cite{DM}) that for any finitely generated free group $F$ its group algebra  $R=\QQ F$ is a semifir. Therefore, Theorem \ref{th:semifir} implies the following corollary.

\begin{corollary}
\label{cor:free divisible}
Let $F$ be a finitely generated free group and $R=\QQ F$. Then any factor-closed submonoid $S$ of $R\setminus\{0\}$ is divisible, more precisely, $R[S^{-1}]\subset Frac(R)$.

\end{corollary}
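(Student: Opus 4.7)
The plan is to deduce the corollary almost immediately from Theorem \ref{th:semifir} together with the semifir property of $\QQ F$ cited just before the corollary. First I would invoke the well-known fact (e.g., from \cite{DM}) that for a finitely generated free group $F$ the group algebra $R = \QQ F$ is a semifir; this is the structural input that makes the rest go through.

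Next, I would apply Theorem \ref{th:semifir}(a) to $R$: since $R$ is a semifir, there exists a universal skew field $Frac(R)$ containing $R$ as a subalgebra and generated by $R$. In particular $Frac(R)$ is of class $\mathcal{E}$ in the sense of Malcev--Cohn, and the canonical map $R \hookrightarrow Frac(R)$ is injective. Then I would apply Theorem \ref{th:semifir}(b) to the given factor-closed submonoid $S \subset R \setminus \{0\}$: it gives that the canonical homomorphism $R[S^{-1}] = R_S \to Frac(R)$ is injective, so $R[S^{-1}]$ embeds as a subalgebra of the skew field $Frac(R)$.

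Finally, since $R[S^{-1}]$ embeds into a skew field, it is itself of class $\mathcal{E}$. By Definition \ref{def:divisible submonoid}, this is precisely the assertion that $S$ is a divisible submonoid of $R \setminus \{0\}$, and the inclusion $R[S^{-1}] \subset Frac(R)$ is the refined statement in the corollary.

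There is essentially no obstacle in the argument beyond correctly invoking the two cited inputs: the semifir property of $\QQ F$ (which is external) and Theorem \ref{th:semifir} (which is already available in the paper). The only care needed is to note that the universal localization $R[S^{-1}]$ of the paper coincides with Cohn's $R_S$, so that part (b) of Theorem \ref{th:semifir} applies verbatim; this is immediate from the definitions, since inverting the singletons $\{s\}$ for $s \in S$ is a special case of inverting the matrix set $\Sigma_S$ used in the proof of Theorem \ref{th:semifir}.
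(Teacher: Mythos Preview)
Your proposal is correct and follows essentially the same approach as the paper: the paper simply notes (citing \cite{DM}) that $\QQ F$ is a semifir for $F$ finitely generated free, and then states that Theorem \ref{th:semifir} implies the corollary. Your write-up merely unpacks this implication in slightly more detail, including the remark that $R[S^{-1}]=R_S$.
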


\begin{remark}
\label{rem:free group algebra UFD}
Based on Theorem \ref{th:invertible almost free}, we expect that an analogue of Corollary \ref{cor:free divisible} also holds for $R=\QQ G$, where $G$ is a torsion-free $1$-relator group.

\end{remark}

Given a unital ring $R$, following Cohn, we say that:

$\bullet$  Elements $a,b\in R$ are {\it similar} if the right $R$-modules $R/aR$ and $R/bR$ are isomorphic (clearly, similarity is an equivalence relation on $R$).

$\bullet$ An element $p\in R\setminus R^\times$ is {\it prime} if for any factorization $p=p'p''$ one has: either $p'\in R^\times$ or $p''\in R^\times$.

$\bullet$  A unital ring $R$ is  a (noncommutative)  {\it unique factorization domain (UFD)} if each nonzero non-unit admits a prime factorization and
for any two prime factorizations of a non-unit $x\in R$:
$$x=p_1\cdots p_r=q_1\cdots q_s$$
one has $s=r$ and $q_i$ is similar to $p_{\sigma(i)}$ for $i=1,\ldots,r$  where $\sigma$ is a permutation of $\{1,\ldots,r\}$.


\begin{proposition}
\label{pr:UFD} Let $R$ be a UFD and $S$ be a submonoid of $R\setminus \{0\}$. Then $S$ is factor-closed iff
it is generated by $R^\times$ and a (empty or not) set $P$ which is the union of similarity classes of prime elements in $R$.



\end{proposition}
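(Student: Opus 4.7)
The proof splits into the two implications, and I will handle the easier one first.

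For the implication $(\Leftarrow)$, assume $S = \langle R^\times, P\rangle$ with $P$ a union of similarity classes of primes in $R$. To check that $S$ is factor-closed, take $ab \in S$ with $a,b \in R\setminus\{0\}$. Using the UFD structure, factor $a = u_a p_1 \cdots p_k$ and $b = u_b q_1 \cdots q_\ell$ into units times primes, so that $ab$ has total prime length $k+\ell$. Since $ab \in \langle R^\times, P\rangle$, it also has a factorization as a unit times $m$ primes taken from $P$. The uniqueness of prime factorization (up to similarity and permutation) in a Cohn UFD forces $m = k+\ell$ and identifies each $p_i$ and $q_j$ as similar to some prime appearing in the $P$-factorization. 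Since $P$ is closed under similarity, each $p_i, q_j \in P$, so $a, b \in \langle R^\times, P\rangle = S$.

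For the implication $(\Rightarrow)$, assume $S$ is factor-closed. Note that $1 = u \cdot u^{-1} \in S$ together with factor-closedness forces $R^\times \subseteq S$. Take $P = \bigcup_{p \in S \cap \text{Prim}(R)} [p]$, the union of similarity classes of those primes that already lie in $S$; this is a union of similarity classes by construction. The containment $S \subseteq \langle R^\times, P\rangle$ is immediate: factoring any $s \in S$ via the UFD as $s = u p_1 \cdots p_n$, factor-closedness puts each $p_i$ into $S$, hence into $P$.

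The reverse containment $\langle R^\times, P\rangle \subseteq S$ is the heart of the argument, and reduces to the key claim: if $p \in S$ is prime and $q \sim p$ is a prime, then $q \in S$. I expect this to be the main obstacle, and the proof will invoke Cohn's structural theory of UFDs. Similarity $p \sim q$ yields an isomorphism $R/pR \xrightarrow{\sim} R/qR$ of right $R$-modules, from which one extracts a comaximal relation $pa = bq$ with $a, b \in R$, where $a$ is chosen coprime to $q$ and $b$ coprime to $p$; this is the standard translation of similarity into a comaximal transposition of prime factorizations. The element $c = pa = bq$ then has two prime factorizations of the same length, and after refining the choice of $a, b$ (using the Cohn interchange lemma, to arrange $a \sim q$ and $b \sim p$ themselves prime) one obtains a direct swap between $p$ and $q$ inside a length-two factorization of $c$. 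Propagating membership via factor-closedness through this swap, together with $R^\times \subseteq S$, yields $q \in S$. Equivalently, the factorization monoid $R^\times \backslash (R \setminus \{0\}) / R^\times$ modulo similarity of primes is a free commutative monoid on similarity classes of primes, and in this quotient ``factor-closed submonoid'' is precisely ``divisor-closed subset,'' which is necessarily generated by a subset of atoms, i.e., by a union of similarity classes of primes.
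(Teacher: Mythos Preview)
Your $(\Leftarrow)$ argument is correct and essentially identical to the paper's.

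For $(\Rightarrow)$, the paper takes a shorter route than you do: it sets $P$ equal to the set of \emph{all primes lying in $S$} (rather than the union of their similarity classes) and shows directly that $S=\langle R^\times,P\rangle$ by factoring an arbitrary $s\in S$ and invoking factor-closedness to place each prime factor back into $S$. The paper does \emph{not} verify that this $P$ is closed under similarity; it simply stops after establishing $S=S_P$. (Only the $(\Leftarrow)$ direction is ever used later, so this omission is harmless for the applications.)

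You, by contrast, take $P$ to be the similarity-closure of the primes in $S$, which forces you to prove the ``key claim'' that $p\in S$ and $q\sim p$ imply $q\in S$. Your argument for this does not go through. From a comaximal relation $c=pa=bq$ you know $p\in S$, but you have no reason to know $a\in S$, hence no reason to know $c=pa\in S$, and so factor-closedness cannot be applied to the factorization $c=bq$ to extract $q$. The phrase ``propagating membership via factor-closedness through this swap'' is precisely where the argument breaks: factor-closedness lets you descend from a product in $S$ to its factors, never ascend from a factor to a product. Your free-commutative-monoid reformulation has the same defect: showing $\phi(S)$ is divisor-closed would require every additive splitting of $\phi(s)$ to be realized by an actual factorization $s=ab$, and noncommutative UFDs do not permit arbitrary reordering of atomic factors (for instance, in $k\langle x,y\rangle$ the element $xy$ admits only the factorization $x\cdot y$, not $y'\cdot x'$ with $y'\sim y$, $x'\sim x$).

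In short: the paper proves less than you attempt (and arguably less than the proposition literally asserts), while your attempt to prove the full statement has a genuine gap at the key claim.
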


\begin{proof}
Denote by $P$ the set of all primes in $S$ and by $S_P$ the submonoid of $R\setminus \{0\}$ generated by $R^\times$ and $P$. Clearly, $S_P\subset S$.

Suppose that $S$ is factor-closed.  Let us show that $S=S_P$. We proceed by contradiction, i.e., suppose that there is at least one element $x\in S\setminus S_P$.
Then $x$ is not a unit hence $x$ has a prime factorization $x=p_1\cdots p_r$. If $r=1$, then
$x=p_1\in S$ hence $x\in S_P$ and we arrive at the contradiction. If $r\ge 2$, then since $S$ is factor-closed,  we have $p_i\in S$ for $i=1,\ldots,r$. Hence
$x\in S_P$ and we arrive at the contradiction once again.

Suppose that $P$ is a union of similarity classes and $S=S_P$. Let us prove that $S$ is factor-closed. Suppose that $ab\in S$ for some $a,b\in R$. Let us show that $a,b\in S$.
If either $a$ or $b$ is a unit, we have nothing to prove because $R^\times \subset S$. Thus, suppose that $a,b\in R\setminus R^\times$ and
let
$$a=p_1\cdots p_{r'},~b=p_{r'+1}\cdots p_r$$
be respective prime factorizations with  $1\le r'<r$, where $p_1,\ldots,p_r$ are some primes in $R$. On the other hand,
since $ab$ is a non-unit element of $S$, it admits a prime factorization 
$ab=q_1\cdots q_s$ in $S$, 
where $q_1,\ldots,q_s\in P$.
Comparing the factorizations $p_1\cdots p_r=q_1\cdots q_s$
and using the fact that $R$ is UFD, we obtain: $r=s$ and each $p_i$ is similar to one of $q_j$. Since all primes similar to each $q_j$ belongs to $P$, we obtain $p_1,\ldots,p_r\in P$ hence $a\in S$, $b\in S$.

The proposition is proved.
\end{proof}

\begin{remark}
\label{rem:free UFD}
The class of noncommutative UFD's is rather large: it contains  group rings $\QQ F$, where $F$ is any finitely generated free group (see e.g.,  \cite[Theorem 3.4, Proposition 3.5 and Corollary]{Cohn2}).
\end{remark}

Note however, that similarity classes of primes may contain some ``unexpected" elements. For instance, if $R$ is the free ring in $x,y$ then $xy+1$ and $yx+1$ are similar (see e.g. \cite{Cohn2}). This motivates the following definition.

\begin{definition} Given a ring $R$, we say that an element $a\in R\setminus \{0\}$ is {\it self-similar} if all elements similar to $a$ are of the form $uau'$, where $u,u'\in R^\times$.
\end{definition}

Taking into account that $(\QQ F)^\times=\QQ^\times\cdot F$ for a free (or, more generally, an ordered) group $F$  (see e.g., \cite[Theorem 6.29]{Lam}), we obtain the following conjectural characterization of certain self-similar primes in  $\QQ F$.

\begin{conjecture}
\label{conj:rigid linear}
Let $F$ be a free group freely generated by $t_1,\ldots,t_m$, $m\ge 2$. Then for $k=2,\ldots,m$ the element $\tau_k:=t_1+\ldots+t_k$ is a self-similar prime, e.g.,
all elements of $\QQ F$ similar to $\tau_k$ belong to $\QQ^\times\cdot F\cdot \tau_k\cdot F$.

\end{conjecture}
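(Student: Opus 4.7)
The plan is to establish the two assertions separately: first, primality of $\tau_k$ in $R := \QQ F$, and then the rigidity that its similarity class coincides with $\QQ^\times \cdot F \cdot \tau_k \cdot F$.

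For primality, $R$ is a fir by Lewin's theorem (cited in Theorem \ref{th:invertible almost free}) and hence a noncommutative UFD, so it will suffice to show that $\tau_k$ is an atom: it admits no factorization $\tau_k = ab$ with both factors outside $R^\times = \QQ^\times \cdot F$. My first attempt would be to work with the word-length filtration on $\QQ F$ and pass to the associated graded, where the leading part of $\tau_k$ is $t_1 + \cdots + t_k$, a sum of distinct free generators, which patently cannot be a product of elements of positive degree. The delicate point is that putative factors $a$ and $b$ need not be homogeneous, and cancellation in the product must be controlled; one handles this via Cohn's weak algorithm in $\QQ F$, which yields the required additivity of length modulo units.

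For the self-similarity statement, I would start from the assumption that $b$ is similar to $\tau_k$, that is, $R/\tau_k R \cong R/bR$ as right $R$-modules. By the standard fir characterization of similarity, this yields $p, q, q', s \in R$ satisfying $bp = q\tau_k$ and $qq' + bs = 1$, equivalently an invertible $2 \times 2$ matrix over $R$ intertwining $\mathrm{diag}(\tau_k, 1)$ and $\mathrm{diag}(b, 1)$. I would then try to exploit the very specific shape of $\tau_k$: its support consists of $k$ distinct elements of $F$ of word length one. From $bp = q\tau_k$ one obtains $\mathrm{supp}(bp) = \bigsqcup_{i=1}^k \mathrm{supp}(q) \cdot t_i$ (assuming no cancellation on the right, which the position of the $t_i$ inside $F$ should partially enforce), and I would aim to argue combinatorially that this pins down $q$ and $p$ as scalar multiples of single group elements, producing $b = \alpha \cdot g \tau_k h$ as required.

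\textbf{Main obstacle.} The rigidity step will be by far the hard part, and is presumably the reason the authors state this only as a conjecture. In a general fir, the similarity class of an element can be strictly larger than its two-sided unit orbit---the classical phenomenon of elements that are stably associated but not associated---and a naive support-tracking argument as sketched above is very likely too optimistic, since it ignores the possibility of massive internal cancellation on both sides of $bp = q\tau_k$. A more realistic route would go through Cohn's matrix reduction theory: work inside the free skew field $\FF_m = Frac(\QQ F)$ and try to show that the $2 \times 2$ invertible matrix over $R$ implementing the similarity can be reduced by elementary $GL_2(R)$-operations to diagonal form, at which point $b$ must equal $u \tau_k v$ with $u, v$ units of $R$. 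This in turn reduces the problem to a structural classification of $GL_2(\QQ F)$-orbits on pairs whose $(1,1)$-entry is $\tau_k$, which would be new and does not obviously succumb to induction on length once $k \geq 2$ and $m \geq 3$.
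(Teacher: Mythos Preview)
The statement you are attempting is labeled \textbf{Conjecture} in the paper, not Theorem or Proposition; the authors do not prove it. They use it only conditionally (see Proposition~\ref{pr:A_n divisible conj}, which assumes Conjecture~\ref{conj:rigid linear} as a hypothesis) and explicitly remark that it ``was shaped during our discussions with George Bergman, Dolors Herbera, and Alexander Lichtman,'' indicating it is genuinely open. There is therefore no paper proof to compare your attempt against.

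Your proposal is honest about this: you correctly identify that the primality of $\tau_k$ is the easy half (and your length/weak-algorithm argument for atomicity is fine), while the self-similarity half is the real content, and you do not claim to close it. Your sketch of the obstacle is accurate---similarity in a fir is strictly coarser than two-sided associatedness in general, and the support-tracking argument you outline does indeed break once cancellation in $bp=q\tau_k$ is allowed. The matrix-reduction route via $GL_2(\QQ F)$ that you propose at the end is a reasonable line of attack, but as you say it amounts to a new structural result about $GL_2$ of a free group algebra and is not something the paper supplies or even hints at. In short: your write-up is a fair assessment of the difficulty, not a proof, and the paper is in the same position.
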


\begin{remark} This conjecture was shaped during our discussions with George Bergman, Dolors Herbera, and Alexander Lichtman. We are immensely grateful to these mathematicians.

\end{remark}

%





\end{document}